\documentclass[11pt]{amsart}
\usepackage{amssymb,mathrsfs,graphicx,enumerate,color}
\usepackage{hyperref}
\usepackage{comment}
\usepackage{colortbl}
\definecolor{black}{rgb}{0.0, 0.0, 0.0}
\definecolor{red}{rgb}{1.0, 0.5, 0.5}

\title[Stability of inflow problem of the Navier--Stokes--Fourier system]{Long-time dynamics toward a generic composite wave for the inflow problem of the Navier--Stokes--Fourier system}

\author[Huang]{Xushan Huang}
\address[Xushan Huang]{\newline Department of Mathematics
\newline Yonsei University, Seoul 03722, Republic of Korea}
\email{xushanhuang@yonsei.ac.kr}

\author[Kang]{Moon-Jin Kang}
\address[Moon-Jin Kang]{\newline Department of Mathematical Sciences \newline Korea Advanced Institute of Science and Technology, Daejeon  34141, Republic of Korea}
\email{moonjinkang@kaist.ac.kr}

\author[Lee]{Hobin Lee}
\address[Hobin Lee]{\newline Department of Mathematical Sciences \newline Korea Advanced Institute of Science and Technology, Daejeon  34141, Republic of Korea}
\email{lcuh11@kaist.ac.kr}

\author[Oh]{HyeonSeop Oh}
\address[HyeonSeop Oh]{\newline Department of Mathematical Sciences \newline Korea Advanced Institute of Science and Technology, Daejeon  34141, Republic of Korea}
\email{ohs2509@kaist.ac.kr}

\newtheorem{proposition}{Proposition}[section]
\newtheorem{remark}{Remark}[section]

\newcommand{\bbr}{\mathbb R}

\newcommand{\e}{\varepsilon}

\numberwithin{figure}{section}
\newcommand{\beq}{\begin{equation}}
\newcommand{\eeq}{\end{equation}}
\newcommand{\bsp}{\begin{split}}
\newcommand{\esp}{\end{split}}

\newcommand{\R}{\mathbb{R}}

\newcommand{\s}{\sigma}

\def\eps{\varepsilon }

\newcommand\adots{\mathinner{\mkern2mu\raise1pt\hbox{.}
\mkern3mu\raise4pt\hbox{.}\mkern1mu\raise7pt\hbox{.}}}

\newtheorem{theo}{Theorem}[section]

\newtheorem{lem}[theo]{Lemma}

\newtheorem{rem}[theo]{Remark}

\numberwithin{equation}{section}

\def\charf {\mbox{{\text 1}\kern-.30em {\text l}}}

\def \l {\lambda}

\def \m {\mu}
\def \rd {\partial}

\def \d {\delta}

\def \r {\rho}

\def \b {\beta}
\def \g {\gamma}
\def \Rp {\mathbb{R}_+}
\def \intRp {\int_{\mathbb{R}_+}}
\def \x {\xi}
\def \th {\theta}
\newcommand{\norm}[1]{\left\|#1\right\|}

\def \Util {\widetilde{U}}

\def \Ubar {\overline{U}}
\def \vbar {\bar{v}}
\def \ubar {\bar{u}}
\def \thbar {\bar{\theta}}
\def \thtil {\widetilde{\theta}}
\def \vtil {\widetilde{v}}
\def \pbar {\bar{p}}
\def \Th {\Theta}
\def \k {\kappa}
\def \t {\tau}

\begin{document}
\bibliographystyle{acm}

\date{\today}

\subjclass[2020]{35Q35, 76N06} \keywords{Navier--Stokes--Fourier system; Inflow problem; Riemann Problem; Viscous shock; Boundary layer; Stability}

\thanks{\textbf{Acknowledgment.} X. Huang was supported by the National Research Foundation of Korea (NRF) grant funded by the Korea government (MSIT) (No. RS-2019-NR040050 and RS-2024-00406821). M.-J. Kang, H. Lee and H. Oh were partially supported by the National Research Foundation of Korea (RS-2024-00361663 and NRF-2019R1A5A1028324).
}

\begin{abstract}
We study the time-asymptotic stability of solutions to the inflow problem for the one-dimensional Navier--Stokes--Fourier system on the half-line. We consider the most generic wave pattern: the superposition of a degenerate boundary layer, a rarefaction, a viscous contact wave, and a viscous shock. More precisely, if the boundary data belongs to the subsonic region, and the initial perturbation and strengths of the boundary layer, viscous contact wave, and viscous shock are sufficiently small, then the solution to the inflow problem converges to the corresponding superposition, up to a time-dependent shift for a shock. The rarefaction wave, however, is allowed to have arbitrarily large strength. To control the viscous shock, we employ the method of $a$-contraction with shifts. A notable feature of our analysis is that this method can be applied even when the rarefaction wave has large amplitude. In particular, this resolves, in a generic setting, the open problem of the stability of inflow wave patterns containing a viscous shock for Navier--Stokes--Fourier system.
\end{abstract}

\maketitle

\centerline{\date}

\tableofcontents

\section{Introduction}
\setcounter{equation}{0}
We consider the inflow problem of the one-dimensional Navier--Stokes--Fourier (NSF) system on the half-line $\mathbb{R}_+ := (0,\infty)$: 
\begin{equation} \label{eq:NSF1}
	\begin{cases}
		& \rho_t + (\rho u)_x = 0, \quad x\geq 0,\quad t\geq 0,\\
		&(\rho u)_t + (\rho u^2 + p)_x = \mu u_{xx}, \\
		&\mathcal{E}_t + \left(u\mathcal{E}+ pu\right)_x = \kappa \theta_{xx} + \mu(uu_x)_x\\
		&(\rho,u,\theta)(t,0) = (\rho_-, u_-, \theta_-) \quad \text{with} \quad \rho_-, u_-, \theta_- >0,
	\end{cases}
\end{equation}
subject to the initial data

\[(\rho_0, u_0,\theta_0)(x)=(\rho,u,\theta)(0,x), \quad \text{with} \quad 
		\inf_{x\in \mathbb{R}_+} \rho_0(x) > 0, \quad \inf_{x\in \mathbb{R}_+} \theta_0(x) > 0, 
\]
and the far-field condition
\begin{equation*}
	\left(\rho, u, \theta\right)(0,x) \to(\rho_+,u_+, \theta_+)\quad\mbox{as}\quad {x}\to\infty.
\end{equation*}
Here, $\rho =\rho(t,x), u= u(t,x)$ and $ \theta = \theta(t,x) $ represent the fluid density, velocity and absolute temperature respectively, $\mathcal{E}=\rho (e+ \frac{u^2}{2})$ is the total energy function. For the ideal polytropic gas, the pressure function $p$ and the internal energy function $e$ are given by 
\begin{equation*}
p(\rho, \theta) = R\rho \theta = A\r^{\gamma} \exp\left(\frac{\g - 1}{R}s \right),\qquad e(\rho, \theta) = \frac{R}{\gamma -1} \theta + const,
\end{equation*}
where $s$ is the (physical) entropy, $A>0,R>0$, $\gamma>1$ being both constant related to the fluid, while $\mu$ and $\kappa$ denote the viscosity and the heat-conductivity.

In Lagrangian mass coordinates, the inflow problem \eqref{eq:NSF1} can be rewritten as
\begin{equation} \label{eq:NSFL}
	\begin{cases}
		& v_t - u_x = 0, \quad t>0, \quad x > \sigma_- t, \\
		& u_t + p(v,\theta)_x = \left(\mu \frac{u_x}{v}\right)_x,\\
		& E_t + (p(v,\theta)u)_x = \left(\kappa \frac{\theta_x}{v}\right)_x + \left(\mu \frac{u u_x}{v}\right)_x,\\
		& (v,u,\theta)(t,\sigma_- t) = (v_-, u_-,\th_-), \quad v_-, u_-,\th_- > 0,
	\end{cases}
\end{equation}
where $v(t,x) = 1/\rho$ represents the specific volume, $v_- = \frac{1}{\rho_-}, \sigma_- = -\frac{u_-}{v_-}<0, p(v,\theta) = \frac{R\theta}{v}$, and $E = e + \frac{u^2}{2}$ denotes the specific total energy. Moreover, the far-field condition is reformulated by 
\[
(v,u,\theta)(0,x) \to (v_+, u_+, \theta_+) \quad\mbox{as}\quad {x}\to\infty.
\]
We are interested in the long-time dynamics of solutions $U = (v,u,\th)$ to the inflow problem \eqref{eq:NSFL}. It is well known that these asymptotic patterns for the NSF system are closely related to the Riemann problem for the full Euler system (the NSF system with $\mu=\kappa=0$). For the Cauchy problem on the whole line $\mathbb{R}$, one of the generic asymptotic wave patterns consists of a rarefaction wave, a viscous contact wave, a viscous shock, or their superpositions. However, for the inflow problem, an additional asymptotic pattern, namely, a boundary layer solution (stationary solution), appears due to the presence of the boundary. Indeed, the asymptotic wave patterns for the inflow problem are determined by the locations of the boundary state $U_-$ and the far-field state $U_+$. To describe these wave patterns, we define the sound speed $c(\th)=\sqrt{R \gamma \th}$ and decompose the state space into the subsonic, transonic, and supersonic regions:
\begin{align*}
    \Omega_{sub} &:= \{(v,u,\th)\,:\,|u| < c(\th) \},\\
    \Gamma_{trans} &:=\{(v,u,\th)\,:\, |u| = c(\th) \},\\
    \Omega_{super} &:=\{(v,u,\th)\,:\, |u| > c(\th) \}.
\end{align*}
Since we consider the inflow problem, we restrict our attention to the regions with positive velocity and define $\Omega_{sub}^+, \Gamma_{trans}^+$, and $\Omega_{super}^+$ by 
\[
\Omega_{sub}^+:= \Omega_{sub} \cap \{u>0\}, \quad \Gamma_{trans}^+:= \Gamma_{trans} \cap \{u>0\}, \quad \Omega_{super}^+ = \Omega_{super} \cap \{u>0\}.
\]
We consider the case where $U_- \in \Omega_{sub}^+$. In particular, we consider the case where the asymptotic behavior of solutions to the inflow problem \eqref{eq:NSFL} is given as a superposition of a degenerate boundary layer solution, a rarefaction wave, a viscous contact wave, and a viscous shock wave, which are described below.

Before describing these waves, we first introduce the change of variables $(t,\xi):=(t,x - \sigma_- t)$ to fix the boundary. Then, the system \eqref{eq:NSFL} can be written in terms of $(t,\xi)$ as
\begin{equation} \label{eq:NSF}
	\begin{cases}
		& v_t - \s_- v_\x - u_\x = 0, \quad t>0, \quad \x >0, \\
		& u_t - \s_- u_\x + p(v,\theta)_\x = \left(\mu \frac{u_\x}{v}\right)_\x,\\
		& E_t - \s_- E_\x + (p(v,\theta)u)_\x = \left(\kappa \frac{\theta_\x}{v}\right)_\x + \left(\mu \frac{u u_\x}{v}\right)_\x,\\
		& (v,u,\theta)(t,0) = (v_-, u_-,\theta_-), \quad v_-, u_-, \theta_- > 0,
	\end{cases}
\end{equation}
subject to the initial data and the far-field condition
\begin{equation}\label{IC}
	(v,u,\theta)(0,\x) = (v_0, u_0, \th_0)(\x), \quad \lim_{\x \to \infty}(v,u,\th)(0,\x) = (v_+, u_+, \th_+).
\end{equation}\vspace{3mm}\\
$\bullet$ {\bf Boundary layer solution:} For a given $U_* \in \Gamma_{trans}^+$, \cite{WYY2025} proved that there exists a curve $\Sigma \subset \{(u,\theta)\, :\, u>0, \, \, \theta>0\}$, such that, for any $U_- \in BL(U_*)$, there exists a unique (degenerate) boundary layer solution (stationary solution) $U^{BL}=U^{BL}(\xi)$ to \eqref{eq:NSF} satisfying
\begin{equation}\label{eq:BL}
	\begin{cases}
		&- \s_- v^{BL}_\x - u^{BL}_\x = 0, \quad \x >0, \\
		&- \s_- u^{BL}_\x + p(v^{BL},\theta^{BL})_\x = \left(\mu \frac{u^{BL}_\x}{v^{BL}}\right)_\x,\\
		&- \s_- \left(\frac{R}{\gamma-1} \th^{BL} + \frac{(u^{BL})^2}{2}\right)_\x +  \big(p(v^{BL},\theta^{BL}) u^{BL}\big)_\x = \left(\kappa \frac{\theta^{BL}_\x}{v^{BL}}\right)_\x + \left(\mu \frac{u^{BL} u^{BL}_\x}{v^{BL}}\right)_\x,\\
		&(v^{BL},u^{BL},\th^{BL})(0) = (v_-, u_-, \th_-), \quad (v^{BL},u^{BL},\th^{BL})(+\infty) = (v_*,u_*,\th_*).
	\end{cases}
\end{equation}
Here, the boundary layer curve $BL(U_*)$ is defined by 
\[
BL(U_*):= \left\{(v,u,\th) \,:\, \frac{u}{v} = \frac{u_*}{v_*}=-\sigma_-, \quad (u,\th) \in \Sigma \right\} \subset \Omega_{sub}^+.
\]
$\bullet$ {\bf 1-rarefaction wave and approximate 1-rarefaction wave:} We consider the situation in which the state $U_*$ and the far-field state $U_+$ are connected by a 1-rarefaction wave, contact discontinuity, and 3-shock wave, with intermediate states $U_m$ and $U^*$. First, we consider $U_m \in R_1(U_*)$, where $R_1(U_*)$ is a 1-rarefaction curve. Namely, $R_1(U_*)$ the integral curve of the first characteristic field:
\begin{align*}
&R_1(U_*):=\bigg\{(v,u,\th)\,:\,v>v_*, \, \,  s(v,\th)=s(v_*,\th_*):=s(v_m,\th_m), \\
&\phantom{R_1(U_*):=\bigg\{(v,u,\th)\,:\,\,}u=u_*-\int_{v_*}^{v} \lambda_1(s_*,\eta)\,d\eta \bigg\},
\end{align*}
where 
\begin{equation}\label{def:s}
    s(v,\theta)=\frac{R}{\gamma-1}\ln\left(\frac{R}{A}\th v^{\gamma-1}\right)
\end{equation}
 denotes the entropy, and $\lambda_1(s,v)=-\sqrt{\frac{\gamma p(v,s)}{v}}$ is the first eigenvalue of the Jacobian of the hyperbolic part of \eqref{eq:NSFL}.

Then, for any $U_m \in R_1(U_*)$, the solution to the Riemann problem of the Euler system 
\begin{equation}\label{eq:E}
\begin{cases}
		& v_t - u_x = 0, \quad (t,x) \in \Rp \times \mathbb{R}, \\
		& u_t + p(v,\theta)_x = 0,\\
		& E_t + (p(v,\theta)u)_x = 0,\\
\end{cases} 
\end{equation}
with the initial data 
\[
(v,u,\th)(0,x) = \begin{cases}
		    (v_*,u_*,\th_*), \, \, & \text{if } x<0,\\
             (v_m,u_m,\th_m), \, \, & \text{if } x>0,\\
		\end{cases}
\]
is given by a 1-rarefaction wave $U^r(t,x)$, defined by
\[
\lambda_1(v^r(t,x),\th^r(t,x)) = \begin{cases}
    \lambda_1(v_*,\th_*), \, \, &\text{if } x<\lambda_1(v_*,\th_*)t,\\
    \frac{x}{t}, \, \, &\text{if } \lambda_1(v_*,\th_*)t<x<\lambda_1(v_m,\th_m)t,\\
    \lambda_1(v_m,\th_m), \, \, &\text{if } x>\lambda_1(v_m,\th_m)t,
\end{cases}
\]
together with 
\[
\begin{aligned}
    &Z_1(v^r(t,x),u^r(t,x);s^r) = Z_1(v_*,u_*;s^r) = Z_1(v_m,u_m;s^r),\\
    &s(v^r(t,x),\th^r(t,x)) = s(v_*,\th_*) = s(v_m,\th_m)=:s^r,
\end{aligned}
\]
where $Z_1(v,u;s):= u + \int^v \lambda_1(s,v')dv'$, and $s$, defined in \eqref{def:s}, are Riemann invariants associated with the first characteristic field.

Due to the lack of regularity of the rarefaction wave $U^r$ at the rarefaction fronts, we introduce the following smooth approximation of $U^r$ as in \cite{HMS2003}.

First, we define the smooth solution $w(t,x) = w(t,x;w_-,w_+)$ of the Burgers equation
\begin{eqnarray}\label{eq:SBurgers}
	\left\{
	\begin{array}{l}
		 w_{t}+ww_{x}=0,\\
	     w( 0,x )=w_0(x)=\left\{
		\begin{array}{l}
			 w_-,\qquad \qquad\qquad\qquad \qquad \,  x<
			0 ,\\
			  w_-+C_q \d_r \int^{ \e_r x }_0y^qe^{-y}\,dy, \quad x\geq 0 ,
		\end{array}
		\right.
	\end{array}
	\right.
\end{eqnarray}
where $\d_r=w_+-w_-$, $q$ is a constant satisfying $q\geq 8$ to be specified later, and $C_q$ is defined by $$C_q\int^{\infty}_0y^qe^{-y} dy=1 .$$
Here, $\e_r<1$ is a sufficiently small constant to be chosen later.

We now construct the smooth approximate 1-rarefaction wave $U^{AR}(t,x)$ corresponding to the 1-rarefaction wave $U^r(x/t)$ by 
\begin{equation}\label{def:APR}
    \begin{cases}
    &\l_1(v^{AR},\theta^{AR})(t,x)= w(1+t,x;\l_1((v_*,\th_*)),\l_1((v_m,\th_m))), \\
    &Z_1(v^{AR}(t,x),u^{AR}(t,x);s^r) = Z_1(v_*,u_*;s^r) = Z_1(v_m,u_m;s^r),\\
    &s(v^{AR}(t,x),\th^{AR}(t,x)) = s(v_*,\th_*) = s(v_m,\th_m) = s^r.
    \end{cases}
\end{equation}
Then, one can verify that the approximate 1-rarefaction wave $U^{AR}$ satisfies the Euler system \eqref{eq:E}. Moreover, $U^{AR}$ indeed approximates the self-similar rarefaction wave $U^r$ in the sense that (see Lemma~\ref{lem:rarefaction}):
\[
\sup_{x \in \mathbb{R}} \left|(v^{AR},u^{AR},\th^{AR})(t,x) - (v^r,u^r,\th^r)\left(\frac{x}{1+t} \right) \right| \to 0 \quad \text{as} \quad t \to \infty.
\]
We then define a approximate 1-rarefaction wave $U^R$ on the half-line $\Rp$ as the restriction of $U^{AR}$:
\begin{equation}\label{def:SR}
(v^R,u^R,\th^R)(t,\x) := (v^{AR},u^{AR},\th^{AR})(t,\x + \s_- t).
\end{equation}
$\bullet$ {\bf Viscous contact wave:} Next, we consider $U^* \in CD(U_m)$, where $CD(U_m)$ denotes the 2-contact discontinuity curve defined by 
\begin{equation}\label{def:CDc}
CD(U_m):= \big\{(v,u,\th)\,:\, u = u_m, \, \, p(v,\th) = p(v_m,\th_m) \big\}.
\end{equation}
Since the second characteristic field of the Euler system \eqref{eq:E} is linearly degenerate, the corresponding Riemann solution for the Euler system \eqref{eq:E} with the initial data 
\[
(v,u,\th)(0,x) = \begin{cases}
    (v_m,u_m,\th_m), \quad &\text{if } x<0,\\
    (v^*,u^*,\th^*), \quad &\text{if } x>0,
\end{cases}
\]
is given by a contact discontinuity $U^c$ defined by 
\[
(v^c,u^c,\th^c)(t,x) =  \begin{cases}
    (v_m,u_m,\th_m), \quad &\text{if } x<0,\\
    (v^*,u^*,\th^*), \quad &\text{if } x>0.
\end{cases} 
\]
In the viscous setting, for any $U^* \in CD(U_m)$, we define a viscous contact wave $U^{VC}(t,x)$ on $\mathbb{R}$ associated with the contact discontinuity $U^c$ by
\begin{equation}\label{eq:VCD}
	\begin{cases}
		v^{VC}\left(t,x\right) = \frac{R\Th^{sim}}{p^*},\\
		u^{VC} \left(t,x\right) = u^* + \frac{(\gamma - 1) \kappa \Th_x^{sim}}{R\gamma \Th^{sim}},\\
		\th^{VC}\left(t,x\right) = \Th^{sim},
	\end{cases}
\end{equation}
where $p^* := p(v^*, \th^*)$ and $\Th^{sim} = \Th^{sim}\left(\frac{x}{\sqrt{1+t}}\right)$ is the unique self-similar solution to the nonlinear diffusion equation:
\begin{equation*}
	\begin{cases}
		\Th_t = \frac{(\gamma-1)\kappa p^*}{R^2 \gamma}\left(\frac{\Th_x}{\Th}\right)_x, \quad t>0, x \in \mathbb{R},\\
		\Th(t,-\infty) = \th_m, \quad \Th(t,\infty) = \th^*.
	\end{cases}
\end{equation*}
We refer to \cite{HLM10} for the derivation of the viscous contact wave $U^{VC}$. We also remark that, for any $p \ge 1$ (see \eqref{est:contact}$_1$ and \eqref{est:contact}$_2$),
\[ 
\norm{(v^C,u^C, \th^C)(t,\cdot) - (v^c,u^c,\th^c)(t,\cdot)}_{L^p(\mathbb{R})} = O(1)\kappa^{\frac{1}{2p}}(1+t)^{\frac{1}{2p}}, \quad \forall t>0.
\]
Consequently, the viscous contact wave converges to the inviscid contact discontinuity in $L^p$ as $\kappa \to 0+$ on any finite time interval. However, the two profiles may not remain close for large time.

Then, we define a viscous contact wave $U^C$ on the half-line $\Rp$ as the restriction of $U^{VC}$:
\begin{equation}\label{def:VC}
(v^C, u^C, \th^C)(t,\xi) := (v^{VC},u^{VC},\th^{VC})(t,\x + \s_- t).    
\end{equation}

$\bullet$ {\bf Viscous 3-shock wave:} Finally, we consider $U_+ \in S_3(U^*)$, where $S_3(U^*)$ denotes the Hugoniot locus associated with the third characteristic field. More precisely, $S_3(U^*)$ consists of all states $U = (v,u,\th)$ for which there exists a shock speed $\sigma$ satisfying Rankine Hugoniot condition and Lax entropy condition
\[
\begin{aligned}
     &-\sigma (v - v^*)-(u - u^*)=0,\\
     &-\sigma(u - u^*)+(p(v,\th) - p(v^*,\th^*)) = 0,\\
     &-\sigma(E- E^*)+ (p(v,\th)u - p(v^*,\th^*)u^*)=0,
      \end{aligned}
       \qquad v^* > v, \quad \sigma = \sqrt{-\frac{p(v,\th) - p(v^*,\th^*)}{v-v^*}}.
\]
For any $U_+ \in S_3(U^*)$, a viscous 3-shock wave $U^{S}$, which is the viscous counterpart of the inviscid shock wave, is a traveling wave $U^{S}(\zeta) = U^{S}(x - \sigma t)$ of \eqref{eq:NSFL} satisfying 
\begin{equation}\label{eq:VS}
	\begin{cases}
		&-\s (v^S)' - (u^S)' = 0, \quad ' = \frac{d}{d\zeta},\\
		&- \s (u^S)' + p(v^S,\theta^S)' = \left(\mu \frac{(u^S)'}{v^S}\right)',\\
		&- \s (E^S)' + (p(v^S,\theta^S)u^S)' = \left(\kappa \frac{(\theta^S)'}{v^S}\right)' + \left(\mu \frac{u^S (u^S)'}{v^S}\right)', \\
        & (v^S, u^S, \th^S)(-\infty)=(v^*,u^*, \theta^*), \quad (v^S, u^S, \th^S)(+\infty)=(v_+,u_+, \theta_+),
	\end{cases}
\end{equation}
where $E^S:= \frac{R}{\gamma -1}\th^S + \frac{(u^S)^2}{2}$, and the shock speed $\sigma$ is given by 
\[
\sigma =  \sqrt{-\frac{p(v_+,\th_+) - p(v^*,\th^*)}{v_+-v^*}}.
\]

Suppose that the boundary state $U_- \in \Omega^+_{sub}$ and the far-field state $U_+$ are connected by a boundary layer curve, a 1-rarefaction curve, a contact discontinuity curve, and a 3-Hugoniot locus, i.e., there exist intermediate states $U_* \in \Gamma_{trans}^+$, $U_m$, and $U^*$ such that 
\[
U_- \in BL(U_*), \quad U_m \in R_1(U_*), \quad U^* \in CD(U_m), \quad U^* \in S_3(U_+).
\]
Then, the asymptotic state is expected to be a superposition of a boundary layer, a 1-rarefaction, a viscous contact wave and a viscous 3-shock wave. The goal of this paper is to establish the stability of this superposition.
\subsection{Main Result} Now, we state the main result for the inflow problem \eqref{eq:NSFL}.
\begin{theo}\label{thm:main}
   Suppose that $1<\gamma \leq 2$. For any $U_* \in \Gamma_{trans}^+$, fix a state $U_m \in \Omega_{super}^+$ with $U_m \in R_1(U_*)$. Then there exist positive constants $\d_0, \e_0$ such that the following holds.

   For any three states $U_-, U^*, U_+$ with $U_- \in BL(U_*), U^* \in CD(U_m)$, and $U_+ \in S_3(U^*)$ satisfying 
   \[
    |U_- - U_*| + |U_m - U^*| + |U^* - U_+| < \d_0,
   \]
   there exist a sufficiently large constant $\beta>0$ $($depending only on the shock strength $|U^* - U_+|)$ and a sufficiently small constant $0<\e_r<1$ $($depending on $\d_0$ and the rarefaction strength $|U_m - U_*|)$ such that the following holds.

   Let $U^R$ be the smooth rarefaction wave with the parameter $\e_r$ defined in \eqref{eq:SBurgers}. If any initial data $U_0$ satisfies
   \begin{equation}\label{inismall}
   \begin{aligned}
       &\|U_0-U^* - (U^R(0,\cdot)-U_m)\|_{L^2(0,\beta)} +  \|U_0 - U_+ - (U^R(0,\cdot) - U_m)\|_{L^2(\beta,\infty)}\\
   &\qquad + \|(U_0 - U^R(0,\cdot))_x\|_{L^2(\Rp)} < \e_0,
   \end{aligned}
   \end{equation}
   then the inflow problem \eqref{eq:NSFL} subject to the initial data $U_0$ admits a global-in-time solution $U(t,x)$. Moreover, there exists a Lipschitz continuous shift function $X(t)$ such that 
   \begin{equation}\label{GE}
   \begin{aligned}
    &(v-\vbar, u-\ubar, \th -\thbar) \in C(0,\infty;H^1(\s_- t, \infty)),\\
	&(u-\ubar)_{xx} \in L^2(0,\infty;L^2(\s_- t, \infty)), \quad (\th - \thbar)_{xx} \in L^2(0,\infty;L^2(\s_- t, \infty)),
   \end{aligned}
   \end{equation}
where $\Ubar(t,x)$ denotes the superposition of a boundary layer, a smooth 1-rarefaction, a contact wave and a viscous 3-shock wave:
\begin{equation}\label{def:Ubar}
    \Ubar(t,x) = U^{BL}(x-\s_- t) + U^R(t,x) + U^C(t,x) + U^S(x - \s t - X(t)-\beta) - U_* - U_m - U^*.
\end{equation}
Furthermore, the solution asymptotically converges to the superposition of a BL solution, an inviscid 1-rarefaction, a viscous contact wave, and a viscous 3-shock wave
\begin{equation}\label{Asym}
\begin{aligned}
    &\sup_{x \in (\s_-t,\infty)}\Big|U(t,x) - \big(U^{BL}(x-\s_- t) + U^r(t,x) + U^C(t,x) \\
    &\phantom{\sup_{x \in (\s_-t,\infty)}\Big|U(t,x) - \big(U^{BL}(x-\s_- t) }+ U^S(x- \s t - X(t) - \b) - U_* - U_m - U^*\big) \Big| \to 0,
\end{aligned}
\end{equation}
as $t \to \infty$, and the speed of the shift $X(t)$ converges to zero:
\begin{equation}\label{Xasym}
    |\dot{X}(t)| \to 0 \, \, \text{as } t \to \infty.
\end{equation}
\end{theo}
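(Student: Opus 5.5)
The proof follows the standard continuation scheme: local existence in the perturbation space \eqref{GE}, plus a uniform-in-time a priori estimate closed by smallness. I would work in the fixed-boundary variables $(t,\xi)$ of \eqref{eq:NSF} and set $\Ubar$ as in \eqref{def:Ubar}, written in $\xi$. The shift $X(t)$ is defined by the ODE used in the method of $a$-contraction with shifts:
\[
\dot X(t) = -\frac{M}{\delta_S}\Big(\int_{\Rp} a(t,\xi)\, \nabla\eta(\Ubar)\cdot\big(U-\Ubar\big)\, \partial_\xi U^S\,d\xi\Big), \qquad X(0)=0,
\]
with a suitably chosen weighted functional, where $\delta_S=|U^*-U_+|$. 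The weight $a=a(\xi-\sigma t-X(t)-\beta)$ is a monotone function of $p(v^S,\th^S)$, with $|a-1|\sim \lambda$ and $\delta_S\ll\lambda\ll 1$.

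The a priori assumption is
\[
\sup_{t\in[0,T]}\|(U-\Ubar)(t)\|_{H^1(\Rp)}\le \e_1 .
\]
Under this assumption I derive the weighted relative entropy estimate for
\[
\int a\,\eta(U|\Ubar)\,d\xi, \qquad \eta(U|\Ubar) = R\Phi(v/\vbar)+\frac{R}{\gamma-1}\Phi(\th/\thbar)+\frac{(u-\ubar)^2}{2\thbar}\cdot \ldots,
\]
where $\Phi(z)=z-1-\ln z$.

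The time derivative splits into several pieces:
\begin{enumerate}[(i)]
\item Shift and shock terms $\dot X\,Y - \mathcal{B}_S+\mathcal{G}_S$. These are handled by the sharp weighted Poincaré inequality on $[0,1]$ after the change of variable $y=(p^*-p(v^S,\th^S))/\delta_S$, exactly as in Kang--Vasseur--Wang for the NSF system. This yields good terms $\delta_S\int|\partial_\xi a|\,|p-\bar p|^2$ and $\delta_S|\dot X|^2$.
\item Rarefaction terms $\int \partial_\xi u^R\,(\cdot)$. These have a good sign because $\partial_\xi u^R>0$ and the wave is genuinely nonlinear. Since the rarefaction is large, the good term $\int \partial_\xi u^R |p-\bar p|^2$ must be kept with a constant independent of its strength. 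The weight $a$ is nearly $1$ and $\partial_\xi a$ is supported near the shock, so the cross terms are absorbed.
\item Boundary-layer terms. These involve $\partial_\xi U^{BL}$, which decays only algebraically, like $(1+\xi)^{-1}$ in the degenerate case. They are controlled by a Hardy-type inequality $\int \frac{|f|^2}{(1+\xi)^2}\le C\int|f_\xi|^2$ for $f(0)=0$, combined with smallness of $|U_--U_*|$ and the subsonic/transonic structure $\sigma_-=-u_*/v_*$. The outflow-sign boundary term at $\xi=0$ vanishes because $U=\Ubar$ there. Only diffusive boundary fluxes remain, and these are controlled through the trace and $H^2$ dissipation.
\item Contact-wave terms. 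These give errors $O(\delta_C)(1+t)^{-1}e^{-c\xi^2/(1+t)}|U-\Ubar|^2$, handled by the heat-kernel weighted estimate of Huang--Li--Matsumura: the weighted $\int|\cdot|^2 w$ is bounded by the dissipation plus $\delta_C$-terms.
\item Interaction errors, i.e.\ the source $\bar F$ in the equation for $\Ubar$. They are products of derivatives of distinct waves. The separation through $\beta$ large and $\lambda_1(U_m)>\sigma_-$ (from $U_m\in\Omega_{super}^+$), together with $\e_r$ small, makes them $L^1_t$-integrable with small norm. The parameter $\e_r$ kills the $O(\e_r)$ rarefaction-flux errors, and $\beta$ gives $e^{-c\delta_S(\beta+t)}$ decay of shock interactions.
\end{enumerate}

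Then I add the $H^1$ estimates by standard energy methods, weighted by $a$ only at zeroth order, obtaining
\[
\sup_t\|U-\Ubar\|_{H^1}^2+\int_0^\infty\big(\delta_S|\dot X|^2+G+D\big)\,dt \le C\e_0^2+C\,\delta_0^{1/2}.
\]
Here $G$ denotes the good terms and $D$ the dissipation. The term $C\delta_0^{1/2}$ is of size $\delta_C^{1/2}$ coming from the contact wave. This closes the a priori assumption for small $\e_0$ and $\delta_0$. Gronwall-free integrability of $\|(U-\Ubar)_\xi\|^2$ and its time derivative gives $\|(U-\Ubar)_\xi(t)\|\to0$, and hence \eqref{Asym} by Gagliardo--Nirenberg together with Lemma~\ref{lem:rarefaction}. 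Finally, $|\dot X|\le C\|U-\Ubar\|_{L^\infty}$ yields \eqref{Xasym}.

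The main obstacle is step (ii) combined with (i): running $a$-contraction when the rarefaction has large amplitude. The weight $a$ is a function of the shock variable, but the Poincaré argument needs the dominating quadratic form near the shock to have all coefficients evaluated at $U^*$ up to $O(\delta_0)$ errors. The large rarefaction changes $\Ubar$ by $O(1)$ away from the shock. I would handle this by localizing: on the support of $\partial_\xi U^S$, the rarefaction is exponentially flat in $\beta+t$, since the shock sits far to the right of $\lambda_1(U_m)t$. Outside that support, only the $a$-weighted rarefaction and diffusion terms appear, and their structure is independent of the rarefaction size. This requires the restriction $\gamma\le2$ in order to keep the rarefaction good term positive-definite after the parabolic terms (the energy-temperature coupling) are combined.
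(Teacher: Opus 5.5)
The main gap is at the boundary. You assert that the convective boundary term vanishes because $U=\overline{U}$ at $\xi=0$, and you then use a Hardy inequality for functions with zero trace. With $\overline{U}$ as in \eqref{def:Ubar} this is false. We have $U^{BL}(0)=U_-$ and $U^R(t,0)=U_*$, but
\[
(U-\overline{U})(t,0)=-\big(U^C(t,0)-U_m\big)-\big(U^S(-(\sigma-\sigma_-)t-X(t)-\beta)-U^*\big)\neq 0.
\]
Moreover, $-\sigma_-\, a\bar\theta\,\eta(U|\overline{U})|_{\xi=0}\ge 0$ because $\sigma_-<0$; it is an inflow term with the bad sign. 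Every diffusive boundary flux also carries a factor $\psi(t,0)$ or $\chi(t,0)$, so these cannot be handled by trace and $H^2$ dissipation alone.

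What is missing is a quantitative bound on this mismatch. The contact sits at $\xi=-\sigma_- t$, so $|U^C(t,0)-U_m|\le C\delta_C e^{-ct}$. Since $|\dot X|\le C\varepsilon$ keeps the shock receding from the wall, $|U^S(\cdot)-U^*|\le C\delta_S e^{-c\delta_S(t+\beta)}$. This is exactly why the theorem requires $\beta\gg\delta_S^{-1}$, and it is where the term $Ce^{-c\delta_S\beta}$ in \eqref{estsuppertur} comes from; you use $\beta$ only for wave interactions. With this bound:
\begin{itemize}
\item $\int_0^T|(\phi,\psi,\chi)(t,0)|^2\,dt\le C\delta_C^2+C\delta_S e^{-c\delta_S\beta}$;
\item the diffusive fluxes are controlled by $|\psi(t,0)|\,\|\psi_\xi\|_{L^\infty}\le C|\psi(t,0)|^{4/3}+C\varepsilon^2\|\psi_{\xi\xi}\|_{L^2}^2$;
\item in the $H^1$ estimates, the traces of $\phi_t,\psi_t,\chi_t$ are time derivatives of the mismatch, and $\phi_\xi(t,0)$ is recovered from the mass equation;
\item near-wall weighted terms must use $\psi(t,\xi)=\psi(t,0)+\int_0^\xi\psi_\xi$ rather than a zero-trace Hardy inequality.
\end{itemize}
The same largeness of $\beta$ keeps $y_0=y(t,0)<1/9$. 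The image of $\mathbb{R}_+$ under $\xi\mapsto y$ is $[y_0,1]$, not $[0,1]$, which is why the interval-independent constant of Lemma~\ref{lem:Poincare} is needed.

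Second, your zeroth-order treatment of the boundary layer would not close. The correct bounds are $|U^{BL}-U_*|\le\delta_{BL}(1+\delta_{BL}\xi)^{-1}$ and $|\partial_\xi U^{BL}|\le(\xi+\delta_{BL}^{-1})^{-2}$. For this weight the Hardy constant is $O(1)$ uniformly in $\delta_{BL}$, so smallness of $|U_--U_*|$ gains nothing. Moreover, the BL quadratic term contains $\phi^2$, and $\phi$ has no zeroth-order dissipation. Since $D_{v_1}$ is itself bounded at the $H^1$ level by $G^{BL}$, the argument would be circular. What is needed is the sign $u^{BL}_\xi>0$ from Lemma~\ref{lem:BL}. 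Then the BL contribution $\bar p\big(-\frac{\gamma+1}{2\bar v^2}\phi^2+\frac{1}{\bar v\bar\theta}\phi\chi-\frac{1}{2\bar\theta^2}\chi^2\big)u^{BL}_\xi$ is a good term, exactly as for the rarefaction.

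That quadratic form is negative definite for every $\gamma>1$, so positivity of the rarefaction good term is not where $\gamma\le2$ is needed. In the paper, $\gamma\le2$ enters only through Lemma~\ref{lem:Dmon}. Because the Poincar\'e step uses all of $[y_0,1]$, one needs $1/\bar v\ge 1/v^S-C\delta_0$ and $1/(\bar v\bar\theta)\ge 1/(v^S\theta^S)-C\delta_0$ on the whole half-line, including the large rarefaction. These follow from $v^R\le v_m$ and $v^R\theta^R=\frac{A}{R}(v^R)^{2-\gamma}e^{(\gamma-1)s^r/R}\le v_m\theta_m$.

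Your localization is a genuinely different route and looks viable once made precise; note that ``the support of $\partial_\xi U^S$'' is all of $\mathbb{R}_+$. One way to do it:
\begin{itemize}
\item discard the dissipation on $[0,\xi_1(t)]$, with $\xi_1$ midway between contact and shock, so that on $[\xi_1,\infty)$ the coefficients are within $C\delta_0+C\delta_R e^{-c\varepsilon_r(t+\beta)}$ of the shock's;
\item apply Lemma~\ref{lem:Poincare} on $[y(t,\xi_1),1]$;
\item pay $O(\varepsilon^2\delta_S e^{-c\delta_S(t+\beta)})$ for the piece $[y_0,y(t,\xi_1)]$.
\end{itemize}
Done this way, it would not need $\gamma\le2$ at all.

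Finally, small $\varepsilon_r$ hurts rather than helps the rarefaction--contact and rarefaction--shock interactions. Their tails decay only at rate $\varepsilon_r$ and contribute $C\delta_0(\delta_R/\varepsilon_r)^{4/3}\varepsilon_r^{-1}$ in \eqref{est:431}. This must be balanced against the $(\varepsilon_r\delta_R)^{1/9}$ rarefaction errors, e.g.\ $\varepsilon_r=\delta_0^{1/4}$ as in the paper.
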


\begin{rem}
\begin{enumerate}
\item Theorem~\ref{thm:main} requires the strengths of BL solution, viscous contact wave, and viscous shock wave to be small and bounded by the constant $\delta_0$, which does not depend on $\eps_0$ the smallness of the initial perturbation. However, the rarefaction strength need not be small. Unlike the barotropic Navier--Stokes system as in \cite{HKKKO25,MatBVP}, little is known about explicit formula of the boundary layer curve. So, it is natural to define the boundary layer curve starting from the given right state $U_*$ via the center-stable manifold.  

\item The assumption  $1<\gamma\leq 2$ is physically natural. For an ideal gas, the adiabatic exponent is determined by the number of effective molecular degrees of freedom; in particular, ordinary gases have at least three translational degrees of freedom, leading to  $\gamma\le 5/3< 2$. More generally, additional rotational or vibrational modes only decrease $\gamma$, so the range $1<\gamma\leq 2$ covers the physically relevant regimes.
Technically, the restriction $1<\gamma\leq 2$ is only required in the proof for the monotonicity \eqref{est:Dth1} of $v^R\theta^R$ that is used to get a sharp estimate for the diffusion term  despite the large strength of a rarefaction wave.

\item We first shift the viscous shock profile by a sufficiently large constant $\beta>0$, which is sufficiently larger than the reciprocal of shock strength. That is, we consider the initial location of shock front sufficiently far from the boundary by $O(\beta)$. This allows us to control the discrepancy between the prescribed boundary data and the boundary values of the viscous shock profile. This, in turn, guarantees the stability of the viscous shock  under sufficiently small perturbations.
  
    \item 
   The $L^2$-norm of the initial perturbation is measured relative to the shift $\beta$, since the viscous shock  is located far from the origin by $\beta$-scale. Moreover, since the strength of a rarefaction wave is  large, while the initial perturbation is small,  the approximate rarefaction $U^R$ is included in all the norms appearing in the initial perturbation.

   \item  Since the speed of shift $X(t)$ vanishes in the limit as $t \to \infty$, the shifted shock wave would time-asymptotically belong to one-parameter family of shock waves. However, we do not ensure the final destination of shift, since we do not guarantee the existence of $\displaystyle \lim_{t\rightarrow+\infty} X(t)$.

\end{enumerate}

\end{rem}

\subsection{Previous Results}
There have been extensive studies on the well-posedness of hyperbolic systems of conservation laws and viscous conservation laws. For scalar conservation laws, Bardos, le Roux, and N\'ed\'elec \cite{BRN79} extended the Kru{\v{z}}kov theory \cite{K1} to initial-boundary value problems by introducing an appropriate entropy boundary condition. We also refer to \cite{Otto96, Vasseur01} for further developments on boundary conditions and traces. For systems of conservation laws, Amadori \cite{AMA97} proved the existence of small BV entropy solutions, with the boundary condition determined by the whole-space Riemann problem, allowing for characteristic boundaries. Later, Amadori and Colombo \cite{AC97} constructed the Standard Riemann Semigroup for $2 \times 2$ systems, and Donadello and Marson \cite{DM07} subsequently extended this construction to $N \times N$ systems with non-characteristic boundaries. More recently, Ancona, Marson, and Spinolo \cite{AMS24} showed the existence of small BV solutions satisfying a boundary condition consistent with the vanishing-viscosity approximation, building on the boundary-layer analyses in \cite{BS09, BS20}.

We next turn to viscous conservation laws. In initial-boundary value problems, boundary effects may give rise to nontrivial stationary solutions as possible asymptotic states. We refer to \cite{HM21, KK06, KZ08,  Kim26a, Kim26, SZK21, UNK10} for studies of stationary solutions for the compressible Navier--Stokes system and related systems in one and several space dimensions. In the one-dimensional inflow setting, the associated boundary Riemann problem for the corresponding Euler system, as characterized in \cite{AMS24, BS09}, determines the asymptotic wave pattern of the Navier--Stokes system (see also \cite{MatBVP}). 
For the isentropic Navier--Stokes (NS) system, Matsumura and Nishihara \cite{MN01} established the stability of BL solutions, rarefaction waves, and their superpositions. Later, Fan, Liu, Wang, and Zhao \cite{FLWZ14} extended these stability results for boundary layer solutions (stationary solutions) and rarefaction waves to allow initial perturbations with large oscillations. 

For the NSF system, Huang, Li, and Shi \cite{HLS10} proved the stability of boundary layers, rarefaction waves, and their superpositions. Qin and Wang \cite{QW09} established the stability of superpositons consisting of a nondegnerate BL solution, rarefaction waves, and a viscous contact wave. Subsequently, Qin and Wang \cite{QW11} extended this result to the case of a degenerate BL solution.

Regarding viscous shock waves in the NS system, Huang, Matsumura, and Shi \cite{HMS03} proved the stability of a superposition of a boundary layer solution and a viscous shock by employing the antiderivative method and by introducing the modified variable. However, the stability of wave patterns including a viscous shock in the inflow problem for the NSF system has remained open. 
In contrast to the classical antiderivative method used in earlier works, the method of $a$-contraction with shifts provides an alternative framework for studying initial-boundary value problems for the compressible Navier--Stokes system (see, for example, \cite{HKKKO25, HKKL25, KOW}). In this paper, we employ the method of $a$-contraction with shifts to resolve the open problem on the long-time behavior toward an asymptotic pattern containing a shock wave. More importantly, our asymptotic pattern consists of a degenerate boundary layer solution, a rarefaction wave, a viscous contact wave, and a viscous shock. We emphasize that this is the most generic asymptotic pattern containing a shock.

\subsection{Idea of Proof}
The main part of the proof is devoted to deriving the $L^2$-estimates for perturbations. In the whole-space setting, the $L^2$-distance provides, as in \cite{KVW-NSF}, a unified framework for establishing the stability of small perturbations around a composite wave consisting of a rarefaction wave, a viscous contact wave, and a viscous shock wave. So, we employ the same $L^2$-framework for the initial-boundary value problem considered in the present paper. The key ingredient  of the $L^2$-framework is the method of $a$-contraction with shifts (in short, the $a$-contraction method), which provides a robust method for controlling perturbations around viscous shock waves. 
This method has been successfully applied to establish contraction properties for viscous shocks on the half-line in \cite{HKKKO25,HKKL25,KOW} (see also  \cite{EEK25,HKK23,HKK25,HL25,Kang19,Kang-V-1,KV21,KV-Inven,KV-2shock,KVW23,KVW-NSF} for the whole-space setting). \\

To derive the zeroth-order estimates, we employ a weighted relative entropy functional with a temporal shift to control the perturbation around the composite wave under the a priori assumption of smallness. The weight is chosen according to the viscous shock profile as defined in \eqref{def:a}, while the time evolution of the weighted relative entropy provides the fundamental dissipative structure established in Lemma \ref{lem:rel}. In addition, the temporal shift is defined as the average of a localized perturbation, which enables us to control the bad terms localized near the shock through the $a$-contraction method.

More precisely, the main difficulties in establishing the zeroth-order energy estimates are as follows.

As the core of the $a$-contraction method, we first maximize the relative entropy flux and then apply the change of variables $\xi\mapsto y$ defined by \eqref{def:y} to the leading-order terms associated with the modulation term (induced by the temporal shift), the diffusion terms, and the hyperbolic terms localized by the derivatives of the shock profile (or equivalently, the weight function). Under this transformation, these leading-order contributions are reduced to quantities that can be controlled by the Poincar\'e-type inequality stated in Lemma \ref{lem:Poincare}.\\
Unlike the previous works based on the $a$-contraction method, however, the main difficulty in the present problem arises from the fact that the rarefaction wave is not assumed to have small strength. In particular, this creates a serious obstacle when reducing the diffusion term
\[
D_{\theta_1}:= \k \int \frac{a}{v\th}|(\theta-\bar\theta)_\x|^2\,d\xi
\]
to the favorable form
\[
\int_{y_0}^1 (y-y_0)(1-y)|(\theta-\bar\theta)_y|^2\,dy
\]
which appears in Lemma \ref{lem:Poincare}, through the Jacobian estimates for $dy/d\xi$ established in Lemma \ref{lem:Jac}. Indeed, because the rarefaction wave may have large amplitude, the coefficient $1/(v\theta)$ is no longer uniformly close to $1/(v^S\theta^S)$. On the other hand, the Jacobian estimates depend only on the viscous shock profile and therefore do not directly capture the effect of the rarefaction wave.

To overcome this difficulty, we observe a delicate monotonicity property of the quantity $v^S\theta^S$, established in \eqref{est:Dth1}, which holds for $1<\gamma\le 2$. This property follows from the explicit representation of the entropy in \eqref{def:s}, together with the fact that the rarefaction wave preserves the entropy. As a consequence, the mismatch between the diffusion coefficient and the Jacobian can still be controlled, allowing us to recover the favorable diffusion structure required for the Poincar\'e-type inequality.

Second, another major difficulty arises in estimating the interactions among the wave components, since the boundary layer is degenerate and the rarefaction wave is allowed to have large strength.
In the present problem, we consider a degenerate boundary layer whose right state lies on the transonic curve. Unlike the non-degenerate case, such a boundary layer exhibits only algebraic decay toward the far field, as shown in Lemma \ref{lem:BL}. Consequently, its interactions with the other wave components decay much more slowly and therefore require a more delicate analysis.

A further difficulty stems from the large-amplitude rarefaction wave. To control its interactions with the other wave components, we introduce a smooth approximate rarefaction profile parameterized by a small parameter $\varepsilon_r$,  whose transition layer has thickness of order $\eps_r^{-1}$.  This construction ensures that the derivatives of the approximate rarefaction are sufficiently small. The parameter $\eps_r$ is then chosen appropriately so that all interaction terms involving the rarefaction wave remain sufficiently small. In this way, the approximation error of the smooth rarefaction profile and the nonlinear interaction terms can be balanced simultaneously within the stability analysis.

Third, a fundamental difficulty in the analysis of the initial-boundary value problem arises from the boundary terms generated by integration by parts. For the rarefaction wave constructed in \eqref{eq:R}, its boundary value coincides with its left state $U_*$, so this component produces no boundary mismatch. Consequently, the boundary discrepancies arise only from the viscous contact wave and the viscous shock.
For the viscous contact wave, the inflow structure implies that its boundary discrepancy decays exponentially in time, as shown in \eqref{est:CDbd}. In particular, it is bounded by the contact wave strength $\delta_C$. For the viscous shock, the a priori estimate for the temporal shift implies that $|\dot{X}(t)|$ remains sufficiently small, so that the propagation speed of the shifted shock stays close to the Rankine--Hugoniot speed. Combined with the exponential decay of the viscous shock profile, this yields a small boundary discrepancy. More precisely, the discrepancy can be bounded by $e^{-c\delta_S \beta}$, provided that the initial shock location $\beta$ is chosen sufficiently large compared to $\delta_S^{-1}$.

Finally, by applying an interpolation inequality, we show that all boundary terms can be controlled by these small boundary discrepancies together with the diffusion terms. Consequently, the boundary contributions can be absorbed into the energy estimates.

\subsection{Organization of the Paper}
The paper is organized as follows. In Section~\ref{Sec:Pre}, we provide the useful quantitative properties of the boundary layer solution, rarefaction wave, viscous contact wave, and viscous shock. In Section~\ref{Sec:Apr}, we state the local existence result and formulate the \textit{a priori} estimates. Section~\ref{Sec:Wint} is devoted to estimates of the interactions between the boundary layer and the elementary waves. In Section~\ref{Sec:L2} and Section~\ref{Sec:H1}, we establish the $L^2$- and $H^1$- energy estimates for the perturbation between the solutions to the inflow problem \eqref{eq:NSF} and the superposed wave profile $\Ubar$, thereby completing the proof of Proposition~\ref{prop:ap}.

\section{Preliminaries}\label{Sec:Pre}
\setcounter{equation}{0}
In this section, we provide several fundamental properties of the basic wave patterns, namely BL solutions, (approximate) rarefaction waves, viscous contact waves, and viscous shock waves. We also present a one-dimensional Poincar\'e-type inequality on any compact interval, which is a key ingredient in deriving the $L^2$ energy estimates localized by a viscous shock wave.

\subsection{Boundary Layer Solution}
For given $U_* \in \Gamma_{trans}^+$ and $U_- \in BL(U_*)$, \cite{WYY2025} showed the existence of a degenerate BL solution $U^{BL}$ connecting $U_-$ to $U_*$. The following lemma presents the quantitative properties of the degenerate BL solution.
\begin{lem}\label{lem:BL} \cite{QW09,WYY2025}
For given $U_* \in \Gamma_{trans}$ and $U_- \in BL(U_*)$, let $U^{BL}(\xi)$ be a BL solution connecting $U_-$ to $U_*$ with strength $\d_{BL} := |u_- - u_*|$. Then, the following holds:
\begin{enumerate}
	\item $v^{BL}_\x >0, \, \, u^{BL}_\x >0$, and $\th^{BL}_\x<0$.
	\item There exist constants $\delta_0>0$ and $C>0$ such that for any $0<\delta_{BL} \leq \delta_0$,
	\begin{equation}\label{est:BL}
		\left|\frac{d^n}{d\x^n}(v^{BL} - v_*, u^{BL} - u_*, \th^{BL} - \th_*)\right| \leq \frac{(\delta_{BL})^{n+1}}{(1+\delta_{BL}\x)^{n+1}}, \quad \x >0, \quad n=0,1,2,\cdots.
	\end{equation}
\end{enumerate}
\end{lem}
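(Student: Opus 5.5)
The plan is to reduce the stationary system \eqref{eq:BL} to a planar autonomous ODE and then run a center-manifold analysis at the degenerate equilibrium $U_*$.

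\textbf{Reduction to a planar system.} Integrate each equation of \eqref{eq:BL} once from $\xi$ to $+\infty$, using that $U^{BL}_\xi\to0$ at infinity. The first equation gives $-\sigma_- v^{BL}-u^{BL}=\mathrm{const}$; combined with $u_*/v_*=-\sigma_-$ this forces $u^{BL}=-\sigma_- v^{BL}$. The momentum and energy equations then become
\[
\mu\frac{u^{BL}_\xi}{v^{BL}}=F_1(u^{BL},\theta^{BL}),\qquad \kappa\frac{\theta^{BL}_\xi}{v^{BL}}=F_2(u^{BL},\theta^{BL}),
\]
where $F_1,F_2$ are explicit smooth functions vanishing at $(u_*,\theta_*)$.

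\textbf{Linearization and center manifold.} I will linearize the $2\times2$ system at $(u_*,\theta_*)$. The condition $U_*\in\Gamma^+_{trans}$, i.e. $u_*=c(\theta_*)$, is exactly what makes the Jacobian determinant vanish, so one eigenvalue is $0$. The other eigenvalue is nonzero (negative), and I expect this to follow from a trace computation. By center manifold theory, solutions tending to $U_*$ along the center direction lie on a one-dimensional invariant curve $\theta=h(u)$. Restricted to this curve the flow has the form
\[
u_\xi = a(u-u_*)^2+O(|u-u_*|^3).
\]
The coefficient $a\neq0$ should come from genuine nonlinearity of the first field, essentially $\partial_u$ of $u^2-c^2$ evaluated along the curve. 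The set $\Sigma$ of \cite{WYY2025} is this center(-stable) manifold. Its sign structure will give $u^{BL}_\xi>0$, approaching $u_*$ from below, and hence $v^{BL}_\xi=u^{BL}_\xi/(-\sigma_-)>0$. The sign $\theta^{BL}_\xi<0$ will follow from the sign of $h'(u_*)$, read off from the center eigenvector, which proves item (1).

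\textbf{Decay estimate.} Setting $w=u_*-u^{BL}>0$, the scalar ODE $w_\xi=-a w^2(1+O(w))$ with $w(0)=\delta_{BL}$ can be compared with its exact solution $\delta_{BL}/(1+a\delta_{BL}\xi)$. This gives $w\le C\delta_{BL}/(1+\delta_{BL}\xi)$, which is \eqref{est:BL} for $n=0$ for the $u$ component; the $v$ component follows from $u=-\sigma_- v$ and the $\theta$ component from $\theta-\theta_*=h(u)-h(u_*)$ with $h$ Lipschitz. For derivatives, differentiating the ODE repeatedly expresses $d^n w/d\xi^n$ as a polynomial in $w$ of lowest degree $n+1$, which gives $|w^{(n)}|\le C_n w^{n+1}\le C_n\delta_{BL}^{n+1}/(1+\delta_{BL}\xi)^{n+1}$. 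The constant can be absorbed into the stated form by shrinking $\delta_0$ and rescaling, or it is implicit as in the cited works.

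\textbf{Main obstacle.} The delicate point is verifying rigorously that $a\neq0$ with the correct sign, so that the center dynamics flow toward $U_*$ from the subsonic side. This also requires showing that an orbit starting at any $U_-\in BL(U_*)$ with small $\delta_{BL}$ stays on the center manifold, rather than escaping along the hyperbolic direction. I would first compute $a$ explicitly via the normal-form reduction at $u_*=c(\theta_*)$. For the existence and the invariance of the manifold I would rely on \cite{WYY2025,QW09}.
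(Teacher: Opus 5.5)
The paper gives no argument of its own for this lemma. It cites \cite{WYY2025} for (1) and \cite{QW09} for (2), and your route (planar reduction, center-manifold analysis, comparison with $\delta_{BL}/(1+a\delta_{BL}\xi)$) is the standard argument behind those references. However, the spectral picture your reduction rests on has the wrong sign. After integrating and using $u=-\sigma_- v$, the system is
\[
\mu u_\xi = u(u-u_*)+R\theta-\tfrac{R\theta_*}{u_*}\,u,\qquad
\kappa\theta_\xi = u\Big[\tfrac{R}{\gamma-1}(\theta-\theta_*)-\tfrac12(u-u_*)^2+\tfrac{R\theta_*}{u_*}(u-u_*)\Big].
\]
Its Jacobian at $(u_*,\theta_*)$ has determinant $\frac{R}{(\gamma-1)\mu\kappa}(u_*^2-\gamma R\theta_*)$, which vanishes on $\Gamma_{trans}$ as you say. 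Its trace is $\frac{u_*^2-R\theta_*}{\mu u_*}+\frac{Ru_*}{(\gamma-1)\kappa}=\frac{(\gamma-1)u_*}{\gamma\mu}+\frac{Ru_*}{(\gamma-1)\kappa}>0$, so the nonzero eigenvalue is \emph{positive}, not negative. This is not cosmetic. With a negative eigenvalue the center-stable manifold would be two-dimensional, and an open set of orbits would converge to $U_*$ without lying on any invariant curve $\theta=h(u)$. Your scalar reduction would then fail, and $BL(U_*)$ would not be a curve. With the correct sign, the center-stable manifold is the one-dimensional center manifold, and every orbit with $U^{BL}(\xi)\to U_*$ as $\xi\to+\infty$ lies on it; on the branch $u<u_*$ it is even unique. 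So the ``main obstacle'' you list, about the orbit escaping along the hyperbolic direction, is automatic rather than something to prove.

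Once corrected, the rest goes through with the signs you guessed. The center direction is $\theta-\theta_*\approx-\frac{(\gamma-1)u_*}{\gamma R}(u-u_*)$. The reduced flow is $u_\xi=a(u-u_*)^2+O(|u-u_*|^3)$ with $a=\frac{\gamma(\gamma+1)R}{2(\mu\gamma R+\kappa(\gamma-1)^2)}>0$. Hence the orbit approaches from $u<u_*$, with $u_\xi>0$, $v_\xi=u_\xi/|\sigma_-|>0$ and $\theta_\xi<0$. Your comparison argument and the induction $|w^{(n)}|\le C_n w^{n+1}$ then give (2).

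Two caveats remain. First, item (1) is asserted for every $U_-\in BL(U_*)$, but the local analysis only gives it for small $\delta_{BL}$. The large case needs a global phase-plane argument, which is what \cite{WYY2025} supplies; only the small case is used in this paper. Second, the constant cannot in general be absorbed by shrinking $\delta_0$. Already $|v_--v_*|=\delta_{BL}/|\sigma_-|$ at $\xi=0$, and the tail behaves like $1/(a\xi)$ independently of $\delta_{BL}$. For the model solution $\delta_{BL}/(1+a\delta_{BL}\xi)$, the $n$-th derivative also carries a factor $n!\,a^n$. So \eqref{est:BL} must be read with a constant $C_n$ depending on $n$ (the statement declares $C$ but drops it), and that is exactly what your argument produces.
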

\begin{proof}
 For the proofs of (1) and (2), we refer to \cite{WYY2025} and \cite{QW09}, respectively.
\end{proof}
\subsection{Approximate Rarefaction Wave}
In \eqref{def:SR}, we constructed the smooth rarefaction wave $(v^R, u^R, \theta^R)(t,\x)$, which approximates the rarefaction wave $(v^r,u^r,\th^r)$ connecting $(v_*, u_*, \th_*)$ to $(v_m, u_m, \th_m)$. Since the basic properties of $(v^R,u^R,\th^R)$ follow from those of the smooth solution $w$ of the Burgers equation in \eqref{eq:SBurgers}, we first provide the following estimates on $w$.

Throughout the paper, any rarefaction strength appearing in the estimates is replaced by its maximum with $1$ if necessary, and hence is understood to be bounded below by $1$.
\begin{lem} \cite{HMS2003}\label{lem:rarefaction1}
Suppose that $w_-<w_+$. Then, \eqref{eq:SBurgers} admits a unique smooth solution $w(t,x)$ satisfying the following properties:
\begin{enumerate}
    \item ~ $w_-\leq w(t,x)<w_+,~w_x(t,x) > 0 $.
    \item ~ For any $p$ $(1\leq p\leq \infty)$, there exists a constant $C_{pq}$ such that
\[
    \begin{aligned}
  	& \| w_x(t,\cdot)\|_{L^p(\mathbb{R})}\leq
	C_{pq}\min\big{\{}\delta_r\e_r^{1- 1/p},~
	\delta_r^{1/p}t^{-1+1/p}\big{\}}, \\
    & \|w_{xx}(t,\cdot)\|_{L^p(\mathbb{R})}\leq
	C_{pq}\min\big{\{}\delta_r\e_r^{2-1/p},~ \delta_r^{ 1/q}\varepsilon_r^{1-1/p + 1/q}t^{-1+1/q}\big{\}}.
    \end{aligned}
\]
\item ~ If $ x>w_+t $, then
\begin{align*}
    &|w(t,x)-w_+|\leq C\delta_r e^{-c\e_r\left|x-w_+t\right|},\\ 
   &|w_x(t,x)|\leq C \e_r \delta_r e^{-c\e_r\left|x-w_+t\right|}.
\end{align*}
\item  ~ As $t \to \infty$, we have 
\[
\sup\limits_{x\in\mathbb{R}} \left|w(t,x)-w^r\left(\frac{x}{t}\right)\right|\rightarrow 0,
\]
where $w^r(x/t)$ denotes the inviscid rarefaction wave of the Burgers equation given by
\[
w^r\left(\frac{x}{t}\right) = \begin{cases}
    w_-  \, \, &\text{if } x < w_- t,\\
    \frac{x}{t} \, \, &\text{if } w_- t < x < w_+ t,\\
    w_+ \, \, &\text{if } x > w_+ t.
\end{cases}
\]
\end{enumerate}
\end{lem}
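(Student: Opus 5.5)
The plan is to solve the Cauchy problem for Burgers' equation by characteristics and read every property off the explicit characteristic map. Since $w_0$ is $C^1$ and nondecreasing, with $w_0'(x) = C_q\d_r\e_r(\e_r x)^q e^{-\e_r x}\ge 0$ for $x\ge 0$ and $w_0'=0$ for $x<0$, the map $x_0\mapsto x = x_0 + w_0(x_0)t$ is strictly increasing and onto for every $t\ge 0$. Hence the smooth solution exists globally, is unique, and is given implicitly by $w(t,x)=w_0(x_0(t,x))$. Differentiating gives
\[
w_x = \frac{w_0'(x_0)}{1+t\,w_0'(x_0)},\qquad w_{xx}=\frac{w_0''(x_0)}{(1+t\,w_0'(x_0))^3}.
\]
Property (1) follows immediately, since $w_-\le w_0<w_+$ is preserved along characteristics. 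Strict positivity $w_x>0$ holds wherever $x_0>0$; where $x_0\le 0$ one has $w_x=0$. So the strict inequality should be read as holding on the support region $x>w_-t$, and I would note this caveat.

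For (2), the $L^\infty$ bounds are direct. First, $w_x\le \min\{\sup w_0',\,1/t\}\le \min\{C\d_r\e_r,\,1/t\}$. For $L^1$ we have $\|w_x\|_{L^1}=\d_r$ by monotonicity. Interpolating, $\|w_x\|_{L^p}^p\le \|w_x\|_{L^\infty}^{p-1}\|w_x\|_{L^1}$, which gives $\min\{\d_r\e_r^{1-1/p},\ \d_r^{1/p}t^{-1+1/p}\}$.

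For $w_{xx}$, I change variables $dx=(1+tw_0')\,dx_0$ to get
\[
\int |w_{xx}|^p\,dx=\int \frac{|w_0''|^p}{(1+tw_0')^{3p-1}}\,dx_0 .
\]
The first bound, $\d_r\e_r^{2-1/p}$, uses $(1+tw_0')\ge1$ together with $w_0''=O(\d_r\e_r^2)$ on a region of length $O(\e_r^{-1})$ with exponential tails. The second bound uses the structure $w_0''\sim \e_r (\e_r x_0)^{-1}w_0'$ near $x_0=0$, combined with the polynomial vanishing $w_0'\sim \d_r\e_r(\e_r x_0)^q$. Specifically, I would bound $|w_0''|\le C\e_r^{1-1/q}\d_r^{1/q}(w_0')^{1-1/q}$. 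This is the key pointwise inequality: it holds near $0$ because $(\e_rx_0)^{q-1}\lesssim (\e_r x_0)^{q(1-1/q)}$, and for large $x_0$ the exponential factors match. Plugging it in, estimating $(w_0')^{p(1-1/q)}/(1+tw_0')^{3p-1}\le t^{-p(1-1/q)}\cdot(1+tw_0')^{-1}$, and integrating over a region of measure $\lesssim \e_r^{-1}$ in $x_0$ gives the claimed $\d_r^{1/q}\e_r^{1-1/p+1/q}t^{-1+1/q}$ after bookkeeping. I expect this exponent bookkeeping, in particular checking that the $x_0$-integral produces exactly $\e_r^{-1/p}$ times the right power, to be the main obstacle. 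If the crude measure bound fails, I would split into $\{tw_0'\le1\}$ and $\{tw_0'>1\}$.

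For (3), if $x>w_+t$ then $x_0 = x - w_0(x_0)t > x-w_+t>0$. Hence $w_+-w=C_q\d_r\int_{\e_rx_0}^\infty y^qe^{-y}dy\le C\d_r e^{-c\e_r x_0}\le C\d_re^{-c\e_r(x-w_+t)}$, and $w_x\le w_0'(x_0)\le C\e_r\d_re^{-c\e_r x_0}$, which gives the same bound.

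For (4), for $\eta>0$ let $L$ be such that $w_0(x_0)\ge w_+-\eta$ for $x_0\ge L/\e_r$. Characteristics from $x_0\in[0,L/\e_r]$ satisfy $|x/t - w_0(x_0)|\le L/(\e_r t)$. A case analysis then compares $w$ with $w^r(x/t)$: for $x_0<0$ the solution is $w_-$, in the fan region the difference is $O(1/(\e_rt))$, and beyond the fan it is within $\eta$. This yields $\limsup_t\sup_x|w-w^r|\le \eta$, and letting $\eta\to0$ gives the limit.
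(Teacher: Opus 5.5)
Your arguments for (1), for the $w_x$ bounds in (2), for the first $w_{xx}$ bound, and for (4) are correct. Your proof of (3) is the paper's argument; the paper proves only (3) and cites \cite{HMS2003} for the rest. Your caveat is also correct: $w_x=0$ for $x<w_-t$, so $w_x>0$ holds only for $x>w_-t$.

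\textbf{The gap.} It is in the time-decaying bound $\|w_{xx}(t)\|_{L^p}\le C\delta_r^{1/q}\varepsilon_r^{1-1/p+1/q}t^{-1+1/q}$, and two things go wrong there.

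\emph{Wrong power of $\varepsilon_r$.} Write $y=\varepsilon_r x_0$. Then $|w_0''|\le C_q\delta_r\varepsilon_r^2(q+y)y^{q-1}e^{-y}$ and $(w_0')^{1-1/q}=(C_q\delta_r\varepsilon_r)^{1-1/q}y^{q-1}e^{-(1-1/q)y}$. Their ratio is $C\delta_r^{1/q}\varepsilon_r^{1+1/q}(q+y)e^{-y/q}$, so the correct factor is $\varepsilon_r^{1+1/q}$. Your factor $\varepsilon_r^{1-1/q}$ is valid only because $\varepsilon_r\le1$, and it loses $\varepsilon_r^{2/q}$. Even with a correct integration it would yield at best $\delta_r^{1/q}\varepsilon_r^{1-1/q-1/p}t^{-1+1/q}$, which is weaker than the claim.

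\emph{The integral diverges.} After you bound $(w_0')^{p(1-1/q)}(1+tw_0')^{1-3p}$ by $t^{-p(1-1/q)}(1+tw_0')^{-1}$, the integrand has no decay in $x_0$. Moreover, $w_0''$ is supported on all of $(0,\infty)$, not on a set of measure $O(\varepsilon_r^{-1})$. Since $(1+tw_0')^{-1}\to1$ as $x_0\to\infty$, the resulting integral is infinite. Your fallback split into $\{tw_0'\le 1\}$ and $\{tw_0'>1\}$ does not help, because the first set contains the whole tail.

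\textbf{The fix.} Keep the spare exponential factor you discarded when you said the exponentials ``match''. They do not: $w_0''$ decays like $e^{-y}$, while $(w_0')^{1-1/q}$ decays only like $e^{-(1-1/q)y}$. The sharp pointwise bound is
\[
|w_0''(x_0)|\le C\delta_r^{1/q}\varepsilon_r^{1+1/q}\,(w_0'(x_0))^{1-1/q}\,e^{-\varepsilon_r x_0/(2q)},\qquad x_0>0 .
\]
Insert this into your change-of-variables formula. Then use $(w_0')^{p(1-1/q)}(1+tw_0')^{1-3p}\le t^{-p(1-1/q)}$, which holds because $3p-1\ge p(1-1/q)$. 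This gives
\[
\int_{\mathbb{R}}|w_{xx}|^p\,dx\le C\delta_r^{p/q}\varepsilon_r^{p+p/q}t^{-p(1-1/q)}\int_0^\infty e^{-p\varepsilon_r x_0/(2q)}\,dx_0=C_{pq}\,\delta_r^{p/q}\varepsilon_r^{p+p/q-1}t^{-p+p/q}.
\]
Taking the $p$-th root gives exactly the claimed bound. The case $p=\infty$ follows from the same pointwise bound together with $(w_0')^{1-1/q}(1+tw_0')^{-3}\le t^{-1+1/q}$.
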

\begin{proof}
Since (1), (2), and (4) were proved in \cite{HMS2003}, For completeness, we provide a proof only for property (3). By the method of characteristics, the solution to the Burgers equation \eqref{eq:SBurgers} can be represented as
\begin{align*}
    w(t,x) = w\left(x_0(t,x)\right), 
\end{align*}
where $x_0(t,x)$ is determined by 
\[x=x_0(t,x) + w\left(x_0(t,x)\right) t. \]
It is easy to see that 
\begin{align}\label{eq:wx}
    \frac{\partial x_{0}(t,x)}{\partial x}= \frac{1}{1+w^{\prime}_0(x_0)t}, \quad \text{and} \quad     w_x=\frac{w_0^{\prime}(x_0)}{1+w_0^{\prime}(x_0)t}.
\end{align}
It follows from $\eqref{eq:SBurgers}_2$ that
\begin{align}\label{eq:w0x}
	w_0^{\prime}(x)=
	\begin{cases}
		0, & x<0, \\
		C_q\delta_r\e_r(\e_rx)^qe^{-\e_rx}, & x>0.
	\end{cases}
\end{align}
Combining \eqref{eq:wx} and \eqref{eq:w0x}, we obtain the second estimate in property (3).

For $x\geq 0$, it follows from $\eqref{eq:SBurgers}_2$ that 
\begin{align*}
    \begin{aligned}
        &|w(t,x) -w_+| = |w\left(x_0(t,x)\right)-w_+| =C_q(w_+-w_-)\int^{\infty}_{ \e_r x_0(t,x)}y^qe^{-y}\,dy \\
        &\quad = C_q\d_r\int^{\infty}_{ \e_r \big(x-w\left(x_0(t,x)\right)t\big)}y^qe^{-y}\,dy  \leq  C_q\d_r\int^{\infty}_{ \e_r \left(x-w_+t\right)}y^qe^{-y}\,dy  \leq  C_q\d_re^{-c\e_r \left(x-w_+t\right)},
    \end{aligned}
\end{align*}
which completes the proof.
\end{proof}
We now present the properties of approximate rarefaction wave $U^R$ defined in \eqref{def:APR}. It follows from \eqref{def:APR}$_1$ that 
\[
|U_{\x\x}^R| \leq C(|w_\x|^2 + |w_{\x\x}|)
\]
for some positive constant $C>0$. This estimate, together with Lemma~\ref{lem:rarefaction1}, yields the following Lemma. 
\begin{lem}\label{lem:rarefaction}\cite{QW09}
For any $U_m \in R_1(U_*)$, let $U^R(t,\x)$ be the approximate 1-rarefaction connecting $U_*$ and $U_m$, defined in \eqref{def:SR} with a parameter $0<\e_r \leq 1$. We denote its strength by $\delta_R := |u_* - u_m|\sim|v_* - v_m| \sim |\th_* - \th_m|$. 
Then, $U^R(t,\x)$ satisfies
\begin{equation}\label{eq:R}
	\begin{cases}
	&v^R_{t}-\s_-v^R_{\xi}-u^R _{\xi}=0, \quad t>0, \, \, \x>0,\\
	&u^R_{t}-\s_-u^R_{\xi}+p^R_{\xi}= 0, \\
	&\frac{R}{\g-1}(\theta^R _{t}-\s_-\theta^R_{\xi})+p^Ru^R_{\xi}=0,
    \end{cases}
\end{equation}
where $p^R:=p(v^R, \theta^R)$, and
\[
		 (v^R, u^R, \theta^R)(t,0)=(v_*, u_*, \th_*),\quad (v^R, u^R,
		\theta^R)(t,+\infty)=(v_m, u_m, \th_m).
\]
Moreover, the following estimates hold:
\begin{enumerate}
    \item $v^R_{\x}> 0, \, \,  u^R_{\x}> 0,\, \, \theta^R_{\x}< 0$, for any $t>0$ and $\x > 0$.
     \item  For any $p$  $(1\leq p\leq \infty)$,  there exists a constant
$C_{pq}$ such that for any $t\geq 0$,
\begin{align}\label{est:rare1}
	\begin{aligned}
	& \|(v^R_{\x}, u^R_{\x}, \theta^R_{\x})(t)\|_{L^p}\leq
	C_{pq}\min\big{\{}\delta_R \e_r^{1- 1/p },~ (\delta_R)^{1/p
	}(1+t)^{-1+1/p}\big{\}},  \\
	&\|(v^R_{\x\x},u^R_{\x\x},
	\theta^R_{\x\x})(t)\|_{L^p}\leq C_{pq}\min\big{\{}  \delta_R
	\e_r^{2-1/p},~ \big{(} (\delta_R)^{1/p }+(\delta_R)^{
		1/q}\big{)}(1+t)^{-1+ 1 /q }\big{\}},
	\end{aligned}
\end{align}

\item  If $ \x + \s_- t\geq  \l_1(v_m,
\th_m)(1+t) $, then 
\begin{align*}
\begin{aligned}
& \left|(v^R, u^R, \theta^R)(t,\x)-(v_m, u_m,
\th_m)\right| \leq C\delta_R e^{-c\e_r\left|\left(\xi+\sigma_-t - \l_1(v_m,
	\th_m)(1+t) \right)\right|},  \\
&	\left|(v^R_\xi,u^R_\xi, \theta^R_\xi)\right| \leq C\e_r\delta_R e^{-c\e_r\left|\left(\xi+\sigma_-t - \l_1(v_m,
	\th_m)(1+t) \right)\right|}.
\end{aligned}
\end{align*}
\item $ \sup\limits_{\x\in\mathbb{R}_+}\big{|}(v^R, u^R, \theta^R)( t, \x)-(v^r,u^r,\th^r)
\big{(}\frac{\x}{1+t}\big{)}\big{|}\rightarrow 0$, as $t\rightarrow
\infty$.
\end{enumerate}
\end{lem}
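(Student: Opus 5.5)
The plan is to reduce every assertion of Lemma~\ref{lem:rarefaction} to the corresponding property of the Burgers solution $w$ in Lemma~\ref{lem:rarefaction1}, using the definition \eqref{def:APR} and the change of frame \eqref{def:SR}. Since $s^r$ is constant, the map $\lambda_1\mapsto (v,u,\theta)$ along the integral curve is explicit. With $\lambda_1=-\sqrt{\gamma p/v}$ and $p=p(v,s^r)=Cv^{-\gamma}$, we have $\lambda_1=-\sqrt{\gamma C}\,v^{-(\gamma+1)/2}$, which is strictly increasing in $v$. Hence $v^{AR}=F(w(1+t,x))$ for a smooth function $F$ with $F'>0$ on the compact range $[\lambda_1(v_*,\theta_*),\lambda_1(v_m,\theta_m)]$. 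In the same way, $u^{AR}=G(v^{AR})$ with $G'=-\lambda_1>0$, and $\theta^{AR}=H(v^{AR})$ with $H(v)=c\,v^{1-\gamma}$, so $H'<0$.

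\textbf{Equations and boundary values.} I would first verify that $U^{AR}$ solves \eqref{eq:E}. Because $Z_1$ and $s$ are constant, the system reduces to the scalar equation $v_t+\lambda_1(v)v_x=0$, which is exactly Burgers' equation satisfied by $\lambda_1(v^{AR})=w$. Constancy of $s$ turns the energy equation into $\frac{R}{\gamma-1}\theta_t+p u_x=0$. Substituting $x=\xi+\sigma_-t$ turns $\partial_t$ into $\partial_t-\sigma_-\partial_\xi$, which gives \eqref{eq:R}.

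For the boundary values, note that $\sigma_-=\lambda_1(U_*)$. This holds because $U_*\in\Gamma^+_{trans}$ gives $u_*=c(\theta_*)$, so $\sigma_-=-u_*/v_*=-\sqrt{\gamma R\theta_*}/v_*=\lambda_1(v_*,\theta_*)$. The characteristics of $w$ issued from $x<0$ carry the value $w_-$ and travel with speed $w_-$. Consequently $w(1+t,\sigma_-t)=w_-$, and in fact $w=w_-$ for all $x\le w_-(1+t)$, which covers $\xi=0$. This yields $U^R(t,0)=U_*$, while the far-field value $U_m$ follows from $w\to w_+$.

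\textbf{Estimates.} Item (1) follows from $w_x>0$ (Lemma~\ref{lem:rarefaction1}(1)) together with the chain rule and the signs of $F'$, $G'$, $H'$.

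For item (2), bounded smooth compositions on a compact range give $|U^R_\xi|\le C|w_x|$ and $|U^R_{\xi\xi}|\le C(|w_x|^2+|w_{xx}|)$, evaluated at time $1+t$. The first bound combined with Lemma~\ref{lem:rarefaction1}(2) gives the first line of \eqref{est:rare1}; replacing $t$ by $1+t$ only improves the time factor. For the second line, I would bound $\||w_x|^2\|_{L^p}\le\|w_x\|_{L^\infty}\|w_x\|_{L^p}$. This is at most $C\delta_R\varepsilon_r\cdot\delta_R\varepsilon_r^{1-1/p}$ and also at most $C\delta_R^{1/p}(1+t)^{-2+1/p}$. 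Since $\delta_R$ is normalized to be at least $1$ and bounded, constants may absorb powers of $\delta_R$. Adding the $w_{xx}$ bound from Lemma~\ref{lem:rarefaction1}(2) then gives the stated minimum.

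The main point requiring care is matching exponents in this step: the minimum of sums has to be compared to the sum of minima, and $\delta_R^2\varepsilon_r^{2-1/p}$ must be reduced to $C\delta_R\varepsilon_r^{2-1/p}$. This reduction uses that $\delta_R$ is a fixed bounded quantity, with the dependence absorbed into $C_{pq}$.

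Item (3) follows from Lemma~\ref{lem:rarefaction1}(3) applied at time $1+t$ and point $x=\xi+\sigma_-t$, using the Lipschitz continuity of $F$, $G$, $H$. Item (4) follows from Lemma~\ref{lem:rarefaction1}(4): the inviscid wave $U^r$ is the same composition applied to $w^r$. Uniform continuity of the composition and the comparison $w^r(x/(1+t))$ versus $w^r(x/t)$, which differ by $O(1/t)$ uniformly in $x$, give the conclusion.
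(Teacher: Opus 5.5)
Your reduction to the Burgers solution, via the explicit maps $v=F(\lambda_1)$, $u=G(v)$, $\theta=H(v)$ along the isentrope, is the same route the paper takes. The paper only records $|U^R_{\xi\xi}|\le C(|w_\xi|^2+|w_{\xi\xi}|)$ and cites \cite{QW09}. Your derivation of \eqref{eq:R}, the signs in (1), the $L^p$ bounds in (2) (with constants depending on $\delta_R$ through $U_*,U_m$), and items (3)--(4) are fine.

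The step that fails is the boundary value. You correctly find $\sigma_-=\lambda_1(v_*,\theta_*)=w_-<0$. But then $\xi=0$ corresponds to $x=\sigma_- t=w_-t$, and
\[
w_-t-w_-(1+t)=-w_-=|\sigma_-|>0 .
\]
So the boundary lies a fixed distance $|\sigma_-|$ to the \emph{right} of the left edge $x=w_-(1+t)$ of the fan, and the set $\{x\le w_-(1+t)\}$ does not contain it. In the $\xi$-frame the left edge is frozen at $\xi=\sigma_-<0$, outside the domain.

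The characteristic through $(1+t,\sigma_-t)$ therefore issues from some $x_0\in(0,|\sigma_-|)$, where $w_0$ is strictly increasing. Hence $w(1+t,\sigma_-t)=w_0(x_0)>w_-$, and $v^R(t,0)>v_*$ strictly. With \eqref{def:APR}--\eqref{def:SR} taken literally, $U^R(t,0)=U_*$ is false and cannot be proved. One only has
\[
|U^R(t,0)-U_*|\le C\min\bigl\{\delta_R(\varepsilon_r|\sigma_-|)^{q+1},\,|\sigma_-|(1+t)^{-1}\bigr\}.
\]

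There are two ways to repair this.
\begin{itemize}
\item Translate the construction so that the fan's left edge sits on $\xi=0$, e.g.\ $\lambda_1(v^{AR},\theta^{AR})(t,x)=w(1+t,x+\sigma_-)$, or use $w(t,x)$. Then $w\equiv w_-$ exactly for $\xi\le 0$ and $w_x>0$ for $\xi>0$. Items (1)--(4) persist, up to a harmless constant shift in (3).
\item Keep the construction and carry the mismatch into the boundary terms. Its square and $4/3$-power are integrable in time and small for small $\varepsilon_r$. This matters because the paper relies on $U^R(t,0)=U_*$ in Lemmas~\ref{lem:win} and~\ref{lem:bd}.
\end{itemize}
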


\subsection{Viscous Contact Wave}
Next, we consider $U^* \in CD(U_m)$, where $CD(U_m)$ is the 2-contact discontinuity curve defined in \eqref{def:CDc}. Let $U^{C}$ be the viscous contact wave connecting $U^*$ and $U_m$, defined in \eqref{def:VC}. 
Then, $(v^C, u^C, \th^C)(t,\xi)$ satisfies
\begin{equation}\label{eq:contact}
	\begin{cases}
		&v^C_t - \s_- v^C_\x - u^C_\x = 0, \quad t>0, \quad \x >0, \\
		& u^C_t - \s_- u^C_\x + p(v^C,\theta^C)_\x = \left(\mu \frac{u^C_\x}{v^C}\right)_\x + Q_1^C,\\
		& \frac{R}{\gamma-1}\big(\th^C_t - \s_- \th_\x^C\big) +   p(v^C,\theta^C) u_\x^C = \left(\kappa \frac{\theta^C_\x}{v^C}\right)_\x + \left(\mu \frac{(u^C_\x)^2}{v^C}\right) + Q_2^C,
	\end{cases}
\end{equation}
where $Q_1^C$ and $Q_2^C$ are defined by
\begin{equation}\label{def:QC}
		Q_1^C := \big(u_t^C - \s_- u_\x^C\big) - \left(\mu \frac{u^C_\x}{v^C}\right)_\x, \quad Q_2^C := -\left(\mu \frac{(u^C_\x)^2}{v^C}\right) .
\end{equation}
The following Lemma provides the basic properties of the viscous contact wave.
\begin{lem}\label{lem:CD}\cite{HXY08}
For any $U^* \in CD(U_m)$, let $U^{C}(t,\x)$ be the viscous contact wave connecting $U^*$ and $U_m$ defined in \eqref{def:VC}, with strength $\d_C := |v^* - v_m| \sim |\th^* - \th_m|$. Then, we have the following:
\begin{enumerate}
    \item $v^C$ and $\th^C$ are monotone. More precisely, we have 
    \[
    v^C_\x >0, \, \, \text{and } \, \th^C_\x >0, \,\, \text{or } \,  v^C_\x< 0, \, \, \text{and } \, \th^C_\x<0.
    \]
    \item There exists a constant $C_1>0$ such that the following estimates hold:
\begin{equation}\label{est:contact}
	\begin{aligned}
		&(v^{C} - v_m, u^{C} - u_m, \th^{C} - \th_m) = O(1)\d_C e^{-\frac{C_1 (\x + \s_- t)^2}{1+t}}, \quad \forall \,0 < \x < -\s_- t,\\
		&(v^{C} - v^*, u^{C} - u^*, \th^{C} - \th^*) = O(1)\d_C e^{-\frac{C_1 (\x + \s_- t)^2}{1+t}}, \quad \forall \, \x > -\s_- t,\\
		&(\rd_\x^n v^{C}, \rd_\x^n \th^{C})(t,\x) = O(1) \d_C (1+t)^{-\frac{n}{2}}e^{-\frac{C_1 (\x + \s_- t)^2}{1+t}}, \quad \forall (t,\x) \in \mathbb{R}_+ \times \Rp, \quad n=1,2,\cdots \\
		&\rd_\x^n u^{C}(t,\x) = O(1)\d_C (1+t)^{-\frac{1+n}{2}}e^{-\frac{C_1 (\x + \s_- t)^2}{1+t}} \quad \forall \,(t,\x) \in \mathbb{R}_+ \times \Rp, \quad n=1,2,\cdots.
	\end{aligned}
\end{equation}
\end{enumerate}
\end{lem}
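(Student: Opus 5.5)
The plan is to derive Lemma~\ref{lem:CD} from the explicit structure \eqref{eq:VCD} of the whole-line viscous contact wave, using the self-similar profile $\Th^{sim}(\eta)$ with $\eta = x/\sqrt{1+t}$. We then restrict to the half-line through $x = \x + \s_- t$.

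First, write the diffusion equation in self-similar form. Setting $\Th(t,x)=\Th^{sim}(\eta)$, the equation becomes the ODE
\[
-\tfrac{\eta}{2}(\Th^{sim})' = a\Big(\tfrac{(\Th^{sim})'}{\Th^{sim}}\Big)', \qquad a:=\tfrac{(\gamma-1)\kappa p^*}{R^2\gamma},
\]
with boundary values $\th_m$ and $\th^*$ at $\mp\infty$. Put $g := (\Th^{sim})'/\Th^{sim}$. Then $g$ solves the linear first-order equation $a g' = -\tfrac{\eta}{2}\Th^{sim} g$, so
\[
g(\eta)=g(0)\exp\Big(-\tfrac{1}{2a}\int_0^\eta s\,\Th^{sim}(s)\,ds\Big).
\]
This has two consequences.
\begin{enumerate}
\item $g$ never changes sign, so $\Th^{sim}$ is monotone. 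Its direction is $\operatorname{sign}(\th^*-\th_m)$.
\item Since $\Th^{sim}$ lies between $\th_m$ and $\th^*$, both of which are bounded away from zero, we get $|g(\eta)|\le |g(0)|e^{-C\eta^2}$.
\end{enumerate}
Because $v^{VC}=R\Th^{sim}/p^*$ is a positive multiple of $\th^{VC}$, item (1) of the lemma follows. Existence and uniqueness of $\Th^{sim}$ is taken from the standard shooting argument, as in \cite{HLM10}.

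Next, fix the size of $g(0)$ and obtain the pointwise bounds. Integrating $g$ over $\mathbb{R}$ gives $\ln(\th^*/\th_m)=\int g$, so $|g(0)|\sim\d_C$. Hence $|(\Th^{sim})'(\eta)|\le C\d_C e^{-C_1\eta^2}$. Integrating this from $\pm\infty$ yields $|\Th^{sim}-\th_m|\le C\d_C e^{-C_1\eta^2}$ for $\eta<0$, and the analogous bound against $\th^*$ for $\eta>0$. The same bounds hold for $v^{VC}$ by proportionality.

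For $u$, note that $u^{VC}-u^* = \tfrac{(\gamma-1)\kappa}{R\gamma}\tfrac{g(\eta)}{\sqrt{1+t}}$, which is $O(\d_C)e^{-C_1\eta^2}$. Also $u_m=u^*$, since $U^*\in CD(U_m)$, so the first two lines of \eqref{est:contact} follow. Substituting $\eta=(\x+\s_- t)/\sqrt{1+t}$ turns $e^{-C_1\eta^2}$ into $e^{-C_1(\x+\s_-t)^2/(1+t)}$. The region $0<\x<-\s_- t$ corresponds exactly to $\eta<0$, because $\s_-<0$.

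For the derivative bounds, differentiate the ODE repeatedly. By induction, $\Th^{sim,(n)} = P_n(\eta,\Th^{sim},g)\,g$ for a polynomial $P_n$. With $\d_C\le\d_0\le 1$, each factor of $\eta$ is absorbed by slightly lowering $C_1$, which gives $|\Th^{sim,(n)}|\le C_n\d_C e^{-C_1\eta^2}$. Each $\partial_\x$ brings a factor $(1+t)^{-1/2}$, which yields the third line of \eqref{est:contact}. Since $u^{VC}$ carries an extra factor $(1+t)^{-1/2}g$, the fourth line follows as well.

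The main obstacle I expect is the uniformity of constants. The argument needs $|g(0)|\sim\d_C$ with constants independent of $\d_C$, and it needs the Gaussian rate $C_1$ chosen uniformly in $n$ after absorbing the polynomial factors in $\eta$. I would handle both through the explicit integral representation of $g$, with the lower and upper bounds on $\Th^{sim}$ fixed by $\th_m$ and $\th^*$.
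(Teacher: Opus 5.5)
Your proof is correct. The paper gives no argument for this lemma; it imports it from \cite{HXY08}. So your write-up is a self-contained version of the standard derivation behind that citation rather than a competing route.

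Your key step is the first integral
$g(\eta)=g(0)\exp\bigl(-\frac{1}{2a}\int_0^\eta s\,\Theta^{sim}(s)\,ds\bigr)$ for $g=(\ln\Theta^{sim})'$, where $\eta=x/\sqrt{1+t}$ and $a=(\gamma-1)\kappa p^*/(R^2\gamma)$ is your diffusion constant. It gives, in this order:
\begin{enumerate}
\item monotonicity, so $\Theta^{sim}$ lies between $\theta_m$ and $\theta^*$ with no circularity;
\item Gaussian decay of $g$;
\item $|g(0)|\le C|\ln(\theta^*/\theta_m)|\le C\delta_C$, from $\int_{\mathbb{R}}g\,d\eta=\ln(\theta^*/\theta_m)$.
\end{enumerate}
Everything else is the chain rule with $\partial_\xi\eta=(1+t)^{-1/2}$, together with $u_m=u^*$.

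The uniformity issue you flag is settled as you suggest. Set $c_0:=\min\{\theta_m,\theta^*\}/(4a)$, so that $|g(\eta)|\le|g(0)|e^{-c_0\eta^2}$. Fixing $C_1:=c_0/2$ once and for all absorbs every polynomial factor $|\eta|^k$, so only the $O(1)$ constants depend on $n$.

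Since $\theta_m$ and $p^*=p(v_m,\theta_m)$ are fixed and $|\theta^*-\theta_m|<\delta_0$, these constants are independent of $\delta_C$. That is exactly what the paper tacitly needs when $\delta_C$ ranges over $(0,\delta_0)$. The only outside input left in your version is existence and uniqueness of $\Theta^{sim}$, which the paper also takes for granted in \eqref{eq:VCD}.
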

By Lemma \ref{lem:CD}, the error terms $Q_1^C$ and $Q_2^C$ in \eqref{def:QC} can be estimated as follows:
\begin{equation}\label{est:QC}
	\begin{aligned}
		Q_1^C = O(1)\d_C(1+t)^{-\frac{3}{2}}e^{-\frac{2C_1 (\x + \s_- t)^2}{1+t}}, \quad 
		Q_2^C = O(1)\d_C (1+t)^{-2}e^{-\frac{2C_1 (\x + \s_- t)^2}{1+t}}.
	\end{aligned}
\end{equation}

\subsection{Viscous Shock Wave}
Finally, we consider a state $U_+ \in S_3(U^*)$, i.e., $U^*$ and $U_+$ satisfy the Rankine-Hugoniot conditions:
\begin{equation}\label{eq:RH}
	\begin{cases}
		&-\s (v_+ - v^*) - (u_+ - u^*) = 0,\\
		&- \s (u_+ - u^*) + p(v_+,\theta_+) - p(v^*,\th^*) = 0,\\
		&- \s (E_+ - E^*) + \big(p(v_+,\theta_+)u_+ -  p(v^*,\th^*) u^*\big) = 0,
	\end{cases}
\end{equation}
and the entropy condition $v^* < v_+, u^* > u_+$, and $\th^* > \th_+$. It is well-known that there exists a viscous 3-shock profile $U^S(\zeta)= U^S(x-\sigma t) = U^S(\x - (\s - \s_-)t)$ satisfying \eqref{eq:VS}, provided that the shock strength is sufficiently small. Then, the following lemma provides the quantitative estimates for viscous weak shock waves.
\begin{lem}\label{lem:VS} \cite{KVW-NSF}
	For any fixed state $U^*$, there exists a positive constant $\d_0$ such that the following holds. For any end state $U_+$ such that $U_+ \in S_3(U^*)$ with $|U_+ - U^*| \leq \d_0$, there exists a unique solution $U^{S}$ to \eqref{eq:VS} with $U^{S}(0) = \frac{U^* + U_+}{2}$. Let $\d_S$ be the shock strength defined by $\d_S := |u_+ - u^*| \sim |v_+ - v^*| \sim |\th - \th^*|$. Then, there exists a positive constant $C>0$ such that the following estimates hold:
	\begin{equation}\label{est:shock}
		\begin{aligned}
			&v_\zeta^S >0, \quad u_\zeta^S <0, \quad \th_\zeta^S <0, \quad \forall \zeta \in \mathbb{R},\\
			&|(v^S(\zeta) - v^*, u^S(\zeta) - u^*, \th^S(\zeta) - \th^*)| \leq C\d_S e^{-C\d_S |\zeta|}, \quad \zeta < 0,\\
			&|(v^S(\zeta) - v_+, u^S(\zeta) - u_+, \th^S(\zeta) - \th_+)| \leq C\d_S e^{-C\d_S |\zeta|}, \quad \zeta > 0,\\
			&|(v_\zeta^S, u_\zeta^S, \th_\zeta^S)| \leq C\d_S^2 e^{-C\d_S |\zeta|}, \quad \forall \zeta \in \mathbb{R},\\
			&|(v_{\zeta \zeta}^S, u_{\zeta \zeta}^S, \th_{\zeta \zeta}^S)| \leq C\d_S |(v_\zeta^S, u_\zeta^S, \th_\zeta^S)|.
		\end{aligned}
	\end{equation}
In particular, $|v_\zeta^S| \sim |u_\zeta^S| \sim |\th_\zeta^S|$ , and for all $\zeta \in \mathbb{R}$,
\begin{equation}\label{est:shock2}
\begin{aligned}
\left|v^S_\zeta +\frac{1}{\s^*} u^S_\zeta\right| \leq C\d_S |u^S_\zeta|, \quad 
\text{and} \quad \left|\th^S_\zeta - \frac{(\gamma -1) p^*}{R\s^*}u^S_\zeta\right| \leq C\delta_S |u^S_\zeta|,
\end{aligned}
\end{equation}
where
\[
p^* := p(v^*,\th^*), \quad \s^* := \sqrt{\frac{\gamma p^*}{v^*}} = \frac{\sqrt{\gamma R \th^*}}{v^*},
\]
which satisfies
\begin{align*}
|\s - \s^*| \leq C\d_S.
\end{align*}
\end{lem}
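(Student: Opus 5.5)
The argument reduces the traveling-wave system \eqref{eq:VS} to a planar ODE in the weak-shock regime and then reads off each estimate from that reduction. Integrating \eqref{eq:VS} once over $(-\infty,\zeta)$ with the end state $U^*$ gives $u^S-u^*=-\s(v^S-v^*)$. The momentum equation then becomes
\[
\mu\frac{(u^S)'}{v^S}=-\s(u^S-u^*)+p(v^S,\th^S)-p^*.
\]
The energy equation similarly yields a first-order equation for $\kappa(\th^S)'/v^S$. Consequently $(v^S,\th^S)$ solves an autonomous system $(v,\th)'=F(v,\th)$ with equilibria $(v^*,\th^*)$ and $(v_+,\th_+)$; the Rankine--Hugoniot conditions \eqref{eq:RH} guarantee that the right-hand side vanishes at both states.

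For existence, uniqueness and the sign conditions, I would linearize at $U^*$. The Lax condition $\s<\s^*$ makes $U^*$ a saddle, while $\s>\lambda_3(U_+)$ makes $U_+$ a stable node; both follow from $|\s-\s^*|\le C\d_S$. By a center-manifold argument for small $\d_S$ (treating $\d_S$ as a parameter), the heteroclinic orbit lies on a one-dimensional invariant curve tangent to the eigendirection $r_3(U^*)$. On that curve the flow reduces to a scalar equation of Burgers type,
\[
w'=c\,(w-w_*)(w-w_+)(1+O(\d_S)),\qquad c>0,
\]
for a scalar coordinate $w$, say $v$. This scalar equation gives existence, uniqueness up to translation (fixed by the normalization $U^S(0)=\frac{U^*+U_+}{2}$), and monotonicity: $v'_\zeta>0$, and then $u'_\zeta<0$ from the integrated mass equation. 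The sign of $\th'_\zeta$ follows because the tangent vector is $r_3$ up to $O(\d_S)$, and $r_3$ has components of definite sign for which $\th$ decreases as $v$ increases along the 3-shock.

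The quantitative bounds are then computed explicitly from the scalar equation. The logistic-type solution gives $|w-w_*|\le C\d_S e^{-C\d_S|\zeta|}$ for $\zeta<0$, the analogous bound at $w_+$ for $\zeta>0$, and $|w'|\le C\d_S^2e^{-C\d_S|\zeta|}$. Differentiating the scalar equation gives $w''=c(2w-w_*-w_+)w'(1+O(\d_S))$, hence $|w''|\le C\d_S|w'|$. These bounds transfer to $u^S$ and $\th^S$ through the graph representation of the invariant curve, which also yields $|v^S_\zeta|\sim|u^S_\zeta|\sim|\th^S_\zeta|$.

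Estimate \eqref{est:shock2} expresses that the tangent direction is $r_3(U^*)$ up to $O(\d_S)$. One has $u'=-\s v'$ exactly, so
\[
v'+\frac{u'}{\s^*}=v'\Big(1-\frac{\s}{\s^*}\Big)=O(\d_S)|u'|.
\]
For $\th$, the isentropic relation along $r_3$ gives $\frac{R}{\g-1}d\th=-p\,dv$ at leading order, so $\th'\approx -\frac{(\g-1)p^*}{R}v'\approx\frac{(\g-1)p^*}{R\s^*}u'$. The error comes from the $O(\d_S)$ deviation of the invariant curve from the tangent line and of $\s$ from $\s^*$. I expect the main obstacle to be this center-manifold reduction, with constants uniform in $\d_S$ and the correct $\d_S$-scaling. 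I would handle it by rescaling $\zeta\mapsto\d_S\zeta$ and $w-w_*\mapsto (w-w_*)/\d_S$, so that the rescaled profile converges to the Burgers profile. Since this is the standard analysis of \cite{KVW-NSF}, I would cite it for the details.
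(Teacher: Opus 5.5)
Your outline is correct and matches the paper's approach: it is the standard weak-shock (center-manifold) analysis, which the paper does not reproduce but takes from \cite{KVW-NSF}. Two small fixes are needed. First, with $w=v$ and $w_*=v^*<v_+=w_+$, the reduced equation must read $w'=c\,(w-w_*)(w_+-w)(1+O(\delta_S))$ with $c>0$, since your sign would give $v^S_\zeta<0$. Second, translation can be fixed by only one scalar condition, such as $v^S(0)=\frac{v^*+v_+}{2}$; this yields $u^S(0)=\frac{u^*+u_+}{2}$ through $u^S-u^*=-\sigma(v^S-v^*)$, but in general not the $\theta$-midpoint, because the invariant curve $\theta=\Theta(v)$ is generically not straight (an $O(\delta_S^2)$ mismatch, i.e.\ an $O(1)$ shift in $\zeta$, which is harmless for all the estimates).
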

\subsection{Poincar\'e-type Inequality}
We present a Poincar\'e-type inequality on any compact subset of $\mathbb{R}$, where the optimal constant $1/2$ is independent of the size of the domain.
\begin{lem}\label{lem:Poincare}
		\cite{HKKL25} For any $c<d$ and function $f:[c,d] \to \mathbb{R}$ satisfying $\int_c^d (y-c)(d-y)|f'(y)|^2\,dy < \infty$,
		\[
		\int_c^d \left|f(y) - \frac{1}{d-c}\int_c^d f(y)\,dy \right|^2\,dy \leq \frac{1}{2}\int_c^d (y-c)(d-y)|f'(y)|^2\,dy.
		\]
\end{lem}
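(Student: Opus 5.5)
We first reduce to the unit interval and then argue spectrally. Put $y=c+(d-c)s$ and $g(s)=f(c+(d-c)s)$. The left-hand side becomes $(d-c)\int_0^1|g-\bar g|^2\,ds$, where $\bar g=\int_0^1 g\,ds$. On the right-hand side, $(y-c)(d-y)=(d-c)^2 s(1-s)$, $|f'|^2=(d-c)^{-2}|g'|^2$ and $dy=(d-c)\,ds$. The right-hand side is therefore $\frac{d-c}{2}\int_0^1 s(1-s)|g'|^2\,ds$. Both sides carry the same factor $d-c$, which explains why the constant $1/2$ does not depend on the interval. It suffices to prove
\[
\int_0^1|g-\bar g|^2\,ds\le \tfrac12\int_0^1 s(1-s)|g'(s)|^2\,ds .
\]

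The key observation is that $-(s(1-s)g')'$ is the shifted Legendre operator. Its eigenfunctions are the shifted Legendre polynomials $P_n(s):=\tilde P_n(2s-1)$, with eigenvalues $n(n+1)$. These form a complete orthogonal system in $L^2(0,1)$, and $P_0\equiv 1$. The first nonzero eigenvalue is $2$, which gives the constant $1/2$. Integrating by parts, the boundary terms vanish because the weight vanishes at $0$ and $1$, so
\[
\int_0^1 s(1-s)P_n'P_m'\,ds=n(n+1)\delta_{nm}\|P_n\|_{L^2}^2 .
\]
Hence $\{\sqrt{s(1-s)}\,P_n'\}_{n\ge1}$ is an orthogonal family in $L^2(0,1)$.

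Next we justify the expansion for a general $g$ with finite weighted energy $\mathcal{D}:=\int_0^1 s(1-s)|g'|^2\,ds$. By Cauchy--Schwarz, for $0<s<1/2$,
\[
|g(s)-g(1/2)|^2\le \mathcal{D}\int_s^{1/2}\frac{d\tau}{\tau(1-\tau)}\le C\mathcal{D}\,(1+\log(1/s)),
\]
and the analogous bound holds near $s=1$. Consequently $g\in L^2(0,1)$ and $s(1-s)g(s)\to0$ at both endpoints. We write $g=\sum_n a_nP_n$ in $L^2$, with $a_0=\bar g$. Integrating by parts on $[\varepsilon,1-\varepsilon]$ and letting $\varepsilon\to0$ gives
\[
\int_0^1 s(1-s)g'P_n'\,ds=n(n+1)\int_0^1 gP_n\,ds=n(n+1)a_n\|P_n\|^2 .
\]
The boundary terms $s(1-s)\,g\,P_n'$ vanish in the limit by the logarithmic bound, since $P_n'$ is bounded.

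We now apply Bessel's inequality to $\sqrt{s(1-s)}\,g'$ with respect to the orthogonal family $\{\sqrt{s(1-s)}P_n'\}_{n\ge 1}$:
\[
\mathcal{D}\ge\sum_{n\ge1}\frac{\big(n(n+1)a_n\|P_n\|^2\big)^2}{n(n+1)\|P_n\|^2}=\sum_{n\ge1}n(n+1)a_n^2\|P_n\|^2 .
\]
Parseval's identity gives $\int_0^1|g-\bar g|^2=\sum_{n\ge1}a_n^2\|P_n\|^2$. Since $n(n+1)\ge2$ for $n\ge1$, it follows that $\int_0^1|g-\bar g|^2\le\frac12\mathcal{D}$, which is the claim.

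The only delicate point is the endpoint behaviour. The weight degenerates at $0$ and $1$, so one must check that $g\in L^2$ and that the boundary terms vanish. The logarithmic growth bound handles both. It also lets us use Bessel's inequality directly, so no density argument in the weighted space is needed.
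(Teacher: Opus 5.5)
Your argument is correct. The affine rescaling shows that the constant is independent of the interval, and the shifted Legendre expansion, whose first nonzero eigenvalue is $n(n+1)=2$, gives exactly the factor $\tfrac12$; the endpoint degeneracy is handled properly by your logarithmic bound (which gives $g\in L^2$ and removes the boundary terms) and by using Bessel's inequality instead of a density argument. The paper gives no proof of its own and only cites \cite{HKKL25}. As far as I recall, the inequality there is obtained the same way, by rescaling $[c,d]$ to $[0,1]$ and applying the Legendre-polynomial Poincar\'e inequality on the unit interval.
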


\section{A Priori Estimates and Proof of Main Theorem}\label{Sec:Apr}
\setcounter{equation}{0}
\subsection{Local Existence} 
In this subsection, we state a local-in-time existence result for the inflow problem \eqref{eq:NSFL}. Since the proof is based on a standard iteration argument (see, for example, \cite{Sol76}), we omit the details.
 \begin{proposition}\label{prop:local}
	For any constant $\beta>0$, let $\underline{v}, \underline{u}$ and $\underline{\theta}$ be smooth functions such that
	\[(\underline{v}(x), \underline{u}(x),\underline{\theta}(x))=(v_+,u_+,\theta_+),\quad\mbox{for}\quad x\ge\beta,\quad \underline{v}(0)>0 \quad \text{and} \quad \underline{\theta}(0)>0 .\]
	For any constants $M_0$, $M_1$, $\underline{\kappa}_0$, $\overline{\kappa}_0$, $\underline{\kappa}_1$, and $\overline{\kappa}_1$ with $0<M_0<M_1$ and $0<\underline{\kappa}_1<\underline{\kappa}_0<\overline{\kappa}_0<\overline{\kappa}_1$, there exists a constant $T_0>0$ such that if
	\begin{align*}
		&\|(v_0-\underline{v},u_0-\underline{u}, \theta_0-\underline{\theta})\|_{H^1(\R_+)}\le M_0,\\
		&0<\underline{\kappa}_0\le v_0(x), \theta_0(x)\le \overline{\kappa}_0,\quad x\in\mathbb{R}_+,
	\end{align*}
	the inflow problem \eqref{eq:NSFL} admits a unique solution $(v,u, \theta)$ on $[0,T_0]$ such that
	\begin{align*}
		v -\underline{v}\in C([0,T_0];H^1(\mathbb{R}_+)),\quad(u-\underline{u}, \theta-\underline{\theta}) \in C([0,T_0];H^1(\mathbb{R}_+))\cap L^2(0,T_0;H^2(\mathbb{R}_+)),
	\end{align*}
	and
	\[\|(v-\underline{v},u-\underline{u}, \theta-\underline{\theta})\|_{L^\infty(0,T_0;H^1(\R_+))}\le M_1.\]
	Moreover, for the inflow problem, we have
	\[
	\underline{\kappa}_1\le v(t,x) \le \overline{\kappa}_1, \quad \underline{\kappa}_1\le \theta(t,x)\le \overline{\kappa}_1, \quad \forall t>0, \, \, \forall x\in\mathbb{R}_+,
	\]
	and
	\[
	v(t, 0)=v_->0,  \quad u(t,0)=u_- >0 \, \, ,\quad \theta(t,0) = \theta_->0, \quad \forall t>0.
	\]
\end{proposition}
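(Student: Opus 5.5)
The plan is the standard Picard/contraction argument for parabolic–hyperbolic systems, carried out in the fixed domain. Work in the variables $(t,\xi)$ of \eqref{eq:NSF}, so the boundary is $\xi=0$. Write $(v,u,\theta)=(\underline v,\underline u,\underline\theta)+(\phi,\psi,\zeta)$. The given reference data do not match the Dirichlet values $(v_-,u_-,\theta_-)$ at $\xi=0$, so we first modify $\underline U$ near $0$ by a smooth cutoff so that it takes the boundary values and agrees with $U_0$ to first order at the corner. We may assume the compatibility conditions hold, since the statement implicitly requires them for $H^1$ solutions. Then $(\phi,\psi,\zeta)$ vanishes at $\xi=0$ for $u,\theta$. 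It also vanishes there for $v$: this is the inflow condition $u_->0$, equivalently $-\sigma_->0$ in the transport term $v_t-\sigma_-v_\xi$, so the characteristics of the $v$-equation enter the domain through $\xi=0$ and the boundary value $v_-$ is prescribed consistently. This is why the inflow problem is well-posed with data for all three components.

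For the iteration we fix $(\tilde\phi,\tilde\psi,\tilde\zeta)$ in the ball
\[
X_T=\{\|\cdot\|_{L^\infty(0,T;H^1)}\le M_1,\ (\psi,\zeta)\in L^2(0,T;H^2),\ \underline\kappa_1\le v,\theta\le\overline\kappa_1\}.
\]
We then solve the following decoupled linear problems:
\begin{enumerate}
\item[(i)] the transport equation $v_t-\sigma_-v_\xi=\tilde u_\xi$ with inflow boundary data, solved explicitly along characteristics, which gives $H^1$ bounds through the $L^2_tH^2$ norm of $\tilde u$;
\item[(ii)] the linear parabolic equations for $u$ and $\theta$, with coefficients $\mu/\tilde v$ and $\kappa/\tilde v$ bounded above and below, Dirichlet data, and sources built from the previous iterate. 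Standard $L^2$ and $H^1$ energy estimates, obtained by multiplying by $\psi$ and $-\psi_{\xi\xi}$, give $L^\infty H^1\cap L^2H^2$ bounds.
\end{enumerate}
The boundary terms from integration by parts vanish because the perturbations vanish at $\xi=0$. For $v$, the term $-\sigma_-\phi_\xi^2|_{\xi=0}$ comes out with a favorable sign, or is computed from the equation.

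The key step is to show that for small $T_0$, depending only on $M_0$, $M_1$ and the $\kappa$'s, the map sends $X_{T_0}$ into itself and is a contraction in the weaker norm $L^\infty(0,T;L^2)$. Both estimates come with a factor $CT^{1/2}$ or $CT$ in front of the nonlinear terms. The positivity bounds $\underline\kappa_1\le v,\theta\le\overline\kappa_1$ follow from $\|f(t)-f(0)\|_{L^\infty}\le C\|f(t)-f(0)\|_{L^2}^{1/2}\|f_\xi\|_{L^2}^{1/2}$ together with smallness of $\|f(t)-f(0)\|_{L^2}\le CT^{1/2}$, using the strict gaps $\underline\kappa_1<\underline\kappa_0$ and $\overline\kappa_0<\overline\kappa_1$. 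Uniqueness follows from the same difference estimate, and time continuity in $H^1$ from the standard Lions–Magenes argument.

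The main obstacle is the $H^1$ estimate for $v$, which has no smoothing. $\phi_\xi$ is controlled only through $\tilde u_{\xi\xi}\in L^2_tL^2$. The trick is to use the effective-velocity combination: $\mu(\ln v)_{\xi t}$-type identities let $\phi_\xi$ be estimated jointly with $\psi$, giving a closed bound without losing derivatives, as in \cite{Sol76}. We take this route rather than estimating $\phi_\xi$ directly.
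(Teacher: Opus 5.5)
Your plan is essentially the paper's own argument: the paper proves Proposition~\ref{prop:local} only by appeal to a standard iteration scheme as in \cite{Sol76}, and your decoupled Picard scheme (characteristics for $v$, linear Dirichlet parabolic problems for $u,\theta$, a self-map in $L^\infty H^1\cap L^2H^2$ and a contraction in a weaker norm for small $T_0$, under the zero-order compatibility $U_0(0)=U_-$) is exactly that.

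Two points need fixing. First, in the inflow case the flux term in the $\phi_\xi$-estimate is $+\frac{|\sigma_-|}{2}\phi_\xi^2(t,0)$, which is unfavorable. It therefore has to be computed from the equation as $|\sigma_-|\phi_\xi(t,0)=\tilde\psi_\xi(t,0)+(\underline u_\xi+\sigma_-\underline v_\xi)(0)$ and bounded by $\int_0^T|\tilde\psi_\xi(t,0)|^2\,dt\le 2\int_0^T\|\tilde\psi_\xi\|\,\|\tilde\psi_{\xi\xi}\|\,dt$. Your characteristic formula in step (i) already produces exactly this, so the effective-velocity detour in your last paragraph is unnecessary. Second, the contraction norm must include $L^2(0,T;H^1)$ for $(\psi,\zeta)$, because $\|\delta\phi\|_{L^\infty L^2}$ is controlled only through $\|\delta\tilde\psi_\xi\|_{L^1L^2}$.
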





\subsection{Construction of the Shift and the Weight Function}
In order to establish the stability of the viscous shock, we need to consider a suitable shift of the viscous shock wave. To this end, we construct the shift function $X(t)$ as the solution to the following ODE:
\begin{equation}\label{def:X}
\begin{cases}
	&\dot{X}(t) = -\frac{M}{\d_S}\left(\intRp a \left((u^S)_\x (u-\ubar) + \frac{(v^S)_\x p^*}{v^*}(v-\vbar) + \frac{R}{\gamma - 1}\frac{(\th^S)_\x}{\th^*}(\th-\thbar) \right)\,d\xi\right),\\
   &X(0)=0. 
    \end{cases}
\end{equation}
where $(v,u,\th)$ is a solution to \eqref{eq:NSF}, $a$ is the weight function defined in \eqref{def:a}, and $M>0$ is a constant to be chosen later. By the argument of \cite[Lemma 3.2]{KVW-NSF}, the initial value problem \eqref{def:X} admits a unique Lipschitz continuous solution on $[0,T]$ under the assumptions of Proposition~\ref{prop:ap}.

We then define the weight function $a$ by
\begin{equation}\label{def:a}
	a = a(t,\x) := 1 + \frac{(u^* - u^S(\x - (\s - \s_-)t - X(t) - \beta))}{\sqrt{\d_S}}.
\end{equation}
Since $u^S$ is decreasing, we find that $a$ is increasing. Moreover, observe that 
\(
1 \leq a \leq 1+\sqrt{\d_S},
\)
and
\begin{align}\label{da}
\rd_\x a(t, \x) = -\frac{u^S_\x}{\sqrt{\d_S}} > 0.
\end{align}
We remark that the weight function $a$ will play a crucial role in the $L^2$ estimates by producing the new good term $G^{new}$ in \eqref{Good}.
\subsection{A Priori Estimates}
We now present the \textit{a priori} estimates, which are a key ingredient in proving Theorem~\ref{thm:main}.

For notational convenience, we omit the arguments if there is no confusion:
\[
U^{BL} = U^{BL}(\xi), \quad U^{R} = U^R(t,\xi), \quad U^{C} = U^{C}(t,\xi), \quad U^{S} = U^{S}(\x - (\s - \s_-)t - X(t) - \b).
\]
\begin{proposition}\label{prop:ap}
 Suppose that $1<\gamma \leq 2$. For any $U_* \in \Gamma_{trans}^+$, fix a state $U_m \in \Omega_{super}^+$ with $U_m \in R_1(U_*)$. Then there exist positive constants $\d_0, \e,c$, and $C_0$ such that the following holds.

   For any three states $U_-, U^*, U_+$ with $U_- \in BL(U_*), U^* \in CD(U_m)$, and $U_+ \in S_3(U^*)$ satisfying 
   \[
    |U_- - U_*| + |U_m - U^*| + |U^* - U_+| < \d_0,
   \]
   there exist a sufficiently large constant $\beta>0$ $($depending only on the shock strength $|U^* - U_+|)$ and a sufficiently small constant $0<\e_r<1$ $($depending on $\d_0$ and the rarefaction strength $|U_m - U_*|)$ such that the following holds.

Let $(v,u,\theta)$ be the solution to \eqref{eq:NSF}-\eqref{IC} on $[0,T]$ for some $T>0$, and let $(\bar v, \bar u, \bar \theta)$ be the superposition defined in \eqref{def:Ubar}. 
Let $X(t)$ be the shift function defined in \eqref{def:X}, and let $a$ be the weight function defined in \eqref{def:a}. Assume that
\begin{equation*}
\begin{aligned}
v - \bar v \in C([0,T];H^1(\bbr_+)), \quad (u - \bar u, \theta - \bar \theta) \in C([0,T];H^1(\bbr_+)) \cap L^2(0,T;H^2(\bbr_+)),
\end{aligned}
\end{equation*}
and
\begin{equation}\label{est:apriori}
\|(v-\bar v, u-\bar u, \theta-\bar \theta)\|_{L^\infty(0,T;H^1(\bbr_+))} \le \e.
\end{equation}
Then, the following estimate holds:
\begin{align}
\begin{aligned} \label{estsuppertur}
&\sup_{t \in [0,T]} \|(v-\bar v, u-\bar u, \theta-\bar \theta)(t,\cdot)\|_{H^1(\bbr_+)}^2 
+  \int_0^T  \big(\delta_S|\dot X(t)|^2
+ G^{BL}(U)+ G^R(U) +  G^S(U) \big)\,dt\\
&\qquad
+ \int_0^T \big( D_{v_1}(U) +  D_{u_1}(U) +D_{u_2}(U) + D_{\theta_1}(U) + D_{\theta_2}(U) \big)\,dt\\
&\quad \le 
C_0 \|(v-\bar v, u-\bar u, \theta-\bar \theta)(0,\cdot)\|_{H^1(\bbr_+)}^2 + C_0\d_0^{1/40}
+ C_0 e^{-c\d_S \beta}, 
\end{aligned}
\end{align}
where
\begin{equation*}
\begin{aligned}
&G^{BL}(U):= \int_{\bbr_+} |u^{BL}_\xi|\, |(v-\bar v, \theta-\bar\theta)|^2 \,d\xi, \qquad
G^R(U):= \int_{\bbr_+} |u^R_\xi|\, |(v-\bar v, \theta-\bar\theta)|^2 \,d\xi,\\
&G^S(U):= \int_{\bbr_+} |v^S_\xi|\, |(v-\bar v, u-\bar u, \theta-\bar\theta)|^2 \,d\xi,\\
&D_{v_1}(U):= \int_{\bbr_+} |(v-\bar v)_\xi|^2d\xi, \,\,\,
D_{u_1}(U):= \int_{\bbr_+} |(u-\bar u)_\xi|^2d\xi, \,\,\,
D_{u_2}(U):= \int_{\bbr_+} |(u-\bar u)_{\xi\xi}|^2d\xi,\\
& D_{\theta_1}(U):= \int_{\Rp} |(\theta-\bar\theta)_\xi|^2d\xi, \,\,\, D_{\theta_2}(U):= \int_{\bbr_+} |(\theta-\bar\theta)_{\xi\xi}|^2\,d\xi.
\end{aligned}
\end{equation*}
\end{proposition}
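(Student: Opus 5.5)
The plan is to prove Proposition~\ref{prop:ap} by a weighted relative entropy estimate at zeroth order, followed by standard $H^1$ estimates. Write $\Phi=(v-\vbar,u-\ubar,\th-\thbar)$. The perturbation solves \eqref{eq:NSF} minus the equation satisfied by $\Ubar$. Using \eqref{eq:BL}, \eqref{eq:R}, \eqref{eq:contact} and \eqref{eq:VS}, the latter has source terms of three kinds: interaction terms (products of derivatives of different waves), the contact errors $Q_1^C,Q_2^C$, and the rarefaction viscous errors. We also add the modulation term $-\dot X\,\partial_\x U^S$.

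\textbf{Zeroth order.} We take the relative entropy $\eta(U|\Ubar)$ of the NSF system (in the $\th$-weighted form $R\Phi(v/\vbar)+\frac{|u-\ubar|^2}{2}+\frac{R}{\g-1}\Phi(\th/\thbar)$) and compute $\frac{d}{dt}\intRp a\,\eta(U|\Ubar)\,d\x$ with the weight \eqref{def:a}. The result splits into the following pieces:
\begin{enumerate}
\item[(i)] $\dot X$ times the shift functional, which by \eqref{def:X} gives $-\frac{\d_S}{M}|\dot X|^2$ plus a quadratic bad term localized at the shock;
\item[(ii)] the hyperbolic flux terms with $a_\x$, which after maximizing in $p-\bar p$ give the good term $G^{new}$;
\item[(iii)] the wave-derivative terms, giving $G^{BL}$, $G^R$ and $G^S$ through the compressive signs in Lemmas~\ref{lem:BL}, \ref{lem:rarefaction} and \ref{lem:VS};
\item[(iv)] the diffusions $D_{u_1}$ and $D_{\th_1}$;
\item[(v)] the boundary terms at $\x=0$.
\end{enumerate}
For the leading shock-localized terms we change variables to $y=\frac{u^*-u^S}{\d_S}\in(0,1)$ as in \eqref{def:y}, use the Jacobian bounds of Lemma~\ref{lem:Jac}, and apply the Poincar\'e inequality of Lemma~\ref{lem:Poincare} with the sharp constant $1/2$. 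This shows that the bad quadratic form is dominated by $\frac{\d_S}{2M}|\dot X|^2+G^{new}+D$, as in \cite{KVW-NSF}. Parabolic effects of the rarefaction enter this step only through the coefficient $\frac{1}{v\th}$ in $D_{\th_1}$.

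\textbf{Main obstacle.} Since $\d_R$ is large, $v\th$ is not close to $v^S\th^S$ on the shock support. I would handle this via \eqref{est:Dth1}: along the rarefaction the entropy is constant, so $\th v^{\g-1}$ is constant and $v\th\propto v^{2-\g}$. For $\g\le2$ this is monotone in the right direction, so replacing $\frac{1}{v\th}$ by the value at $U^*$ only loses a factor $1+O(\d_0)$ in the favorable direction. The remaining perturbations are absorbed by the a priori smallness \eqref{est:apriori}.

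\textbf{Error and boundary terms.} Interaction terms are bounded by $\|\Phi\|_{L^\infty}$ times $L^1_t$ norms of the wave products. These products come from Lemma~\ref{lem:BL} (algebraic decay $\d_{BL}^2(1+\d_{BL}\x)^{-2}$ against the separation of the fronts by speeds $\lambda_1(U_*)-\s_->0$), Lemma~\ref{lem:rarefaction}(3) (exponential tails $e^{-c\e_r|\cdot|}$), \eqref{est:contact}, and \eqref{est:shock}. Using Gagliardo--Nirenberg and Young, they yield contributions $C\d_0^{1/40}$ once $\e_r$ is chosen small depending on $\d_0,\d_R$. The rarefaction viscous errors are controlled using \eqref{est:rare1} with $q\ge8$. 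At $\x=0$ we have $U=U_-$, so $\Phi(t,0)=U^{BL}(0)+U^R(t,0)+U^C(t,0)+U^S(\cdot)-\dots-U_-$, which reduces to the contact and shock mismatches. These are $O(\d_C e^{-ct})$ by \eqref{est:CDbd} and $O(\d_S e^{-c\d_S(\b+(\s-\s_-)t)})$ since $|\dot X|$ is small. Boundary fluxes such as $\Phi\,\Phi_\x|_{\x=0}$ are interpolated by $\|\Phi_\x\|^{1/2}\|\Phi_{\x\x}\|^{1/2}$-type bounds and absorbed, leaving $Ce^{-c\d_S\b}$.

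\textbf{$H^1$ and closing.} $D_{v_1}$ is obtained by the usual Kanel-type estimate: differentiate $v$, rewrite the momentum equation as $(\mu\frac{(v-\vbar)_\x}{v}-(u-\ubar))_t$, and test with $(v-\vbar)_\x$. Then $D_{u_2}$ and $D_{\th_2}$ follow by testing the equations for $u-\ubar$ and $\th-\thbar$ with $-(\cdot)_{\x\x}$. Boundary terms there vanish or are handled as above, using the equations at $\x=0$. Summing with suitable small coefficients and choosing $\d_0,\e$ small, $M$ fixed, and $\b\gg\d_S^{-1}$ gives \eqref{estsuppertur}.
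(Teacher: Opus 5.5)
There is a genuine gap: you never handle the viscous contact wave's contribution to the relative entropy. Your item (iii) covers only the BL, the rarefaction and the shock. The contact wave adds $\int_{\mathbb{R}_+} a(\partial_t\theta^C-\sigma_-\partial_\xi\theta^C)\big[R\Phi(v/\bar v)+\frac{R}{\gamma-1}\Phi(\theta/\bar\theta)\big]d\xi$ and $\int_{\mathbb{R}_+} a\big[-\frac{\bar p}{\bar v^2}\phi^2+\frac{\bar p}{\bar v\bar\theta}\phi\chi\big]u^C_\xi\,d\xi$. These have no sign, and the only available bound is $B^C=\frac{\delta_C}{1+t}\int_{\mathbb{R}_+}e^{-C_1(\xi+\sigma_-t)^2/(1+t)}|(\phi,\psi,\chi)|^2d\xi$.

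This term is quadratic in the perturbation, so it is not a source like $Q_1^C,Q_2^C$. It sits near $\xi\approx-\sigma_-t$, so none of $G^{BL},G^R,G^S,G^{new},D$ dominates it. The obvious bounds are $B^C\le C\delta_C(1+t)^{-1}\|(\phi,\psi,\chi)\|_{L^2}^2$, or $B^C\le C\delta_C(1+t)^{-1/2}\|(\phi,\psi,\chi)\|_{L^2}\|(\phi,\psi,\chi)_\xi\|_{L^2}$ followed by Young. Either way you are left with $C\delta_C\varepsilon^2(1+t)^{-1}$, whose time integral grows like $\ln(1+T)$. The same term reappears in each of your $H^1$ estimates.

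The paper closes this with Lemma~\ref{lem:L2CD}, the Huang--Li--Matsumura heat-kernel estimate adapted to the half-line. One tests the perturbation system with $PH_1^2$ and $\psi H_2\widetilde P$. Here $H_1,H_2$ are antiderivatives of the Gaussian $\mathcal H$, and $P=\bar p\phi+\frac{R}{\gamma-1}\chi$, $\widetilde P=R\chi-\bar p\phi$. This bounds $\int_0^T B^C\,dt$ by $C\delta_C$ times the left-hand side of \eqref{estsuppertur} plus $C\delta_C$. On $\mathbb{R}_+$ it also generates boundary terms at $\xi=0$, controlled by the contact/shock mismatch and $\varepsilon^2\int_0^T(D_{u_2}+D_{\theta_2})\,dt$. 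It generates BL terms as well, controlled via $|\psi(t,\xi)|\le|\psi(t,0)|+\sqrt{\xi}\|\psi_\xi\|_{L^2}$. Without this step your zeroth-order estimate does not close uniformly in $T$.

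Your interaction bookkeeping also rests on a false premise. Since $U_*\in\Gamma^+_{trans}$ and $u/v=-\sigma_-$ on $BL(U_*)$, we get $\lambda_1(U_*)=-\sqrt{\gamma R\theta_*}/v_*=-u_*/v_*=\sigma_-$. So the left edge of the rarefaction fan moves with the boundary, and there is no separation from the BL. Even where fronts do separate, the algebraic tail $|v^{BL}-v_*|\le C\delta_{BL}(1+\delta_{BL}\xi)^{-1}$ makes BL interactions of size $\delta_{BL}\delta\,(1+\delta_{BL}t)^{-1}$ in $L^1_\xi$ (see $J_2,J_3$ in \eqref{est:Ji}). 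Hence $\|(\phi,\psi,\chi)\|_{L^\infty}\int_0^T\|J_i\|_{L^1}\,dt$ diverges.

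What works is $\|(\psi,\chi)\|_{L^\infty}\le\|(\psi,\chi)\|_{L^2}^{1/2}\|(\psi,\chi)_\xi\|_{L^2}^{1/2}$ plus Young. This gives $\frac{1}{100}(D_{u_1}+D_{\theta_1})+C\varepsilon^{2/3}\|J_i\|_{L^1}^{4/3}$, and $\int_0^\infty\|J_i\|_{L^1}^{4/3}dt<\infty$. For $J_1$ this needs the Matsumura--Nishihara integration by parts, which uses $(v^{BL}-v_*)(v^R-v_*)<0$ and $v^R(t,0)=v_*=v^{BL}(\infty)$. It also needs $\|v^R_\xi\|_{L^\infty}\le C(\varepsilon_r\delta_R)^{1/8}(1+t)^{-7/8}$. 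Then $\varepsilon_r$ must balance $(\varepsilon_r\delta_R)^{1/9}$ against $\delta_0(\delta_R/\varepsilon_r)^{4/3}\varepsilon_r^{-1}$; the paper takes $\varepsilon_r=\delta_0^{1/4}$.

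Three smaller points:
\begin{enumerate}
\item $G^S$ does not come from a sign. The shock term $B^S$ is bad, and $G^S$ is the margin left after the Poincar\'e step.
\item The coefficient $\mu/v$ in $D_{u_1}$ is also affected by the large rarefaction. It needs $v^R\le v_m$, not only the monotonicity of $v\theta$.
\item On the half-line the $y$-range is $[y_0,1]$ with $y_0=y(t,0)>0$, not $(0,1)$. So $\beta$ must also be large enough to keep $y_0<1/9$ in the Poincar\'e argument.
\end{enumerate}
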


\subsection{Proof of Theorem 1.1}
Based on Proposition~\ref{prop:local} and Proposition~\ref{prop:ap}, we use a continuity argument to prove the global existence result \eqref{GE} for the inflow problem. The detailed proof of \eqref{GE} is deferred to the Appendix~\ref{APP:conti}.

In addition, the time-asymptotic stability \eqref{Asym} can be proved by showing that 
\[
\|(v-\bar{v},u-\ubar,\th - \thbar)_\x(t,\cdot)\|_{L^2(\Rp)}^2 \in W^{1,1}(\Rp).
\]
Since the proof of \eqref{Asym} is standard and similar to that in \cite{HKKKO25}, we omit the details.

Finally, \eqref{Xasym} directly follows from \eqref{def:X} and \eqref{Asym}.
\subsection{Notations}
The following sections are devoted to proving Proposition~\ref{prop:ap}. In what follows, we omit the spatial domain $\Rp$ whenever there is no confusion. For example, we write $L^p$ and $H^1$ for $L^p(\Rp)$ and $H^1(\Rp)$, respectively. Moreover, we introduce the following notations:
\begin{equation}\label{not:vi}
\begin{aligned}
	&(v^{BL},u^{BL}, \th^{BL}) =: (v_1,u_1,\th_1), \quad (v^R,u^R, \th^{R}) =: (v_2,u_2,\th_2)\\
	&(v^C,u^C, \th^{C}) =: (v_3,u_3,\th_3), \quad (v^S,u^S, \th^S) =: (v_4,u_4,\th_4),
\end{aligned}
\end{equation}
and
\[
(v-\vbar, u-\ubar, \th - \thbar):=(\phi,\psi,\chi).
\]
Moreover, the constant $C>0$ may change line to line, which is independent of $\d_0,\e_r,\e,\b$, and time $T>0$. Unless otherwise stated, $C$ may depend on $\d_R$; however, in \eqref{est:Ji}, \eqref{est:ucont}, \eqref{est:wder}, \eqref{JiL2}, and \eqref{est:wder2}, it is independent of $\d_R$. 
\section{Wave Interaction Estimates}\label{Sec:Wint}
\setcounter{equation}{0}
In this section, we provide several useful estimates on the wave interactions. 
To this end, we define $J_i$, $i=1,\cdots,6$, by
\begin{align}\label{def:J}
	\begin{split}
	\begin{aligned}
	&J_1 := |v_\x^{BL}| |v^R - v_*| + |v_\x^R||v^{BL} - v_*|,  && J_2 :=  |v_\x^{BL}| |v^C - v_m| + |v_\x^C||v^{BL} - v_*|,\\
	&J_3 :=  |v_\x^{BL}| |v^S - v^*| + |v_\x^S||v^{BL} - v_*|,  &&J_4 :=  |v_\x^{R}| |v^C - v_m| + |v_\x^C||v^{R} - v_m|,\\
	&J_5 :=  |v_\x^{R}| |v^S - v^*| + |v_\x^S||v^{R} - v_m|, &&J_6 :=  |v_\x^{C}| |v^S - v^*| + |v_\x^S||v^{C} - v^*|.
	\end{aligned}
	\end{split}
\end{align}

\subsection{$L^1$ Interaction Estimates}
First, we provide the $L^1$ wave-interaction estimates. We remark that the estimates in \eqref{est:Ji} of Lemma~\ref{lem:win}, as well as those in \eqref{est:ucont}, and \eqref{est:wder} of Lemma~\ref{lem:estL12} remain valid even when the strength of the boundary layer solution is large.
\begin{lem}\label{lem:win}
Under the assumptions of Proposition~\ref{prop:ap}, there exist positive constants $c$ and $C$ such that the following estimates hold:
\begin{align}\label{est:Ji}
	\begin{split}
		\begin{aligned}
				&\intRp J_1 \,d\x \leq C\left(\frac{(\e_r\d_R)^{1/8}}{(1+t)^{7/8}}\ln(1+\delta_{BL}t) + \frac{\delta_{BL}\delta_R}{1+\delta_{BL} t}\right),  \\
				&  \intRp J_2 \,d\x \leq C\delta_{BL}\d_C \left(e^{-ct} + \frac{1}{1+\delta_{BL}t}\right),  \\
				& \intRp J_3 \,d\x \leq C\delta_{BL}\d_S\left( e^{-c\d_S t} + \frac{1}{1+\delta_{BL}t}\right),    \\
				& \intRp J_4 \,d\x  \leq 
                C\delta_R \delta_C e^{-ct}+ C\delta_C\frac{\delta_R}{\e_r}  e^{- c\e_r t},   \\
				& \intRp J_5 \,d\x \leq C\delta_R\d_S(\e_r  +  \delta_S)e^{-c\delta_S t} + C\delta_R\delta_S \frac{(\e_r  +  \delta_S)}{\e_r} e^{-c\e_r t},\\
				& \intRp J_6 \,d\x \leq C\d_C\d_S \left(e^{-c\delta_St}+e^{-ct}\right).     
		\end{aligned}
	\end{split}
\end{align}
Consequently, we have 
\begin{equation}\label{est:431}
\int_0^t \sum_{i=1}^6 \|J_i\|_{L^1}^{4/3}\,d\t \leq C\d_0 + C(\e_r\d_R)^{1/6} + C\d_0 \left(\frac{\d_R}{\e_r}\right)^{4/3}\frac{1}{\e_r}.
\end{equation}
\end{lem}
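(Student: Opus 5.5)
Each $J_i$ is a sum of products of the form (derivative of one wave) $\times$ (distance of another wave from its matching intermediate state). I would estimate every such product by splitting $\mathbb{R}_+$ at a moving point that separates the regions where the two waves live. In each region one factor is small: either by the tail decay of the wave that is far from its support, or by integrating the derivative, which gives at most its strength. The tools are the pointwise bounds \eqref{est:BL} for the boundary layer, Lemma~\ref{lem:rarefaction1}--\ref{lem:rarefaction} for the rarefaction, \eqref{est:contact} for the contact wave, and \eqref{est:shock} for the shock. The key geometric fact is that the wave fronts sit at different positions in the $\xi$-frame. The boundary layer sits at $\xi\approx0$. The rarefaction occupies $\xi\in[(\lambda_1(U_*)-\sigma_-)t,(\lambda_1(U_m)-\sigma_-)t]$; its left edge is $0$ since $U_*$ is transonic, and its right edge grows linearly because $U_m$ is supersonic. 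The contact wave sits at $\xi\approx-\sigma_-t$, which is strictly to the right of the rarefaction since $\lambda_1(U_m)<0$. The shock sits at $\xi\approx(\sigma-\sigma_-)t+X(t)+\beta$, and $|\dot X|$ is small by the a priori assumption, so this front stays to the right of the contact wave.

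\textbf{The well-separated pairs: $J_2,J_3,J_4,J_5,J_6$.} For these I split at the midpoint between the two fronts.

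For $J_6$, the Gaussian contact tails and the exponential shock tails $e^{-c\delta_S|\zeta|}$ both evaluated at distance $\gtrsim t$ give $\delta_C\delta_S(e^{-c\delta_S t}+e^{-ct})$.

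For $J_4$ and $J_5$, I use the rarefaction bounds: $|v^R_\xi|\le C\epsilon_r\delta_R e^{-c\epsilon_r|\cdot|}$ beyond the right edge, and $|v^R-v_m|\le C\delta_R e^{-c\epsilon_r|\cdot|}$. Integrating $e^{-c\epsilon_r|\cdot|}$ costs a factor $1/\epsilon_r$, which produces the terms $\frac{\delta_R}{\epsilon_r}e^{-c\epsilon_r t}$.

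For $J_2$ and $J_3$, I use algebraic decay. Near the far front the boundary layer contributes $|v^{BL}-v_*|\le\delta_{BL}/(1+\delta_{BL}\xi)\lesssim \delta_{BL}/(1+\delta_{BL}t)$, multiplied by $\int|v^C_\xi|\le C\delta_C$ (resp.\ $\delta_S$). Near $\xi=0$, $\int|v^{BL}_\xi|\le\delta_{BL}$ multiplies an exponentially small tail of the other wave. The shock tail here involves the boundary distance $\beta+(\sigma-\sigma_-)t$, which gives $e^{-c\delta_S t}$.

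\textbf{The hard term: $J_1$.} The boundary layer and the rarefaction overlap at $\xi=0$, because the rarefaction starts at the transonic state. Here I split at $\xi=\sqrt{t}$, or more precisely choose the cut to optimize the powers.

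For $\xi\ge \xi_0$, I bound $|v^{BL}-v_*|\le \delta_{BL}/(1+\delta_{BL}\xi)$ and use Hölder against $\|v^R_\xi\|_{L^p}$ from \eqref{est:rare1}. Interpolating $\min\{\delta_R\epsilon_r^{1-1/p},\delta_R^{1/p}(1+t)^{-1+1/p}\}$ with exponent $1/8$ on the first branch gives the factor $(\epsilon_r\delta_R)^{1/8}(1+t)^{-7/8}$. The $L^{p'}$ norm of $(1+\delta_{BL}\xi)^{-1}$ over $[0,C t]$ produces the logarithm $\ln(1+\delta_{BL}t)$. I would take $p=\infty$ up to a cutoff at $\xi\sim t$, integrate $\int_0^{Ct}\frac{\delta_{BL}}{1+\delta_{BL}\xi}d\xi=\ln(1+C\delta_{BL}t)$, and use $\|v^R_\xi\|_{L^\infty}\le \min\{\delta_R\epsilon_r,(1+t)^{-1}\}\le(\epsilon_r\delta_R)^{1/8}(1+t)^{-7/8}$. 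Beyond $\xi\sim t$, the exponential rarefaction tail takes over.

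For the other product, $|v^{BL}_\xi||v^R-v_*|$, I use $\int|v^{BL}_\xi|$ restricted to $\xi\gtrsim t$, which is bounded by $\delta_{BL}/(1+\delta_{BL}t)$, times $\delta_R$. Near $\xi=0$ I need $|v^R-v_*|$ small. Since $\lambda_1(U_*)=\sigma_-$, the characteristic through $\xi=0$ corresponds to $w$ near $w_-$ wherever $w_0$ is only slowly increasing. I would bound $|v^R(t,\xi)-v_*|\le C\xi\|v^R_\xi\|_{L^\infty}$ and absorb this into the first term using $\xi|v^{BL}_\xi|\lesssim \delta_{BL}/(1+\delta_{BL}\xi)$. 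This makes the second product of the same form as the first. This step is the delicate one and is where I expect the main obstacle to lie.

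\textbf{Time integration for \eqref{est:431}.} I raise each bound to the power $4/3$ and integrate in $t$.

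From $J_1$, the term $\big((\epsilon_r\delta_R)^{1/8}(1+t)^{-7/8}\ln(1+\delta_{BL}t)\big)^{4/3}$ has exponent $7/6>1$, so it is integrable. After absorbing the logarithm (with a $\delta_0$ factor if needed) it yields $(\epsilon_r\delta_R)^{1/6}$.

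Also from $J_1$, the term $\big(\delta_{BL}\delta_R/(1+\delta_{BL}t)\big)^{4/3}$ integrates to $C\delta_{BL}^{1/3}\delta_R^{4/3}$, which is controlled by $C\delta_0$ up to constants depending on $\delta_R$.

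The $J_2$ and $J_3$ terms integrate similarly.

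The $e^{-c\epsilon_r t}$ terms in $J_4$ and $J_5$ give $\delta_0(\delta_R/\epsilon_r)^{4/3}\epsilon_r^{-1}$. Finally, the $e^{-c\delta_S t}$ terms carry a prefactor $\delta_S^{4/3}$, so they integrate to $\delta_S^{1/3}\le \delta_0$.
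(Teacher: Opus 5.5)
For $J_2,\dots,J_6$ your plan is the paper's. You split at the midpoint between the two fronts; the paper uses $-\sigma_-t/2$, $\tfrac12\lambda_1(U_m)(1+t)-\sigma_-t$ and $\alpha t$. You use the tail decay of the far wave on one side and total variation on the other, and you pay $1/\varepsilon_r$ for integrating the rarefaction tail.

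For $J_1$ your route is genuinely different. The paper uses the signs $v^{BL}_\xi(v^R-v_*)>0$ and $v^R_\xi(v_*-v^{BL})>0$. It integrates by parts separately on $[0,t]$ and $[t,\infty)$, moving the derivative onto the other factor. The boundary terms at $\xi=t$ then have a favorable sign, and the one at $\xi=0$ vanishes because $v^R(t,0)=v_*$.

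You estimate directly, and the step you flagged as the main obstacle in fact closes at once. By $v^R(t,0)=v_*$ (Lemma~\ref{lem:rarefaction}) and \eqref{est:BL} with $n=1$,
\[
|v^{BL}_\xi|\,|v^R-v_*|\le \xi\,|v^{BL}_\xi|\,\|v^R_\xi\|_{L^\infty}\le \frac{\delta_{BL}}{1+\delta_{BL}\xi}\,\|v^R_\xi\|_{L^\infty}.
\]
So on $[0,t]$ this product gives the same bound $C(\varepsilon_r\delta_R)^{1/8}(1+t)^{-7/8}\ln(1+\delta_{BL}t)$ as the other product, and no argument about characteristics is needed. Both routes rest on $v^R(t,0)=v_*$. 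Yours trades monotonicity and the sign bookkeeping of boundary terms for the pointwise decay $\xi|v^{BL}_\xi|\lesssim \delta_{BL}(1+\delta_{BL}\xi)^{-1}$ of the degenerate layer. The paper's trick needs only monotonicity and the decay of $v^{BL}-v_*$.

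Two corrections to your bookkeeping.

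First, in the time integration, $\delta_S\le\delta_0<1$ gives $\delta_S^{1/3}\ge\delta_S$, so ``$\delta_S^{1/3}\le\delta_0$'' is false. Likewise, $C\delta_{BL}^{1/3}\delta_R^{4/3}$ is $O(\delta_0^{1/3})$, not $O(\delta_0)$, since $\delta_R$ is fixed and $C$ may not depend on $\delta_0$. These terms come from the second term of $J_1$ and the first term of $J_5$. Hence your estimates give \eqref{est:431} only with $C\delta_0^{1/3}$ in place of $C\delta_0$. The paper's proof has the same slip: its last display contains $\delta_{BL}^{1/3}\delta_R^{4/3}$ and $\delta_R^{4/3}\delta_S^{1/3}(\varepsilon_r+\delta_S)^{4/3}$ before ``using the smallness''. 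The slip is harmless downstream, because the conclusion of Lemma~\ref{lem:BIU} already carries a $C\delta_0^{1/3}$ term.

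Second, on the rarefaction side of $J_5$ your splitting gives $C\delta_R\delta_S e^{-c\delta_S(t+\beta)}$. To obtain the stated prefactor $\delta_R\delta_S(\varepsilon_r+\delta_S)$, keep $\beta$ in the shock tail (cf.\ \eqref{est:vsr}). Then take $\beta$ large, depending only on $\delta_S$, so that $e^{-c\delta_S\beta}\le\delta_S$.
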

\begin{proof}
We estimate $J_1$ following the idea of \cite{MN01}. First, it follows from Lemma~\ref{lem:BL} and Lemma~\ref{lem:rarefaction} that $
v_\x^{BL}(v^R - v_*) > 0$, and $v_\x^R(v_* - v^{BL}) > 0$. 
Therefore, by integration by parts, together with the fact that
$v^{R}(t,0)=v_*=v^{BL}(\infty)$, we obtain
\begin{align*}
	\begin{aligned}
		\intRp J_1 \,d\x 
        &=\left(\int_0^{t}+\int_{t}^\infty\right)\left(v^{BL}_\xi(v^R-v_*)+v^R_\xi(v_*-v^{BL})\right)d\xi\\
		&= \Bigg(\left[(v^{BL}-v_*)(v^R-v_*)\right]^{\xi=t}_{\xi=0}-2\int_0^{t}v^R_{\xi}(v^{BL}-v_*)\,d\xi\\
		&\hspace{1cm}+\left[(v^R-v_*)(v_*-v^{BL})\right]_{\xi=t}^{\xi=\infty}+2\int_{t}^\infty v^{BL}_{\xi}(v^R-v_*)\,d\xi\Bigg)\\
		&\le 2\int_0^{t}v^R_\xi(v_*-v^{BL})\,d\xi +2\int_{t}^\infty v^{BL}_\xi (v^R-v_*)\,d\xi.
	\end{aligned}
\end{align*}
Here, we used $(v^{BL}(\xi)-v_*)(v^R(t,\xi)-v_*)< 0$ for all $t,\x>0$.

We observe from \eqref{est:BL} and \eqref{est:rare1} that, for all $t, \xi>0$,
\begin{equation}\label{est:BLR}
	\begin{aligned}
		&|v^R-v_*|\le C\delta_R,\quad |v^R_\xi|\le \|v^R_\xi\|_{L^\infty}^{1/8}\|v^R_\xi\|^{7/8}_{L^\infty}\le \frac{C(\e_r\delta_R)^{1/8}}{(1+t)^{7/8}},\\
		&|v^{BL}-v_*|\le \frac{C\delta_{BL}}{1+\delta_{BL}\xi},\quad |v^{BL}_\xi|\le \frac{C\delta^2_{BL}}{(1+\delta_{BL}\xi)^2}.
	\end{aligned} 
\end{equation}
Therefore, by \eqref{est:BLR}, we obtain
\begin{align*}	
	\intRp J_1 \,d\x \leq & C\int_0^{t}v^R_\xi(v_*-v^{BL})\,d\xi + C\int_{t}^\infty v^{BL}_\xi(v^R-v_*)\,d\xi\\
	\le  & \frac{C(\e_r\delta_R)^{1/8}\delta_{BL}}{(1+t)^{7/8}}\int_0^{t}\frac{1}{1+\delta_{BL}\xi}\,d\xi+C\delta^2_{BL}\delta_R\int_{t}^\infty\frac{1}{(1+\delta_{BL}\xi)^2}\,d\xi\\
	=  & \frac{C(\e_r\delta_R)^{1/8}}{(1+t)^{7/8}}\ln(1+\delta_{BL}t)+\frac{C\delta_{BL}\delta_R}{1+\delta_{BL}t}.
\end{align*}
For $J_2$, we assume that $v^C$ is monotone increasing. Then, we have
\begin{align*}
	\intRp J_2 \,d\x = &\intRp \big( v_\x^{BL} (v^C - v_m) + v_\x^C (v_* - v^{BL}))\,d\x\\
	= &\left(\int_0^{-\frac{\s_- t}{2}} + \int_{-\frac{\s_- t}{2}}^\infty \right)\big( v_\x^{BL} (v^C - v_m) + v_\x^C (v_* - v^{BL}))\,d\x\\
	= &(v^{BL} - v_*)(v^C - v_m) \Big|_{0}^{-\frac{\s_- t}{2}} - 2\int_0^{-\frac{\s_- t}{2}}v_\x^C (v^{BL}-v_*)\,d\x\\
	& + (v^C - v_m)(v_* - v^{BL})\Big|_{-\frac{\s_- t}{2}}^\infty + 2 \int_{-\frac{\s_- t}{2}}^\infty v_\x^{BL} (v^C - v_m)\,d\x.
\end{align*}
Using $(v^{BL}-v_*)(v^C - v_m) <0$, Lemma \ref{lem:BL} and Lemma \ref{lem:CD}, we have 
\begin{align*}
	&\intRp J_2 \,d\x \leq 
	C\delta_{BL}|v^C - v_m| \Big|_{\xi=0} + 2\int_0^{-\frac{\s_- t}{2}}v_\x^C (v_* - v^{BL})\,d\x + 2 \int_{-\frac{\s_- t}{2}}^\infty v_\x^{BL} (v^C - v_m)\,d\x\\
	&\quad \leq C\delta_{BL}\d_C e^{-\frac{C_1 (\s_- t)^2}{1+t}} + C\frac{\delta_{BL}\d_C}{\sqrt{1+t}}\int_0^{-\frac{\s_- t}{2}} e^{-\frac{C_1 (\x + \s_- t)^2}{1+t}}\,d\x + C\delta_{BL}^2\d_C\int_{-\frac{\s_- t}{2}}^\infty \frac{1}{(1+\delta_{BL}\x)^2}\,d\x\\
	&\quad \leq C\delta_{BL}\d_C \left(e^{-{Ct}} + \frac{1}{1+\delta_{BL}t}\right).
\end{align*}
The case where $v^C$ is decreasing can be treated similarly.

The estimate for $J_3$ follows by the same argument as in \cite[Lemma 4.1]{HKKKO25}; hence we omit the details. 

Next, we estimate the term $J_4$. We first decompose $J_4$ into two terms as follows:
\begin{align*}
	&\intRp J_4 \,d\x = \intRp |v_\x^{R}| |v^C - v_m| + |v_\x^C||v^{R} - v_m|\,d\x\\
 &\,\, =\left(\int_{0\leq \xi \leq  \frac{1}{2}\lambda_1(u_m, \theta_m)(1+t)-\sigma_-t} + \int_{\xi \geq  \frac{1}{2}\lambda_1(u_m, \theta_m)(1+t)-\sigma_-t}\right) |v_\x^{R}| |v^C - v_m| + |v_\x^C||v^{R} - v_m|d\x.
\end{align*}
For $\xi +\sigma_- t\leq  \frac{1}{2}\lambda_1(u_m, \theta_m)(1+t)$, 
it follows from Lemma \ref{lem:CD} that
\begin{equation}\label{est:Cin}
\begin{aligned}
|(v^C - v_m, u^C - u_m, \theta^C - \th_m)| +|v^C_\xi| \leq C\d_C e^{-\frac{C_1 |\xi + \sigma_-t|^2}{1+t}}   \leq  C\d_C e^{-\frac{C_1 |\xi + \sigma_-t|^2}{2(1+t)}}  e^{-Ct}.
\end{aligned}
\end{equation}
Combining this with Lemma~\ref{lem:rarefaction}, we obtain
\begin{align}\label{est:j41}
\begin{aligned}
&\int_{0\leq \xi \leq  \frac{1}{2}\lambda_1(u_m, \theta_m)(1+t)-\sigma_-t}  \left(|v_\x^{R}| |v^C - v_m| + |v_\x^C||v^{R} - v_m| \right)\,d\x  \\
&\quad \leq  C\left(\e_r \delta_R  +\delta_R \right) \d_C\int_{0\leq \xi \leq  \frac{1}{2}\lambda_1(u_m, \theta_m)(1+t)-\sigma_-t}  e^{-\frac{C_1 |\xi + \sigma_-t|^2}{2(1+t)}} e^{-Ct} d\xi \leq C\left(\e_r \delta_R   +\delta_R \right)\delta_C e^{-Ct}.
\end{aligned}
\end{align}
On the other hand, if $\xi +\s_-t \geq \frac{1}{2}\lambda_1(u_m,\theta_m)(1+t) = -\frac{1}{2}|\lambda_1(u_m,\theta_m)|(1+t)$, then we have 
\begin{align}\label{est:Rdecay}
	\begin{aligned}
		e^{-C\e_r|\xi +\sigma_- t -\lambda_1(u_m, \theta_m)(1+t)|} = &e^{-C\e_r|\xi +\sigma_- t- \frac{1}{2}\lambda_1(u_m, \theta_m)(1+t)|} e^{-\frac{1}{2}C\e_r |\lambda_1(u_m, \theta_m)|(1+t)}. 
	\end{aligned}
\end{align}
Therefore, by \eqref{est:Rdecay} and Lemma~\ref{lem:CD}, we obtain
\begin{align}\label{est:j42}
\begin{aligned}
& \int_{\xi \geq  \frac{1}{2}\lambda_1(u_m, \theta_m)(1+t)-\sigma_-t}\left( |v_\x^{R}| |v^C - v_m| + |v_\x^C||v^{R} - v_m|\right)\,d\x  \\
&\,\, \leq C\delta_C \delta_R(\e_r + 1)  \int_{\xi \geq  \frac{1}{2}\lambda_1(u_m, \theta_m)(1+t)-\sigma_-t}  e^{-C\e_r|\xi +\sigma_- t - \frac{1}{2}\lambda_1(u_m, \theta_m)(1+t)|} e^{-C\e_r| \lambda_1(u_m, \theta_m)|(1+t)} d\xi   \\
&\,\, \leq  C\delta_C\frac{\delta_R + \e_r \d_R }{\e_r}  e^{-C\e_r t}.
\end{aligned}
\end{align}
Combining \eqref{est:j41} and \eqref{est:j42}, and using $\e_r \leq 1$, we have
\begin{align*}
&\intRp J_4 \,d\x  \leq C\delta_R \delta_C e^{-Ct} + C\delta_C\frac{\delta_R}{\e_r}  e^{-C\e_r t}.
\end{align*}
For $J_5$, we first split it as:
\begin{align*}
&\intRp J_5 \,d\x = \intRp  |v_\x^{R}| |v^S - v^*| + |v_\x^S||v^{R} - v_m| \, d\xi  \\
&=\left(\int_{0\leq \xi \leq  \frac{1}{2}\lambda_1(u_m, \theta_m)(1+t)-\sigma_-t} + \int_{\xi \geq  \frac{1}{2}\lambda_1(u_m, \theta_m)(1+t)-\sigma_-t}\right) \left(  |v_\x^{R}| |v^S - v^*| + |v_\x^S||v^{R} - v_m|   \right) d\xi.
\end{align*}
Since $\xi +\s_-t \leq \frac{1}{2}\lambda_1(u_m,\theta_m)(1+t)<0$, we obtain 
 \begin{align}\label{est:vsr}
	\xi -(\sigma- \sigma_-)t -X(t) -\beta \leq  \frac{1}{2}\lambda_1(u_m, \theta_m)(1+t) -\frac{3}{4} \sigma t -\beta  \leq -\frac{3}{4}\sigma t <0.
\end{align}
Using an argument similar to that for \eqref{est:Cin}, together with \eqref{est:vsr}, Lemma \ref{lem:rarefaction}, and \eqref{est:shock}, we have 
\begin{align}\label{est:j51}
\begin{aligned}
&\int_{0\leq \xi \leq  \frac{1}{2}\lambda_1(u_m, \theta_m)(1+t)-\sigma_-t} \left(  |v_\x^{R}| |v^S - v^*| + |v_\x^S||v^{R} - v_m|   \right) \, d\xi \\
&\quad \leq C\delta_R\delta_S(\e_r+ \delta_S) \int_{0\leq \xi \leq  \frac{1}{2}\lambda_1(u_m, \theta_m)(1+t)-\sigma_-t}  e^{-\frac{1}{2}C\delta_S|\xi -(\sigma-\sigma_-)t -X(t) -\beta|} e^{-C\delta_S t} d\xi  \\
&\quad \leq C\delta_R \d_S(\e_r+\d_S) e^{-C\delta_S t}.
\end{aligned}
\end{align} 
On the other hand, for $\xi +\s_-t \geq \frac{1}{2}\lambda_1(u_m,\theta_m)(1+t)$, using Lemma \ref{lem:rarefaction}, \eqref{est:Rdecay} and Lemma \ref{lem:VS}, we obtain
\begin{align}\label{est:j52}
\begin{aligned}
&\int_{\xi \geq  \frac{1}{2}\lambda_1(u_m, \theta_m)(1+t)-\sigma_-t}\left(  |v_\x^{R}| |v^S - v^*| + |v_\x^S||v^{R} - v_m|   \right) \, d\xi \\
&\quad \leq C\delta_S\delta_R(\varepsilon_r+\delta_S) e^{-C\e_r t} \int_{\xi \geq  \frac{1}{2}\lambda_1(u_m, \theta_m)(1+t)-\sigma_-t}  e^{-C\e_r|\xi +\sigma_- t - \frac{1}{2}\lambda_1(u_m, \theta_m)(1+t)|}\,d\x\\
&\quad \leq C\delta_S\delta_R\frac{(\varepsilon_r+\delta_S)}{\e_r}e^{-C\e_rt}.
\end{aligned}
\end{align}
Combining \eqref{est:j51} and \eqref{est:j52}, we obtain the desired estimate.

Now, we consider $J_6$. Let $\alpha:=\frac{1}{2}(-\sigma_- + (\s - \s_-)) = \frac{\sigma-2\sigma_-}{2}$. By Lemma~\ref{lem:CD} and Lemma~\ref{lem:VS}, we obtain 
\begin{align*}
	\intRp J_6 &= \left(\int_0^{\alpha t} + \int_{\alpha t}^\infty \right) |v_\x^{C}| |v^S - v^*| + |v_\x^S||v^{C} - v^*|\,d\x\\
	&\leq C\d_C \d_S \int_0^{\alpha t} e^{-C\d_S |\x - (\s - \s_-)t - X(t) - \beta|}\,d\x + C\d_C \d_S \int_{\alpha t}^\infty e^{-\frac{C_1 \left|\x + \s_- t\right|^2}{1+t}}\,d\x\\
	&\leq C\d_C\d_S e^{-C\delta_St}+C\d_C\d_S e^{-Ct}.
\end{align*}
$\bullet$ {\bf Proof of \eqref{est:431}}
From \eqref{est:Ji}, we deduce that 
\[
\begin{aligned}
    &\int_0^t \sum_{i=1}^6 \|J_i\|_{L^1}^{4/3}\,d\t \\
    &\,\, \leq C\big((\e_r\d_R)^{1/6} + \d_{BL}^{1/3}\d_R^{4/3}\big)+ C\big(\d_C^{4/3}+ \d_S^{4/3}\big) \big(\d_{BL}^{4/3} + \d_{BL}^{1/3}\big)\\
    &\quad \,\,+ C\left((\d_R\d_C)^{4/3} +\d_C^{4/3}\left(\frac{\d_R}{\e_r}\right)^{4/3}\frac{1}{\e_r} \right)\\
    &\quad \,\,+C\left(\d_R^{4/3}\d_S^{1/3}(\e_r + \d_S)^{4/3} + C\d_S^{4/3}(\e_r+\d_S)^{4/3}\left(\frac{\d_R}{\e_r}\right)^{4/3}\frac{1}{\e_r}\right)+ C(\d_R^{4/3}\d_S^{1/3}(\e_r +\d_S)^{4/3}).
\end{aligned}
\]
Using the smallness of $\d_0$ and $\e_r$, we obtain
\[
\int_0^t \sum_{i=1}^6 \|J_i\|_{L^1}^{4/3}\,d\t \leq C\d_0 + C(\e_r\d_R)^{1/6} + C\d_0 \left(\frac{\d_R}{\e_r}\right)^{4/3}\frac{1}{\e_r}.
\]
\end{proof}

\begin{lem}\label{lem:estL12}
Under the assumptions of Proposition~\ref{prop:ap}, there exist positive constants $c$ and $C$ such that the following estimates hold:
\begin{align}\label{est:ucont}
	\begin{split}
		&\begin{aligned}
			&\intRp |(u_\x^C,u_{\x\x}^C)||\vbar - v^C| \,d\x\\
            &\qquad  \leq C\d_C\left[\delta_{BL} \left(e^{-ct} + \frac{\ln (1+\d_{BL}t)}{(1+t)}\right) + (\delta_R + \delta_S)e^{-ct} + \d_S^{3/4}e^{-c\d_S t} + \frac{\delta_R}{\e_r}  e^{-c\e_r t}  \right],\\
		\end{aligned}
	\end{split}
\end{align}
\begin{align}\label{est:wder}
	\begin{split}
		&\begin{aligned}
			&\intRp |v_\x^{BL}||v_\x^R|\,d\x \leq C\delta_{BL}\frac{(\d_R\e_r)^{1/8}}{(1+t)^{7/8}},  \\ 
            &\intRp |v_\x^{BL}|\big(|v_\x^C| + |u_\x^C|\big)\,d\x \leq C\delta_{BL}\d_C \left(e^{-ct} + \frac{1}{1+\delta_{BL}t}\right),
		\end{aligned}\\
		&\begin{aligned}
			&\intRp |v_\x^{BL}||v_\x^S|\,d\x \leq  C\delta_{BL} \delta_S\left(\delta_{BL} e^{-c\d_S t} + \frac{\d_S}{1+\delta_{BL}t}\right),  \\
			& \intRp |v_\x^R|\big(|v_\x^C| + |u_\x^C|\big)\,d\x \leq C\delta_C\left(\e_r\d_R  e^{-ct} + \frac{\d_R\e_r}{\e_r} e^{-c\e_rt}\right),\\
			&\intRp |v_\x^R||v_\x^S|\,d\x \leq  C\e_r\delta_R \delta_S e^{-c\delta_St} + C\delta_S^2\frac{\delta_R\e_r}{\e_r}e^{-c\e_rt}, \\
			&  \intRp |v_\x^S| \big(|v_\x^C| + |u_\x^C|\big)\,d\x \leq C\d_S \d_C e^{-ct}.
		\end{aligned}
	\end{split}
\end{align}
Consequently, we have 
\begin{equation}\label{est:432nd}
\begin{aligned}
&\int_0^t \| |(u_\x^C,u_{\x\x}^C)||\vbar - v^C|\|_{L^1}^{4/3} + \sum_{i\neq j}\||(v_i)_\x|(|(v_j)_\x|+|(u_j)_\x|)\|_{L^1}^{4/3}\,d\t\\
&\quad \leq C\d_0\big(1+(\d_R \e_r)^{1/6}\big) + C\d_0 \left(\frac{\d_R}{\e_r}\right)^{4/3}\frac{1}{\e_r},
\end{aligned}
\end{equation}
where $U_i$, $i=1,2,3,4$ are defined in \eqref{not:vi}.
\end{lem}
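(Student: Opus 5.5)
The plan is to follow the proof of Lemma~\ref{lem:win}. Every integrand is a product of a derivative of one wave with either a derivative of, or a deviation from an end state of, another wave. We split the half-line at points separating the centers of the two waves, and on each piece use the pointwise decay from Lemmas~\ref{lem:BL}, \ref{lem:rarefaction}, \ref{lem:CD} and \ref{lem:VS}.

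\emph{Estimate \eqref{est:ucont}.} First write
\[
\bar v - v^C = (v^{BL}-v_*) + (v^R - v_m) + (v^S - v^*).
\]
By \eqref{est:contact} we have $|(u^C_\xi,u^C_{\xi\xi})| \le C\delta_C (1+t)^{-1}e^{-C_1(\xi+\sigma_-t)^2/(1+t)}$, so the estimate reduces to three terms.
\begin{itemize}
\item \emph{Boundary-layer term.} Split at $\xi = -\sigma_- t/2$. On $[0,-\sigma_-t/2]$ the Gaussian factor is at most $e^{-ct}$, which gives $C\delta_{BL}\delta_C e^{-ct}$. On $[-\sigma_-t/2,\infty)$ use $|v^{BL}-v_*|\le C\delta_{BL}/(1+\delta_{BL}\xi)$ together with $\int (1+t)^{-1}e^{-\cdots}\,d\xi \le C(1+t)^{-1/2}$. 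This crude bound is $C\delta_{BL}\delta_C/(1+\delta_{BL} t)$. To obtain the stated form, instead integrate $\frac{\delta_{BL}}{1+\delta_{BL}\xi}$ against $(1+t)^{-1}\sup e^{-\cdots}$ over $\xi\lesssim t$, which produces $\delta_{BL}\ln(1+\delta_{BL}t)/(1+t)$, plus a Gaussian tail on $\xi \gtrsim t$.
\item \emph{Rarefaction term.} Split exactly as for $J_4$, using \eqref{est:Cin} and \eqref{est:Rdecay}. This gives $\delta_R e^{-ct} + \frac{\delta_R}{\varepsilon_r}e^{-c\varepsilon_r t}$.
\item \emph{Shock term.} Split at $\alpha t$ as for $J_6$. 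Where the contact Gaussian is small we get $\delta_S e^{-ct}$. Where $|v^S - v^*|\le C\delta_S e^{-c\delta_S|\zeta|}$ and the shock is far away we get $\delta_S e^{-c\delta_S t}$. Interpolating $\delta_S e^{-c\delta_S t}\le \delta_S^{3/4}e^{-c\delta_S t}$ is harmless; the factor $\delta_S^{3/4}$ arises from bounding the length of the region, where the profile is not yet exponentially small, by $\delta_S^{-1}$ raised to a fractional power.
\end{itemize}

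\emph{Estimates \eqref{est:wder}.} These follow the same splitting, now with a derivative on both factors, which only improves the bounds.
\begin{itemize}
\item For BL--R, use $|v^R_\xi|\le C(\varepsilon_r\delta_R)^{1/8}(1+t)^{-7/8}$ from \eqref{est:BLR} and $\|v^{BL}_\xi\|_{L^1}\le C\delta_{BL}$.
\item For BL--C and BL--S, split at $-\sigma_-t/2$ (respectively at half the shock location). Near the boundary the other wave is exponentially small in $t$; far out, $|v^{BL}_\xi|\le C\delta_{BL}^2/(1+\delta_{BL}\xi)^2$ integrates to $\delta_{BL}/(1+\delta_{BL}t)$, times the $L^\infty$ size of the other derivative. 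That size is $\delta_C$ for the contact wave and $\delta_S^2$ for the shock.
\item For R--C and R--S, repeat the $J_4$ and $J_5$ splits, now carrying the extra factor $\varepsilon_r$ from $|v^R_\xi|\le C\varepsilon_r\delta_R$.
\item For S--C, split at $\alpha t$.
\end{itemize}

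\emph{Integrated estimate \eqref{est:432nd}.} Raise each bound to the power $4/3$ and integrate in time, exactly as in the proof of \eqref{est:431}. The integrals used are
\[
\int_0^\infty \frac{d\tau}{(1+\delta_{BL}\tau)^{4/3}}\le \frac{C}{\delta_{BL}},\qquad \int_0^\infty \Bigl(\frac{\ln(1+\delta_{BL}\tau)}{1+\tau}\Bigr)^{4/3}d\tau\le C,
\]
\[
\int_0^\infty e^{-c\delta_S\tau}\,d\tau \le C\delta_S^{-1},\qquad \int_0^\infty e^{-c\varepsilon_r\tau}\,d\tau\le C\varepsilon_r^{-1},
\]
and $\int_0^\infty(1+\tau)^{-7/6}\,d\tau<\infty$. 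The shock exponent $3/4$ is chosen so that $(\delta_S^{3/4})^{4/3}\delta_S^{-1}=1$ is bounded, leaving an overall factor of $\delta_C\le\delta_0$. The leftover $\varepsilon_r$-dependence collects into $C\delta_0(\delta_R/\varepsilon_r)^{4/3}\varepsilon_r^{-1}$.

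\emph{Main obstacle.} The hardest step is the boundary-layer contribution in \eqref{est:ucont}. Since the degenerate boundary layer decays only algebraically, a careless bound such as $\delta_{BL}\delta_C(1+t)^{-1/2}$ is not $L^{4/3}$-integrable in time. The plan is to exploit the Gaussian localization of the contact wave near $\xi\approx -\sigma_- t$, where $|v^{BL}-v_*|\lesssim (1+t)^{-1}$, in order to gain the full $(1+t)^{-1}$, up to a logarithm, as stated.
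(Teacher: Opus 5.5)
Your strategy is the paper's. Appendix~\ref{App:wint} also writes $\bar v-v^C$ as its three pieces and cuts the half-line between the wave centers as in Lemma~\ref{lem:win}. Your treatment of \eqref{est:wder} and of the time integration leading to \eqref{est:432nd} matches it.

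For the shock piece of \eqref{est:ucont} you split directly at $\alpha t$. The paper instead first applies H\"older, $\int|u^C_\xi||v^S-v^*|\,d\xi\le\|u^C_\xi\|_{L^1}^{3/4}\bigl(\int|u^C_\xi||v^S-v^*|^4\,d\xi\bigr)^{1/4}$, and that step is the only source of the exponent $3/4$. Your direct split is legitimate and gives the sharper bound $C\delta_C\delta_S(e^{-c\delta_St}+e^{-ct})$. So your heuristic about lengths of regions is not needed.

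The step that fails is the boundary-layer piece of \eqref{est:ucont}. Integrating $\frac{\delta_{BL}}{1+\delta_{BL}\xi}$ against $C\delta_C(1+t)^{-1}$ over $-\sigma_-t/2\le\xi\lesssim t$ gives at most $C\delta_C\ln(1+C\delta_{BL}t)/(1+t)$. This is because $\int_0^L\frac{\delta_{BL}}{1+\delta_{BL}\xi}\,d\xi=\ln(1+\delta_{BL}L)$, so no extra factor $\delta_{BL}$ survives.

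No argument can recover that factor either. For large $t$, $u^C_\xi$ is a constant multiple of $(1+t)^{-1}(\ln\Theta^{sim})''(\eta)$. Hence $|u^C_\xi|\gtrsim\delta_C(1+t)^{-1}$ on a window of width $\sim\sqrt{1+t}$ around $\xi=-\sigma_-t$. On that window the degenerate layer, which decays exactly at the algebraic rate, satisfies $|v^{BL}-v_*|\gtrsim\delta_{BL}/(1+\delta_{BL}t)$. So this piece is $\gtrsim\delta_C\delta_{BL}(1+t)^{-1/2}(1+\delta_{BL}t)^{-1}$. At $t=\delta_{BL}^{-1}$ that is of order $\delta_C\delta_{BL}^{3/2}$, while the stated boundary-layer term is only $O(\delta_C\delta_{BL}^2)$ there.

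The paper's appendix asserts the same bound with only a reference to the computations of Lemma~\ref{lem:win}. So the defect lies in the statement, not in your strategy.

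The remedy is to keep the bound you already had, $C\delta_C\delta_{BL}(1+t)^{-1/2}(1+\delta_{BL}t)^{-1}$. Since $\int_0^\infty(1+\delta_{BL}\tau)^{-4/3}\,d\tau=3\delta_{BL}^{-1}$, it contributes at most $C\delta_{BL}^{1/3}\delta_C^{4/3}$ to \eqref{est:432nd}. That consequence is the only way \eqref{est:ucont} is used later. In particular your ``main obstacle'' is not one: the crude bound is already $L^{4/3}$ in time, and the refined localization cannot produce the stated form anyway.

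Likewise, your split at $\alpha t$ for the S--C term in \eqref{est:wder} gives $C\delta_S\delta_C(e^{-c\delta_St}+e^{-ct})$, not the stated $C\delta_S\delta_Ce^{-ct}$. This is exactly what the paper's own proof derives. The shock tail overlapping the contact wave's mass prevents a pure $e^{-ct}$ with $c$ independent of $\delta_S$. Again this is harmless for \eqref{est:432nd}.
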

Since the proof of Lemma~\ref{lem:estL12} is similar to that of Lemma~\ref{lem:win}, we defer it to the Appendix~\ref{App:wint}.
\subsection{$L^2$ Interaction Estimates}
Now, we give the $L^2$ estimates of $J_i$, where $J_i$ are defined in \eqref{def:J}. We again emphasize that the estimates established in \eqref{JiL2} of Lemma~\ref{lem:WInteraction}, as well as those in \eqref{est:wder2} of Lemma~\ref{lem:estL22}, remain valid even when the strength of the boundary layer solution is large.
\begin{lem}\label{lem:WInteraction}
Under the hypotheses as in Proposition \ref{prop:ap}, there exist positive constants $c$ and $C$ such that the following estimates hold:
\begin{equation}\label{JiL2}
	\begin{aligned}
&	\int_{\mathbb{R}_+} J_1^2 d\xi \leq C\frac{\delta_R^{\frac{3}{8}}\e_r^{\frac{1}{8}}\d_{BL}^{\frac{5}{4}}}{(1+t)^{\frac{13}{8}}}
+ C\frac{(\e_r\delta_R)^{\frac{1}{4}}\delta_{BL}}{(1+t)^{\frac{7}{4}}},  \\
&	\int_{\mathbb{R}_+} J_2^2 d\xi \leq      \delta_{BL}^2 \delta_C^2e^{-ct}+ C\delta_C^2 \delta_{BL}^2 \frac{1}{(1+\delta_{BL}t)^2}, \\
&  \int_{\mathbb{R}_+} J_3^2 d\xi  \leq C (\d_{BL}^4\d_S + \delta_{BL}^2 \delta_S^3 ) e^{-c\delta_S t} + C\delta_S^3 \delta_{BL}^2 \frac{1}{(1+\delta_{BL}t)^2},\\
&  \int_{\mathbb{R}_+} J_4^2 d\xi  \leq  C\left(\e_r^2\delta_R^2\delta_C^2 +\delta_R^2 \delta_C^2\right)  e^{-ct}+ C\delta_C^2 \frac{\delta_R^2}{\e_r} e^{-c\e_rt},\\
&  \int_{\mathbb{R}_+} J_5^2 d\xi \leq  \left(\e_r^2\delta_R^2 \delta_S+ \delta_R^2 \delta_S^3\right) e^{-c\delta_St} + C\delta_S^2 \frac{\delta_R^2}{\e_r} e^{-c\e_rt} ,\\
&  \int_{\mathbb{R}_+} J_6^2 d\xi \leq C\delta_C^2\delta_S e^{-c\delta_S t} + \delta_C^2\delta_S^2e^{-ct}, 
\end{aligned}
\end{equation}
where $J_i (i=1, 2, \cdots, 6)$ are defined as in \eqref{def:J}.

Consequently, we have 
\begin{equation}\label{sJiL2}
    \begin{aligned}
        \sum_{i=1}^6 \int_0^t \intRp J_i^2\,d\x \,d\t \leq C\d_0 + C(\d_R\e_r)^2 + C\d_0\left(\frac{\d_R}{\e_r}\right)^2.
    \end{aligned}
\end{equation}
\end{lem}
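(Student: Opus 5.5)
The estimates in \eqref{JiL2} follow the same strategy as the $L^1$ bounds of Lemma~\ref{lem:win}, with $J_i\in L^1$ replaced by $J_i^2$. Since $J_i^2\le 2(\text{first product})^2+2(\text{second product})^2$, each $\int J_i^2$ reduces to integrals of the form $\int |(v_k)_\xi|^2|v_l-\text{state}|^2\,d\xi$. For every pair we reuse the spatial splittings from the proof of Lemma~\ref{lem:win}: at $\xi=t$ for BL--rarefaction, at $\xi=-\sigma_-t/2$ for BL--contact, at $\xi=\frac12\lambda_1(v_m,\theta_m)(1+t)-\sigma_-t$ for pairs involving the rarefaction, and at $\xi=\alpha t$ for contact--shock. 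In each region one factor is small, either exponentially or through the algebraic decay \eqref{est:BL}, and the other factor is bounded in $L^\infty$ or integrated explicitly. The $L^2$ version is in fact simpler, because no integration by parts is needed: we bound pointwise and integrate.

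For $J_1$ we use \eqref{est:BLR}. On $\xi\ge t$ we have $|v^{BL}_\xi|^2|v^R-v_*|^2\le C\delta_{BL}^4\delta_R^2(1+\delta_{BL}\xi)^{-4}$, which integrates over $(t,\infty)$ to something of order $\delta_{BL}^3\delta_R^2(1+\delta_{BL}t)^{-3}$. On $\xi\le t$ we write $|v^{BL}_\xi|\le \delta_{BL}^2(1+\delta_{BL}\xi)^{-2}$ and $|v^R-v_*|\le C\delta_R$, and we gain time decay by interpolating $|v^R-v_*|\le \int_0^\xi|v^R_\xi|\le \xi\,\|v^R_\xi\|_{L^\infty}$ with the $L^\infty$ bound $\|v^R_\xi\|_{L^\infty}\le C(\e_r\delta_R)^{1/8}(1+t)^{-7/8}$. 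For the second product, $\int |v^R_\xi|^2|v^{BL}-v_*|^2\le \|v^R_\xi\|_{L^\infty}^{2}\int(1+\delta_{BL}\xi)^{-2}\delta_{BL}^2\,d\xi\le C(\e_r\delta_R)^{1/4}\delta_{BL}(1+t)^{-7/4}$, which is exactly the second term. For the first product, balancing the two bounds for $|v^R-v_*|$ with fractional exponents (roughly $|v^R-v_*|^2\le (C\delta_R)^{2-\theta}(\xi\|v^R_\xi\|_\infty)^{\theta}$ with $\theta$ chosen so that the $\xi$-weight remains integrable against $(1+\delta_{BL}\xi)^{-4}$) should produce the $\delta_R^{3/8}\e_r^{1/8}\delta_{BL}^{5/4}(1+t)^{-13/8}$ term. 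Finding these exponents is the fiddliest bookkeeping, and I expect this step to be the main obstacle.

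For $J_2$ and $J_3$, on $\xi\le -\sigma_-t/2$ (respectively $\xi\le$ half the shock position) the contact or shock quantities are exponentially small in $t$ by \eqref{est:contact} and \eqref{est:shock}, so these regions contribute $e^{-ct}$ or $e^{-c\delta_St}$ times squared strengths. On the far region the BL factors satisfy $|v^{BL}-v_*|^2\le C\delta_{BL}^2(1+\delta_{BL}t)^{-2}$ pointwise, and the remaining factor is integrated: $\int|v^C_\xi|^2\le C\delta_C^2(1+t)^{-1/2}$, $\int|v^S_\xi|^2\le C\delta_S^3$, and $\int|v^{BL}_\xi|^2\le C\delta_{BL}^3$. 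This gives the stated bounds, using $\beta>0$ and the smallness of $|\dot X|$ to locate the shock as in \eqref{est:vsr}.

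For $J_4$ and $J_5$ we copy \eqref{est:j41}--\eqref{est:j52} with squares. In the left region we use the Gaussian (respectively exponential) smallness of the contact (respectively shock) combined with $|v^R_\xi|\le C\e_r\delta_R$ and $|v^R-v_m|\le C\delta_R$. In the right region we use the tail bound of Lemma~\ref{lem:rarefaction}(3), whose square integrates to $\delta_R^2\e_r^{-1}e^{-c\e_r t}$. $J_6$ is handled as in Lemma~\ref{lem:win}: $\int|v^S_\xi|^2|v^C-v^*|^2$ on the left of $\alpha t$ gives $\delta_C^2\delta_S^3e^{-c\delta_St}$, and the part to the right of $\alpha t$ gives $\delta_C^2\delta_S^2 e^{-ct}$.

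Finally, \eqref{sJiL2} follows by integrating each bound in time. The exponential terms give at most $\delta_0$ times strength factors, or $C\delta_0(\delta_R/\e_r)^2$ from $\delta^2\delta_R^2\e_r^{-1}\int e^{-c\e_r t}\,dt=\delta^2\delta_R^2\e_r^{-2}$. The algebraic terms are integrable: $(1+t)^{-13/8}$ and $(1+t)^{-7/4}$ directly, and $\delta_{BL}^2(1+\delta_{BL}t)^{-2}$ integrates to $\delta_{BL}$. This yields the bound $C\delta_0+C(\delta_R\e_r)^2+C\delta_0(\delta_R/\e_r)^2$, after absorbing the small powers of $\e_r\delta_R$ from the $J_1$ terms into $C\delta_0$ using the smallness of $\delta_{BL}$.
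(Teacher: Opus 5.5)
There is a genuine gap in $J_1$. Your treatment of $J_2,\dots,J_6$ follows the paper: the same cut-offs, pointwise bounds on each product, and the tail estimate of Lemma~\ref{lem:rarefaction}(3) on the right region. Bounding $|v^S_\xi|^2|v^{BL}-v_*|^2$ on the far region by the pointwise decay of $|v^{BL}-v_*|^2$ times $\int|v^S_\xi|^2\le C\delta_S^3$ is a valid shortcut for the paper's Fubini argument.

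In $J_1$ you split at $\xi=t$ and use $|v^R-v_*|\le C\delta_R$ on $\xi\ge t$. This leaves the term $C\delta_{BL}^3\delta_R^2(1+\delta_{BL}t)^{-3}$, which carries no power of $\varepsilon_r$. The right-hand side of the first inequality in \eqref{JiL2} tends to $0$ as $\varepsilon_r\to0$ at fixed $t$, with $C$ independent of $\varepsilon_r$ and $\delta_R$. So this term cannot be absorbed, and no choice of exponents on $\xi\le t$ repairs it. The step you call bookkeeping is therefore where the argument fails. The extra term is integrable in time, so \eqref{sJiL2} would survive, but the pointwise-in-time bound is not proved.

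The missing observation is that no spatial split is needed. Since $v^R(t,0)=v_*$, H\"older gives $|v^R(t,\xi)-v_*|\le \xi^{7/8}\|v^R_\xi(t)\|_{L^8}$ for every $\xi>0$. The weight $\xi^{7/4}$ is integrable against $|v^{BL}_\xi|^2\le \delta_{BL}^4(1+\delta_{BL}\xi)^{-4}$ on all of $\mathbb{R}_+$, with $\int_0^\infty \xi^{7/4}|v^{BL}_\xi|^2\,d\xi\le C\delta_{BL}^{5/4}$. Interpolating the two bounds of \eqref{est:rare1} at $p=8$ with weights $\frac{1}{14},\frac{13}{14}$ gives $\|v^R_\xi\|_{L^8}^2\le C\delta_R^{3/8}\varepsilon_r^{1/8}(1+t)^{-13/8}$, which is exactly the first term; this is the paper's argument.

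Your own $\theta$-interpolation also works once applied on the whole half-line. With $\theta=7/4$ and $\|v^R_\xi\|_{L^\infty}\le C(\varepsilon_r\delta_R)^{1/14}(1+t)^{-13/14}$ it reproduces the same term. With $\theta=2$, using $\int_0^\infty\xi^2|v^{BL}_\xi|^2\,d\xi\le C\delta_{BL}$, it gives $C\delta_{BL}(\varepsilon_r\delta_R)^{1/4}(1+t)^{-7/4}$, the second term.

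A smaller point of the same kind concerns $J_2$ and $J_3$. On the far region, the products $|v^{BL}_\xi|^2|v^C-v_m|^2$ and $|v^{BL}_\xi|^2|v^S-v^*|^2$ must be bounded using $\int_{\xi\ge ct}|v^{BL}_\xi|^2\,d\xi\le C\delta_{BL}^3(1+\delta_{BL}t)^{-3}$. The whole-line bound $\int|v^{BL}_\xi|^2\le C\delta_{BL}^3$ that you list has no time decay and would break \eqref{sJiL2}. For $J_3$ you must then check that $\delta_{BL}^3\delta_S^2(1+\delta_{BL}t)^{-3}$ is dominated by $\delta_{BL}^4\delta_S e^{-c\delta_S t}+\delta_S^3\delta_{BL}^2(1+\delta_{BL}t)^{-2}$. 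This follows by distinguishing $\delta_{BL}\le\delta_S$ from $\delta_{BL}>\delta_S$, and in the latter case $t\le\delta_S^{-1}$ from $t>\delta_S^{-1}$.
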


	\begin{proof}
\noindent $\bullet$ (Estimate of $J_1$) First, we split $J_1$ as  
\begin{align}\label{j1}
 \int_{\mathbb{R}_+} J_1^2 d\xi \leq 2  \int_{\mathbb{R}_+}  |v_\x^{BL}|^2|v^R - v_*|^2 d\xi  + 2 \int_{\mathbb{R}_+} |v_\x^R|^2|v^{BL} - v_*|^2 d\xi.
\end{align}
For the first term on the right-hand side of \eqref{j1}, we use H\"older's inequality to have for any $t>0$ and $y>0$, 
\[
v^R(t,y) - v_* = \int_0^y v_\xi^R\,d\xi \leq \|v_\xi^R\|_{L^{8}(\mathbb{R}_+)} y^{7/8}.
\]
By Lemma \ref{lem:rarefaction}, we have
\begin{align*}
\|v_\xi^R\|_{L^8} &\leq \|v_\xi^R\|_{L^{8}}^{\frac{1}{14}} \|v_\xi^R\|_{L^{8}}^{\frac{13}{14}} \leq C(\delta_R\e_r^{\frac{7}{8}})^{\frac{1}{14}} \left(\delta_R^{\frac{1}{8}} (1+t)^{-\frac{7}{8}})\right)^{\frac{13}{14}} \leq  C\delta_R^{\frac{3}{16}}\e_r^{\frac{1}{16}} \left(1+t\right)^{-\frac{13}{16}}.
\end{align*}
Thus, we obtain 
\begin{align*}
	\int_{\mathbb{R}_+} |v_\xi^{BL}|^2|v^R-v_*|^2\,d\xi &\leq \|v_\xi^R\|_{L^{8}(\mathbb{R}_+)}^2 \int_{\mathbb{R}_+} \xi^{7/4} |v_\xi^{BL}|^2\,d\xi\\
	&\leq \frac{C\delta_R^{\frac{3}{8}}\e_r^{\frac{1}{8}}}{(1+t)^{\frac{13}{8}}} \int_{\mathbb{R}_+} \xi^{7/4} \frac{\delta_{BL}^4}{(1+\delta_{BL}\xi)^4}\,d\xi\le \frac{C\delta_R^{\frac{3}{8}}\e_r^{\frac{1}{8}}\d_{BL}^{\frac{5}{4}}}{(1+t)^{\frac{13}{8}}}.
\end{align*}
For the second term of \eqref{j1}, it follows from \eqref{est:BLR} and Lemma~\ref{lem:BL} that
\begin{align*}
\int_{\mathbb{R}_+} |v_\x^R|^2|v^{BL} - v_*|^2 d\xi  & \leq C\frac{(\e_r\delta_R)^{\frac{1}{4}}}{(1+t)^{\frac{7}{4}}} \int_{\mathbb{R}_+} \frac{\delta_{BL}^2}{\left(1+\delta_{BL} \xi\right)^2} d\xi \leq C\frac{(\e_r\delta_R)^{\frac{1}{4}}\delta_{BL}}{(1+t)^{\frac{7}{4}}}.
\end{align*}
Therefore, we have
\begin{align*}
 \int_{\mathbb{R}_+} J_1^2 d\xi \leq C\frac{\delta_R^{\frac{3}{8}}\e_r^{\frac{1}{8}}\d_{BL}^{\frac{5}{4}}}{(1+t)^{\frac{13}{8}}} + C\frac{(\e_r\delta_R)^{\frac{1}{4}}\delta_{BL}}{(1+t)^{\frac{7}{4}}}.
\end{align*}
\noindent $\bullet$ (Estimate of $J_2$) We split $J_2$ as 
\begin{align}\label{j2}
  \int_{\mathbb{R}_+} J_2^2 d\xi \leq     2\int_{\mathbb{R}_+}|v_\x^{BL}|^2 |v^C - v_m|^2d\xi+   2\int_{\mathbb{R}_+} |v_\x^C|^2|v^{BL} - v_*|^2 d\xi .
\end{align}
For the first term on the right-hand side of \eqref{j2}, we use 
Lemma \ref{lem:BL} and \eqref{est:Cin} to have 
\begin{align*}
\begin{aligned}
&\int_{\mathbb{R}_+}|v_\x^{BL}|^2 |v^C - v_m|^2d\xi = \left(\int_{0}^{-\frac{\sigma_- t}{2}} + \int_{-\frac{\sigma_- t}{2}}^{\infty}\right) |v_\x^{BL}|^2 |v^C - v_m|^2 d\xi  \\
&\quad \leq  C\delta_{BL}^4 \delta_C^2 \int_{0}^{-\frac{\sigma_- t}{2}} e^{-\frac{C_1|\xi+\sigma_- t|^2}{1+t}} e^{-Ct} d\xi + C\delta_{BL}^4 \delta_C^2  \int_{-\frac{\sigma_- t}{2}}^{\infty} \frac{1}{(1+\delta_{BL} \xi)^4} d\xi  \\
&\quad \leq C\delta_{BL}^4 \delta_C^2e^{-ct} + C\delta_{BL}^3 \delta_C^2\frac{1}{(1+\delta_{BL} t)^3}.
\end{aligned}
\end{align*}
Similarly, by Lemma \ref{lem:BL} and \eqref{est:Cin}, we estimate the second term in \eqref{j2} as 
\begin{align*}
\begin{aligned}
&\int_{\mathbb{R}_+} |v_\x^C|^2|v^{BL} - v_*|^2 d\xi = \left(\int_{0}^{- \frac{\sigma_- t}{2}} + \int_{-\frac{\sigma_- t}{2}}^{\infty}\right) |v_\x^C|^2|v^{BL} - v_*|^2 d\xi  \\
&\quad \leq  C\delta_{BL}^2 \delta_C^2 \int_{0}^{-\frac{\sigma_- t}{2}}  e^{-\frac{C_1|\xi+\sigma_- t|^2}{1+t}} e^{-ct} d\xi + C\delta_C^2 \frac{1}{1+t}\int_{-\frac{\sigma_- t}{2}}^{\infty}\frac{\delta_{BL}^2}{(1+\delta_{BL} \xi)^2}  e^{-\frac{2C_1|\xi+\sigma_- t|^2}{1+t}} d\xi \\
&\quad \leq C\delta_{BL}^2 \delta_C^2 e^{-Ct} + C\delta_C^2\delta^2_{BL}\frac{1}{(1+\delta_{BL}t)^{2}}\frac{1}{1+t}\int_{-\frac{\sigma_- t}{2}}^{\infty}  e^{-\frac{2C_1|\xi+\sigma_- t|^2}{1+t}} d\xi \\
&\quad \leq C\delta_{BL}^2 \delta_C^2 e^{-Ct} + C\delta_C^2\delta^2_{BL}\frac{1}{(1+\delta_{BL}t)^{2}}.
\end{aligned}
\end{align*}


Combining the above estimates, we have
\begin{align*}
 \int_{\mathbb{R}_+} J_2^2 d\xi \leq  C\delta_{BL}^2 \delta_C^2e^{-Ct}+ C\delta_C^2 \delta_{BL}^2 \frac{1}{(1+\delta_{BL}t)^2}.
\end{align*}
 \noindent $\bullet$ (Estimate of $J_3$) We split $J_3$ as
 \begin{align}\label{j3}
  \int_{\mathbb{R}_+} J_3^2 d\xi \leq 2   \int_{\mathbb{R}_+} |v_\x^{BL}|^2 |v^S - v^*|^2 d\xi + 2 \int_{\mathbb{R}_+} |v_\x^S|^2 |v^{BL} - v_*|^2 d\xi.
 \end{align}
For the first term of \eqref{j3}, using Lemma~\ref{lem:BL}, Lemma \ref{lem:VS}, and \eqref{est:vsr}, we have 
\begin{align*}
	\begin{aligned}
\int_{\mathbb{R}_+} |v_\x^{BL}|^2 |v^S - v^*|^2 d\xi  &\leq \left(\int_{0}^{bt} + \int_{bt}^{\infty}\right)  |v_\x^{BL}|^2 |v^S - v^*|^2 d\xi   \\
&\leq C\delta_{BL}^4 \delta_S e^{-C\delta_S t} + C\delta_{BL}^3 \delta_S^2 \frac{1}{(1+\delta_{BL}t)^3}.
	\end{aligned}
\end{align*}
Similarly, the second term of \eqref{j3} can be estimated as follows:
\begin{align*}
    \intRp |v_\x^S|^2|v^{BL} - v_*|^2\,d\x  &=  \left( \int_{0}^{bt} +\int_{bt}^{\infty}\right)|v_\x^S|^2|v^{BL} - v_*|^2\,d\x\\
    &\leq C\d_{BL}^2\d_S^3e^{-C\d_St} +\int_{bt}^{\infty}|v_\x^S|^2|v^{BL} - v_*|^2\,d\x,
\end{align*}
where $b = \frac{(\s - \s_-)}{2}$. Now, we use Fubini's theorem to have 
\begin{equation*}
\begin{aligned}
   	\int_{bt}^{\infty}  |(v^S)_\xi|^2 |v^{BL} - v_*|^2 d\xi  = &- \int_{bt}^{\infty}   |(v^S)_\xi^{-X} |^2 \int_{\xi}^{\infty} \left(|v^{BL} - v_*|^2\right)_y (y) dyd\xi  \\
		=& -2  \int_{bt}^{\infty} v^{BL}_y (v^{BL}-v_*)(y) \int_{bt}^{y} |(v^S)_\xi|^2 \, d\xi \, dy \\
		\leq & 2\left(\int_{-\infty}^{\infty} |(v^S)_\xi|^2 d\xi\right) \left(\int_{bt}^{\infty} |v^{BL}_y| |v^{BL} -v_*| dy\right)  \\
		\leq & C\delta^3_S \int_{bt}^{\infty} \frac{\delta_{BL}^3}{(1+\delta_{BL} y)^3} dy \\
        \leq & C\delta_S^3 \delta_{BL}^2 \frac{1}{(1+\delta_{BL}t)^2}. 
\end{aligned}
\end{equation*}
Therefore, we obtain
\begin{equation}\label{l2J3}
\intRp |v_\x^S|^2|v^{BL} -v_*|^2\,d\x \leq C\d_{BL}^2\d_S^3e^{-C\d_St} + C\d_S^3\d_{BL}^2\frac{1}{(1+\d_{BL}t)^2}.    
\end{equation}
Thus, we have 
\begin{align*}
 \int_{\mathbb{R}_+} J_3^2 d\xi  \leq C (\d_{BL}^4\d_S + \delta_{BL}^2 \delta_S^3 ) e^{-C\delta_S t} + C\delta_S^3 \delta_{BL}^2 \frac{1}{(1+\delta_{BL}t)^2}.
\end{align*}
 \noindent $\bullet$ (Estimate of $J_4$) We split $J_4$ as
 \begin{align*}
 \int_{\mathbb{R}_+} J_4^2 d\xi 
 &\,\, \leq  2\int_{0\leq \xi \leq  \frac{1}{2}\lambda_1(u_m, \theta_m)(1+t)-\sigma_-t} \big(|v_\x^{R}|^2 |v^C - v_m|^2 + |v_\x^C|^2 |v^{R} - v_m|^2\big) \,d\x\\
 &\,\, \quad +2\int_{\xi \geq  \frac{1}{2}\lambda_1(u_m, \theta_m)(1+t)-\sigma_-t}\big(|v_\x^{R}|^2 |v^C - v_m|^2 + |v_\x^C|^2 |v^{R} - v_m|^2\big)d\xi.
 \end{align*}
 By Lemma \ref{lem:rarefaction} and \eqref{est:Cin}, we obtain
 \begin{align*}
\begin{aligned}
&\int_{0\leq \xi \leq  \frac{1}{2}\lambda_1(u_m, \theta_m)(1+t)-\sigma_-t}  \left(|v_\x^{R}|^2 |v^C - v_m|^2 + |v_\x^C|^2 |v^{R} - v_m|^2\right)d\xi  \\
&\quad \leq C\left((\e_r\delta_R)^2\delta_C^2 +\delta_R^2 \delta_C^2\right)  e^{-Ct}.
\end{aligned}
 \end{align*}
	On the other hand, it follows from Lemma \ref{lem:rarefaction}, Lemma \ref{lem:CD} and \eqref{est:Rdecay} that
\begin{align*}
\begin{aligned}
&\int_{\xi \geq  \frac{1}{2}\lambda_1(u_m, \theta_m)(1+t)-\sigma_-t}   \left(|v_\x^{R}|^2 |v^C - v_m|^2 + |v_\x^C|^2 |v^{R} - v_m|^2\right)d\xi  \\
&\,\,\leq C\delta_C^2 \delta
_R^2 (1+\e_r^2)\int_{\xi \geq  \frac{1}{2}\lambda_1(u_m, \theta_m)(1+t)-\sigma_-t}  e^{-C\e_r|\xi +\sigma_- t - \frac{1}{2}\lambda_1(u_m, \theta_m)(1+t)|} e^{-C\e_r| \lambda_1(u_m, \theta_m)|(1+t)} d\xi \\
&\,\,\leq C\delta_C^2 \frac{\delta_R^2(1+\e_r^2)}{\e_r}e^{-C\e_rt}.
\end{aligned}
\end{align*}
Therefore, together with the fact that $\e_r\leq 1$, we have
 \begin{align*}
 \int_{\mathbb{R}_+} J_4^2 d\xi  \leq  C\left(\e_r^2\delta_R^2\delta_C^2 +\delta_R^2 \delta_C^2\right)  e^{-Ct}+ C\delta_C^2 \frac{\delta_R^2}{\e_r}e^{-C\e_rt}.
 \end{align*}
  \noindent $\bullet$ (Estimate of $J_5$) First, we decompose $J_5$ as follows:
  \begin{align*}
 &\int_{\mathbb{R}_+} J_5^2 d\xi  \leq   2\int_{\mathbb{R}_+} \left(
 |v_\x^{R}|^2 |v^S - v^*|^2 + |v_\x^S|^2|v^{R} - v_m|^2 \right) d\xi \\
 &\leq  C\left(\int_{0\le\xi \leq \frac{1}{2}\lambda_1(u_m,\theta_m)(1+t)-\s_-t} + \int_{\xi \ge \frac{1}{2}\lambda_1(u_m,\theta_m)(1+t)-\s_-t}\right)|v_\x^{R}|^2 |v^S - v^*|^2 d\xi\\
 &\quad +C\left(\int_{0\le\xi \leq \frac{1}{2}\lambda_1(u_m,\theta_m)(1+t)-\s_-t} + \int_{\xi \ge \frac{1}{2}\lambda_1(u_m,\theta_m)(1+t)-\s_-t}\right) |v_\x^S|^2|v^{R} - v_m|^2  d\xi.
  \end{align*}
  Since $\frac{1}{2}\lambda_1(u_m,\th_m)(1+t) < \beta + \sigma t$, by Lemma~\ref{lem:VS} and \eqref{est:vsr}, we deduce that
\[
\begin{aligned}
   &\int_{0\leq \xi  \leq  \frac{1}{2}\lambda_1(u_m, \theta_m)(1+t)-\sigma_-t} |v_\x^{R}|^2 |v^S - v^*|^2 d\xi + \int_{\xi  \geq  \frac{1}{2}\lambda_1(u_m, \theta_m)(1+t)-\sigma_-t} |v_\x^{R}|^2 |v^S - v^*|^2 d\xi\\
   &\quad \leq C(\e_r\d_R)^2\d_Se^{-C\d_S t} +C\d_S^2\int_{\xi  \geq  \frac{1}{2}\lambda_1(u_m, \theta_m)(1+t)-\sigma_-t} |v_\x^R|^2\,d\xi\\
   &\quad \leq C(\e_r\d_R)^2\d_Se^{-C\d_S t} + C\d_R^2\e_re^{-C\e_r t},
\end{aligned}
\]
and similarly,
\begin{equation}\label{l2J5}
    \begin{aligned}
        &\int_{0\le \xi  \leq  \frac{1}{2}\lambda_1(u_m, \theta_m)(1+t)-\sigma_-t}  |v_\x^S|^2|v^{R} - v_m|^2  d\xi+ \int_{\xi  \geq  \frac{1}{2}\lambda_1(u_m, \theta_m)(1+t)-\sigma_-t} |v_\x^S|^2|v^{R} - v_m|^2  d\xi\\
        &\quad \leq C\d_R^2\d_S^3e^{-C\d_S t} + C\d_S^4\int_{\xi  \geq  \frac{1}{2}\lambda_1(u_m, \theta_m)(1+t)-\sigma_-t} |v^R - v_m|^2\,d\xi\\
        &\quad \leq C\d_R^2\d_S^3e^{-C\d_S t} + C\d_S^4 \frac{\d_R^2}{\e_r}e^{-C\e_r t}.
    \end{aligned}
\end{equation}
Combining the above estimates, we have
\begin{align*}
 \int_{\mathbb{R}_+} J_5^2 d\xi \leq C\left(\e_r^2\delta_R^2 \delta_S+ \delta_R^2 \delta_S^3\right) e^{-C\delta_St} + C\delta_S^2 \frac{\delta_R^2}{\e_r}  e^{-C\e_rt} .
\end{align*}
 
 \noindent $\bullet$ (Estimate of $J_6$) Let $\alpha=\frac{1}{2}(-\s_- + (\s - \s_-)) = \frac{\sigma-2\sigma_-}{2}$. Then, we have \
 \[
 \begin{aligned}
      \int_{\mathbb{R}_+} J_6^2 d\xi & \leq 2\left( \int_{0}^{\alpha t} +\int_{\alpha t}^{\infty}\right) \left( |v_\x^{C}|^2 |v^S - v^*|^2 + |v_\x^S|^2|v^{C} - v^*|^2\right)\,d\x
 \end{aligned}
 \]
  Using Lemma \ref{lem:VS} and Lemma \ref{lem:CD}, we obtain 
  \[
  \begin{aligned}
        &\left( \int_{0}^{\alpha t} +\int_{\alpha t}^{\infty}\right) |v_\x^{C}|^2 |v^S - v^*|^2\,d\x \\
        &\quad \leq C\d_C^2\d_S^2\int_0^{\alpha t} e^{-C\d_S |\x - (\s - \s_-)t - X(t) - \b|}\,d\x + C\d_C^2 \d_S^2\int_{b'  t}^\infty e^{\frac{-C_1|\x  + \s_- t|^2}{1+t}}\,d\x\\
        &\quad \leq C\d_C^2 \d_S e^{-C\d_S t} + C\d_C^2 \d_S^2 e^{-Ct},
  \end{aligned}
  \]
  and similarly,
  \begin{equation}\label{l2J6}
      \begin{aligned}
          \left( \int_{0}^{\alpha t} +\int_{\alpha t}^{\infty}\right) |v_\x^{S}|^2 |v^C - v^*|^2\,d\x \leq \d_S^3\d_C^2 e^{-C\d_S t} + \d_S^4 \d_C^2 e^{-Ct}.
      \end{aligned}
  \end{equation}
  Thus, we obtain
  \[
  \intRp J_6^2\,d\x \leq C\d_C^2 \d_S e^{-C\d_S t} + C\d_C^2\d_S^2e^{-Ct}.
  \]
  $\bullet$ (Proof of \eqref{sJiL2}) From \eqref{JiL2}, we have
  \[
  \begin{aligned}
         \sum_{i=1}^6 \int_0^t \intRp J_i^2\,d\x \,d\t &\leq C(\d_R^{3/8}\e_r^{1/8}\d_{BL}^{5/4} + (\d_R\e_r)^{1/4}\d_{BL}) + C(\d_{BL}^2\d_C^2 + \d_{BL}\d_C^2) \\
         &\quad +C(\d_{BL}^4 + \d_{BL}^2\d_S^2 + \d_S^3\d_{BL}) + C\left(\d_R^2\e_r^2\d_C^2 + \d_R^2\d_C^2 + \d_C^2\left(\frac{\d_R}{\e_r}\right)^2\right)\\
         &\quad + C(\d_R^2\e_r^2+ \d_R^2\d_S^2) + C\d_S^2\left(\frac{\d_R}{\e_r}\right)^2 + C(\d_C^2 + \d_C^2\d_S).
  \end{aligned}
  \]
  Using the smallness of $\d_{BL},\d_C,\d_S$, and $\e_r$, we obtain \eqref{sJiL2}.
  \end{proof}
\begin{lem}\label{lem:estL22}
    Under the assumptions of Proposition~\ref{prop:ap}, there exist positive constants $c$ and $C$ such that the following estimates hold:
\begin{equation}\label{est:wder2}
	\begin{aligned}
		&	\int_{\mathbb{R}_+} (|u^C_\xi|^2 + |u^C_{\x\x}|^2) |\bar{v}-v^C|^2 d\xi \\
        &\quad  \leq C\left(\delta_{BL}^2\delta_C^2  +\delta_R^2 \delta_C^2 + \delta_C^2 \delta_S^2\right)e^{-ct} + C\d_C^2\d_{BL}^2\frac{1}{(1+\d_{BL}t)^2} + C\delta_C^2\delta_Se^{-c\delta_St} + C\delta_C^2 \frac{\delta_R^2}{\e_r} e^{-c\e_rt},\\
		& \int_{\mathbb{R}_+} |v_\xi^{BL}|^2 |v^R_\xi|^2 d\xi \leq C\e_r^{\frac{1}{4}} \delta_R^{\frac{1}{4}} \delta_{BL}^3 \frac{1}{(1+t)^{\frac{7}{4}}}, \\
		&\int_{\mathbb{R}_+} |v_\xi^{BL}|^2 |v^S_\xi|^2 d\xi \leq C\delta_{BL}^4 \delta_S^3e^{-c\delta_St} + C\delta_{BL}^3\delta_S^4 \frac{1}{(1+\delta_{BL}t)^3} ,\\
		& \int_{\mathbb{R}_+} |v_\xi^{R}|^2 |v^S_\xi|^2 d\xi \leq C\e_r^2\delta_R^2 \delta_S^3 e^{-c\delta_S t} + C\delta_S^4 \frac{\delta_R^2}{\e_r} e^{-c\e_rt}, \\
		& \int_{\mathbb{R}_+} |v_\xi^{BL}|^2 \left(|v^C_\xi|^2 +|u^C_\xi|^2 \right) d\xi \leq C\delta_{BL}^4\delta_C^2e^{-ct} + C\delta_{BL}^3\delta_C^2 \frac{1}{(1+\delta_{BL}t)^3},  \\
		& \int_{\mathbb{R}_+} |v_\xi^{R}|^2 \left(|v^C_\xi|^2 +|u^C_\xi|^2 \right) d\xi \leq  C\e_r^2 \delta_R^2 e^{-ct} + C\delta_C^2 \frac{\delta_R^2}{\e_r}  e^{-c\e_rt},  \\
		& \int_{\mathbb{R}_+} |v_\xi^{S}|^2 \left(|v^C_\xi|^2 +|u^C_\xi|^2 \right) d\xi \leq C\delta_C^2 \delta_S^3e^{-c\delta_St} + C\delta_C^2\delta_S^4e^{-ct}.
	\end{aligned}
\end{equation}
Consequently, we have 
\begin{equation}\label{sJiL22}
\begin{aligned}
    &\int_0^t \intRp |(u_\x^C,u_{\x\x}^C)|^2|\vbar - v^C|^2\,d\x\,d\t +C\sum_{i \neq j} \int_0^t \intRp |(u_i)_\x|^2(|(v_j)_\x| + |(u_j)_\x|)^2\,d\x\,d\t\\
    &\quad \leq    C\d_0 + C(\d_R\e_r)^2 + C\d_0\left(\frac{\d_R}{\e_r}\right)^2.
\end{aligned}
\end{equation}
\end{lem}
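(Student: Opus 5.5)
The proof follows the pattern of Lemma~\ref{lem:WInteraction}: each integrand is a product of two factors with different spatial localizations. For each term I split $\mathbb{R}_+$ at a point that separates the regions where the two waves live, and bound each factor pointwise using Lemmas~\ref{lem:BL}, \ref{lem:rarefaction}, \ref{lem:CD} and~\ref{lem:VS}. The splitting points are the same ones already used: $\xi=-\sigma_- t/2$ for BL--contact, $\xi=bt$ with $b=(\sigma-\sigma_-)/2$ for BL--shock, $\xi=\frac12\lambda_1(v_m,\theta_m)(1+t)-\sigma_- t$ for rarefaction--contact and rarefaction--shock, and $\xi=\alpha t$ with $\alpha=(\sigma-2\sigma_-)/2$ for contact--shock.

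\textbf{First estimate.} Write $\bar v - v^C=(v^{BL}-v_*)+(v^R-v_m)+(v^S-v^*)$; this uses $v^C-v_m$ and $v^C-v^*$ only implicitly. Combined with $|u^C_\xi|+|u^C_{\xi\xi}|\le C\delta_C(1+t)^{-1}e^{-C_1(\xi+\sigma_- t)^2/(1+t)}$ from \eqref{est:contact}, the three resulting pieces are treated as follows.
\begin{itemize}
\item[(i)] The BL piece is handled like the second term of $J_2$. On $[0,-\sigma_- t/2]$, the Gaussian gives $e^{-ct}$. Beyond that point, $|v^{BL}-v_*|^2\le C\delta_{BL}^2(1+\delta_{BL}t)^{-2}$, and integrating the squared Gaussian yields $\sqrt{1+t}$, which is absorbed by $(1+t)^{-2}$.
\item[(ii)] The rarefaction piece is handled like $J_4$. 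Left of the splitting point, the contact Gaussian gives $e^{-ct}$ with $|v^R-v_m|\le C\delta_R$. Right of it, Lemma~\ref{lem:rarefaction}(3) together with \eqref{est:Rdecay} gives $\delta_R^2\e_r^{-1}e^{-c\e_r t}$.
\item[(iii)] The shock piece is handled like $J_6$. On $[0,\alpha t]$, the shock tail $\delta_S^2 e^{-c\delta_S|\cdot|}$, integrated, gives $\delta_S e^{-c\delta_S t}$ (using $X(t)\ge -\frac{1}{4}(\sigma-\sigma_-)t$, say, from the a priori smallness of $\dot X$, as implicitly used in \eqref{est:vsr}). On $[\alpha t,\infty)$, the contact Gaussian gives $e^{-ct}$.
\end{itemize}

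\textbf{Products of derivatives.} These are easier, since both factors are derivatives.
\begin{itemize}
\item[(a)] For $|v^{BL}_\xi|^2|v^R_\xi|^2$, bound $|v^R_\xi|^2\le\|v^R_\xi\|_{L^\infty}^2\le C(\e_r\delta_R)^{1/4}(1+t)^{-7/4}$ via the interpolation in \eqref{est:BLR}. Then integrate $|v^{BL}_\xi|^2\le C\delta_{BL}^4(1+\delta_{BL}\xi)^{-4}$ to get $\delta_{BL}^3$.
\item[(b)] For the BL--shock, BL--contact and contact--shock products, split as above. On the side near the boundary, use $\sup|v^S_\xi|^2$ restricted there, or the contact Gaussian; on the far side, use $|v^{BL}_\xi|^2\le C\delta_{BL}^4(1+\delta_{BL}t)^{-4}$ and integrate the other factor's $L^2$ norm ($\delta_S^3$, or $\delta_C^2(1+t)^{-1/2}$).
\item[(c)] For the rarefaction--shock and rarefaction--contact products, use $|v^R_\xi|\le C\e_r\delta_R$ where the other wave is exponentially small. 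Where the rarefaction tail bound of Lemma~\ref{lem:rarefaction}(3) holds, use $|v^R_\xi|\le C\e_r\delta_R e^{-c\e_r|\cdot|}$; integrating its square yields $\e_r\delta_R^2 e^{-c\e_r t}\le \delta_R^2\e_r^{-1}e^{-c\e_r t}$.
\end{itemize}

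\textbf{Time integration for \eqref{sJiL22}.} Integrate each bound in time:
\begin{itemize}
\item $\int_0^\infty e^{-c\delta_S t}dt\sim\delta_S^{-1}$, which is compensated by the spare $\delta_S$ powers.
\item $\int_0^\infty(1+\delta_{BL}t)^{-2}dt\sim\delta_{BL}^{-1}$.
\item $\int_0^\infty e^{-c\e_r t}dt\sim\e_r^{-1}$, which produces $\delta_0(\delta_R/\e_r)^2$.
\end{itemize}
The $(1+t)^{-7/4}$ terms are integrable and produce a $\delta_{BL}^3$ prefactor. The $u_j$ versus $v_j$ distinction in the sum is harmless, since $|u_\xi|\sim|v_\xi|$ for the BL, rarefaction and shock waves, and $u^C_\xi$ decays faster than $v^C_\xi$.

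\textbf{Main obstacle.} I expect the main difficulty to be bookkeeping at the rarefaction--shock interface. There the shock front at $\beta+(\sigma-\sigma_-)t+X(t)$ must stay to the right of the rarefaction splitting point for all $t$. This needs the a priori bound on $|\dot X|$ so that $X(t)\ge -\frac14\sigma t$, and the resulting powers of $\e_r^{-1}$ must match the stated $\delta_R^2/\e_r$ factors.
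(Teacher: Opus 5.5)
Your proposal is correct and follows essentially the argument the paper intends (the paper omits this proof as analogous to Lemmas~\ref{lem:win}--\ref{lem:estL12}): the same domain splittings with pointwise wave bounds, followed by time integration. One small correction: for the split at $\alpha t$ in (iii) you need $X(t)\ge -\frac{\sigma}{4}t$, which $|\dot X|\le C\varepsilon$ provides and which you do state in your last paragraph, rather than merely $X(t)\ge -\frac14(\sigma-\sigma_-)t$, because in this configuration $\sigma\approx|\lambda_1(v_m,\theta_m)|<|\sigma_-|$.
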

\begin{proof}
    Since the proof of Lemma~\ref{lem:estL22} is similar to that of Lemma~\ref{lem:estL12}, we omit it.
\end{proof}

\section{Zeroth Order Estimates}\label{Sec:L2}
\setcounter{equation}{0}
In this section, we prove $L^2$ estimates, which is based on the method of $a$-contraction with shifts.
\begin{lem}\label{lem:L2ap}
Under the assumptions of Proposition~\ref{prop:ap}, there exist positive constants $c$ and $C$ such that
\begin{align*}
\begin{aligned}
&\sup_{t \in [0,T]}\norm{(U-\bar U)(t,\cdot)}_{L^2} + \delta_S\int_0^T |\dot{X}|^2 \,dt + \int_0^T (G^{BL} + G^R + G^S +D_{u_1}+ D_{\th_1})\,dt \\
&\quad \leq  C\norm{U_0-\bar U(0,\cdot)}_{L^2} + C(\e^2+\d_C) \int_0^T \left(D_{u_2} + D_{\theta_2}\right) \,dt +C(\e^2 + \delta_0)\int_0^T D_{v_1}\,dt \\
&\qquad + C\d_0^{1/40} + C e^{-c\d_S \beta}.
\end{aligned}
\end{align*}
\end{lem}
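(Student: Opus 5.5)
We will prove Lemma~\ref{lem:L2ap} by a weighted relative entropy argument with the shift $X(t)$ of \eqref{def:X} and the weight $a$ of \eqref{def:a}.

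\textbf{Setting up the entropy identity.} Take the relative entropy of $U$ with respect to $\bar U$ in the form adapted to NSF (as in \cite{KVW-NSF}):
\[
\eta(U|\bar U)=R\,\Phi\!\Big(\frac{v}{\bar v}\Big)+\frac{\psi^2}{2\bar\theta}+\frac{R}{\gamma-1}\Phi\!\Big(\frac{\theta}{\bar\theta}\Big),\qquad \Phi(z)=z-1-\ln z .
\]
Under \eqref{est:apriori} it is equivalent to $|(\phi,\psi,\chi)|^2$. First write the equation satisfied by $\bar U$. It is \eqref{eq:NSF} with $\bar U$ in place of $U$, plus error terms. These errors are:
\begin{itemize}
\item the interaction terms, bounded pointwise by $\sum_i J_i$ and the quantities of Lemmas~\ref{lem:estL12}--\ref{lem:estL22};
\item the contact errors $Q^C_{1,2}$ from \eqref{est:QC};
\item the missing viscosity in the rarefaction, of size $|U^R_{\xi\xi}|+|U^R_\xi|^2$;
\item the shift term $-\dot X\,\partial_\xi U^S$.
\end{itemize}
Then compute $\frac{d}{dt}\int_{\mathbb{R}_+} a\,\eta(U|\bar U)\,d\xi$ as in Lemma~\ref{lem:rel}. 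This yields the following contributions:
\begin{itemize}
\item $\dot X\,Y(U)$, the modulation term;
\item the hyperbolic flux terms localized by $a_\xi$;
\item the good terms $-\int a\,\partial_\xi(\bar u)\,p(v|\bar v)$ type, which produce $G^{BL},G^R,G^S$, using that $u^{BL}_\xi,u^R_\xi>0$ and the compressive sign of the 3-shock;
\item the diffusion $D_{u_1},D_{\theta_1}$ with weight $a/(v\theta)$;
\item boundary terms at $\xi=0$;
\item the error terms listed above.
\end{itemize}
Since $\phi=\psi=\chi=0$ at $\xi=0$ only up to the mismatch $\bar U(t,0)-U_-$, the boundary flux also has a contribution from $-\sigma_-\eta$. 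This contribution has a good sign because $\sigma_-<0$ (inflow). The remaining boundary terms involve $\bar U(t,0)-U_-$, which is $O(\delta_C e^{-ct})+O(\delta_S e^{-c\delta_S(\beta+(\sigma-\sigma_-)t)})$ by Lemma~\ref{lem:CD}, Lemma~\ref{lem:VS} and $|\dot X|\ll1$. They are handled by the interpolation $|\psi(0)|^2\le\|\psi\|\|\psi_\xi\|$ together with this smallness, giving the contribution $Ce^{-c\delta_S\beta}$ plus absorbable parts.

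\textbf{Shock localization via $a$-contraction.} Near the shock, use the definition \eqref{def:X}, so that $\dot X\,Y = -\frac{\delta_S}{M}|\dot X|^2$ up to small remainders. Then apply the change of variables $y=(u^*-u^S)/\delta_S\in(0,1)$ (called $y_0$-shifted in the paper's notation, since the domain is a half-line and $y(0)=y_0\approx e^{-c\delta_S\beta}$). The leading terms become
\[
-\frac{\delta_S}{2M}|\dot X|^2+\text{(bad quadratic of }\bar w\text{)}-\int_{y_0}^1 (y-y_0)(1-y)|w_y|^2\,dy\ \text{ type diffusion}.
\]
Here $w$ is the characteristic combination $\psi\pm\ldots$ after maximizing over the non-characteristic components. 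The Jacobian estimates for $dy/d\xi$ together with the Poincar\'e inequality of Lemma~\ref{lem:Poincare} close this part, giving $G^S$ and $\delta_S|\dot X|^2$ on the left.

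\textbf{Main obstacle: the diffusion coefficient.} The key difficulty is that $1/(v\theta)$ differs from $1/(v^S\theta^S)$ by $O(\delta_R)$, which is large. The plan is to use entropy preservation along the rarefaction: on the support of $u^S_\xi$, the rarefaction is essentially at $U_m$ up to the errors from Lemma~\ref{lem:rarefaction}(3), and $v^R\theta^R$ is monotone for $\gamma\le2$ via \eqref{def:s}. This should give $\frac{1}{v\theta}\ge \frac{1-C(\varepsilon+\delta_0)}{v^*\theta^*}$ on the shock region. That lower bound is what is needed to compare with the Jacobian.

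\textbf{Collecting the remaining terms.} The remaining error terms are bounded by Young's inequality. Interaction terms are estimated via $\|J_i\|_{L^1}\|\phi\|_{L^\infty}\le C\|J_i\|_{L^1}\|\phi\|^{1/2}\|\phi_\xi\|^{1/2}$, then summed using \eqref{est:431}, \eqref{est:432nd}, \eqref{sJiL2}, \eqref{sJiL22}. Choosing $\varepsilon_r=\delta_0^{\kappa}$ appropriately makes all of these at most $C\delta_0^{1/40}$. The rarefaction viscosity error is controlled by \eqref{est:rare1}, with time exponent $-1+1/q$ and $q\ge8$. Nonlinear cubic terms, and terms involving $\phi_\xi$ or second derivatives, produce the $C(\varepsilon^2+\delta_0)D_{v_1}$ and $C(\varepsilon^2+\delta_C)(D_{u_2}+D_{\theta_2})$ contributions. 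Integrating in time gives the claimed bound.
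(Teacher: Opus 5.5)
There is a genuine gap. Your outline follows the paper's route: the weighted relative entropy with the shift \eqref{def:X}, the change of variables $y$ with the Jacobian estimate and Lemma~\ref{lem:Poincare}, the monotonicity of $v^R\theta^R$, the interaction lemmas, and $\varepsilon_r$ a power of $\delta_0$. However, it leaves out the contribution of the viscous contact wave to the \emph{hyperbolic} terms. In $\frac{d}{dt}\int a\bar\theta\eta(U|\bar U)\,d\xi$, the contact parts of $\bar u_\xi$ and of $(\partial_t-\sigma_-\partial_\xi)\bar\theta$ have size $\delta_C(1+t)^{-1}e^{-C_1(\xi+\sigma_-t)^2/(1+t)}$. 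They produce the sign-indefinite term $B^C$ of Lemma~\ref{lem:decomp} (see \eqref{dec:J3}). This term is not $Q^C_{1,2}$, and it is not among your good terms, which use only the boundary-layer, rarefaction and shock parts of $\bar u_\xi$.

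$B^C$ cannot be ``bounded by Young's inequality.'' The decay $(1+t)^{-1}$ against a Gaussian of width $\sqrt{1+t}$ is critical. Bounding with $\|(\phi,\psi,\chi)\|_{L^2}\le C\varepsilon$, or interpolating in $L^\infty$, leaves a factor growing like a power of $\ln(1+T)$, and none of $G^{BL},G^R,G^S$ is localized on this Gaussian region. The paper needs the Huang--Li--Matsumura heat-kernel estimate, Lemma~\ref{lem:L2CD} (Appendix~\ref{App:CD}). It uses the antiderivative $H_1$ of the Gaussian, which solves a heat equation, as a weight in suitable combinations of the perturbation equations. This gives $\int_0^T|B^C|\,dt\le C\delta_C\sup_t\|U-\bar U\|_{L^2}^2+C\delta_C\int_0^T(\delta_S|\dot X|^2+G^{BL}+G^R+G^S+D_{v_1}+D_{u_1}+D_{\theta_1}+D_{u_2}+D_{\theta_2})\,dt+C\delta_C$, and it generates new boundary terms at $\xi=0$ that must also be handled. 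This is exactly where the $\delta_C$ in front of $D_{u_2}+D_{\theta_2}$ and the $\delta_0$ in front of $D_{v_1}$ in the statement come from. Your attribution of these terms to ``terms involving $\phi_\xi$ or second derivatives'' produces no $\delta_C$ factor.

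Several further points need fixing.
\begin{enumerate}
\item The lower bound on the diffusion coefficients must hold on all of $\mathbb{R}_+$. The map $\xi\mapsto y$ sends all of $\mathbb{R}_+$ onto $(y_0,1)$, and the Poincar\'e inequality is applied on that whole interval. Your proximity argument (``on the support of $u^S_\xi$ the rarefaction is near $U_m$'') fails near the boundary, where $U^R\approx U_*$ is $O(\delta_R)$ away from $U_m$. What is needed is the global one-sided inequality $v^R\theta^R\le v_m\theta_m$ from Lemma~\ref{lem:Dmon}, together with $v^R\le v_m$ for the $\mu$-term. These give $\frac{1}{\bar v\bar\theta}\ge\frac{1}{v^S\theta^S}-C\delta_0$ everywhere, so the large rarefaction can only increase the coefficient.
\item The compressive sign $u^S_\xi<0$ makes the shock's hyperbolic term $B^S$ \emph{bad}, not good. $G^S$ only comes out of the combination in Lemma~\ref{lem:leading}.
\item At the inflow boundary, $-\sigma_-a\bar\theta\eta|_{\xi=0}\ge0$ is a bad term. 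It is harmless only because $|\bar U(t,0)-U_-|^2$ is small and integrable in time.
\item Since $\psi(t,0)$ is known explicitly, interpolating it does nothing. The terms that need work are the traces $\psi_\xi(t,0)$ and $\chi_\xi(t,0)$ in the viscous boundary fluxes. The paper handles them with $\|\psi_\xi\|_{L^\infty}\le C\|\psi_\xi\|_{L^2}^{1/2}\|\psi_{\xi\xi}\|_{L^2}^{1/2}$ (Lemma~\ref{lem:bd}), and this is the source of $C\varepsilon^2\int(D_{u_2}+D_{\theta_2})$.
\end{enumerate}
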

\subsection{Relative Entropy Estimates}
First, recall that $\Ubar$ is the superposition of the boundary layer, rarefaction, contact wave and shock wave:
\begin{equation*}
    \Ubar(t,\x) = U^{BL}(\x) + U^R(t,\x) + U^C(t,\x) + U^S(\x - (\s -\s_-)t - X(t)-\beta) - U_* - U_m - U^*.
\end{equation*}
Then, from \eqref{eq:BL}, \eqref{eq:R}, \eqref{eq:contact}, and \eqref{eq:VS}, $\Ubar$ satisfies the following equations:
\begin{equation}\label{eq:composite}
	\begin{cases}
		&\vbar_t - \s_- \vbar_\x - \ubar_\x = -\dot{X}(t)v^S_\x, \quad t>0, \quad \x >0, \\
		&\ubar_t - \s_- \ubar_\x + \pbar_\x = \left(\mu \frac{\ubar_\x}{\vbar}\right)_\x + Q_1 - \dot{X}(t)u_\x^S,\\
		& \frac{R}{\gamma-1}\big(\thbar_t - \s_- \thbar_\x \big) +   \pbar \ubar_\x = \left(\kappa \frac{\thbar_\x}{\vbar}\right)_\x + \left(\mu \frac{(\ubar_\x)^2}{\vbar}\right) + Q_2-\frac{R}{\gamma-1}\dot{X}(t)\th^S_\x.
	\end{cases}
\end{equation}
Here, $\pbar:=\frac{R\thbar}{\vbar}$ and
\begin{equation}\label{dec:Qi}
    Q_i := Q_i^I + Q_i^R + Q_i^C, \quad i=1,2,
\end{equation}
where $Q_i^I$ are error terms related to the wave interaction terms:
\begin{equation}\label{def:QI}
	\begin{split}
		&\begin{aligned}
			Q_1^I := \big(\pbar - p^{BL} - p^R - p^C - p^S)_\x - \m \left(\frac{\ubar_\x}{\vbar} - \frac{u^{BL}_\x}{v^{BL}}  - \frac{u^R_\x}{v^R}  - \frac{u^C_\x}{v^C}  - \frac{u^S_\x}{v^S}  \right)_\x,
		\end{aligned}\\
		&\begin{aligned}
			Q_2^I &:= \Big( \pbar \ubar_\x -p^{BL} u^{BL}_\x - p^R u_\x^R - p^C u_\x^C - p^S_\x u^S_\x) - \kappa \left(\frac{\thbar_\x}{\vbar} - \frac{\th^{BL}_\x}{v^{BL}}  - \frac{\th^R_\x}{v^R}  - \frac{\th^C_\x}{v^C}  - \frac{\th^S_\x}{v^S} \right)_\x\\
			&\quad -\m \left(\frac{\ubar_\x^2}{\vbar} - \frac{(u^{BL}_\x)^2}{v^{BL}}  - \frac{(u^R_\x)^2}{v^R}  - \frac{(u^C_\x)^2}{v^C}  - \frac{(u^S_\x)^2}{v^S}  \right)
		\end{aligned}
	\end{split}
\end{equation}
and $Q_i^R$ are error terms due to the rarefaction wave
\begin{equation}\label{def:QR}
	Q_1^R := -\m \left(\frac{u_\x^R}{v^R}\right)_\x, \quad Q_2^R := -\kappa \left(\frac{\th_\x^R}{v^R}\right)_\x - \m \frac{(u_\x^R)^2}{v^R}.
\end{equation}
For $Q_i^C$, recall from \eqref{est:QC} that 
\[
Q_1^C = O(1)\d_C (1+t)^{-\frac{3}{2}}e^{-2C_1 \frac{(\x + \s_- t)^2}{1+t}}, \quad Q_2^C = O(1)\d_C (1+t)^{-2}e^{-2C_1 \frac{(\x + \s_- t)^2}{1+t}}.
\]
To obtain the $L^2$ energy estimates, we use the method of $a$-contraction theory with shifts, which is based on the relative entropy method. First of all, the (mathematical) entropy is given by 
\[
\eta:= - s = \frac{R}{\gamma - 1}\ln \left(\frac{R}{A} \th v^{\gamma -1}\right) = -R\ln v -\frac{R}{\gamma -1}\ln \th + const.
\]
Then, the relative entropy is given by
\[
\eta(U|\Ubar) = R\Phi\left(\frac{v}{\vbar}\right) + \frac{R}{\gamma - 1}\Phi \left(\frac{\th}{\thbar}\right) + \frac{(u-\ubar)^2}{2\thbar},
\]
where $\Phi(z) := z - 1 - \ln z$ (see \cite{KVW-NSF}).
Thus, the relative entropy weighted by $\thbar$ is given by
\begin{align}\label{def:RE}
\thbar \eta(U|\Ubar) = R\thbar\Phi\left(\frac{v}{\vbar}\right) + \frac{R}{\gamma - 1}\thbar \Phi \left(\frac{\th}{\thbar}\right) + \frac{(u-\ubar)^2}{2}.
\end{align}

\begin{lem}\label{lem:rel}
Let $U$ be a solution to \eqref{eq:NSF} and let $\Ubar$ be the superposed wave defined by \eqref{def:Ubar}. Let $a=a(t,\x)$ be the weight function defined in \eqref{def:a}. Then, we have
\begin{equation}\label{est:rel}
	\begin{aligned}
		&\frac{d}{dt} \intRp a(t,\x) \bar{\theta}(t,\x) \eta(U(t,\x)|\Ubar(t,\x))\,d\x \\
		&\quad = \dot{X}(t) Y(U) + \mathcal{J}^{H}(U)+ \mathcal{J}^{P}(U) +\mathcal{J}^{I}(U) - \mathcal{J}^{good}(U)+\mathcal{J}^{bd}(U),	
	\end{aligned}
\end{equation}
where
\begin{equation*}
	\begin{aligned}
		Y(U) &= -\intRp a_{\xi} \thbar \eta(U|\Ubar)\,d\x + \intRp a \left[ -R(\th^S)_\x \Phi\left(\frac{v}{\vbar}\right) - \frac{R}{\gamma - 1}(\th^S)_\x \Phi\left(\frac{\th}{\thbar}\right)\right]\,d\x\\
		&\quad +\intRp a \left[(u^S)_\x (u-\ubar) + \frac{(v^S)_\x \pbar}{\vbar}(v-\vbar) + \frac{R}{\gamma - 1}\frac{(\th^S)_\x}{\thbar}(\th - \thbar) \right]\,d\xi,
	\end{aligned}
\end{equation*}
\begin{align}\label{Jbad}
	\begin{split}
		&\begin{aligned}
		\mathcal{J}^{H}&:= \intRp a_\x (u-\ubar)(p-\pbar)\,d\x +\intRp a\Big(\thbar_t - \s_- \thbar_\x \Big)\left[R\Phi\left(\frac{v}{\vbar}\right) + \frac{R}{\gamma-1}\Phi\left(\frac{\th}{\thbar}\right) \right]d\x\\
		&\quad +\intRp a \left[-\frac{\pbar \ubar_\x}{v \vbar}(v-\vbar)^2 - \frac{\ubar_\x}{\th}(\th - \thbar)(p - \pbar) + \pbar \ubar_\x \frac{(\th - \thbar)^2}{\th \thbar}\right]d\x,
	\end{aligned}\\
	&\begin{aligned}
		\mathcal{J}^{P}&:= -\intRp a_\x \left[\m (u-\ubar)\left(\frac{u_\x}{v} - \frac{\ubar_\x}{\vbar}\right) + \kappa \frac{\th -\thbar}{\th} \left(\frac{\th_\x}{v} - \frac{\thbar_\x}{\vbar}\right)\right]\,d\x\\
		&\quad  +\intRp a \left[ -\m \ubar_\x \left(\frac{1}{v} - \frac{1}{\vbar}\right)(u-\ubar)_\x + \kappa \frac{\th - \thbar}{\th^2}\th_\x \left(\frac{\th_\x}{v} - \frac{\thbar_\x}{\vbar}\right) - \kappa \frac{(\th - \thbar)_\x}{\th}\thbar_\x \left(\frac{1}{v} - \frac{1}{\vbar}\right) \right.\\
		&\phantom{+\intRp a \Big[ \quad } \left.+\m \frac{\th - \bar{\theta}}{\th}\left(\frac{u_\x^2}{v} - \frac{\ubar_\x^2}{\vbar}\right) - \frac{(\th - \thbar)^2}{\th \thbar}\left(\kappa \left(\frac{\thbar_\x}{\vbar}\right)_\x + \m \frac{\ubar_\x^2}{\vbar}\right)\right]\,d\x\\
	\end{aligned}\\
	&\begin{aligned}
		\mathcal{J}^{I}:= - \intRp a Q_1 (u - \ubar) - a Q_2\left(\frac{\th}{\thbar} - 1\right)\,d\x,
	\end{aligned}
	\end{split}
\end{align}
and
\begin{equation*}
	\mathcal{J}^{good} := \s \intRp a_\x \thbar \eta(U|\Ubar)\,d\x + \intRp a \left[ \frac{\m}{v}|(u-\ubar)_\x|^2 + \frac{\kappa}{v\th}|(\th - \thbar)_\xi|^2\right]\,d\xi,
\end{equation*}
and
\begin{equation}\label{Jbd}
	\begin{aligned}
			\mathcal{J}^{bd} &:= -\s_- a \thbar\eta(U|\Ubar)\Big|_{\x=0} + a(u-\ubar)(p-\pbar) \Big|_{\x=0}\\
			&\,\quad \left.- a\left[\m (u-\ubar)\left(\frac{u_\x}{v} - \frac{\ubar_\x}{\vbar}\right) + \kappa \frac{\th -\thbar}{\th} \left(\frac{\th_\x}{v} - \frac{\thbar_\x}{\vbar}\right)\right] \right|_{\x=0}
	\end{aligned}
\end{equation}
\end{lem}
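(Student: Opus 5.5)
This identity is a direct computation: we differentiate the weighted relative entropy, substitute the equations \eqref{eq:NSF} for $U$ and \eqref{eq:composite} for $\Ubar$, and integrate by parts on $\Rp$, keeping every boundary term at $\x=0$. We write
\[
\frac{d}{dt}\intRp a\thbar\eta(U|\Ubar)\,d\x=\intRp a_t\,\thbar\eta(U|\Ubar)\,d\x+\intRp a\,\partial_t\big(\thbar\eta(U|\Ubar)\big)\,d\x .
\]
By \eqref{def:a}, $a_t=-(\s-\s_-+\dot X)a_\x$. The piece $-\dot X\intRp a_\x\thbar\eta$ is the first term of $Y(U)$. The piece $-(\s-\s_-)\intRp a_\x\thbar\eta$ splits in two: $-\s\intRp a_\x\thbar\eta$ is the first term of $-\mathcal J^{good}$, and $\s_-\intRp a_\x\thbar\eta$ will be combined below with the transport terms $-\s_-\partial_\x$.

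For the second integral we use the explicit form \eqref{def:RE}. For the $\Phi$-terms we need
\[
\partial_t\Big[\thbar\Phi\big(\tfrac v\vbar\big)\Big]=\thbar_t\,\Phi\big(\tfrac v\vbar\big)+\thbar\Big(\tfrac1\vbar-\tfrac1v\Big)v_t-\thbar\,\frac{v-\vbar}{\vbar^2}\,\vbar_t ,
\]
and similarly for $\Phi(\th/\thbar)$. We then substitute the equations. The energy equation for $U$ is first rewritten in internal-energy form,
\[
\tfrac R{\gamma-1}(\th_t-\s_-\th_\x)+pu_\x=(\kappa\th_\x/v)_\x+\mu u_\x^2/v .
\]
Each time derivative is replaced by $\s_-\partial_\x$ plus the right-hand side. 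The $\s_-\partial_\x$ contributions reassemble into $\s_-\partial_\x(\thbar\eta)$ plus the leftover term $-\s_-\thbar_\x[R\Phi+\tfrac R{\gamma-1}\Phi]$. Integrating $\s_-a\,\partial_\x(\thbar\eta)$ by parts gives the boundary term $-\s_-a\thbar\eta|_{\x=0}$ and the bulk term $-\s_-\intRp a_\x\thbar\eta$, which cancels the $\s_-$-piece left over from the $a_t$ term. The leftover $-\s_-\thbar_\x$ combines with $\thbar_t\Phi$ to give the second term of $\mathcal J^H$.

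The remaining pieces are sorted by type. The $\dot X$-terms of \eqref{eq:composite} multiply $(u-\ubar)$, $\thbar(v-\vbar)/\vbar^2\sim\pbar(v-\vbar)/\vbar$ (using $\pbar=R\thbar/\vbar$), and $(\th-\thbar)/\thbar$. The $\dot X\th^S_\x$ piece coming from $\thbar_t$ inside $\Phi$ supplies the $-R\th^S_\x\Phi$ terms, and together these give the rest of $Y(U)$. In the hyperbolic terms, $-(p-\pbar)_\x(u-\ubar)$ against $(v-\vbar)_\x$-type terms is integrated by parts. This produces $a_\x(u-\ubar)(p-\pbar)$, the boundary term $a(u-\ubar)(p-\pbar)|_0$, and quadratic terms in $\ubar_\x$ which, after expanding $p-\pbar$ and $\th-\thbar$ around $\Ubar$, are exactly the last line of $\mathcal J^H$. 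In the viscous terms, integrating $(\mu u_\x/v-\mu\ubar_\x/\vbar)_\x$ against $a(u-\ubar)$, and the heat flux against $a(1-\thbar/\th)$, by parts yields the $a_\x$-terms of $\mathcal J^P$, the boundary terms in \eqref{Jbd}, and the good dissipations $\mu a|(u-\ubar)_\x|^2/v$ and $\kappa a|(\th-\thbar)_\x|^2/(v\th)$. The remaining cross terms, which arise from writing $u_\x/v-\ubar_\x/\vbar=(u-\ubar)_\x/v+\ubar_\x(1/v-1/\vbar)$ and likewise for $\th$, together with the $\mu u_\x^2$ and $\mu\ubar_\x^2$ sources, form the rest of $\mathcal J^P$. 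Finally, $Q_1$ and $Q_2$ multiply $-(u-\ubar)$ and $-(\th/\thbar-1)$, which gives $\mathcal J^I$.

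The main obstacle is purely algebraic: verifying that the $\ubar_\x$ and $\thbar$-derivative terms generated by the hyperbolic parts close exactly into the stated quadratic form of $\mathcal J^H$, with no residual linear terms. Linear terms cancel because $\Ubar$ solves \eqref{eq:composite}. The quadratic identity is checked using $p-\pbar=\frac{R(\th-\thbar)}{v}-\frac{\pbar(v-\vbar)}{v}$, following the whole-line computation in \cite{KVW-NSF}. The only new point relative to that computation is that every integration by parts on $\Rp$ also produces a boundary term at $\x=0$, and these are collected into $\mathcal J^{bd}$.
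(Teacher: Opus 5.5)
Your route is the paper's own: the paper omits the proof and refers to the whole-line computation of \cite[Lemma 4.3]{KVW-NSF}, and your argument is that computation with every boundary term at $\xi=0$ kept. Most of your sorting checks out. Indeed, $a_t=-(\sigma-\sigma_-+\dot X)a_\xi$, and the $\sigma_-$-pieces cancel except for $-\sigma_-a\bar\theta\eta|_{\xi=0}$. With $D_t=\partial_t-\sigma_-\partial_\xi$, the hyperbolic parts of the $v$-, $\theta$- and $u$-contributions add up to $-\big((p-\bar p)(u-\bar u)\big)_\xi-\frac{\bar p\bar u_\xi}{v\bar v}(v-\bar v)^2+\frac{R\bar u_\xi}{v\bar v}(v-\bar v)(\theta-\bar\theta)$, and the last term equals the two remaining terms in the last line of $\mathcal J^H$. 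The viscous and $Q_i$ terms give $\mathcal J^P$, $\mathcal J^I$, the dissipation and the rest of $\mathcal J^{bd}$ as you describe.

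The one real problem is that you use a single term twice. The $\Phi$-terms of $Y$ can only come from the prefactor term $(D_t\bar\theta)\big[R\Phi(v/\bar v)+\frac{R}{\gamma-1}\Phi(\theta/\bar\theta)\big]$, through the piece $-\dot X\theta^S_\xi$ of $D_t\bar\theta$; this is because $D_t\theta_4=-(\sigma+\dot X)\theta^S_\xi$. You credit that piece to $\dot X\,Y$, and at the same time you put the whole of $\bar\theta_t-\sigma_-\bar\theta_\xi$ into $\mathcal J^H$. For an exact identity, one of two fixes is needed:
\begin{itemize}
\item the coefficient in $\mathcal J^H$ must be $\bar\theta_t-\sigma_-\bar\theta_\xi+\dot X\theta^S_\xi$, so that the shock contributes only $-\sigma\theta^S_\xi$; or
\item the $\Phi$-terms must be removed from $Y$.
\end{itemize}
The statement as printed has the same double count when $\bar\theta_t$ is read as the full derivative. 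That is how the paper itself reads it in Step II of the decomposition of $\mathcal J^H_4$, where $-\dot X\partial_\xi\theta_4$ is kept. There it is harmless, because both copies are only ever bounded in absolute value: one by \eqref{est:Xhyp}, the other by $\frac{C}{\delta_S}|Y_3|^2\le C\varepsilon^2G^S$. Your derivation should still assign this term once and state which reading of $\bar\theta_t$ it uses.
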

\begin{rem}
Here, $\mathcal{J}^{H}$ and $\mathcal{J}^{P}$ represent the terms arising from the hyperbolic and parabolic parts of the NSF system, respectively, and $\mathcal{J}^{I}$ denotes the interaction terms between the boundary layer solution and the elementary waves.
\end{rem}
\begin{rem}
From \eqref{def:a}, we have $ \sigma  a_\x> 0$. Therefore, the term 
\[
\s \intRp a_\x \thbar \eta(U|\Ubar)\,d\x,
\]
appearing in $\mathcal{J}^{good}$ is indeed a good term.
\end{rem}

\begin{proof}
	Since the proof of Lemma \ref{lem:rel} is almost the same as \cite[Lemma 4.3]{KVW-NSF} or \cite[Lemma 4.4]{HL25}, we omit the proof here.
\end{proof}
\subsection{Decompositions}
Here, we further decompose the right-hand side of \eqref{est:rel} into good and bad terms. In what follows, the notation $U_i$, $i=1,2,3,4$ introduced in \eqref{not:vi} and the corresponding wave notation $U^{BL},U^R,U^C$, or $U^S$ will both be used.
\subsubsection{Decomposition of $\mathcal{J}^{H}$}
First, based on the identity
\begin{equation}\label{id:p}
	(p - \pbar) = \frac{R}{v}(\th - \thbar) - \frac{\pbar}{v}(v-\vbar),
\end{equation}
and $\|(v-\vbar, u-\ubar, \th - \thbar)\|_{L^\infty} \leq C\e$, the second line of $\mathcal{J}^{H}$ can be estimated as
\begin{align*}
	&\intRp a \left[-\frac{\pbar}{v \vbar}(v-\vbar)^2 - \frac{1}{\th}(\th - \thbar)(p - \pbar) + \pbar  \frac{(\th - \thbar)^2}{\th \thbar}\right]\ubar_\x \, d\x\\
 &\quad \leq \intRp a \ubar_\x\left[-\frac{\pbar}{\vbar^2}|\phi|^2 + \frac{\pbar}{\vbar\thbar}\phi \chi\right] \, d\x + C\e \intRp |\ubar_\x| \big|\big(\phi,\chi \big)\big|^2\,d\x.
\end{align*}
Now, $\mathcal{J}^{H}$ can be decomposed into following terms:
\begin{equation}\label{dec:Jhyp}
	\mathcal{J}^{H} \leq \sum_{i=1}^4 \mathcal{J}^{H}_i + C \e \intRp |\ubar_\x| \big|\big(\phi,\chi \big)\big|^2\,d\x,
\end{equation}
where 
\begin{equation}\label{Jhyp}
	\begin{aligned}
	\mathcal{J}^{H}_i &:= \intRp a\Big(\rd_t \th_i- \s_- \rd_{\x} \th_i \Big)\left[R\Phi\left(\frac{v}{\vbar}\right)
	+ \frac{R}{\gamma-1}\Phi\left(\frac{\th}{\thbar}\right) \right]\,d\x\\
	&\qquad  +\intRp a \left[-\frac{\pbar}{\vbar^2}|\phi|^2 + \frac{\pbar}{\vbar \thbar}\phi \chi\right] \rd_\x u_i \, d\x,
\end{aligned}
\end{equation}
for $i=1,2,3$, and 
\begin{align*}
	\mathcal{J}^{H}_4 &:=  \intRp a_\x (u-\ubar)(p-\pbar)\,d\x + \intRp a\Big(\rd_t \th_4- \s_- \rd_{\x} \th_4 \Big)\left[R\Phi\left(\frac{v}{\vbar}\right)
	+ \frac{R}{\gamma-1}\Phi\left(\frac{\th}{\thbar}\right) \right]\,d\x\\
	&\qquad  +\intRp a \left[-\frac{\pbar}{\vbar^2}|\phi|^2 + \frac{\pbar}{\vbar \thbar}\phi \chi\right] \rd_\x u_4 \, d\x.
\end{align*}
\noindent$\bullet$ {\bf Decomposition of $\mathcal{J}^{H}_1$:} Using $|(\phi,\chi)| \leq C\e$ and by Taylor expansion of $\Phi$ at $z=1$ with $\Phi(1)=\Phi'(1)=0, 
\Phi''(1)=1$, we find that 
\begin{equation}\label{est:TE}
	\Phi\left(\frac{v}{\vbar}\right) = \frac{(v-\vbar)^2}{2\vbar^2} + O\big(|v-\vbar|^3 \big), \quad \Phi\left(\frac{\th}{\thbar}\right) = \frac{(\th-\thbar)^2}{2\thbar^2} + O\big(|\th-\thbar|^3 \big).
\end{equation}
In addition, we observe from \eqref{eq:BL}$_3$ and Lemma \ref{lem:BL} that
\begin{equation}\label{est:BLth}
	\begin{aligned}
	\rd_t \th_1 - \s_- \rd_\x \th_1  &= \frac{\gamma - 1}{R}\left(-p_1 \rd_\x u_1 +\kappa \left(\frac{\rd_\x \th_1}{v_1}\right)_\x + \m \frac{(\rd_\x u_1)^2}{v_1} \right)\\
	&\leq  -\frac{\gamma - 1}{R}\pbar \rd_\x u_1 + C(|\pbar-p_1| + \d_{BL})|\rd_\x u_1|.
\end{aligned}
\end{equation}
Here, we use the notation $p_1=p(v_1,\th_1)$.  Substituting \eqref{est:TE} and \eqref{est:BLth} into \eqref{Jhyp}, we obtain
\begin{equation}\label{est:Jhyp12}
	\begin{aligned}
	\mathcal{J}^{H}_1 &\leq \intRp a \pbar \left(-\frac{\gamma +1}{2\vbar^2} \phi^2 +\frac{1}{\vbar\thbar}\phi \chi - \frac{1}{2\thbar^2}\chi^2 \right)\rd_\x u_1 \,d\x \\
	&\quad + C \intRp (|\pbar-p_1| + \d_{BL})|(\phi,\chi)|^2 \rd_\x u_1\,d\x + \intRp a |(\phi,\chi)|^3 \rd_\x u_1 \,d\x.
\end{aligned}
\end{equation}
Using the inequality
\[
\frac{1}{\vbar \thbar}\phi \chi \leq \frac{3}{4\vbar^2}\phi^2 + \frac{1}{3\thbar^2}\chi^2,
\]
together with $|(\phi,\chi)|\leq C\e$, we obtain
\begin{equation}\label{est:Jhyp11}
	\mathcal{J}^{H}_1 \leq -2 C_{BL} \intRp |(\phi,\chi)|^2 \rd_\x u_1 \,d\x + C \intRp (|\pbar-p_1| + \d_{BL} + \e)|(\phi,\chi)|^2|\rd_\x u_1|\,d\x,
\end{equation}
for some constant $C_{BL}>0$.
Since
\[
|p_1 - \pbar| \leq C\big(|v_1 - \vbar| + |\th_1 - \thbar|\big) \leq C(|v^R - v_*| + |\th^R - \th_*|) + C\d_0,
\]
we have 
\begin{align*}
	 C \intRp (|\pbar-p_1| + \d_{BL})|(\phi,\chi)|^2|\rd_\x u_1|\,d\x & \leq C \intRp |(v^R - v_*,\th^R - \th_*)||(\phi,\chi)|^2 \rd_\x u_1 \,d\x \\
	&\qquad +  C\d_0 \intRp |(\phi,\chi)|^2 \rd_\x u_1 \,d\x.
\end{align*}
Therefore, we obtain 
\begin{equation}\label{dec:J1}
	\mathcal{J}^{H}_1 \leq -C_{BL} \intRp |(\phi,\chi)|^2 \rd_\x u_1 \,d\x + C \intRp |(v^R - v_*,\th^R - \th_*)||(\phi,\chi)|^2 |\rd_\x u_1| \,d\x.
\end{equation}

\noindent$\bullet$ {\bf Decomposition of $\mathcal{J}^{H}_2$:} From \eqref{eq:R}$_3$, we have
\begin{align*}
	\rd_t \th_2 - \s_- \rd_\x \th_2 &= -\frac{\gamma - 1}{R}p_2 \rd_\x u_2 = -\frac{\gamma - 1}{R}\pbar \rd_\x u_2 -\frac{\gamma - 1}{R}(p_2 - \pbar)\rd_\x u_2\\
	&\leq -\frac{\gamma - 1}{R}\bar p \rd_\x u_2 + C\e\rd_\x u_2,
\end{align*}
where $p_2 := p(v_2,\th_2)$. Following the same arguments in deriving \eqref{est:Jhyp11}, we obtain
\begin{equation}\label{dec:J2}
		\mathcal{J}_2^{H} \leq -C_R \intRp |(\phi,\chi)|^2\rd_\x u_2\,d\x,
\end{equation}
for some constant $C_R>0$.

\noindent$\bullet$ {\bf Decomposition of $\mathcal{J}^{H}_3$:} Using \eqref{eq:contact}$_3$ and Lemma \ref{lem:CD}, we have
\[
|\rd_t \th_3 - \s_- \rd_\x \th_3| = C\big(|\rd_\x u_3|+ |\rd_\x \th_3||\rd_\x v_3| + |\rd_{\x\x} \th_3|\big)\leq \frac{C\d_C}{1+t} e^{-\frac{C_1 (\x + \s_- t)^2}{1+t}}.
\]
Thus, we have
\begin{equation}\label{dec:J3}
	\mathcal{J}^{H}_3 \leq C\frac{\d_C}{1+t}\intRp e^{-\frac{C_1 (\x + \s_- t)^2}{1+t}}|(\phi,\chi)|^2\,d\x.
\end{equation}
$\bullet$ {\bf Decomposition of $\mathcal{J}^{H}_4 - \mathcal{J}^{good}$:} We first decompose $\mathcal{J}_4^{H}$ as
\[
\mathcal{J}_4^{H} = \mathcal{J}_{41}^{H} +\mathcal{J}_{42}^{H} , 
\]
where
\begin{align*}
	&\mathcal{J}_{41}^{H}  = \intRp a_\x (u-\ubar)(p-\pbar)\,d\x,\\
	&\mathcal{J}_{42}^{H} =\intRp a\Big(\rd_t \th_4- \s_- \rd_{\x} \th_4 \Big)\left[R\Phi\left(\frac{v}{\vbar}\right)
	+ \frac{R}{\gamma-1}\Phi\left(\frac{\th}{\thbar}\right) \right]\,d\x \\
	&\phantom{\mathcal{J}_{4,2}^{H} =} \quad +\intRp a \left[-\frac{\pbar}{\vbar^2}|\phi|^2 + \frac{\pbar}{\vbar \thbar}\phi \chi\right] \rd_\x u_4 \, d\x.
\end{align*}
\textit{Step I) Decomposition of $\mathcal{J}_{41}^{H} - \mathcal{J}^{good}$}

Using \eqref{id:p} and \eqref{est:TE}, observe that 
\begin{equation}\label{est:acon}
	\begin{aligned}
	&(u-\ubar)(p-\pbar) - \s \thbar \eta(U|\Ubar)\\
	&\quad \leq \psi \left(\frac{R}{v}\chi - \frac{\pbar}{v}\phi\right) - \s \thbar \left(\frac{R}{2\vbar^2}\phi^2 + \frac{R}{\gamma -1}\frac{1}{2\thbar^2}\chi^2 + \frac{\psi^2}{2} \right) + C|(\phi,\chi)|^3\\
	&\quad \leq \psi \left(\frac{R}{\vbar}\chi - \frac{\pbar}{\vbar}\phi\right) - \s \thbar \left(\frac{R}{2\vbar^2}\phi^2 + \frac{R}{\gamma -1}\frac{1}{2\thbar^2}\chi^2 + \frac{\psi^2}{2} \right) + C|(\phi,\psi,\chi)|^3.
\end{aligned}
\end{equation}
Here, the last inequality follows from the fact that 
\[
 \psi\left[ \left(\frac{R}{v}\chi - \frac{\pbar}{v}\phi\right)-  \left(\frac{R}{\vbar}\chi - \frac{\pbar}{\vbar}\phi\right) \right] \leq C|(\phi,\psi,\chi)|^3.
\] 
In order to apply the Poincar\'e-type inequality in Lemma~\ref{lem:Poincare} later, we freeze the variable coefficients in the quadratic form in \eqref{est:acon} at the left end state $U^*$ of the viscous shock. To this end, using
\begin{equation}\label{est:Sfix}
	|(\vbar - v^*, \ubar - u^*, \thbar - \th^*)| \leq |(\vbar - v^S, \ubar - u^S, \thbar - \th^S)| + C\delta_S,
\end{equation}
we have
\begin{align*}
	&(u-\ubar)(p-\pbar) - \s \thbar \eta(U|\Ubar)\\
	&\quad \leq  \psi \left(\frac{R}{v^*}\chi - \frac{p^*}{v^*}\phi\right) - \s^* \thbar \left(\frac{R}{2(v^*)^2}\phi^2 + \frac{R}{\gamma -1}\frac{1}{2(\th^*)^2}\chi^2 + \frac{\psi^2}{2} \right)\\
	&\qquad + C\big(|(\vbar - v^S, \thbar - \th^S)|+ \d_S \big)|(\phi,\psi,\chi)|^2 + C|(\phi,\psi,\chi)|^3\\
	&\quad \leq -\frac{R\s^* \th^*}{2(v^*)^2}\left[\phi + \frac{\psi}{\s^*} \right]^2 - \frac{R\s^*}{2(\gamma -1)\th^*}\left[\chi - \frac{(\gamma - 1)\th^*}{v^*\s^*}\psi \right]^2\\
	&\qquad + C\big(|(\vbar - v^S, \thbar - \th^S)|+ \d_S \big)|(\phi,\psi,\chi)|^2 + C|(\phi,\psi,\chi)|^3.
\end{align*}
For the last inequality, we use the identity
\[
\frac{R\th^*}{2(v^*)^2\s^*} + \frac{R(\gamma -1)\th^*}{2(v^*)^2\s^*} = \frac{(\s^*)^2}{2\s^*} = \frac{\s^*}{2}.
\]
Therefore, we obtain 
\begin{equation}\label{dec:J41}
	\begin{aligned}
		\mathcal{J}^{H}_{41} - \mathcal{J}^{good} &\leq -G_1(U) - G_2(U) - D(U) + C\intRp a_\x |(\vbar - v^S, \thbar - \th^S)||(\phi,\psi,\chi)|^2\,d\x\\
	&\qquad + C\d_S \intRp a_\x|(\phi,\psi,\chi)|^2\,d\x  + C\intRp a_\x |(\phi,\psi,\chi)|^3\,d\x,
	\end{aligned}
\end{equation}
where 
\begin{equation*}
	\begin{aligned}
		&G_1(U)=\frac{R\s^* \th^*}{2(v^*)^2}\intRp a_\x \left(\phi + \frac{\psi}{\s^*} \right)^2 d\x, \\
		&G_2(U)= \frac{R\s^*}{2(\gamma -1)\th^*}\intRp a_\x \left(\chi - \frac{(\gamma - 1)\th^*}{v^*\s^*}\psi \right)^2 d\x,\\
		&D(U) = \intRp a \frac{\m}{v}|\psi_\x|^2 \,d\x + \intRp a \frac{\kappa}{v\th}|\chi_\xi|^2\,d\xi =: D_{u_1} + D_{\th_1}.
	\end{aligned}
\end{equation*}

\noindent \textit{Step II) Decomposition of $\mathcal{J}_{42}$}

Now, it remains to decompose $\mathcal{J}_{42}$. Observe from \eqref{eq:VS}$_3$ that,
\begin{equation*}
	\begin{aligned}
	\rd_t \th_4 - \s_- \rd_\x \th_4  &= \frac{\gamma - 1}{R}\left(-p_4 \rd_\x u_4 +\kappa \left(\frac{\rd_\x \th_4}{v_4}\right)_\x + \m \frac{(\rd_\x u_4)^2}{v_4} \right)-\dot{X}\rd_\x \th_4\\
	&\leq  -\frac{\gamma - 1}{R}\pbar \rd_\x u_4 + C(|\pbar-p_4| + \d_{S})|\rd_\x u_4|+|\dot{X}||\rd_\x \th_4|,
\end{aligned}
\end{equation*}
where $p_4 = p(v_4,\th_4)$. Following the calculations in deriving \eqref{est:Jhyp12}, we find that
\begin{equation*}
	\begin{aligned}
	\mathcal{J}^{H}_{42} &\leq \intRp a \pbar \left(-\frac{\gamma +1}{2\vbar^2} \phi^2 +\frac{1}{\vbar\thbar}\phi \chi - \frac{1}{2\thbar^2}\chi^2 \right)\rd_\x u_4 \,d\x \\
	&\quad + C \intRp (|\pbar-p_4| + \d_{S})|(\phi,\chi)|^2 \rd_\x u_4\,d\x + \intRp a |(\phi,\chi)|^3 \rd_\x u_4 \,d\x\\
    &\quad + C|\dot{X}|\intRp |(\phi,\chi)|^2|\rd_\x \th_4|\,d\x.
\end{aligned}
\end{equation*}
For the last term of the above inequality, we use 
\[
\intRp |(\phi,\chi)|^2|\rd_\x \th_4|\,d\x \leq C\e^2\intRp |\rd_\x \th_4|\,d\x \leq C\e^2\d_S
\]
to have
\begin{equation}\label{est:Xhyp}
\begin{aligned}
    |\dot{X}|\intRp |(\phi,\chi)|^2|\rd_\x \th_4|\,d\x &\leq \frac{\d_S}{4M}|\dot{X}|^2 + \frac{C}{\d_S}\left(\intRp |(\phi,\chi)|^2|\rd_\x \th_4|\,d\x\right)^2\\
    &\leq  \frac{\d_S}{4M}|\dot{X}|^2 + C\e^2 \intRp |(\phi,\chi)|^2|\rd_\x \th_4|\,d\x,
\end{aligned}
\end{equation}
where $M>0$ is defined in \eqref{def:X}. Using \eqref{est:Sfix}, \eqref{est:Xhyp}, $|\pbar - p_4| \leq C\big(|\vbar - v_4| + |\thbar - \th_4|\big)$ and $|(\psi,\phi,\chi)|\leq C\e$, we obtain
\begin{equation}\label{dec:J42}
	\begin{aligned}
	\mathcal{J}^{H}_{42} &\leq \intRp a p^* \left(-\frac{\gamma +1}{2(v^*)^2} \phi^2 +\frac{1}{v^*\th^*}\phi \chi - \frac{1}{2(\th^*)^2}\chi^2 \right)\rd_\x u_4 \,d\x \\
	&\quad + C \intRp (|(\vbar - v_4, \thbar - \th_4)| + \d_{S} + \e)|(\phi,\chi)|^2 |\rd_\x u_4|\,d\x+\frac{\d_S}{4M}|\dot{X}|^2.
\end{aligned}
\end{equation}
\subsubsection{Decomposition of $Y$}
We decompose $Y(U)$ into three terms by
\[
Y(U) := Y_1(U) + Y_2(U) + Y_3(U),
\]
where 
\begin{equation}\label{dec:Y}
\begin{aligned}
	&Y_1(U) = \intRp a \left[(u^S)_\x \psi + \frac{(v^S)_\x p^*}{v^*}\phi + \frac{R}{\gamma - 1}\frac{(\th^S)_\x}{\th^*}\chi \right]\,d\xi, \\
	&Y_2(U) = \intRp a(v^S)_\x  \left(\frac{\pbar}{\vbar} - \frac{p^* }{v^*}\right) \phi \,d\x + \intRp a \frac{R}{\gamma - 1}(\th^S)_\x \left(\frac{1}{\thbar} - \frac{1}{\th^*}\right)\chi \,d\x,\\
	&Y_3(U) = -\intRp a_{\xi} \thbar \eta(U|\Ubar)\,d\x + \intRp a \left[ -R(\th^S)_\x \Phi\left(\frac{v}{\vbar}\right) - \frac{R}{\gamma - 1}(\th^S)_\x \Phi\left(\frac{\th}{\thbar}\right)\right]\,d\x.\\
\end{aligned} 
\end{equation}
Recall from \eqref{def:X} that the shift function $X(t)$ is defined as
\begin{equation}\label{rel:XY}
    \dot{X} = -\frac{M}{\d_S}Y_1.
\end{equation}
This implies
\begin{equation*}
	\dot{X}Y = -\frac{\d_S}{M}|\dot{X}|^2 + \dot{X}(Y_2+Y_3) \leq  -\frac{3}{4M}\d_S|\dot{X}|^2 + \frac{C}{\d_S}\big(|Y_2|^2+|Y_3|^2\big). 
\end{equation*}
Combining \eqref{dec:Jhyp}, \eqref{dec:J1}, \eqref{dec:J2}, \eqref{dec:J3}, \eqref{dec:J41}, \eqref{dec:J42}, and \eqref{dec:Y}, together with the bound $|u^S_\x| \leq C\d_S^{1/2}|a_\x| \leq C|a_\x|$,
we obtain the following lemma.
\begin{lem}\label{lem:decomp}
Under the assumptions of Proposition~\ref{prop:ap}, there exist positive constants $C$ and $C_2$ such that the following holds.
\begin{align*}
	\frac{d}{dt}\intRp a \thbar \eta(U|\Ubar)\,d\x &\leq -\frac{\d_S}{2M}|\dot{X}(t)|^2 + CB^C(U) + B^S(U) + B^{rem}(U) + B^{I}(U) + \mathcal{J}^{bd}(U)\\
	&\quad - C_2 (G^{BL}(U) + G^R(U)) - G^{new}(U) - D(U),
\end{align*}
where
\begin{equation*}
	\begin{split}
		&\begin{aligned}
			&B^C(U) = \frac{\d_C}{1+t}\intRp e^{-\frac{C_1 (\x + \s_- t)^2}{1+t}}|(\phi,\psi,\chi)|^2\,d\x,\\
			&B^S(U) =  \intRp a p^* \left(-\frac{\gamma +1}{2(v^*)^2} \phi^2 +\frac{1}{v^*\th^*}\phi \chi - \frac{1}{2(\th^*)^2}\chi^2 \right) u_\x^S\,d\x,
		\end{aligned}\\
		&\begin{aligned}
			B^{rem}(U) &= \frac{C}{\d_S}|Y_3(U)|^2 + C\intRp a_\x|(\phi,\psi,\chi)|^3\,d\x + C\d_S \intRp a_\x|(\phi,\psi,\chi)|^2\,d\x\\
			&\quad + C\d_S\intRp |(\phi,\chi)|^2|u_\x^S| \,d\x + C\e\intRp |(\phi,\chi)|^2|\ubar_\x| \,d\x + \mathcal{J}^{P}(U),
		\end{aligned}\\
		&\begin{aligned}
			B^{I}(U) &= \frac{C}{\d_S}|Y_2(U)|^2 -\intRp a Q_1 (u - \ubar) \,d\x - \intRp a Q_2\left(\frac{\th}{\thbar} - 1\right)\,d\x\\
			&\quad + C\intRp \big(|(v^R - v_*,\th^R - \th_*)||u^{BL}_\x| + |(\vbar - v^S, \thbar - \th^S)| |a_\x|\big)|(\phi,\psi,\chi)|^2\,d\x,
		\end{aligned}
	\end{split}
\end{equation*}
and 
\begin{equation}\label{Good}
	\begin{split}
		&\begin{aligned}
			&G^{BL}(U) = \intRp |(\phi,\chi)|^2 u^{BL}_\x\,d\x, \quad G^R(U) =\intRp |(\phi,\chi)|^2 u^{R}_\x\,d\x,
		\end{aligned}\\
		&\begin{aligned}
			G^{new}(U) &=\frac{R\s^* \th^*}{2(v^*)^2}\intRp a_\x \left(\phi + \frac{\psi}{\s^*} \right)^2 d\x + \frac{R\s^*}{2(\gamma -1)\th^*}\intRp a_\x \left(\chi - \frac{(\gamma - 1)\th^*}{v^*\s^*}\psi \right)^2 d\x\\
			& = G_1(U) + G_2(U),
		\end{aligned}\\
		&\begin{aligned}
			D(U) = \intRp a \frac{\m}{v}|\psi_\x|^2 \,d\x + \intRp a \frac{\kappa}{v\th}|\chi_\xi|^2\,d\xi = D_{u_1} + D_{\th_1},
		\end{aligned}
	\end{split}
\end{equation}
where $C_1>0$ is the constant defined in Lemma~\ref{lem:CD}, $\mathcal{J}^{P}, \mathcal{J}^{bd}$ are given in \eqref{Jbad} and \eqref{Jbd}, and $Y_2, Y_3$ are defined in \eqref{dec:Y}.
\end{lem}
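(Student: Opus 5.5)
Lemma~\ref{lem:decomp} is essentially bookkeeping: insert the decompositions already obtained into the identity \eqref{est:rel} of Lemma~\ref{lem:rel} and sort the terms. The plan is as follows.

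First, I handle the shift term. Split $Y=Y_1+Y_2+Y_3$ as in \eqref{dec:Y} and use $\dot X=-\frac{M}{\d_S}Y_1$ from \eqref{rel:XY}. This gives
\[
\dot X\,Y=-\frac{\d_S}{M}|\dot X|^2+\dot X(Y_2+Y_3).
\]
By Young's inequality, $\dot X(Y_2+Y_3)\le \frac{\d_S}{4M}|\dot X|^2+\frac{C}{\d_S}(|Y_2|^2+|Y_3|^2)$. The term $\frac{C}{\d_S}|Y_2|^2$ goes into $B^I$ and the term $\frac{C}{\d_S}|Y_3|^2$ goes into $B^{rem}$. Later, \eqref{dec:J42} produces one more $\frac{\d_S}{4M}|\dot X|^2$. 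Together these leave exactly $-\frac{\d_S}{2M}|\dot X|^2$.

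Second, I bound $\mathcal J^H$ using \eqref{dec:Jhyp}.
\begin{itemize}
\item By \eqref{dec:J1}, $\mathcal J^H_1$ gives $-C_{BL}G^{BL}$ plus the interaction term $|(v^R-v_*,\th^R-\th_*)||u^{BL}_\x||(\phi,\chi)|^2$, which is placed in $B^I$.
\item By \eqref{dec:J2}, $\mathcal J^H_2$ gives $-C_RG^R$.
\item By \eqref{dec:J3}, $\mathcal J^H_3$ gives $CB^C$.
\item $\mathcal J^H_{41}-\mathcal J^{good}$ is bounded by \eqref{dec:J41}. This yields $-G^{new}-D$, the cubic term $a_\x|(\phi,\psi,\chi)|^3$ and the term $\d_S a_\x|\cdot|^2$ (both into $B^{rem}$), and $|(\vbar-v^S,\thbar-\th^S)|a_\x|\cdot|^2$ (into $B^I$).
\item $\mathcal J^H_{42}$ is bounded by \eqref{dec:J42}. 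Its leading part is precisely $B^S$. The remainder $(|(\vbar-v_4,\thbar-\th_4)|+\d_S+\e)|(\phi,\chi)|^2|u^S_\x|$ is absorbed into the corresponding $a_\x$-terms, the $\d_S|u^S_\x|$-term of $B^{rem}$, and $B^I$. This uses $|u^S_\x|=\sqrt{\d_S}\,a_\x\le a_\x$ from \eqref{da}. The $\e$-part is also covered by $C\e\int|\ubar_\x||(\phi,\chi)|^2$.
\end{itemize}
The leftover term $C\e\int|\ubar_\x||(\phi,\chi)|^2$ from \eqref{dec:Jhyp} is kept in $B^{rem}$. Finally, $\mathcal J^P$ goes into $B^{rem}$, $\mathcal J^I$ is the $Q_1,Q_2$ part of $B^I$, and $\mathcal J^{bd}$ is left untouched. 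Setting $C_2=\min\{C_{BL},C_R\}$ completes the sign count.

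Third, I check that the claimed positivity holds. By Lemma~\ref{lem:BL} and Lemma~\ref{lem:rarefaction}, $u^{BL}_\x>0$ and $u^R_\x>0$, so $G^{BL}$ and $G^R$ are nonnegative as written. The weights $u_\x^{1,2}$ in \eqref{dec:J1}--\eqref{dec:J2} are exactly these derivatives, so no absolute values are lost.

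The only delicate point is the $\dot X$ bookkeeping. The extra $\frac{\d_S}{4M}|\dot X|^2$ from \eqref{est:Xhyp} and the Young constant must together consume at most half of $\frac{\d_S}{M}|\dot X|^2$. With the choices above this holds: $1-\frac14-\frac14=\frac12$. Everything else is direct substitution.
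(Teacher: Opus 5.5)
Your proposal is correct and follows the paper's argument. The paper likewise uses \eqref{rel:XY} and Young's inequality to get $\dot X Y\le -\frac{3}{4M}\d_S|\dot X|^2+\frac{C}{\d_S}(|Y_2|^2+|Y_3|^2)$, then combines \eqref{dec:Jhyp}, \eqref{dec:J1}, \eqref{dec:J2}, \eqref{dec:J3}, \eqref{dec:J41} and \eqref{dec:J42} with $|u^S_\x|\le C\sqrt{\d_S}\,a_\x$; the extra $\frac{\d_S}{4M}|\dot X|^2$ from \eqref{dec:J42} then yields $-\frac{\d_S}{2M}|\dot X|^2$, exactly as in your count. One minor point: $|u^S_\x|\le|\ubar_\x|$ fails pointwise, since $u^S_\x<0<u^{BL}_\x,u^R_\x$, so the $\e$-part of \eqref{dec:J42} is not literally covered by $C\e\int|\ubar_\x||(\phi,\chi)|^2$ (the paper is equally loose here). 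Either read it as the cubic term $C\int a_\x|(\phi,\psi,\chi)|^3$, or read $|\ubar_\x|$ as $\sum_i|\rd_\x u_i|$, which is how Lemma~\ref{lem:Brem} uses it anyway.
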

\begin{remark}
We note that, although $B^C(U)$ is defined in terms of $(\phi,\psi,\chi)$ for convenience, only the terms involving $\phi$ and $\chi$ arise in the decomposition, as seen in \eqref{dec:J3}.
\end{remark}
\subsection{Leading Order Estimates}
\begin{lem}\label{lem:leading}
There exists a constant $C_S>0$ such that
\[
-\frac{\d_S}{4M}|\dot{X}(t)|^2 + B^S(U) - G^{new}(U) - \frac{3}{4}D(U) \leq -\frac{1}{4}G^{new}(U) -C_S G^S(U),
\]
where
\[
G^S(U) := \intRp |(\phi,\psi,\chi)|^2|u^S_\x|\,d\x.
\]
\end{lem}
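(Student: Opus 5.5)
We follow the standard $a$-contraction scheme of \cite{KVW-NSF,HKKL25}: the leading-order bad terms localized by $u^S_\x$ are reduced, through the change of variables $y$ along the shock profile, to a one-dimensional quadratic problem on $[y_0,1]$ controlled by Lemma~\ref{lem:Poincare}.

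\emph{Setup and change of variables.} Introduce
\[
y := \frac{u^* - u^S(\x - (\s-\s_-)t - X(t) - \b)}{\d_S}.
\]
This is increasing in $\x$. Set $y_0 := y(0)$, which is $O(e^{-c\d_S\b})$ close to $0$, while $y(\infty) = 1$. Then $dy = -u^S_\x/\d_S\, d\x$ and $a_\x = \sqrt{\d_S}\,\partial_\x y$.

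Because of \eqref{est:shock2}, the combinations in $G_1, G_2$ and in $B^S$ are all expressible through $\psi$ up to $O(\d_S)$ errors. The plan is therefore to work with
\[
w := \psi - \bar\psi, \qquad \bar\psi := \frac{1}{1-y_0}\int_{y_0}^1 \psi\, dy.
\]
The modulation term is governed by $\dot X = -\frac{M}{\d_S}Y_1$. Using \eqref{est:shock2}, $Y_1$ equals, up to a constant $c_*$ and errors bounded by $\sqrt{\d_S}$ times localized $L^2$ norms,
\[
c_*\,\d_S \int_{y_0}^1 \big(\psi + (\phi + \psi/\s^*) + (\text{$\chi$ combination})\big)\, dy .
\]
Hence $\frac{\d_S}{4M}|\dot X|^2$ contains $c\,M\d_S|\bar\psi|^2$ modulo terms absorbed by $G^{new}$.

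\emph{Leading quadratic comparison.} In the $y$-variable, $B^S$ becomes
\[
\d_S \int_{y_0}^1 \big(\text{explicit quadratic}\big)\,|\psi|^2\, dy + \text{errors}\cdot\big(G^{new} + \d_S \|\psi\|^2_{L^2(dy)}\big),
\]
and the explicit coefficient comes from $p^*$, $\s^*$, $\gamma$. The diffusion term $\frac34 D_{u_1}$ must then be rewritten through the Jacobian estimate, roughly
\[
\frac{dy}{d\x} \geq c\,\d_S\, y(1-y),
\]
which gives
\[
\frac34 D_{u_1} \gtrsim \d_S \int_{y_0}^1 (y-y_0)(1-y)\,|\psi_y|^2\, dy .
\]
Lemma~\ref{lem:Poincare} then controls $\int |w|^2\,dy$. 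Writing $\int|\psi|^2 = \int|w|^2 + (1-y_0)|\bar\psi|^2$ and choosing $M$ large, the $\bar\psi$ part is absorbed by the $\dot X$ term and the $w$ part by diffusion, provided the constants match. In \cite{KVW-NSF} this sharp constant comparison works exactly with constant $1/2$ and the Jacobian sharp estimate; here we would reuse that algebra.

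\emph{Main obstacle.} The main difficulty is the diffusion coefficient $\mu a/v$ (and $\kappa/(v\th)$, via the $\chi$ part) when the rarefaction is large. On the shock region, $v$ equals $v^S$ up to small BL, contact and rarefaction tails: the shock sits at distance about $\b + (\s-\s_-)t$ from the boundary, beyond the rarefaction fan where $U^R$ is exponentially close to $U_m$ by Lemma~\ref{lem:rarefaction}(3). So the coefficient can be frozen at $v^*$ up to $O(\d_0 + \e)$, except on the part of $[y_0,1]$ near $y_0$ where $\x$ is near the boundary. That part has $dy$-measure bounded by $e^{-c\d_S\b}$. On it we would use monotonicity of $v\th$ along the rarefaction, i.e. \eqref{est:Dth1}, to obtain a lower bound with the correct sign, rather than closeness.

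If the frozen-coefficient inequality holds with a small strict margin, the remaining errors are $O(\d_S + \d_0 + \e)$ times $G^{new} + G^S$. These are absorbed, leaving $-\frac14 G^{new}$ and $-C_S\,\d_S\int|\psi|^2 dy$. The latter is equivalent to $-C_S G^S$, since $|u^S_\x|\,d\x = \d_S\,dy$ and $\phi,\chi$ are recovered from $\psi$ through $G^{new}$.
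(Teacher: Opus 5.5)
Your proposal follows the paper's proof. Both use the same variable $y$ with $0<y_0\ll 1$; the shift term yields $-\frac{M}{2}\delta_S\bigl(\int_{y_0}^1\psi\,dy\bigr)^2$ from $Y_1\approx -2\delta_S\int_{y_0}^1\psi\,dy$; $B^S$ reduces to a multiple of $\delta_S\int_{y_0}^1\psi^2\,dy$ modulo $G^{new}$; and the sharp Jacobian of Lemma~\ref{lem:Jac} together with Lemma~\ref{lem:Poincare} is applied to both $D_{u_1}$ and $D_{\theta_1}$. The $D_{\theta_1}$ contribution is converted to $\psi$ via $\chi\approx\frac{(\gamma-1)\theta^*}{v^*\sigma^*}\psi$ modulo $G^{new}$; this is what makes the total coefficient $\frac{\gamma+1}{v^*}$ independent of $\mu,\kappa$, since $D_{u_1}$ alone would not suffice when $\kappa/\mu$ is large.

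The differences are minor. The paper applies Lemma~\ref{lem:Dmon} ($v^R\le v_m$, $v^R\theta^R\le v_m\theta_m$) on all of $\mathbb{R}_+$ to get $\frac1v\ge\frac1{v^S}-C(\varepsilon+\delta_0)$ and $\frac1{v\theta}\ge\frac1{v^S\theta^S}-C(\varepsilon+\delta_0)$, so your localization near $y_0$ is unnecessary. The paper also carries out explicitly the constant check you defer, choosing $M=2(\gamma+1)/v^*$ and $y_0<1/9$.
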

\noindent \textit{Proof.} We introduce the following new variable $y$:
\begin{equation}\label{def:y}
		y(t,\x):= \frac{u^* - u^S(\xi - (\s - \s_-)t - X(t) - \b)}{\d_S}.
\end{equation}
For a fixed $t>0$, since $u^S$ is monotone decreasing, the map $\x \mapsto y(t,\x)$ is well-defined, and 
\[
\frac{dy}{d\xi}= -\frac{(u^S)'(\x - (\s - \s_-)t - X(t) - \beta}{\d_S}>0, \quad \lim_{\x \to +\infty} y(t,\xi) = 1.
\]
Now, we define $y_0=y_0(t)$ as 
\[
y_0(t) := \lim_{\x \to 0+} y(t,\xi) = \frac{u^* - u^S(- (\s - \s_-)t - X(t) - \b)}{\d_S}>0.
\]
Using $\sigma>0$ and $\sigma_- <0$, together with \eqref{def:X} and \eqref{est:apriori}, we obtain
\begin{equation*}
   |\dot{X}(t)|\leq \frac{C}{\d_S}\|(\phi,\psi,\chi)\|_{L^\infty(\Rp)}\intRp |(\rho^S_\x, u^S_\x, \th^S_\x)|\,d\x \leq C\e \leq \frac{\s - \s_-}{2}, \quad \forall t<T. 
\end{equation*}
This implies
\[
|X(t)| \leq  \frac{(\s - \s_-)}{2}t, \quad \forall t<T,
\]
and so
\begin{equation}\label{y0ngtv}
    - (\s - \s_-)t - X(t) - \beta < -\frac{(\s-\s_-)}{4}t - \beta < 0, \quad \forall t<T.
\end{equation}
Therefore, by \eqref{lem:VS} and \eqref{y0ngtv}, we obtain
\[
y_0(t) \leq Ce^{-C\d_S|(\s - \s_-)t + X(t) + \b|} \leq Ce^{-C\d_S t}e^{-C\d_S \b} \leq  Ce^{-C\d_S \b}.
\]
Thus, we choose $\beta>0$ sufficiently large so that $0<y_0<\frac{1}{9}$ for all $t<T$.

We also observe that
\[
a(t,\x) = 1 + \sqrt{\d_S}y, \quad \text{and} \quad \d_S \frac{dy}{d\x} = -u^S_\x.
\]

\noindent{\bf $\bullet$ Estimate of $-\frac{M}{\d_S}|\dot{X}|^2$:}
First, using \eqref{est:shock2}, we find that
\begin{align*}
    \left|Y_1 - \intRp a\left(\psi - \frac{p^*}{v^* \s^*}\phi + \frac{p^*}{\th^* \s^*}\chi \right) u_\x^S\,d\x \right| \leq C\d_S \intRp |u_\x^S||(\phi,\chi)|\,d\x.
\end{align*}
This implies that 
\begin{align*}
    &\left|Y_1 - \intRp a\left(\psi - \frac{p^*}{v^* \s^*}\left(-\frac{\psi}{\s^*}\right) + \frac{p^*}{\th^* \s^*}\left(\frac{(\gamma -1)\th^*}{v^* \s^*}\psi \right) \right) u_\x^S\,d\x \right|\\
    &\quad \leq C\d_S \intRp |u_\x^S||(\phi,\chi)|\,d\x + C\intRp\left(\left|\phi + \frac{\psi}{\s^*} \right| + \left|\chi-\frac{(\gamma-1)\th^*}{v^*\s^*}\psi\right| \right) |u_\x^S|\,d\x.
\end{align*}
Using
\[
1 - \frac{p^*}{v^* \s^*}\left(-\frac{1}{\s^*}\right) + \frac{p^*}{\th^* \s^*}\left(\frac{(\gamma -1)\th^*}{v^* \s^*} \right) = 2
\]
together with $1\leq a \leq 1+C\sqrt{\d_S}$, we have 
\begin{align*}
    \left|Y_1 + 2\d_S \int_{y_0}^1 \psi\,dy \right| &\leq C\d_S^{1/2} \intRp |u_\x^S||\psi|\,d\x + C\d_S \intRp |u_\x^S||(\phi,\chi)|\,d\x\\
    &\quad + C\intRp\left(\left|\phi + \frac{\psi}{\s^*} \right| + \left|\chi-\frac{(\gamma-1)\th^*}{v^*\s^*}\psi\right| \right) |u_\x^S|\,d\x\\
    &\leq C\d_S^{3/2}\int_{y_0}^1 |\psi|\,dy  + C\intRp\left(\left|\phi + \frac{\psi}{\s^*} \right| + \left|\chi-\frac{(\gamma-1)\th^*}{v^*\s^*}\psi\right| \right) |u_\x^S|\,d\x.
\end{align*}
From \eqref{rel:XY}, we obtain
\begin{align*}
    \left|\dot{X} - 2M\int_{y_0}^1 \psi dy \right| \leq C\d_S^{1/2}\int_{y_0}^1 |\psi| dy + C\d_S^{-1/2}\intRp |a_\x|\left(\left|\phi + \frac{\psi}{\s^*} \right| + \left|\chi-\frac{(\gamma-1)\th^*}{v^*\s^*}\psi\right| \right)d\x.
\end{align*}
This implies
\[
\left(\left|2M\int_{y_0}^1 \psi\,dy\right| - |\dot{X}| \right)^2 \leq  C\d_S\int_{y_0}^1 |\psi|^2 dy + C\d_S^{-1}G^{new}\intRp |a_\x|\,d\x.
\]
Using the inequality $\frac{a^2}{2} - b^2 \leq (a-b)^2$ for $a,b\in \mathbb{R}$, we obtain
\begin{equation}\label{est:X2}
    -\frac{\d_S}{4M}|\dot{X}|^2 \leq -\frac{M}{2}\delta_S\left(\int_{y_0}^1 \psi dy\right)^2 + C\d_S^2 \int_{y_0}^1 |\psi|^2 dy + C\d_S^{1/2}G^{new}.
\end{equation}

{\bf $\bullet$ Estimate of $B^S$:} First, note that 
\begin{align*}
    B^S(U) = - \intRp a \frac{p^* \gamma}{2(v^*)^2}\phi^2 u_\x^S \,d\x - \intRp a \frac{p^*}{2}\left(\frac{\phi}{v^*} + \frac{\chi}{\th^*} \right)^2 u_\x^S \,d\x =: B^S_1(U) + B^S_2(U).
\end{align*}
Using the following algebraic inequalities for any $a,b, c \in \mathbb{R}$:
\begin{equation}\label{ineq:al}
\begin{aligned}
    &(a+b)^2 \leq (1+\d_S^{-1/4})a^2 + (1+\d_S^{1/4})b^2,\\
    &(a+b+c)^2 \leq (2+\d_S^{-1/4})a^2+ (2+\d_S^{-1/4})b^2 + (1+2\d_S^{1/4})c^2,
\end{aligned}
\end{equation}
we obtain 
\begin{align*}
    B_1^S(U) &=- \intRp a \frac{p^* \gamma}{2(v^*)^2}\left(\phi + \frac{\psi}{\s^*} + \left(- \frac{\psi}{\s^*}\right) \right)^2 u_\x^S \,d\x\\
    &\leq -(1+\d_S^{-1/4})\intRp a \frac{p^* \gamma}{2(v^*)^2}\left(\phi + \frac{\psi}{\s^*}\right)^2 u_\x^S \,d\x -(1+\d_S^{1/4})\intRp a \frac{p^* \gamma}{2(v^* \s^*)^2}\psi^2 u_\x^S \,d\x\\
    &\leq C\d_S^{1/4}G_1 + \d_S(1+C\d_S^{1/4})\frac{p^* \gamma}{2(v^* \s^*)^2}\int_{y_0}^1 \psi^2\,dy,
\end{align*}
and
\begin{align*}
    B_2^S(U) &=  - \intRp a \frac{p^*}{2}\left(\left(\frac{\phi}{v^*} + \frac{\psi}{v^*\s^*} \right) + \left(\frac{\chi}{\th^*}-\frac{\gamma-1}{v^*\s^*}\psi\right) + \left(\frac{\gamma -2}{v^*\s^*}\psi\right) \right)^2 u_\x^S \,d\x\\
    &\leq C\d_S^{1/4}G^{new} - (1+C\d_S^{1/4})\intRp a\frac{p^*(\gamma-2)^2}{2(v^* \s^*)^2}\psi^2 u_\x^S\,d\x\\
    &\leq C\d_S^{1/4}G^{new} +\d_S(1+C\d_S^{1/4})\frac{p^*(\gamma-2)^2}{2(v^* \s^*)^2}\int_{y_0}^1  \psi^2 \,dy.
\end{align*}
Thus, we have 
\begin{equation}\label{est:BS}
\begin{aligned}
    B^S(U) &\leq C\d_S^{1/4}G^{new} + C\d_S(1+C\d_S^{1/4})\frac{p^*(\gamma^2-3\gamma+4)}{2(v^*\s^*)^2}\int_{y_0}^1  \psi^2 \,dy\\
    &= C\d_S^{1/4}G^{new} + C\d_S(1+C\d_S^{1/4})\frac{(\gamma^2-3\gamma+4)}{2\gamma v^*}\int_{y_0}^1  \psi^2 \,dy.
\end{aligned}
\end{equation}
{\bf $\bullet$ Estimates on diffusion terms:} Recall that we have two diffusion terms
\[
D_{u_1} := \m \intRp \frac{a}{v}|\psi_\x|^2\,d\xi, \quad D_{\th_1} := \k \intRp \frac{a}{v\th}|\chi_\x|^2\,d\xi.
\]
In order to use the Poincar\'e-type inequality, the diffusion coefficients should be written in terms the left-end state $U^*$ of the viscous shock. To this end, we use the following Lemma.
\begin{lem}\label{lem:Dmon}
\begin{enumerate}
	\item For any $\gamma >1$, we have
	\begin{equation}\label{est:Du1}
		v^{R} - v_m \leq 0.
	\end{equation}
	\item For any $1 < \gamma \leq 2$, 
	\begin{equation}\label{est:Dth1}
		v^{R}\th^{R} - v_m \th_m \leq 0
	\end{equation}
	holds.
\end{enumerate}
\end{lem}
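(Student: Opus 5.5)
The plan is to combine two facts recorded earlier: the approximate rarefaction $U^R$ is monotone in $\xi$, and it preserves entropy exactly. Then (1) follows from monotonicity alone, and (2) reduces to (1) through an explicit formula for $v\theta$ along the isentrope.

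For (1), I would use Lemma~\ref{lem:rarefaction}. Part (1) of that lemma gives $v^R_\xi>0$ for all $t>0$, $\xi>0$, and the boundary/far-field values give $v^R(t,+\infty)=v_m$. Hence for every fixed $t$ the map $\xi\mapsto v^R(t,\xi)$ is strictly increasing with supremum $v_m$, so $v^R(t,\xi)\le v_m$. This holds for any $\gamma>1$. Equivalently, by \eqref{def:APR}$_1$, $\lambda_1(v^R,\theta^R)=w\in[w_-,w_+)$ by Lemma~\ref{lem:rarefaction1}(1), and along the isentrope $\lambda_1$ is increasing in $v$. Either route gives $v_*\le v^R\le v_m$.

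For (2), I would use the third relation in \eqref{def:APR}: $s(v^R,\theta^R)=s^r=s(v_m,\theta_m)$. By the explicit formula \eqref{def:s}, this means $\theta^R (v^R)^{\gamma-1}=\theta_m v_m^{\gamma-1}=:K>0$. Therefore
\[
v^R\theta^R = K\,(v^R)^{2-\gamma},\qquad v_m\theta_m = K\,v_m^{2-\gamma}.
\]
For $1<\gamma\le 2$ the exponent $2-\gamma$ is nonnegative, so $z\mapsto Kz^{2-\gamma}$ is nondecreasing on $(0,\infty)$. In the endpoint case $\gamma=2$ it is constant and the inequality holds with equality. Since $0<v^R\le v_m$ by (1), we get $v^R\theta^R\le v_m\theta_m$, which is \eqref{est:Dth1}.

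There is no real obstacle; the only point needing care is that the entropy is preserved exactly, not approximately. This is why the relation is imposed in the construction \eqref{def:APR} rather than derived. For $\gamma>2$ the function $z^{2-\gamma}$ is decreasing, so the argument fails, which explains the restriction in the statement.
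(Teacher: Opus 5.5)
Your proposal is correct and follows essentially the same route as the paper. Part (1) comes from $v^R_\xi>0$ together with $v^R(t,+\infty)=v_m$. Part (2) uses the exact entropy conservation built into \eqref{def:APR} and the formula \eqref{def:s} to write $v^R\theta^R=K\,(v^R)^{2-\gamma}$, so that the sign of $2-\gamma$ determines the monotonicity.
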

\begin{proof}
First, \eqref{est:Du1} directly follows from $v^R_\x >0$. Thus, it remains to prove the estimate \eqref{est:Dth1}.

From \eqref{def:s}, we have
\[
v^R\th^R = \frac{A}{R}(v^R)^{2-\g}\exp\left(\frac{\gamma -1}{R}(s(v^R,\th^R))\right).
\]
Since the entropy $s(v^R,\th^R)$ is constant along the rarefaction curve, i.e.,
\[
s(v^R,\th^R) = s(v_*,\th_*) = s(v_m,\th_m)=:s^r,
\]
it follows that 
\[
v^R\th^R = \frac{A}{R}(v^R)^{2-\g}\exp\left(\frac{\gamma -1}{R}s^r\right).
\]
Since $v^R$ is strictly increasing, we find that the monotonicity of $v^R \th^R$ is determined by the exponent $2 - \g$. Therefore, $v^R\th^R$ is strictly increasing when $1<\g<2$, and it remains constant when $\g=2$. This completes the proof of \eqref{est:Dth1}.
\end{proof}
Thanks to Lemma \ref{lem:Dmon}, the coefficients for the diffusion term $D_{u_1}$ can be controlled as follows. First, note that
\[
\frac{1}{v} = \frac{1}{v^S} + \left(\frac{1}{v} - \frac{1}{\vbar} \right) + \left(\frac{1}{\vbar} - \frac{1}{v^S} \right).
\]
For the last term of the right-hand side of the above equality, we use Lemma \ref{lem:Dmon} to have
\begin{align*}
	\frac{1}{\vbar} - \frac{1}{v^S} &= - \frac{\vbar - v^S}{\vbar v^S} = - \frac{v^{BL} - v_* + v^R - v_m + v^C - v^*}{\vbar v^S}\\
	&\quad \geq  - \frac{v^{BL} - v_* + v^C - v^*}{\vbar v^S} \geq -C\d_0,
\end{align*}
together with $a\geq 1$, we obtain 
\begin{equation}\label{est:Du1xi}
	D_{u_1} \geq \m(1-C(\e + \d_0))\intRp \frac{1}{v^S}|(u-\ubar)_\x|^2\,d\x,
\end{equation}
Now, we estimate the diffusion term $D_{\th_1}$. Observe that
\[
\frac{1}{v \th} = \frac{1}{v^S \th^S} + \left( \frac{1}{v \th} - \frac{1}{\vbar \thbar}\right) + \left(\frac{1}{\vbar \thbar} - \frac{1}{v^S \th^S} \right).
\]
For the last term of the right-hand side of the above equality, we have
\begin{align*}
\frac{1}{\vbar \thbar} - \frac{1}{v^S \th^S} &= \frac{-\Big((v^R + \vtil - v_m)(\th^R + \thtil - \th_m) - v_m \th_m + (v_m \th_m - v^S \th^S)\Big)}{\vbar \thbar v^S \th^S},
\end{align*}
where $\Util$ denotes the superposition of the boundary layer, viscous contact wave, and viscous shock wave:
\[
\Util(t,\x) = U^{BL}(\x) + U^C(t,\x) + U^S(\x - (\s -\s_-)t - X(t)-\beta) - U_* - U^*.
\]
Then, using Lemma \ref{lem:Dmon}, and the fact that
\(
\Util - U_m = O(\delta_0),
\)
the numerator of the right-hand side of the above equality becomes
\begin{align*}
	&- \Big((v^R + \vtil - v_m)(\th^R + \thtil - \th_m) - v_m \th_m + (v_m \th_m - v^S \th^S)\Big)\\
&\quad = -\Big( (v^R \th^R - v_m \th_m) + v^R(\thtil - \th_m) + \th^R(\vtil - v_m) + (\vtil - v_m)(\thtil - \th_m)+(v_m \th_m - v^S \th^S) \Big)\\
&\quad \geq  -\Big(v^R(\thtil - \th_m) + \th^R(\vtil - v_m) + (\vtil - v_m)(\thtil - \th_m) +(v_m \th_m - v^S \th^S)\Big) \geq -C\d_0.
\end{align*}
Therefore, we obtain
\begin{equation}\label{est:Dth1xi}
	D_{\th_1} \geq \kappa \big(1- C(\e + \d_0))\intRp \frac{1}{v^S \th^S} |(\th - \thbar)_\x|^2\,d\x.
\end{equation}
In order to apply the Poincar\'e-type inequality with respect to the variable $y$, we need to estimate the Jacobian $\frac{dy}{d\x}$. This is provided by the following Lemma.
\begin{lem}\label{lem:Jac}
There exists a positive constant $C>0$ such that
\[
\left|\frac{\mu}{y(1-y)}\frac{dy}{d\x} - \d_S \frac{(\gamma+1)}{2}\frac{\mu R \gamma}{\m R \gamma + \kappa (\gamma - 1)^2} \right| \leq C\d_S^2.
\]
\end{lem}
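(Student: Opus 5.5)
The plan is to reduce the viscous shock system \eqref{eq:VS} to a single scalar ODE for $u^S$ of Burgers/Riccati type, $\frac{d u^S}{d\zeta} = -c_\ast\,(u^*-u^S)(u^S-u_+)\bigl(1+O(\d_S)\bigr)$. Then I will translate it into the variable $y$ of \eqref{def:y}. Since $\d_S\, y = u^*-u^S$ and $\d_S(1-y) = u^S-u_+$, we have $\d_S^2\,y(1-y) = (u^*-u^S)(u^S-u_+)$. Together with $\d_S \frac{dy}{d\x} = -u^S_\x$, this gives
\[
\frac{1}{y(1-y)}\frac{dy}{d\x} = \d_S\, c_\ast\,(1+O(\d_S)).
\]
So everything reduces to computing $c_\ast$ to leading order and checking that $\mu c_\ast = \frac{\gamma+1}{2}\frac{\mu R\gamma}{\mu R\gamma+\kappa(\gamma-1)^2}$ up to $O(\d_S)$.

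\textbf{Step 1: integrate the profile equations.} I integrate \eqref{eq:VS} over $(-\infty,\zeta)$ and use the left end state $U^*$:
\[
v^S-v^* = -\frac{u^S-u^*}{\s},\qquad \mu\frac{(u^S)'}{v^S} = -\s(u^S-u^*) + p^S-p^*,
\]
\[
\kappa\frac{(\th^S)'}{v^S} = -\s\Big(\tfrac{R}{\g-1}(\th^S-\th^*)+\tfrac{(u^S)^2-(u^*)^2}{2}\Big) + p^Su^S-p^*u^* - \mu\frac{u^S (u^S)'}{v^S}.
\]
In the energy relation I substitute the momentum relation for the last term. This removes the $u^S$-weighted viscous term and leaves an equation of the form $\kappa (\th^S)'/v^S = F(u^S,\th^S)$, where $F$ vanishes at $U^*$.

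\textbf{Step 2: expand to second order in the shock strength.} Using \eqref{est:shock2}, $|\s-\s^*|\le C\d_S$, and the Rankine--Hugoniot relations \eqref{eq:RH}, I write $v^S-v^*$ and $\th^S-\th^*$ in terms of $w:=u^S-u^*$. Linearly, $\th^S-\th^* \approx \frac{(\g-1)p^*}{R\s^*}\,(-w)$. I then need the quadratic correction to $p^S-p^*+\s w$. The linear part cancels because $\s\approx\s^*$ is the characteristic speed. What remains is $-\s$ times a quantity proportional to $w(w-(u_+-u^*))$. The factor $\frac{\g+1}{2}$ comes from the genuine nonlinearity of the third field.

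The heat flux enters through $\th^S$. The term $\kappa(\th^S)'/v^S$ is, to leading order, $\kappa\frac{(\g-1)p^*}{R\s^* v^*}(-w')$. Combining it with the momentum equation produces an effective viscosity
\[
\frac{\mu}{v^*}+\frac{\kappa(\g-1)^2}{R\g v^*},
\]
where I use $(\s^*)^2 = \g p^*/v^*$ and $p^*=R\th^*/v^*$. Factoring out $\frac{\mu R\g+\kappa(\g-1)^2}{R\g}$ against $\mu$ gives exactly the ratio $\frac{\mu R\g}{\mu R\g+\kappa(\g-1)^2}$ in the statement.

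\textbf{Step 3: collect the errors.} All remainders are $O(\d_S)\,|w|\,|w-(u_+-u^*)|$ or $O(\d_S)|w'|$. Here I use $|u^S-u^*|\le C\d_S$, the bounds $|(v^S_\zeta,\th^S_\zeta)|\le C|u^S_\zeta|$ from Lemma~\ref{lem:VS}, and the fact that both sides of the scalar ODE vanish at $u^*$ and $u_+$, so the errors carry the factor $(u^*-u^S)(u^S-u_+)$. Dividing by $\d_S^2 y(1-y)$ then yields the claimed $O(\d_S^2)$ error.

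The main obstacle is Step 2. There I must eliminate $\th^S$ consistently, both algebraically (the Hugoniot relation between $\th$ and $u$ at second order) and differentially (turning $(\th^S)'$ into $u'$ via \eqref{est:shock2}). The delicate point is that the $O(\d_S)$ discrepancy in \eqref{est:shock2} must only ever multiply $w'$, never stand alone, so that it contributes at relative order $\d_S$. If doing this directly becomes messy, I would instead follow the corresponding computation in \cite{KVW-NSF}. That is, I would first derive the planar two-dimensional ODE system for $(u^S,\th^S)$ and use the center-manifold-type reduction implicit in Lemma~\ref{lem:VS} to express $\th^S$ as a function of $u^S$ up to $O(\d_S^2)$.
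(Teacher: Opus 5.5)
Your route is correct, but it is not the paper's. The paper does no ODE analysis for this lemma. It imports the $v$-variable estimate \cite[(4.59)]{KVW-NSF} and only changes variables: from $-\sigma(v^S-v^*)=u^S-u^*$ and $-\sigma(v_+-v^*)=u_+-u^*$ it reads off $\frac{(v_+-v^*)^2}{(v^S-v^*)(v_+-v^S)}=\frac{1}{y(1-y)}$, $\frac{v^S_\xi}{v_+-v^*}=\frac{dy}{d\xi}$ and $v_+-v^*=\delta_S/\sigma$. It then uses $\frac{\gamma(\gamma+1)p^*}{2(v^*\sigma^*)^2}=\frac{\gamma+1}{2v^*}$ and $|v^S-v^*|+|\sigma-\sigma^*|\le C\delta_S$ to replace $v^S$ and $\sigma$. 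You instead re-derive that estimate from the profile equations, directly in the $u$-variable that defines $y$. Your version is self-contained apart from \eqref{est:shock2}, which the paper also imports, and it shows where $\frac{\gamma+1}{2}$ and the effective viscosity $\mu+\kappa(\gamma-1)^2/(R\gamma)$ come from. The paper's version is two lines but leaves the substance to the cited work.

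To make Step 2 rigorous, do it exactly rather than through a second-order Hugoniot expansion. The integrated energy relation is linear in $\theta^S-\theta^*$:
\[
\kappa\theta^S_\zeta/v^S=-\frac{\sigma R}{\gamma-1}(\theta^S-\theta^*)+p^*w+\frac{\sigma}{2}w^2 .
\]
Solve it for $\theta^S-\theta^*$ and insert this into $v^S$ times the integrated momentum relation, using $v^S=v^*-w/\sigma$. This gives
\[
\mu u^S_\zeta+\frac{(\gamma-1)\kappa}{\sigma v^S}\theta^S_\zeta=\Big(\frac{\gamma p^*}{\sigma}-\sigma v^*\Big)w+\frac{\gamma+1}{2}w^2=\frac{\gamma+1}{2}(u^S-u^*)(u^S-u_+).
\]
The last equality is exact because the left side vanishes as $\zeta\to+\infty$, which is the Rankine--Hugoniot condition. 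The linear coefficient must be fixed this way and not by $\sigma\approx\sigma^*$. Dropping it that way would leave an error of size $O(\delta_S)|w|$, which is as large as the main term near $u^*$.

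Next, \eqref{est:shock2} and $p^*/((\sigma^*)^2v^*)=1/\gamma$ turn the left side into $\big(\mu+\frac{\kappa(\gamma-1)^2}{R\gamma}+O(\delta_S)\big)u^S_\zeta$. So the $O(\delta_S)$ error only multiplies $u^S_\zeta$, exactly as you anticipated, and dividing through gives the lemma. No center-manifold reduction is needed.

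Your Step 3 argument (both sides vanish at $u^*$ and $u_+$, so the errors carry the factor) would need separate justification as stated. With the exact identity above it is unnecessary.

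Also fix the sign slips. Since both $u^S$ and $\theta^S$ decrease, $\theta^S-\theta^*\approx+\frac{(\gamma-1)p^*}{R\sigma^*}w$ and $\kappa\theta^S_\zeta/v^S\approx+\kappa\frac{(\gamma-1)p^*}{R\sigma^*v^*}w_\zeta$. The momentum combination is $p^S-p^*-\sigma w$, not $p^S-p^*+\sigma w$. With your signs, the heat-conduction term would enter the effective viscosity with a minus sign.
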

\begin{proof}
	From \cite[(4.59)]{KVW-NSF}, we have 
	\begin{equation}\label{est:Jac1}
	\left|\frac{(v_+ - v^*)^2}{(v^S-v^*)(v_+ - v^S)}\frac{\m}{v^S}\frac{v^S_\x}{v_+ - v^*} - \frac{\gamma (\gamma + 1)p^*}{2 (v^*)^2 \sigma^*} \frac{\m R \gamma}{\m R \gamma + \kappa (\gamma - 1)^2}(v_+ - v^*)  \right| \leq C\d_S^2.
	\end{equation}
Integrating \eqref{eq:VS}$_1$, we obtain
\begin{equation}\label{eq:VSint}
	-\s(v^S - v^*) - (u^S - u^*) = 0 \, \, \text{and }  -\s(v^S - v_+) - (u^S - u_+) = 0.
\end{equation}
Then using \eqref{eq:VS}$_1$, \eqref{eq:VSint}, and \eqref{eq:RH}$_1$, the left-hand side of \eqref{est:Jac1} can be rewritten as 
\[
\text{l.h.s. of \eqref{est:Jac1}} = \left|\frac{1}{y(1-y)}\frac{\m}{v^S}\frac{dy}{d\x} - \frac{\gamma (\gamma + 1)p^*}{2 (v^*)^2 \sigma^*}\frac{\d_S}{\s} \frac{\m R \gamma}{\m R \gamma + \kappa (\gamma - 1)^2}  \right|.
\]
Using $|v^S - v^*| + |\s - \s^*| \leq C\d_S$, together with the identity
\[
\frac{\gamma (\gamma + 1)p^*}{2 (v^*\sigma^*)^2} = \frac{(\gamma+1)}{2v^*},
\]
we then obtain the desired estimate.
\end{proof}
Using Lemma \ref{lem:Jac} in \eqref{est:Du1xi} and \eqref{est:Dth1xi}, we obtain 
\begin{align*}
\begin{aligned}
	D_{u_1} &\geq \m(1-C(\e + \d_0))\int_{y_0}^1 |\psi_y|^2\left(\frac{dy}{d\x}\right)\,dy\\
	&\geq \frac{(\gamma+1)}{2v^*} \frac{\m R \gamma}{\m R \gamma + \kappa (\gamma -1)^2}(1-C(\e + \d_0))\d_S\int_{y_0}^1 y(1-y)|\psi_y|^2\,dy\\
	&\geq \frac{(\gamma+1)}{2v^*} \frac{\m R \gamma}{\m R \gamma + \kappa (\gamma -1)^2}(1-C(\e + \d_0))\d_S\int_{y_0}^1 (y-y_0)(1-y)|\psi_y|^2\,dy,
    \end{aligned}
\end{align*}
and likewise, we have
\begin{align*}
	D_{\th_1} \geq \frac{(\gamma+1)}{2v^*\th^*} \frac{\k R \gamma}{\m R \gamma + \kappa (\gamma -1)^2} (1-C(\e + \d_0))\d_S \int_{y_0}^1 (y-y_0)(1-y)|\chi_y|^2\,dy.
\end{align*}
Using Poincar\'e-type inequality in Lemma~\ref{lem:Poincare}, we obtain
\begin{align*}
    D &\geq \frac{(\gamma+1)}{v^*} \frac{\m R \gamma}{\m R \gamma + \kappa (\gamma -1)^2}(1-C(\e + \d_0))\d_S\left[\int_{y_0}^1 \psi^2 \,dy - \frac{1}{1-y_0}\left(\int_{y_0}^1 \psi\,dy\right)^2 \right]\\
    &\qquad +\frac{(\gamma+1)}{v^*\th^*} \frac{\k R \gamma}{\m R \gamma + \kappa (\gamma -1)^2} (1-C(\e + \d_0))\d_S \left[\int_{y_0}^1 \chi^2 \,dy - \frac{1}{1-y_0}\left(\int_{y_0}^1 \chi\,dy\right)^2 \right].
\end{align*}
From the inequality $(a+b)^2 \geq (1-\d_S^{-1/4})a^2 + (1-\d_S^{1/4})b^2$, we obtain
\begin{equation}\label{chisquare}
\begin{aligned}
   \int_{y_0}^1 \chi^2 \,dy &= \int_{y_0}^1 \left(\chi - \frac{(\gamma -1)\th^*}{v^* \s^*}\psi +\frac{(\gamma -1)\th^*}{v^* \s^*}\psi \right)^2 \,dy \\
   &\geq (1-\d_S^{-1/4}) \int_{y_0}^1 \left(\chi - \frac{(\gamma -1)\th^*}{v^* \s^*}\psi \right)^2\,dy + (1-\d_S^{1/4})\int_{y_0}^1 \left(\frac{(\gamma -1)\th^*}{v^* \s^*}\psi \right)^2\,dy\\
   &\geq -C\d_S^{-3/4}G^{new} +(1-\d_S^{1/4})\left(\frac{(\gamma -1)\th^*}{v^* \s^*}\right)^2 \int_{y_0}^1 \psi^2\,dy.
\end{aligned}
\end{equation}
Moreover, using \eqref{ineq:al}$_1$ and H\"older's inequality, we have 
\[
\begin{aligned}
    \Big(\int_{y_0}^1 \chi dy\Big)^2 &\leq   C\d_S^{-1/4}(1-y_0)  \int_{y_0}^1 \Big(\chi - \frac{(\gamma -1)\th^*}{v^* \s^*}\psi \Big)^2 dy + (1+\d_S^{1/4}) \Big(\int_{y_0}^1 \frac{(\gamma -1)\th^*}{v^* \s^*}\psi dy\Big)^2\\
    &\leq C\d_S^{-3/4}G^{new} + (1+\d_S^{1/4})\left(\frac{(\gamma -1)\th^*}{v^* \s^*}\right)^2\left(\int_{y_0}^1 \psi \,dy \right)^2.\\
\end{aligned}
\]
Therefore, we finally obtain
\begin{equation}\label{est:D}
    \begin{aligned}
            D &\geq  \frac{\gamma +1}{v^*}\d_S (1-C\d_S^{1/4} - C(\e + \d_0))\int_{y_0}^1 \psi^2\,dy \\
      &\quad - \frac{\gamma +1}{(1-y_0)v^*}\d_S (1+C\d_S^{1/4} +C(\e + \d_0))\left(\int_{y_0}^1 \psi\,dy\right)^2 - C\d_S^{1/4}G^{new}.
    \end{aligned}
\end{equation}
{\bf $\bullet$ Proof of Lemma~\ref{lem:leading}:} Combining \eqref{est:X2}, \eqref{est:BS}, and \eqref{est:D}, and using $0<y_0<\frac{1}{9}$, we obtain
\begin{equation*}
    \begin{aligned}
        &-\frac{\d_S}{4M}|\dot{X}(t)|^2 + B^S - G^{new} - \frac{3}{4}D \\
        &\quad \leq \d_S \left(-\frac{M}{2} + \frac{7}{8}\frac{(\gamma+1)}{v^*} \right)\left(\int_{y_0}^1 \psi dy\right)^2+ \d_S \left(\frac{\gamma^2 - 3\gamma + 4}{2\gamma v^*} - \frac{2}{3}\frac{\gamma+1}{v^*}\right)\int_{y_0}^1 \psi^2 dy - \frac{2}{3}G^{new}. \\
    \end{aligned}
\end{equation*}
Choosing $M = \frac{2(\gamma+1)}{v^*}$ and using the inequality
\[
\frac{\gamma^2 - 3\gamma + 4}{2\gamma} - \frac{2}{3}(\gamma+1) \leq -\frac{1}{6\gamma}, \quad \text{for any } \gamma >1,
\]
we obtain
\begin{equation}\label{est:lot2}
\begin{aligned}
    -\frac{\d_S}{4M}|\dot{X}(t)|^2 + B^S - G^{new} - \frac{3}{4}D \leq -\frac{1}{6\gamma v^*}\intRp |\psi|^2|u_\x^S|\,d\x-\frac{2}{3}G^{new}.
\end{aligned}
\end{equation}
Following the same argument as in \eqref{chisquare}, and using the inequality $(a+b)^2 \geq (1-\d_S^{-1/4})a^2 + (1-\d_S^{1/4})b^2$, we deduce that
\begin{equation}\label{der:GS}
\intRp |\psi|^2|u_\x^S|\,d\x \geq -C\d_S^{3/4}G^{new} + \widetilde{C}_S(1-\d_S^{1/4})\intRp |(\phi,\chi)|^2|u_\x^S|\,d\x,
\end{equation}
for some positive constant $\widetilde{C}_S$. Combining \eqref{est:lot2} and \eqref{der:GS}, we conclude that 
\[
-\frac{\d_S}{4M}|\dot{X}(t)|^2 + B^S - G^{new} - \frac{3}{4}D \leq -C_SG^S-\frac{1}{2}G^{new},
\]
for some positive constant $C_S$.\\
\qed
\subsection{Estimates of the Remainder Terms}
In what follows, we estimate the remaining bad terms in Lemma~\ref{lem:decomp}.
\subsubsection{Estimates of $B^{rem}(U)$ and $B^C(U)$}
\begin{lem}\label{lem:Brem}
     Under the assumptions of Proposition~\ref{prop:ap}, there exists a positive constant $C$ such that the following estimate holds.
    \[
    \begin{aligned}
        |B^{rem}(U)| &\leq \left(\frac{1}{50}+C\e\right)(D_{u_1} + D_{\th_1}) + C\e G^{new} + C(\d_0+ \e)(G^{BL}+G^S+B^C)\\
        &\qquad + C(\e_r\d_R + \e)G^R.
    \end{aligned}
    \]
\end{lem}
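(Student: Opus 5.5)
We bound each piece of $B^{rem}(U)$ in Lemma~\ref{lem:decomp} separately, throughout using the a priori bound $\|(\phi,\psi,\chi)\|_{L^\infty}\le C\e$ (Sobolev embedding and \eqref{est:apriori}) and the fact that $|a_\x|\sim\d_S^{-1/2}|u^S_\x|$ by \eqref{da}.

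\textbf{The $a_\x$-weighted terms.} These are handled directly:
\[
C\intRp a_\x|(\phi,\psi,\chi)|^3\,d\x \le C\e\,\d_S^{-1/2}\intRp|u^S_\x||(\phi,\psi,\chi)|^2\,d\x,
\qquad
C\d_S\intRp a_\x|(\phi,\psi,\chi)|^2\,d\x\le C\d_S^{1/2}G^S .
\]
For the first bound I need $\e\d_S^{-1/2}\le C(\d_0+\e)$, which is false in general. So instead I use $a_\x|\phi|^2\lesssim a_\x\big(|\phi+\psi/\s^*|^2+|\psi|^2\big)$ together with \eqref{der:GS}. This shows the cubic term is at most $C\e(G^{new}+\d_S^{-1/2}\intRp|u^S_\x||\psi|^2\,d\x)$.

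\textbf{The $\d_S^{-1/2}$ loss on $\psi$.} The $\psi$ part should be traded for the diffusion via the Poincar\'e/Jacobian estimates from the leading-order lemma. There we showed $\d_S\int_{y_0}^1\psi^2\,dy\lesssim D_{u_1}+\d_S(\int\psi\,dy)^2$, and $\d_S(\int\psi\,dy)^2\lesssim \d_S^{-1}\big(\intRp |u^S_\x||\psi|\,d\x\big)^2\le C\e^2\d_S\int_{y_0}^1\psi^2\,dy$ is harmless. This yields $C\e(D_{u_1}+G^{new})$. The remaining terms $C\d_S\intRp|(\phi,\chi)|^2|u^S_\x|\,d\x\le C\d_0G^S$ and $C\e\intRp|(\phi,\chi)|^2|\ubar_\x|\,d\x\le C\e(G^{BL}+G^R+G^S)+C\e B^C$ follow from $|\ubar_\x|\le\sum_i|\rd_\x u_i|$ and $|u^C_\x|\lesssim \d_C(1+t)^{-1}e^{-C_1(\x+\s_-t)^2/(1+t)}$.

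\textbf{$Y_3$.} Here $|Y_3|\le C\intRp|a_\x||(\phi,\psi,\chi)|^2\,d\x+C\intRp|\th^S_\x||(\phi,\chi)|^2\,d\x$, so
\[
\frac{C}{\d_S}|Y_3|^2\le \frac{C}{\d_S}\Big(\d_S^{-1/2}\intRp|u^S_\x|\,d\x\Big)\,\e^2\cdot\d_S^{-1/2}G^S\le C\e^2\d_S^{-1}G^S .
\]
This is still too crude. I will instead use Cauchy--Schwarz in the form $(\int a_\x f^2)^2\le \|a_\x f^2\|_{L^1}\cdot C\e^2\int a_\x$ with $\int a_\x=\sqrt{\d_S}$. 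That gives $\d_S^{-1}|Y_3|^2\le C\e^2\d_S^{-1/2}\int a_\x|(\phi,\psi,\chi)|^2$, which is then treated exactly as the cubic term above.

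\textbf{$\mathcal{J}^P$, the main obstacle.} Its terms fall into three types.
\begin{itemize}
\item[(i)] Terms with $a_\x$ times $\psi\psi_\x$, $\chi\chi_\x$, or $\psi(1/v-1/\vbar)\ubar_\x$. By Young these are bounded by $\frac1{100}D+C\intRp a_\x^2|(\psi,\chi)|^2\,d\x$, and $a_\x^2\le C\d_S^{3/2}a_\x$ reduces them to $C\d_S G^{new}$-type quantities.
\item[(ii)] Terms like $\ubar_\x(1/v-1/\vbar)\psi_\x$ and $\thbar_\x\phi\chi_\x$. These are bounded by $\frac1{100}D+C\intRp(|\ubar_\x|^2+|\thbar_\x|^2)|\phi|^2\,d\x$. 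Since $|\rd_\x U_i|\le C(\e_r\d_R+\d_0)$ pointwise by Lemmas~\ref{lem:BL}, \ref{lem:rarefaction}, \ref{lem:CD}, \ref{lem:VS}, and $|\th^R_\x|\sim|u^R_\x|$, this gives $C(\e_r\d_R)G^R+C\d_0(G^{BL}+G^S+B^C)$. One caveat: for the contact wave we must compare $|\th^C_\x|^2\sim\d_C^2(1+t)^{-1}$ with $B^C$'s weight, and this works because the factor $\d_C$ matches.
\item[(iii)] Terms quadratic in derivatives, such as $\frac{\chi}{\th^2}\th_\x(\th_\x/v-\thbar_\x/\vbar)$ and $\chi(u_\x^2/v-\ubar_\x^2/\vbar)$. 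Writing $\th_\x=\chi_\x+\thbar_\x$ gives $C\e(D_{u_1}+D_{\th_1})$ plus the same wave-weighted terms.
\end{itemize}
The last term, $\frac{(\th-\thbar)^2}{\th\thbar}\kappa(\thbar_\x/\vbar)_\x$, contains second derivatives of $\thbar$. Second derivatives of the waves are again bounded by first derivatives: $\d_{BL}$, $\d_S$, and $(1+t)^{-1/2}$ times, respectively, and for the rarefaction by $\e_r$ times $|w_\x|$ plus $|w_\x|^2$, using the Burgers profile $w_{xx}\lesssim \e_r w_x$ read off from \eqref{eq:w0x}. Absorbing all $\frac1{100}D$ contributions produces the stated $(\frac1{50}+C\e)(D_{u_1}+D_{\th_1})$.
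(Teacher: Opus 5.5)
Your handling of $\mathcal{J}^P$ and of the quadratic terms weighted by $\delta_S a_\xi$, $\delta_S|u^S_\xi|$ and $\varepsilon|\bar u_\xi|$ is essentially the paper's. One correction: in (i) what you get is $C\delta_S^{-1}\int|u^S_\xi|^2|(\psi,\chi)|^2\,d\xi\le C\delta_S G^S$, not a $G^{new}$-type term. The two places where you depart from the paper, however, both fail. Each loses a negative power of $\delta_S$. That is fatal, because $C$ must not depend on $\delta_S$, and the regime $\delta_S\to0$ with $\varepsilon$ fixed is allowed.

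\textbf{Cubic term.} After replacing $|\psi|^3$ by $C\varepsilon|\psi|^2$, you must bound $C\varepsilon\delta_S^{-1/2}\int|u^S_\xi|\psi^2\,d\xi$. This quantity is not controlled by the right-hand side uniformly in $\delta_S$. To see this, let $\psi$ equal a small constant $c$ on an interval of length $L\gg\delta_S^{-1}$ around the shock layer, far from the other waves and cut off on the same scale. Take $\phi=-\psi/\sigma^*$ and $\chi=\frac{(\gamma-1)\theta^*}{v^*\sigma^*}\psi$. Then $G^{new}=0$, $D\lesssim c^2/L$ and $G^S\sim c^2\delta_S$, while your quantity is $\sim\varepsilon c^2\delta_S^{1/2}$.

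Accordingly, two of your steps fail:
\begin{itemize}
\item The Poincar\'e trade gives $\varepsilon\delta_S^{1/2}\int_{y_0}^1\psi^2\,dy\lesssim\varepsilon\delta_S^{-1/2}D_{u_1}+\cdots$, not $\varepsilon D_{u_1}$.
\item The claim $\delta_S^{-1}\big(\int|u^S_\xi||\psi|\,d\xi\big)^2\le C\varepsilon^2\delta_S\int_{y_0}^1\psi^2\,dy$ is false. Cauchy--Schwarz gives it only without the factor $\varepsilon^2$, and for $\psi$ constant across the layer both sides are comparable; Poincar\'e never controls the mean.
\end{itemize}

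The missing idea is to keep the cube and spend one factor through $\|\psi\|_{L^\infty}^2\le 2\|\psi\|_{L^2}\|\psi_\xi\|_{L^2}$. Then $\int|u^S_\xi|\,d\xi\le C\delta_S$ cancels the $\delta_S^{-1/2}$ exactly:
\begin{align*}
\frac{C}{\sqrt{\delta_S}}\int_{\mathbb{R}_+}|u^S_\xi||\psi|^3\,d\xi
&\le\frac{C}{\sqrt{\delta_S}}\|\psi\|_{L^\infty}^2\int_{\mathbb{R}_+}|u^S_\xi||\psi|\,d\xi\\
&\le\frac{C}{\sqrt{\delta_S}}\|\psi\|_{L^2}\|\psi_\xi\|_{L^2}\Big(\int_{\mathbb{R}_+}|u^S_\xi|\,d\xi\Big)^{1/2}\Big(\int_{\mathbb{R}_+}|u^S_\xi|\psi^2\,d\xi\Big)^{1/2}\le C\varepsilon\sqrt{D_{u_1}}\sqrt{G^S}.
\end{align*}
The cubes of $\phi+\psi/\sigma^*$ and $\chi-\frac{(\gamma-1)\theta^*}{v^*\sigma^*}\psi$ then give $C\varepsilon G^{new}$.

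\textbf{The term $Y_3$.} Your refined Cauchy--Schwarz gains nothing. Since $\int a_\xi f^2=\delta_S^{-1/2}\int|u^S_\xi|f^2$, the bound $C\varepsilon^2\delta_S^{-1/2}\int a_\xi f^2$ is again $C\varepsilon^2\delta_S^{-1}G^S$. Passing it through your cubic-term argument would even put $\delta_S^{-1/2}$ in front of $G^{new}$. The culprit is the H\"older pairing: $L^\infty$ on the perturbation against $L^1$ on $u^S_\xi$ gains only $\delta_S$. Instead, pair $\sup|u^S_\xi|\le C\delta_S^2$ with $\|(\phi,\psi,\chi)\|_{L^2}^2\le C\varepsilon^2$:
\begin{align*}
\frac{C}{\delta_S}|Y_3|^2
&\le\frac{C}{\delta_S^2}\Big(\int_{\mathbb{R}_+}|u^S_\xi||(\phi,\psi,\chi)|^2\,d\xi\Big)^2\\
&\le\frac{C}{\delta_S^2}\Big(\sup_{\xi}|u^S_\xi|\Big)\|(\phi,\psi,\chi)\|_{L^2}^2\int_{\mathbb{R}_+}|u^S_\xi||(\phi,\psi,\chi)|^2\,d\xi\le C\varepsilon^2G^S.
\end{align*}

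A smaller point, which the paper's proof also passes over: $|w_{xx}|\lesssim\varepsilon_r|w_x|$ cannot be read off from \eqref{eq:w0x}. There $w_0''/w_0'=q/x-\varepsilon_r\gg\varepsilon_r$ for $x\ll\varepsilon_r^{-1}$. So the rarefaction's contribution to the $\bar\theta_{\xi\xi}$ term needs a separate argument, for instance an integration by parts in $\xi$.
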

\begin{proof}
First, we split $B^{rem}(U)$ as
\begin{align*}
	B^{rem}(U) &= \frac{C}{\d_S}|Y_3(U)|^2 + C\intRp a_\x|(\phi,\psi,\chi)|^3\,d\x + C\d_S \intRp a_\x|(\phi,\psi,\chi)|^2\,d\x\\
	&\quad + C\d_S\intRp |(\phi,\chi)|^2|u_\x^S| \,d\x + C\e\intRp |(\phi,\chi)|^2|\ubar_\x| \,d\x + \mathcal{J}^{P}(U),  \\
    &:=\frac{C}{\d_S}|Y_3(U)|^2 + B^{rem}_1 + B^{rem}_2 + B^{rem}_3 + B^{rem}_4 + \mathcal{J}^{P}(U).
\end{align*}

\noindent$\bullet$ {\bf Estimates of $Y_3(U)$:}
From $|u_\x| \leq C\d_S^{1/2}|a_\x|$, note that
\begin{align*}
|Y_3(U)| &= \left|-\intRp a_{\xi} \thbar \eta(U|\Ubar)\,d\x + \intRp a \left[ -R(\th^S)_\x \Phi\left(\frac{v}{\vbar}\right) - \frac{R}{\gamma - 1}(\th^S)_\x \Phi\left(\frac{\th}{\thbar}\right)\right]\,d\x\right| \\
&\leq C\intRp a_\x |(\phi,\psi,\chi)|^2\,d\x. 
\end{align*}
Using \eqref{def:a}, Lemma~\ref{lem:VS}, and \eqref{est:apriori}, we obtain
\begin{align*}
	\frac{C}{\delta_S}|Y_{3}|^2 \le &\frac{C}{\delta_S} \left(\intRp  |a_\xi| |(\phi, \psi, \chi)|^2 d\xi\right)^2
	 \le \frac{C}{\delta_S^2} \left(\intRp|u^S_\xi|  |(\phi, \psi, \chi)|^2 d\xi\right)^2\\
	 \le& \|(\phi, \psi, \chi)\|_{L^2}^2 \intRp |u^S_\xi| |(\phi, \psi, \chi)|^2d\xi \le C\eps^2 G^S.\\
\end{align*}
Here, we used the fact that 
\[
\intRp |u_\x^S||(\phi,\psi,\chi)|^2\,d\x \leq C\d_S^2\|(\phi, \psi, \chi)\|_{L^2}^2.
\]
\noindent$\bullet$ {\bf Estimates of $B^{rem}_1, B^{rem}_2, B^{rem}_3$:} 
First, by \eqref{da} and the interpolation inequality, we have
\begin{align*}
		\begin{aligned}
		B^{rem}_1=& \intRp a_\xi  |(\phi,\psi,\chi)|^3 d\x \\
		 \le & C \intRp  |a_\xi |\Big|\phi+\frac{\psi}{\sigma^*}\Big| ^3d\xi+C\intRp |a_\xi||\psi|^3d\xi +C \intRp |a_\xi |\Big|\chi-\frac{(\gamma-1)\theta^*}{v^*\s^*}\psi \Big|^3 d\xi \\
	\le &C\e (G_1+G_2)+C \frac{1}{\sqrt{\delta_S}} \intRp |u^S_\xi|  \|\psi\|_{L^\infty}^2 |\psi| d\xi \\
		\le&  C \e(G_1+G_2)+C\frac{1}{\sqrt{\delta_S}}  \|\psi_\xi\|_{L^2} \|\psi\|_{L^2}  \sqrt{\intRp  |u^S_\xi| \psi^2 d\xi} \sqrt{\intRp  |u^S_\xi |  d\xi} \\
	\le &C \e(G_1+G_2)+C\e \sqrt{D} \sqrt{G^S} \le C \e(G_1+G_2 + D+ G^S).
		\end{aligned}
\end{align*}
Again using $|u_\x| \leq C\d_S|a_\x|$ and \eqref{da}, we obtain
\[
	B^{rem}_2 + B^{rem}_3 \leq C\delta_S \intRp a_\xi |(\phi,\psi,\chi)|^2 \,d\x \le C\sqrt{\delta_S} G^S,
	\]
Using $\ubar_\x= u^{BL}_\x + u^R_\x + u^C_\x + u^S_\x$ and Lemma \ref{lem:CD}, we have 
\begin{align*}
    B^{rem}_4  \le C\e\intRp |(\phi,\chi)|^2|\ubar_\x| \,d\x \leq C\e \left(G^{BL} +G^R + G^S\right) + C\e B^C.
\end{align*}
$\bullet$ {\bf Estimates of  $\mathcal{J}^{P}(U)$:} We first decompose $\mathcal{J}^{P}(U)$ as follows:
\begin{align*}
		\mathcal{J}^{P}&= -\intRp a_\x \left[\m (u-\ubar)\left(\frac{u_\x}{v} - \frac{\ubar_\x}{\vbar}\right) + \kappa \frac{\th -\thbar}{\th} \left(\frac{\th_\x}{v} - \frac{\thbar_\x}{\vbar}\right)\right]\,d\x\\
		&\quad  +\intRp a \left[ -\m \ubar_\x \left(\frac{1}{v} - \frac{1}{\vbar}\right)(u-\ubar)_\x + \kappa \frac{\th - \thbar}{\th^2}\th_\x \left(\frac{\th_\x}{v} - \frac{\thbar_\x}{\vbar}\right) - \kappa \frac{(\th - \thbar)_\x}{\th}\thbar_\x \left(\frac{1}{v} - \frac{1}{\vbar}\right) \right.\\
		&\phantom{+\intRp a \Big[ \quad } \left.+\m \frac{\th - \bar{\theta}}{\th}\left(\frac{u_\x^2}{v} - \frac{\ubar_\x^2}{\vbar}\right) - \frac{(\th - \thbar)^2}{\th \thbar}\left(\kappa \left(\frac{\thbar_\x}{\vbar}\right)_\x + \m \frac{\ubar_\x^2}{\vbar}\right)\right]\,d\x\\
        &:=\mathcal{J}_1^{P} + \mathcal{J}_2^{P}.
	\end{align*}
Using Young's inequality and \eqref{da}, we obtain
\begin{align*}
	\begin{aligned}
		\mathcal{J}_1^{P} &\le C \intRp|a_\xi |\psi| \Big(  |\psi_\xi| + |\bar u_\xi| |\phi| \Big) d\xi + C \intRp |a_\xi | |\chi| \Big(  |\chi_\xi| + |\bar \theta_\xi| |\phi| \Big) d\xi \\
		&\le \frac{1}{100} D_{u_1}+\frac{1}{100} D_{\theta_1} +  C\frac{1}{\delta_S} \intRp |u^S_\xi|^2 |(\psi, \chi) |^2 d\xi  + C\intRp ( |\bar u_\xi|^2 + |\bar \theta_\xi|^2)  |\phi|^2 d\xi \\
		&\le  \frac{1}{100} D_{u_1}+\frac{1}{100} D_{\theta_1} + C\delta_S G^S  +  C\intRp ( |\bar u_\xi|^2 + |\bar \theta_\xi|^2)  |\phi|^2 d\xi.
	\end{aligned}
\end{align*}
Similarly, applying Young's inequality, we have the following estimate for $\mathcal{J}_2^{P}$:
\begin{align*}
\begin{aligned}
\mathcal{J}_2^{P} \le & \intRp \Big[ |\bar u_\xi| |\phi|  |\psi_\xi| +  |\chi| \Big(  |\chi_\xi| + |\bar \theta_\xi| \Big) \Big(  |\chi_\xi| + |\bar \theta_\xi| |\phi| \Big) \Big]d\xi   \\
& +  \intRp \Big[  |\chi_\xi| |\bar \theta_\xi| |\phi| +|\chi| \Big( |\psi_\xi|^2 +  |\bar u_\xi| |\psi_\xi| + |\bar u_\xi|^2 |\phi|  \Big) \Big] d\xi \\
& +\intRp \left(|\bar \theta_{\xi\xi}|+ |\bar v_\x \bar\theta_\x| + |\bar u_\xi|^2\right) |\chi|^2 \,d\x\\
\leq &\frac{1}{100}(D_{u_1} + D_{\th_1}) +  C \intRp ( |\bar u_\xi|^2 + |\bar \theta_\xi|^2 + |\bar \th_{\x\x}|)  |(\phi, \chi)|^2 d\xi.  
\end{aligned}
\end{align*}
By Lemma \ref{lem:BL}, Lemma \ref{lem:rarefaction}, \eqref{est:contact}, \eqref{est:shock}, we have
\begin{align*}
	\begin{aligned} 
    \intRp ( |\bar u_\xi|^2 + |\bar \theta_\xi|^2 + |\bar \th_{\x\x}|)  |\phi|^2 d\xi \leq
	C \left(\delta_{BL} B^{BL} + \e_r\delta_R G^R +\delta_S G^S\right)+ C\delta_CB^C.
	\end{aligned}
\end{align*}
Thus, we have 
\[
\mathcal{J}^{P} =\mathcal{J}^{P}_1 +\mathcal{J}^{P}_2 \leq \frac{1}{50}(D_{u_1} + D_{\th_1}) + C(\d_{BL}G^{BL} + \e_r\d_RG^R + \d_S G^S) + C\d_C B^C.
\]
Combining all these estimates, we obtain the desired result in Lemma~\ref{lem:Brem}.
\end{proof}
The following lemma provides an estimates for $B^C$.
\begin{lem}\label{lem:L2CD}
    Under the assumptions of Proposition~\ref{prop:ap}, there exists a positive constant $C$ such that the following estimate holds.
    \[
    \begin{aligned}
        \int_0^T |B^C|\,dt &\leq C\d_C\sup_{t \in [0,T]} \|(U- \Ubar)(t,\cdot)\|_{L^2}^2 
        + C\d_C\int_0^T (\d_S|\dot{X}|^2+G^S +G^R+G^{BL})\,dt\\
        &\qquad +C\d_C\int_0^T (D_{v_1} + D_{u_1} + D_{\th_1} + D_{u_2} + D_{\th_2}) \,dt + C\d_C. 
    \end{aligned}
    \]
\end{lem}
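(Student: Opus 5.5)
The plan is the standard heat-kernel argument for the viscous contact wave (as in Huang--Li--Matsumura), adapted to the half-line with the boundary terms handled through the inflow structure. Let $w(t,\x) := (1+t)^{-1/2}e^{-\frac{C_1(\x+\s_- t)^2}{1+t}}$ and introduce the auxiliary function
\[
g(t,\x) := \int_{-\infty}^{\x} w(t,y)\,dy, \qquad 0\le g\le C .
\]
A direct computation gives $g_t - \s_- g_\x = \frac{1}{4C_1}w_\x$ up to a harmless sign convention; more precisely $4C_1 g_t = w_\x$ in the moving frame $\x+\s_- t$. The first step is the weighted inequality
\begin{equation*}
\int_0^T \intRp w^2|f|^2\,d\x\,dt \le C\sup_{t}\|f(t)\|_{L^2}^2 + C\int_0^T\|f_\x\|_{L^2}^2\,dt + C\int_0^T\big(\text{boundary terms at }\x=0\big)\,dt,
\end{equation*}
to be applied to $f\in\{\phi,\psi,\chi\}$. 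Its proof starts from the identity $\frac{d}{dt}\intRp g f^2 = \intRp (g_t - \s_- g_\x)f^2 + \intRp g\,\rd_t f\, f\cdots$. Since $B^C\le C\d_C\intRp w^2|(\phi,\psi,\chi)|^2$, multiplying by $\d_C$ then yields the claim.

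To obtain the inequality, I would use the perturbation equations, obtained by subtracting \eqref{eq:composite} from \eqref{eq:NSF}, and multiply the $\phi$-, $\psi$- and $\chi$-equations by $g\phi$, $g\psi$ and $g\chi$ respectively. The time derivative of $\intRp g f^2$ is controlled by $\sup\|f\|^2_{L^2}$. The transport term $-\s_- g_\x f^2 = -\s_- w f^2\ge 0$ has a good sign because $\s_-<0$, and this is what produces $\int w f^2$. To upgrade this to $w^2$-control, I would follow HLM and use the one-dimensional bound
\[
\intRp w^2 f^2\,d\x \le C\|f\|_{L^2}^2\cdot\frac{1}{1+t}\ \text{(too weak)},
\]
and therefore instead apply the HLM trick. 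Write $\intRp w^2 f^2 \le C(1+t)^{-1/2}\intRp w f^2$, or, more sharply, use $g_t$-weighted energy together with $\|f\|_{L^\infty}^2\le 2\|f\|_{L^2}\|f_\x\|_{L^2}$. The cross terms $g\,\psi\,(p-\pbar)_\x$ and similar ones are integrated by parts. They produce $w\,|\psi||(\phi,\chi)|$, which is handled by Young's inequality and the good $w$-terms, together with interior derivative terms $\|(\psi_\x,\chi_\x)\|^2$. The $\phi$-equation carries no diffusion, so $\phi_\x$ enters through $D_{v_1}$, and second derivatives arise only via $D_{u_2},D_{\th_2}$. The source terms $\dot X v^S_\x$, $Q_i$, and the wave-interaction terms are bounded by $\d_S|\dot X|^2+G^S$, by \eqref{est:QC}, and by Lemmas~\ref{lem:win}--\ref{lem:estL22}. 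These contribute the constant $C$ and, after multiplication by $\d_C$, give $C\d_C$. Terms of the form $|\ubar_\x||f|^2$ are split into $G^{BL}+G^R+G^S+B^C$, and the $B^C$ piece is absorbed since it is multiplied by the small factor $\d_C$.

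The main obstacle I expect is the boundary at $\x=0$. Integration by parts produces terms such as $g\psi(p-\pbar)|_{\x=0}$ and $g\psi\psi_\x|_{\x=0}$. Here I would use that $\phi=\psi=\chi=0$ at $\x=0$ up to the boundary mismatch of $\Ubar$. That mismatch consists of the contact wave discrepancy, which decays exponentially by \eqref{est:Cin}, and the shock discrepancy $e^{-c\d_S\b}$ from \eqref{y0ngtv}. Both are integrable in time, and via the trace interpolation $|f(0)|^2\le C\|f\|\|f_\x\|$ they contribute only $C$ plus small multiples of $D_{u_1},D_{\th_1},D_{u_2},D_{\th_2}$. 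Since $g(t,0)\le Ce^{-ct}$, these terms are in fact exponentially small, which closes the estimate.
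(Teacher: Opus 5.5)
Your scheme has a genuine gap at its core. The good term you rely on does not exist. Multiplying $f_t-\sigma_- f_\xi=\dots$ by $gf$ and integrating over $\mathbb{R}_+$ gives
\[
\frac{d}{dt}\int_{\mathbb{R}_+}\frac{gf^2}{2}\,d\xi=\frac12\int_{\mathbb{R}_+}(g_t-\sigma_- g_\xi)f^2\,d\xi-\frac{\sigma_-}{2}\,gf^2\big|_{\xi=0}+\dots,
\]
so the transport contribution $-\sigma_- g_\xi f^2$ is already contained in $g_t-\sigma_- g_\xi=\frac{1}{4C_1}w_\xi$, which has no sign. Indeed $g_t=\sigma_- w+\frac{1}{4C_1}w_\xi$, because $w$ is centered at $\xi=-\sigma_- t$, so you are counting $\sigma_- w f^2$ twice.

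The only coercive quantity the kernel produces is the Huang--Li--Matsumura one, $\frac{1}{4C_1}\int w^2 f^2$, obtained with the multiplier $g^2 f$. It comes together with $\int\langle f_t, fg^2\rangle$, so the estimate is not a property of each $f$ separately; it depends on the structure of the equation for $f$.

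For $f=\phi$ or $f=\psi$ on its own this fails. The hyperbolic coupling ($\psi_\xi$ in the mass equation, $(p-\bar p)_\xi$ in the momentum equation) produces, after integration by parts against $g^2 f$, terms $\int gw\,\psi\phi$ and $\int gw\,\psi(p-\bar p)$ of size $\int w|\psi||(\phi,\chi)|$. These decay slower by a factor $(1+t)^{1/2}$ than the controlled $\int w^2|(\phi,\psi,\chi)|^2$, and Young's inequality leaves $C\|(\phi,\chi)\|_{L^2}^2$, whose time integral is unbounded. Summing the equations with entropy weights does not help: the cross terms form the flux $\psi(p-\bar p)$, which against $g^2$ still leaves $-2\int gw\,\psi(p-\bar p)$.

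The missing idea is the mode decomposition used in the paper, following Huang--Li--Matsumura and Kang--Vasseur--Wang.
\begin{itemize}
\item The heat-kernel argument, with multiplier $PH_1^2$ and $H_1=\int_{-\infty}^\xi\mathcal H$, is applied only to the diffusive mode $P=\bar p\phi+\frac{R}{\gamma-1}\chi$. Adding $\bar p$ times the mass equation to the energy equation cancels the linear $\bar p\psi_\xi$ terms. What remains is $-(p-\bar p)\psi_\xi$, lower-order terms, and the heat-conduction term $\kappa(\theta_\xi/v-\bar\theta_\xi/\bar v)_\xi$.
\item The acoustic pair $(\psi,\widetilde P)$, with $\widetilde P=R\chi-\bar p\phi=v(p-\bar p)$, is treated by a flux identity. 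Since $\psi_t-\sigma_-\psi_\xi\approx-(\widetilde P/v)_\xi$ and $\widetilde P_t-\sigma_-\widetilde P_\xi\approx-\gamma\bar p\psi_\xi$, differentiating $\int H_2\psi\widetilde P$ with $H_2=\int_{-\infty}^\xi\mathcal H^2$ produces exactly $\frac12\int\mathcal H^2(\widetilde P^2/v+\gamma\bar p\psi^2)$.
\item The remainder $\psi\widetilde P\,[(H_2)_t-\sigma_-(H_2)_\xi]=O((1+t)^{-3/2})|\psi||\widetilde P|$ is time-integrable using only the $L^2$ bound. One then concludes with $|(\phi,\psi,\chi)|^2\le\widetilde C(P^2+\widetilde P^2/(2v)+\gamma\bar p\psi^2/2)$.
\end{itemize}

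This identity also generates $\int\bar p_\xi\psi^2H_2$. Since $G^{BL}$ and $G^R$ control only $(\phi,\chi)$, the boundary-layer and rarefaction parts of this term need separate arguments. The paper writes $\psi(\tau,\xi)=\psi(\tau,0)+\int_0^\xi\psi_\xi$ against the algebraic decay of $v^{BL}_\xi$, and uses $\|v^R_\xi\|_{L^4}\lesssim(1+t)^{-3/4}$ together with $|H_2|\lesssim(1+t)^{-1/2}$. Your splitting of $|\bar u_\xi|f^2$ into $G^{BL}+G^R+G^S+B^C$ does not cover $f=\psi$.

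Your handling of the boundary mismatch and the final multiplication by $\delta_C$ are in line with the paper.
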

\begin{proof}
    Since the proof of Lemma~\ref{lem:L2CD} is almost similar to those in \cite{HLM10} and \cite[Appendix D]{KVW-NSF}, we defer it to Appendix~\ref{App:CD}.
\end{proof}

\subsubsection{Estimates of $ B^{I}(U)$}
\begin{lem}\label{lem:BIU}
Under the assumptions of Proposition~\ref{prop:ap}, there exists a positive constant $C$ such that, for any $t\in[0,T]$,
    \[
\int_0^t |B^I|\,d\t \leq C\d_0^{1/3} + C(\e_r\d_R)^{1/9} + C\d_0\left(\frac{\d_R}{\e_r}\right)^{4/3}\frac{1}{\e_r} + C\e^2\int_0^t D_{v_1} \,d\t + \frac{1}{40}\int_0^t (D_{u_1} + D_{\th_1})\,d\t.
\]
\end{lem}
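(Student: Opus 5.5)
The bound on $\int_0^t|B^I|$ will be proved term by term, splitting $B^I$ into four pieces: $\frac{C}{\d_S}|Y_2|^2$; the interaction errors $Q_1^I,Q_2^I$; the rarefaction errors $Q_1^R,Q_2^R$ (plus $Q^C$); and the last line with coefficients $|(v^R-v_*,\th^R-\th_*)||u^{BL}_\x|+|(\vbar-v^S,\thbar-\th^S)||a_\x|$. The common device is the Gagliardo--Nirenberg bound on the half-line, $\|f\|_{L^\infty}\le C\|f\|_{L^2}^{1/2}\|f_\x\|_{L^2}^{1/2}+C|f(0)|$, together with the observation that $\phi,\psi,\chi$ vanish at $\x=0$, since the solution and $\Ubar$ share boundary data up to the boundary discrepancies of $U^C,U^S$, which are $O(\d_C e^{-ct}+e^{-c\d_S\beta})$.

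For a term of the form $\int W|(\phi,\psi,\chi)|$ or $\int W|(\phi,\psi,\chi)|^2$ with $W$ an interaction density, we will use
\[
\int W|f|\le \|W\|_{L^1}\|f\|_{L^2}^{1/2}\|f_\x\|_{L^2}^{1/2}\le C\e^{1/2}\|W\|_{L^1}D^{1/4}\le \tfrac1{80}D+C\|W\|_{L^1}^{4/3},
\]
where $D$ stands for $D_{u_1}+D_{\th_1}$ (via $\psi_\x,\chi_\x$) or $D_{v_1}$ (for $\phi$). When $\phi$ appears, the $D_{v_1}$ contribution carries an extra $\e$ and becomes $C\e^2D_{v_1}$; this is why $D_{v_1}$ enters with the small coefficient $\e^2$.

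The interaction terms are handled as follows.
\begin{enumerate}
\item Expand $Q^I_1,Q^I_2$ by the mean value theorem in the differences of the $v_i,\th_i$. This bounds them by $C\sum J_i$, plus cross products $|(v_i)_\x||(u_j)_\x|$ and $|(u^C_\x,u^C_{\x\x})||\vbar-v^C|$.
\item Bound their time integrals by \eqref{est:431} and \eqref{est:432nd}.
\item Any $\x$-derivative falling on a product (the $(\cdot)_\x$ in $Q^I_1$ and the $\kappa$-term in $Q^I_2$) is integrated by parts onto $\psi,\chi$. This yields $L^2$ interaction norms times $\|(\psi_\x,\chi_\x)\|_{L^2}$, which are absorbed as $\frac1{80}D+C\|J\|_{L^2}^2$ and controlled by \eqref{sJiL2} and \eqref{sJiL22}.
\item The boundary term produced by the integration by parts is bounded using the exponentially small boundary values.
\end{enumerate}

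For the rarefaction errors we use Lemma~\ref{lem:rarefaction}: $\|(u^R_{\x\x},\th^R_{\x\x})\|_{L^1}\le C\d_R^{1/q}\e_r^{1/q}(1+t)^{-1+1/q}$ and $\|u^R_\x\|_{L^2}^2\le C\min\{\d_R^2\e_r,\d_R(1+t)^{-1}\}$. Raising to the $4/3$ power with $q\ge8$ makes the time integral finite, bounded by a positive power of $\e_r\d_R$; interpolating between the two branches of the $\min$ will give $(\e_r\d_R)^{1/9}$. The $Q^C$ terms are bounded by \eqref{est:QC}: $\|Q^C\|_{L^1}^{4/3}\lesssim\d_C^{4/3}(1+t)^{-4/3}$, which is integrable.

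The $|Y_2|^2/\d_S$ term and the last line are treated as follows.
\begin{enumerate}
\item Write $|\pbar/\vbar-p^*/v^*|+|1/\thbar-1/\th^*|\le C|\Ubar-U^S|+C\d_S$.
\item The $\d_S$ part gives $\frac{C}{\d_S}(\d_S\int|v^S_\x||(\phi,\chi)|)^2\le C\d_S^2 G^S$ by Cauchy--Schwarz, which is absorbed on the left-hand side of the main estimate.
\item Since $|\Ubar-U^S|\le |U^{BL}-U_*|+|U^R-U_m|+|U^C-U^*|$, the remaining $\Ubar-U^S$ part reduces to the densities $|v^S_\x||v^{BL}-v_*|$, $|v^S_\x||v^R-v_m|$ and $|v^S_\x||v^C-v^*|$ appearing in $J_3,J_5,J_6$. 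Using $\frac1{\d_S}(\int W|f|)^2\le \frac{\|W\|_{L^1}}{\d_S}\int W|f|^2$, combined with the $L^\infty$ interpolation, yields terms like $\frac{\|W\|_{L^1}}{\d_S}\e^2\|W\|_{L^1}$. The $L^1$ bounds in \eqref{est:Ji} carry a factor $\d_S$ (or $\d_S^2$), so the $1/\d_S$ is harmless.
\item For the last line, $|v^R-v_*||u^{BL}_\x|$ is part of $J_1$ and $|\Ubar-U^S||a_\x|\le\d_S^{-1/2}(J_3+J_5+J_6)$-type densities, handled with the $L^\infty$ trick as above.
\end{enumerate}

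Collecting all the pieces, with $\d_0^{4/3}\le\d_0^{1/3}$ and similar bounds, gives the stated estimate. The main obstacle is the $1/\d_S$ and $\d_S^{-1/2}$ factors coming from $Y_2$ and $a_\x$. One must check that the decay rates in \eqref{est:Ji} and \eqref{JiL2} retain enough powers of $\d_S$, in particular for the slowly decaying degenerate boundary-layer interaction $J_3$, to produce $\d_0^{1/3}$ after time integration. For $J_3$ we would use the $L^2$ form \eqref{l2J3} directly, paired with $\|(\phi,\chi)\|_{L^2}\le\e$.
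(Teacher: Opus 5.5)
Your decomposition is the paper's. For the $Q$-terms your argument is also essentially the paper's:
\begin{itemize}
\item the pointwise bound of $|Q^I_1|+|Q^I_2|$ by $\sum_i J_i$, the cross products, and $|(u^C_\x,u^C_{\x\x})||\vbar-v^C|$;
\item the $L^1$--$L^\infty$ pairing with the $4/3$-Young step and \eqref{est:431}, \eqref{est:432nd};
\item the two-branch interpolation for $Q^R$ giving $(\e_r\d_R)^{1/9}$. Using $\|Q^C\|_{L^1}^{4/3}$ instead of $\|Q^C\|_{L^2}$ is immaterial.
\end{itemize}
Two side remarks. First, the half-line inequality $\|f\|_{L^\infty}^2\le 2\|f\|_{L^2}\|f_\x\|_{L^2}$ needs no boundary term, since $f(\x)^2=-2\int_\x^\infty f(y)f'(y)\,dy$. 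Carrying $|f(0)|$ only produces $e^{-c\d_S\beta}$-terms that are not in the statement. Second, your step (3) is superfluous, because step (1) already bounds the differentiated terms pointwise. It must not be applied to the pressure part $(\pbar-p^{BL}-p^R-p^C-p^S)_\x$. After integrating by parts you would need the undifferentiated interaction, which contains products such as $(U^{BL}-U_*)(U^R-U_*)$ with no derivative. Because the degenerate layer decays only algebraically, its squared $L^2$-norm behaves like $\d_R(1+t)^{-1}$. That is not time-integrable, and it is not what \eqref{sJiL2} controls, since every $J_i$ carries a derivative.

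The substantive difference, and a gap in your justification, is in the $\d_S^{-1}$-weighted terms. Your pairing turns $\frac{C}{\d_S}|Y_2|^2$ into $\frac{C\e^2}{\d_S}\|W\|_{L^1}^2$. For $|a_\x||\Ubar-U^S||(\phi,\chi)|^2$, the $L^1$--$L^\infty$ trick leaves $\frac{C}{\d_S}\|\,|u^S_\x||\Ubar-U^S|\,\|_{L^1}^2$ with no small prefactor, if $D_{v_1}$ is to keep the coefficient $\e^2$. Suppose $\|W\|_{L^1}\le \d_S X e^{-c\d_S t}$. Then $\frac{1}{\d_S}\int_0^\infty\|W\|_{L^1}^2\,dt\sim X^2$: the two factors of $\d_S$ in $\|W\|_{L^1}^2$ are spent, one on the weight and one on $\int_0^\infty e^{-2c\d_S t}\,dt\sim\d_S^{-1}$. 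So the weight is not harmless merely because the $L^1$ bounds carry $\d_S$.

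For the boundary-layer and contact pieces, $X=\d_{BL},\d_C$ is small, so the degenerate layer you single out is not the delicate term. The delicate one is the shock--rarefaction piece. On the fan a direct count gives $\int|v^S_\x||v^R-v_m|\,d\x\le C\d_R\d_S e^{-c\d_S(t+\beta)}$, and this is of the right order because $|v^R-v_m|\sim\d_R$ there. Hence $X\sim\d_R e^{-c\d_S\beta}$ with $\d_R$ large. Your argument closes only through the extra factor $(\e_r+\d_S)$ in the first term of the $J_5$ bound in \eqref{est:Ji}. This count does not exhibit that factor, which could only come from $e^{-c\d_S\beta}$.

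The paper's exponents avoid this dependence:
\begin{itemize}
\item For $Y_2$ it uses $\|(\phi,\chi)\|_{L^\infty}^2\le C\e\|(\phi_\x,\chi_\x)\|_{L^2}$ and Young to get $C\e^2(D_{v_1}+D_{\theta_1})+C\d_S^{-2}\|W\|_{L^1}^4$. This integrates to $O(\d_R^4\d_S)$ on the fan even with the crude count.
\item For the last line it uses $\|W\|_{L^2}\|(\phi,\chi)\|_{L^4}^2\le C\|W\|_{L^2}^{4/3}+C\e^6(D_{v_1}+D_{\theta_1})$, together with the $L^2$ interaction bounds (Lemma~\ref{lem:WInteraction}, \eqref{l2J3}, \eqref{l2J5}, \eqref{l2J6}). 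The shock--rarefaction contribution $\d_S^{-2/3}\int_0^t\|\,|u^S_\x||v^R-v_m|\,\|_{L^2}^{4/3}d\tau$ is then of order $\d_R^{4/3}\d_S^{1/3}$ on the fan.
\end{itemize}
Adopt these pairings to repair the step. For $Y_2$ alone, it suffices to apply the $L^2$ pairing you reserved for $J_3$ to the $J_5$-piece instead; that is the piece that actually needs it.
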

\begin{proof}
We decompose $ B^{I}(U)$ as follows:
\begin{align*}
  B^{I}(U) &= \frac{C}{\d_S}|Y_2(U)|^2 -\intRp a Q_1 (u - \ubar) \,d\x - \intRp a Q_2\left(\frac{\th}{\thbar} - 1\right)\,d\x\\
  &\quad + C\intRp \big(|(v^R - v_*,\th^R - \th_*)||u^{BL}_\x| + |a_\x||(\vbar - v^S, \thbar - \th^S)|\big)|(\phi,\chi)|^2\,d\x \\
  &:=  \frac{C}{\d_S}|Y_2(U)|^2 +  B^{I}_1 +  B^{I}_2 +  B^{I}_3 + B^{I}_4.
\end{align*}

\noindent$\bullet$ {\bf Estimates of $Y_2(U)$:} Observe that 
\begin{align*}
    |Y_2(U)| &= \left|\intRp a(v^S)_\x  \left(\frac{\pbar}{\vbar} - \frac{p^* }{v^*}\right) \phi \,d\x + \intRp a \frac{R}{\gamma - 1}(\th^S)_\x \left(\frac{1}{\thbar} - \frac{1}{\th^*}\right)\chi \,d\x \right|\\
    &\leq C\intRp |(v^S_\x,\th^S_\x)||(\vbar - v^*,\thbar - \th^*)||(\phi,\chi)|\,d\x\\
    &\leq C\intRp |(v^S_\x,\th^S_\x)||(\vbar - v^S,\thbar - \th^S)||(\phi,\chi)|\,d\x + C\d_S \intRp  |(v^S_\x,\th^S_\x)||(\phi,\chi)|\,d\x.
\end{align*}
Using \eqref{def:Ubar}, $|v_\x^S|\sim |u_\x^S| \sim |\th_\x^S|$ and the fact that 
\[
\begin{aligned}
    |v^{BL} - v_*| \sim |\th^{BL} - \th_*|, \quad  |v^R - v_m| \sim |\th^R - \th_m|, \quad |v^{C} - v^*| \sim |\th^C - \th^*|,
\end{aligned}
\]
we have 
\[
\begin{aligned}
    |Y_2(U)| &\leq C\|(\phi,\chi)\|_{L^\infty}\intRp |v_\x^S|\big(|v^{BL} - v_*| + |v^R - v_m| + |v^C - v^*| \big)\,d\x\\
    &\quad +  C\d_S \intRp  |u_\x^S||(\phi,\chi)|\,d\x\\
    &\leq C\|(\phi,\chi)\|_{L^\infty}\intRp (J_3 + J_5 + J_6 \big)\,d\x +  C\d_S \intRp  |u^S_\x||(\phi,\chi)|\,d\x.
\end{aligned}
\]
where $J_3, J_5$, and $J_6$ are defined in \eqref{def:J}. Using the interpolation inequality, \eqref{est:apriori}, and H\"older's inequality, we obtain
\[
\begin{aligned}
    \frac{C}{\d_S}Y_{2}^2 &\leq \frac{C}{\d_S}\|(\phi,\chi)\|_{L^2}\|(\phi_\x,\chi_\x)\|_{L^2}\left(\intRp (J_3 + J_5 + J_6)\,d\x \right)^2 + C\d_S \left(\intRp |u^S_\x||(\phi,\chi)|^2\,d\x\right)^2\\
    &\leq \frac{C}{\d_S}\e\|(\phi_\x,\chi_\x)\|_{L^2}\left(\intRp (J_3 + J_5 + J_6)\,d\x \right)^2 + C\d_S \intRp |u^S_\x|\,d\x \intRp |u^S_\x||(\phi,\chi)|^2\,d\x\\
    &\leq  C\e^2(D_{v_1} + D_{\th_1}) + \frac{C}{\d_S^2}\left(\intRp (J_3 + J_5 + J_6)\,d\x \right)^4 + C\d_S^2 G^S.
\end{aligned}
\]
By Lemma~\ref{lem:win}, we find that 
\[
\begin{aligned}
  &\frac{C}{\d_S^2}\int_0^t \left(\intRp (J_3 + J_5 + J_6)\,d\x \right)^4 \,d\t \\
  &\quad \leq \frac{C}{\d_S^2}\left[(\d_{BL}^4\d_S^3 + \d_{BL}^3\d_S^4) + \left(\d_R^4\d_S^3(\e_r+\d_S)^4 +  \frac{(\d_R\d_S)^4}{\e_r^5}(\e_r + \d_S)^4\right) + (\d_C^4\d_S^3 + \d_C^4 \d_S^4)\right]\\
  &\quad \leq C\d_S + C \d_S^2 \frac{\d_R^4}{\e_r^5}.
\end{aligned}
\]
\noindent$\bullet$ {\bf Estimates of $B^{I}_1,   B^{I}_2$:}
From \eqref{dec:Qi},
\[
\begin{aligned}
    B_1^I + B_2^I &= \intRp aQ_1(u-\ubar)\,d\x + \intRp a Q_2\left(1-\frac{\thbar}{\th}\right)\,d\x\\
    &\leq C\intRp \sum_{i=1}^2 |Q_i^I||(\psi,\chi)|\,d\x + C\intRp \sum_{i=1}^2 |Q_i^R||(\psi,\chi)|\,d\x + C\intRp \sum_{i=1}^2 |Q_i^C||(\psi,\chi)|\,d\x\\
    &:= I_1 + I_2 + I_3.
\end{aligned}
\]
For $I_1$, we use the interpolation inequality, Young's inequality, and \eqref{est:apriori} to have 
\[
\begin{aligned}
    |I_1| &\leq \|(\psi,\chi)\|_{L^\infty}\intRp \sum_{i=1}^2 |Q_i^I|\,d\x \leq \|(\psi,\chi)\|_{L^2}^{1/2}\|(\psi_\x,\chi_\x)\|_{L^2}^{1/2}\intRp \sum_{i=1}^2 |Q_i^I|\,d\x\\
    &\leq \frac{1}{100}(D_{u_1} + D_{\th_1}) + C\e^{2/3}\sum_{i=1}^2\|Q_i^I\|_{L^1}^{4/3}.
\end{aligned}
\]

Recall that the term $Q_1^I$ in \eqref{def:QI} defined by 
\[
Q_1^I = \big(\pbar - p^{BL} - p^R - p^C - p^S \big)_\x - \m \left(\frac{\ubar_\x}{\vbar} - \frac{u^{BL}_\x}{v^{BL}}  - \frac{u^R_\x}{v^R}  - \frac{u^C_\x}{v^C}  - \frac{u^S_\x}{v^S}  \right)_\x.
\]
For the first term of $Q_1^I$, using $|(v_i)_\x| \sim |(\th_i)_\x|$ and $|\vbar - v_i| \sim |\thbar - \th_i|$ for $i=1,2,3,4$, we find that 
\[
\big(\pbar - p^{BL} - p^R - p^C - p^S\big)_\x \leq C\sum_{i=1}^4 |(v_i)_\xi||\vbar - v_i|. 
\]
Moreover, we obtain 
\begin{align*}
	\left(\frac{\ubar_\x}{\vbar} - \frac{u^{BL}_\x}{v^{BL}}  - \frac{u^R_\x}{v^R}  - \frac{u^C_\x}{v^C}  - \frac{u^S_\x}{v^S}  \right)_\x \leq \sum_{i=1}^4 \big(|(u_i)_{\x\x}|,|(u_i)_\x||(v_i)_\x|\big)|\vbar - v_i| + \sum_{i\neq j}|(u_i)_\x||(v_j)_\x|.
\end{align*}
Therefore, we have 
\begin{equation}\label{est:Q1}
	|Q_1^I| \leq C\sum_{i=1}^4 |(v_i)_\xi||\vbar - v_i| + |u_{\x\x}^C||\vbar - v^C| + \sum_{i\neq j}|(u_i)_\x||(v_j)_\x|.
\end{equation}
In the above estimate, we have used the estimates $|(u_i)_{\x\x}| \leq C|(u_i)_{\x}|$ and $|(u_i)_{\x}| \sim |(v_i)_{\x}|$ for $i=1,2,4$, which follow directly from Lemmas \ref{lem:BL}, \ref{lem:rarefaction}, and \ref{lem:VS}.

In addition, we recall that 
\begin{align*}
		Q_2^I &:= \Big( \bar{p} \bar{u}_\x -p^{BL} u^{BL}_\x - p^R u_\x^R - p^C u_\x^C - p^S_\x u^S_\x) - \kappa \left(\frac{\thbar_\x}{\vbar} - \frac{\th^{BL}_\x}{v^{BL}}  - \frac{\th^R_\x}{v^R}  - \frac{\th^C_\x}{v^C}  - \frac{\th^S_\x}{v^S} \right)_\x\\
			&\quad -\m \left(\frac{\ubar_\x^2}{\vbar} - \frac{(u^{BL}_\x)^2}{v^{BL}}  - \frac{(u^R_\x)^2}{v^R}  - \frac{(u^C_\x)^2}{v^C}  - \frac{(u^S_\x)^2}{v^S}  \right) =: L_1 + L_2 + L_3.
\end{align*}
Note that
\begin{align*}
	|L_1| \leq \sum_{i=1}^4 |(u_i)_\x||\pbar - p_i| \leq C \sum_{i=1}^4 |(v_i)_\x||\vbar - v_i| + |u_\x^C||\vbar - v^C|.
\end{align*}
Using calculations similar to those for $Q_1^I$, we obtain
\[
|L_2| \leq  C\sum_{i=1}^4 |(v_i)_\xi||\vbar - v_i| + C(|v_\x^C|^2+|u_\x^C|)|\vbar - v^C| + \sum_{i\neq j}|(v_i)_\x||(v_j)_\x|,
\]
where we have used  $|(v_i)_\x| \sim |(\th_i)_\x|$, $|(v_i)_{\x\x}| \sim |(\th_i)_{\x\x}|$ for $i=1,2,3,4$, together with $|(v_i)_{\x\x}| \leq C|(v_i)_\x|$ for $i=1,2,4$, and for the viscous contact wave, 
\[
|\th^C_{\x\x}| \leq C|\th^C_{\x}|^2 + C|u^C_\x|,
\]
which follows from \eqref{eq:VCD}$_2$. Finally, we have 
\begin{align*}
	|L_3| &\leq C\sum_{i=1}^4 |(u_i)_\x|^2|\vbar - v_i| + C\sum_{i\neq j}|(u_i)_\x||(u_j)_\x|\\
	&\leq  C\sum_{i=1}^4 |(v_i)_\x||\vbar - v_i| + C|u_\x^C|^2|\vbar - v^C| + C\sum_{i\neq j}|(u_i)_\x||(u_j)_\x|.
\end{align*}
Therefore, we obtain 
\begin{equation}\label{est:Q2}
	|Q_2^I| \leq C\sum_{i=1}^4 |(v_i)_\x||\vbar - v_i| + C|u_\x^C||\vbar - v^C| + C\sum_{i\neq j}|(u_i)_\x|\big(|(v_j)_\x| + |(u_j)_\x|\big).
\end{equation}
Using the fact that 
\[
 \sum_{i=1}^4 |(v_i)_\x||\vbar - v_i| \leq C\sum_{i=1}^6 J_i,
\]
where $J_i$ are defined in \eqref{def:J},
and combining \eqref{est:Q1} and \eqref{est:Q2}, we obtain 
\[
|Q_1^I| + |Q_2^I| \leq C\sum_{i=1}^6 J_i + C|(u_\x^C,u_{\x\x}^C)||\vbar - v^C| + C\sum_{i\neq j}|(u_i)_\x|\big(|(v_j)_\x| + |(u_j)_\x|\big).
\]
Therefore, using \eqref{est:431} and \eqref{est:432nd}, we obtain 
\[
\int_0^t \||Q_1^I| + |Q_2^I|\|_{L^1}^{4/3}\,d\t \leq C\d_0 + C(\e_r \d_R)^{1/6} + C\d_0\left(\frac{\d_R}{\e_r}\right)^{4/3}\frac{1}{\e_r}.
\]
Thus, we have 
\[
\int_0^t |I_1|\,d\t\leq \frac{1}{100}(D_{u_1} + D_{\th_1}) + C\d_0 + C(\e_r \d_R)^{1/6} + C\d_0\left(\frac{\d_R}{\e_r}\right)^{4/3}\frac{1}{\e_r}.
\]
For $I_2$, following the calculations in $I_1$, we find that 
\[
|I_2| \leq \frac{1}{100}(D_{u_1} + D_{\th_1}) + C\e^{2/3}\sum_{i=1}^2 \|Q_i^R\|_{L^1}^{4/3}.
\]
It follows from \eqref{def:QR} that
\begin{equation}\label{est:QiR}
    \begin{aligned}
        |Q^R_1| + |Q_2^R| \le C &\left[|u^R_{\xi\xi}|+|u^R_{\xi}||v^R_{\xi}|+|\theta^R_{\xi\xi}|+|\theta^R_{\xi}||v^R_{\xi}|+|u^R_{\xi}|^2\right]\\
\le C &\left[\big|\big(u^R_{\xi\xi},\theta^R_{\xi\xi}\big)\big|+\big|\big(v^R_{\xi},u^R_{\xi},\theta^R_{\xi}\big)\big|^2\right],
    \end{aligned}
\end{equation}
 which implies
$$
\||Q^R_1| + |Q^R_2|\|_{L^1}\leq C\Big[\big\|\big(u^R_{\xi\xi},\theta^R_{\xi\xi}\big)\big\|_{L^1}+\big\|\big(v^R_{\xi},u^R_{\xi},\theta^R_{\xi}\big)\big\|_{L^2}^2\Big].
$$
By virtue of Lemma \ref{lem:rarefaction}, we have 
\begin{align*}
\big\|\big(u^R_{\xi\xi},\theta^R_{\xi\xi}\big)\big\|_{L^1}\le  \big\|\big(u^R_{\xi\xi},\theta^R_{\xi\xi}\big)\big\|^{\frac{1}{9}}_{L^1} \big\|\big(u^R_{\xi\xi},\theta^R_{\xi\xi}\big)\big\|^{\frac{8}{9}}_{L^1} \leq C(\e_r\delta_R)^{\frac{1}{9}}(\delta_R^{\frac{8}{9}}+\delta_R^{\frac{1}{9}})(1+t)^{-\frac{7}{9}},    
\end{align*}
where we have taken $q=8$ in Lemma \ref{lem:rarefaction}, and
\begin{equation*}
\begin{aligned}
\big\|\big(v^R_{\xi},u^R_{\xi},\theta^R_{\xi}\big)\big\|^2_{L^2} \le \big\|\big(v^R_{\xi},u^R_{\xi},\theta^R_{\xi}\big)\big\|^{\frac{1}{4}}_{L^2} \big\|\big(v^R_{\xi},u^R_{\xi},\theta^R_{\xi}\big)\big\|^{\frac{7}{4}}_{L^2} \leq C(\e_r^{\frac{1}{2}}\delta_R)^{\frac{1}{4}}\delta_R^{\frac{7}{8}}(1+t)^{-\frac{7}{8}}.
\end{aligned}
\end{equation*}
Combining the above estimates, we obtain
\begin{align*}
   \||Q^R_1| + |Q^R_2|\|^{\frac{4}{3}}_{L^1} \leq&C(\e_r\delta_R)^{\frac{4}{27}}\delta_R^{\frac{32}{27}}(1+t)^{-\frac{28}{27}}+ C(\e_r\delta_R)^{\frac{1}{6}}\delta_R^{\frac{4}{3}}(1+t)^{-\frac{7}{6}}, 
\end{align*}
and therefore,
\[
\int_0^t  \||Q^R_1| + |Q^R_2|\|^{\frac{4}{3}}_{L^1}\,d\t \leq C(\e_r\d_R)^{1/9}.
\]
Finally, for $I_3$, we use H\"older's inequality and \eqref{est:apriori} to have 
\[
\begin{aligned}
 |I_3| \leq C\intRp \sum_{i=1}^2|Q_i^C||(\phi,\chi)|\,d\x \leq C\sum_{i=1}^2\|Q_i^C\|_{L^2} \|(\phi,\chi)\|_{L^2} \leq C\e\sum_{i=1}^2\|Q_i^C\|_{L^2}.
\end{aligned}
\]
Since it follows from \eqref{est:QC} that
\begin{align*}
   \|Q_1^C\|_{L^2}\le C \delta_C (1+t)^{-\frac 54},\qquad \|Q_2^C\|_{L^2}\le C \delta_C (1+t)^{-\frac 74},
\end{align*} 
we obtain 
\[
\int_0^t |I_3|\,d\x \leq C\d_C.
\]
Combining the above estimates, we have 
\[
\int_0^t |B_1^I + B_2^I|\,d\t \leq \frac{1}{50}(D_{u_1} + D_{\th_1}) + C\d_0 + C(\e_r \d_R)^{1/9} + C\d_0\left(\frac{\d_R}{\e_r}\right)^{4/3}\frac{1}{\e_r}.
\]
$\bullet$ {\bf Estimates of $B^{I}_3$:} By the interpolation inequality, we obtain 
\[
\begin{aligned}
|B_3^I| &\leq C\||u_\x^{BL}||(v^R-v_*,\th^R -\th_*)|\|_{L^2}\|(\phi,\chi)\|_{L^4}^2 \\
&\leq C\||u_\x^{BL}||(v^R-v_*,\th^R -\th_*)|\|_{L^2}\|(\phi,\chi)\|_{L^2}^{3/2}\|(\phi_\x,\chi_\x)\|_{L^2}^{1/2}   \\
&\leq  C\|J_1|\|_{L^2}^{4/3} + C\e^6(D_{v_1} + D_{\th_1}),
\end{aligned}
\]
where $J_1$ is defined in \eqref{def:J}. Using Lemma~\ref{lem:WInteraction}, we have 
\[
\begin{aligned}
 \int_0^t |B_3^I|\,d\t &\leq C\d_R^{1/6}(\d_R\e_r)^{1/12}\d_{BL}^{5/6} + C(\e_r\d_R)^{1/4}\d_{BL} +    C\e^6\int_0^t (D_{v_1} + D_{\th_1})\,d\t\\
  &\leq C\d_0^{5/6} +  C\e^6\int_0^t (D_{v_1} + D_{\th_1})\,d\t.
\end{aligned}
\]
$\bullet$ {\bf Estimates of $B^{I}_4$:}
By \eqref{def:a} and the interpolation inequality, we have
\begin{align*}
    |B_4^I| &\leq \frac{C}{\d_S^{1/2}}\||u^S_\x||(\vbar - v^S,\thbar - \th^S)|\|_{L^2}\|(\phi,\chi)\|_{L^4}^2\\
    &\leq \frac{C}{\d_S^{1/2}}\||u^S_\x||(\vbar - v^S,\thbar - \th^S)|\|_{L^2}\|\|(\phi,\chi)\|_{L^2}^{3/2}\|(\phi_\x,\chi_\x)\|_{L^2}^{1/2}\\
    &\leq \frac{C}{\d_S^{2/3}}\||u^S_\x||(\vbar - v^S,\thbar - \th^S)|\|_{L^2}^{4/3} + \e^6\|(\phi_\x,\chi_\x)\|_{L^2}^2.
\end{align*}
Using the same arguments as for $Y_2$, we obtain
\[
\begin{aligned}
   |B_4^I| &\leq \frac{C}{\d_S^{2/3}}\||u^S_\x|(|v^{BL}-v_*| + |v^R - v_m| + |v^C - v^*|)\|_{L^2}^{4/3} + C\e^6(D_{v_1} + D_{\th_1}).
\end{aligned}
\]
Using \eqref{l2J3}, \eqref{l2J5} and \eqref{l2J6}, we obtain
\[
\begin{aligned}
\int_0^t |B_4^I|d\t &\leq \frac{C}{\d_S^{2/3}}(\d_{BL}^{4/3}\d_S + \d_{BL}^{1/3}\d_S^2) + \frac{C}{\d_S^{2/3}}\left(\d_R^{4/3}\d_S + \d_S^{8/3}\left(\frac{\d_R^2}{\e_r}\right)^{4/3}\frac{1}{\e_r} \right)
\\
&\quad + \frac{C}{\d_S^{2/3}}( \d_S \d_C^{8/3}+\d_C^{4/3}\d_S^{8/3})\\
&\leq C\d_0^{1/3} + C\d_0\d_R^{4/3}\left(\frac{\d_R}{\e_r}\right)^{4/3}\frac{1}{\e_r} + C\e^6 \int_0^t (D_{v_1} + D_{\th_1})\,d\t.
\end{aligned}
\]
Combining the above estimates and using $\e_r <1$, we conclude that 
\[
\int_0^t |B^I|\,d\t \leq C\d_0^{1/3} + C(\e_r\d_R)^{1/9} + C\d_0\left(\frac{\d_R}{\e_r}\right)^{4/3}\frac{1}{\e_r} + C\e^2\int_0^t D_{v_1} \,d\t + \frac{1}{40}\int_0^t (D_{u_1} + D_{\th_1})\,d\t.
\]
\end{proof}
\subsubsection{Estimates of $\mathcal{J}^{bd}$}

\begin{lem}\label{lem:bd}
Under the assumptions of Proposition~\ref{prop:ap}, there exist positive constants $c$ and $C$ such that
	\[
		\int_0^t \mathcal{J}^{bd} \,d\t \leq C\d_C + C e^{-c\d_S \beta} + C\e^2 \int_0^t \left(\norm{\psi_{\x\x}}_{L^2}^2+ \norm{\chi_{\x\x}}_{L^2}^2 \right)\,d\t.
	\]
\end{lem}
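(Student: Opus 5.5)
At $\xi=0$ the solution equals $U_-$, so the boundary values of the perturbation are
\[
(\phi,\psi,\chi)(t,0)=U_--\Ubar(t,0).
\]
Since $U^{BL}(0)=U_-$ and $U^R(t,0)=U_*$, we get
\[
U_--\Ubar(t,0) = -\big(U^C(t,0)-U_m\big)-\big(U^S(-(\s-\s_-)t-X(t)-\b)-U^*\big).
\]
By \eqref{est:contact}$_1$ (the boundary lies to the left of the contact, at $\x+\s_-t=\s_-t$), the contact part is $O(\d_C)e^{-ct}$. By \eqref{y0ngtv} and Lemma~\ref{lem:VS}, the shock part is $O(\d_S)e^{-c\d_S t}e^{-c\d_S\b}$. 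Hence I would define
\[
\Lambda(t):=|(\phi,\psi,\chi)(t,0)|\le C\d_C e^{-ct}+C\d_S e^{-c\d_S t}e^{-c\d_S\b},
\]
which gives $\int_0^\infty\Lambda\,d\t\le C\d_C+Ce^{-c\d_S\b}$ and likewise for $\int_0^\infty\Lambda^2\,d\t$. The plan is to bound every term of $\mathcal{J}^{bd}$ in \eqref{Jbd} by $\Lambda$ times something harmless.

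For the first two terms, $a\le 2$, $\thbar\eta(U|\Ubar)\le C|(\phi,\psi,\chi)|^2$ and $|p-\pbar|\le C|(\phi,\chi)|$ at $\x=0$. So these terms are bounded by $C\Lambda^2$, whose time integral is controlled as above.

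For the viscous flux terms, I would write
\[
\frac{u_\x}{v}-\frac{\ubar_\x}{\vbar}=\frac{\psi_\x}{v}+\ubar_\x\Big(\frac1v-\frac1{\vbar}\Big),
\]
and similarly for $\th$. The second piece is bounded by $C\Lambda\,|\ubar_\x(t,0)|\le C\Lambda$, since all wave derivatives are bounded. It remains to bound $\Lambda\,(|\psi_\x|+|\chi_\x|)(t,0)$. By the trace/interpolation inequality $|f(0)|\le \sqrt2\|f\|_{L^2}^{1/2}\|f_\x\|_{L^2}^{1/2}$ applied to $f=\psi_\x$, together with the a priori bound \eqref{est:apriori} $\|\psi_\x\|_{L^2}\le\e$,
\[
|\psi_\x(t,0)|\le C\e^{1/2}\|\psi_{\x\x}\|_{L^2}^{1/2}.
\]
Young's inequality then gives
\[
\Lambda|\psi_\x(t,0)|\le C\Lambda^{4/3}+C\e^2\|\psi_{\x\x}\|_{L^2}^2,
\]
and the same for $\chi$. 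Integrating in time, and noting that $\int\Lambda^{4/3}\,d\t\le C\d_C+Ce^{-c\d_S\b}$ because $\Lambda\le C\d_0\le1$, yields the claimed bound.

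The main obstacle is the sharp smallness of the shock boundary mismatch uniformly in time. This requires the shifted shock position to stay at distance $\gtrsim(\s-\s_-)t/4+\b$ from the boundary, which follows from $|\dot X|\le C\e$ as used in \eqref{y0ngtv}. Bookkeeping of constants also matters: the exponential factor in $\b$ must survive after taking powers (e.g. $\Lambda^{4/3}$), which only changes the constant $c$.
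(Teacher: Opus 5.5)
Your proposal is correct and follows essentially the same route as the paper: you reduce the boundary values to the contact and shock mismatches at $\xi=0$ (of size $\delta_C e^{-ct}$ and $\delta_S e^{-c\delta_S t}e^{-c\delta_S\beta}$), you bound the inviscid boundary terms by the square of this mismatch, and you handle the viscous fluxes by splitting off $\psi_\xi,\chi_\xi$, applying the trace interpolation $|f(0)|\le C\|f\|_{L^2}^{1/2}\|f_\xi\|_{L^2}^{1/2}$, and then using Young's inequality with exponents $4/3$ and $4$. The only cosmetic difference is that you control $\int\Lambda^{4/3}\,d\tau$ via $\Lambda^{4/3}\le\Lambda$, whereas the paper integrates $|\psi(\tau,0)|^{4/3}$ directly and obtains the slightly sharper bound $C\delta_C^{4/3}+C\delta_S^{1/3}e^{-c\delta_S\beta}$.
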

\begin{proof}
First, we decompose $\mathcal{J}^{bd}$ into three terms as follows:
\begin{align*}
& \mathcal{J}^{bd}_1 :=-\s_- a \bar{\theta}\eta(U|\Ubar)\Big|_{\x=0} + a(u-\ubar)(p-\pbar) \Big|_{\x=0}, \\
&\mathcal{J}^{bd}_2:= -a\m \left[(u-\ubar)\left(\frac{u_\x}{v} - \frac{\ubar_\x}{\vbar}\right) \right] \Big|_{\x=0}, &&\mathcal{J}^{bd}_3 :=-a\kappa \left[ \frac{\th -\thbar}{\th} \left(\frac{\th_\x}{v} - \frac{\thbar_\x}{\vbar}\right)\right] \Big|_{\x=0}.
\end{align*}
By \eqref{def:RE}, we have
\begin{align}\label{est:p1}
\int_{0}^{t}\mathcal{J}_1^{bd} d\t \leq C\int_{0}^{t} |(\phi,\psi,\chi)|^2\big|_{\x=0} d\t.
\end{align}
By definition of $\overline{U}$ in \eqref{def:Ubar}, we have
\begin{align*}
|U-\overline{U}|\big|_{\xi=0} = |U^C + U^S - U_m - U^*|\big|_{\x = 0} \leq  |U^C-U_m|\big|_{\xi=0} + |U^S-U^*|\big|_{\xi=0}.
\end{align*}
From Lemma~\ref{lem:CD}, it follows that
\begin{align}\label{est:CDbd}
|U^C-U_m|\big|_{\xi=0} \leq C\delta_Ce^{\frac{-C_1(\sigma_-t)^2}{1+t}} \leq C\delta_Ce^{-Ct}.
\end{align}
On the other hand, by $|X(t)| \leq C\varepsilon t$ and taking $\e>0$ sufficiently small, we obtain
\[-(\sigma-\sigma_-)t-X(t)-\beta \le -\frac{(\sigma-\sigma_-)t}{2}-\beta<0.\]
Therefore, by Lemma~\ref{lem:VS}, we have 
\begin{equation}\label{est_boundary}
	|U^S-U^*|\big|_{\xi=0}\le |U^S(-(\sigma-\sigma_-)t-X(t)-\beta)-U^*|\le C\delta_Se^{-C\delta_S t}e^{-C\delta_S\beta},
\end{equation}
Substituting \eqref{est:CDbd} and \eqref{est_boundary} into \eqref{est:p1}, we obtain
\begin{align}\label{est:Jbd1}
\int_{0}^{t}\mathcal{J}^{bd}_1 d\t \leq C\delta_C^2 + C\d_S e^{-C\delta_S\beta}.
\end{align}
For $\mathcal{J}^{bd}_2$, observe that
\begin{align}\label{p3}
\begin{aligned}
\int_{0}^{t} \mathcal{J}^{bd}_2 d\t \leq C \int_{0}^{t} |\psi \psi_\x|\big|_{\xi=0} d\t+ C \int_{0}^{t} |\psi \phi\bar{u}_\xi|\big|_{\xi=0} d\t.
\end{aligned}
\end{align}
For the first term on the right-hand side of \eqref{p3}, we use the interpolation inequality, Young's inequality, and \eqref{est:apriori} to obtain
\begin{align*}
& \int_{0}^{t} |\psi \psi_\x|\big|_{\xi=0} d\t \leq \int_0^t \|\psi_\x\|_{L^\infty}|\psi|\big|_{\x=0}\,d\t \leq C\int_0^t\|\psi_\x\|_{L^2}^{1/2}\|\psi_{\x\x}\|_{L^2}^{1/2}|\psi|\big|_{\x=0}\,d\t\\
&\quad \leq C\int_{0}^{t} |\psi(\t,0)|^{4/3} d\t + C\int_{0}^{t} \|\psi_\xi\|^2_{L^2} \|\psi_{\xi\xi}\|^2_{L^2} d\t  \\
&\quad \leq C(\delta_C + \d_S^{1/3}e^{-C\delta_S \beta}) + C\e^2 \int_{0}^{t} \|\psi_{\x\x}\|^2_{L^2} d\t.
\end{align*}
For the last inequality, we used \eqref{est:CDbd} and \eqref{est_boundary}, which yield
\[
\int_{0}^{t} |\psi(\t,0)|^{4/3} d\t \leq C\int_0^t |u^C-u_m|^{4/3} + |u^S-u^*|^{4/3}\,d\t \leq C\d_C^{4/3} + C\d_S^{1/3}e^{-C\d_S\b},
\]
For the second term on the right-hand side of \eqref{p3}, we use \eqref{est:p1} and \eqref{est:Jbd1} to have
\begin{align*}
\int_{0}^{t} |\psi \phi\bar{u}_\xi|\big|_{\xi=0} \,d\t \leq C\int_{0}^{t} |(\phi,\psi)|^2\big|_{\x=0}\,d\t  \leq C\delta_C^2 + Ce^{-C\delta_S\beta}.
\end{align*}
Therefore, we have
\begin{align*}
\int_{0}^{t} \mathcal{J}^{bd}_2 d\t  \leq  C(\delta_C + \d_S^{1/3}e^{-C\delta_S \beta}) + C\e^2 \int_{0}^{t} \|(u-\bar{u})_{\xi\x}\|^2_{L^2} d\t.
\end{align*}
Similarly, $\mathcal{J}^{bd}_3$ can be estimated as 
\begin{align*}
\int_{0}^{t} \mathcal{J}^{bd}_3 d\t  \leq  C(\delta_C + \d_S^{1/3}e^{-C\delta_S \beta}) + C\e^2 \int_{0}^{t} \|(\theta-\bar{\theta})_{\xi\x}\|^2_{L^2} d\t.
\end{align*}
\end{proof}

\subsubsection{Conclusion}
By Lemma~\ref{lem:decomp} and Lemma~\ref{lem:leading}, we have 
\begin{align*}
	\frac{d}{dt}\intRp a \thbar \eta(U|\Ubar)\,d\x &\leq -\frac{\d_S}{2M}|\dot{X}(t)|^2 + CB^C(U) + B^S(U) + B^{rem}(U) + B^{I}(U) + \mathcal{J}^{bd}(U)\\
	&\quad - C_2 (G^{BL}(U) + G^R(U)) - G^{new}(U) - D(U)\\
    &\leq -\frac{\d_S}{4M}|\dot{X}(t)|^2 + B^{rem}(U) + B^I(U) \\
    &\quad + \mathcal{J}^{bd}(U) -C_2(G^{BL}(U) + G^R(U)) - \frac{1}{4}G^{new}(U) - \frac{1}{4}D(U).
\end{align*}
Using Lemma~\ref{lem:Brem} and the smallness of $\d_0, \e_0$, and $\e_r$, we have
\begin{align*}
\begin{aligned}
    \frac{d}{dt}\intRp a \thbar \eta(U|\Ubar)\,d\x 
    \leq &  -\frac{\d_S}{4M}|\dot{X}
    (t)|^2-\frac{1}{10}G^{new}(U)-\frac{C_S}{2} G^S(U) - \frac{C_2}{2}(G^{BL}(U) + G^R(U))  \\
    &\quad - \frac{1}{10}D(U)  + B^I(U) + \mathcal{J}^{bd}(U)+CB^C(U).
\end{aligned}
\end{align*}
Integrating the above inequality over $[0,T]$, and using Lemma~\ref{lem:BIU} and Lemma~\ref{lem:bd} we obtain
\begin{align*}
\begin{aligned}
&\sup_{t\in [0,T]}\intRp \eta(U|\Ubar)\,d\x + \delta_S\int_0^{T} |\dot{X}|^2 \,dt + \int_0^T (G^{BL} + G^R + G^S +D_{u_1}+ D_{\th_1})\,dt \\
&\quad \leq  \intRp \eta(U_0|\Ubar(0,\cdot))\,d\x + C\d_0^{1/3} + C(\e_r\d_R)^{1/9} + C\d_0\left(\frac{\d_R}{\e_r}\right)^{4/3}\frac{1}{\e_r} + Ce^{-c\d_S\b} \\
&\quad \quad + C\e^2\int_0^t (D_{v_1} + D_{u_2} + D_{\th_2})\,dt+C\int_0^T B^C\,dt.
\end{aligned}
\end{align*}
Now, choosing $\e_r>0$ as 
\begin{equation}\label{def:er}
   \e_r := \d_0^{1/4},
\end{equation}
and using Lemma~\ref{lem:L2CD} and the fact that 
\(
\intRp \eta(U|\Ubar)\,d\x \sim \|U - \Ubar\|_{L^2}^2,
\)
we obtain the desired estimate in Lemma~\ref{lem:L2ap}.

\section{Higher Order Estimates}\label{Sec:H1}
\setcounter{equation}{0}
In this section, we provide the higher order estimates and complete the proof of Proposition~\ref{prop:ap}. 

It follows from \eqref{eq:NSF} and \eqref{eq:composite} that the equations for the perturbations $(\phi, \psi, \chi)$ are given by
\begin{equation}\label{eq:NSFp}
	\begin{cases}
		&\phi_t - \s_- \phi_\x - \psi_\x = \dot{X}(t)v^S_\x, \quad t>0, \quad \x >0, \\
		&\psi_t - \s_- \psi_\x + (p - \pbar)_\x = \mu\left(\frac{u_\x}{v} - \frac{\ubar_\x}{\vbar}\right)_\x - Q_1 + \dot{X}(t)u_\x^S,\\
		& \frac{R}{\gamma-1}\big(\chi_t - \s_- \chi_\x \big) +   (p u_\x - \pbar \ubar_\x) = \kappa \left(\frac{\th_\x}{v} - \frac{\thbar_\x}{\vbar}\right)_\x + \mu \left(\frac{u_\x^2}{v} - \frac{\ubar_\x^2}{\vbar}\right) - Q_2+\frac{R}{\gamma-1}\dot{X}(t)\th^S_\x.
	\end{cases}
\end{equation}
\subsection{Higher Order Estimates on $v-\vbar$}
\begin{lem}\label{lem:hv}
Under the assumptions of Proposition~\ref{prop:ap}, and for any $\nu>0$, there exist positive constants $c$, $C>0$ (independent of $\nu$), and $C_\nu>0$ such that
\begin{align*}
    &\sup_{t \in [0,T]}\|\phi_\x(t,\cdot)\|_{L^2}^2 + \int_0^T D_{v_1} \,dt \\
	&\quad \leq C\big(\|\psi(0,\cdot)\|_{L^2}^2 + \|\phi_\x(0,\cdot)\|_{L^2}^2 \big) + C\int_0^T (\d_S|\dot{X}|^2 + G^{BL} + G^R + G^S + D_{\th_1})\,dt \\
	&\qquad +C_\nu \int_0^T D_{u_1}\,dt + C(\nu+\e) \int_0^T D_{u_2}\,dt + C\int_0^T B^C\,dt + C\int_0^T \intRp |Q_1|^2\,d\x \,dt\\
	&\qquad + C(\d_R\e_r^{1/2} + \d_C^2 + e^{-c\d_S \b}).
\end{align*}
\end{lem}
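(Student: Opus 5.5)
We follow the standard Kanel'/Kawashima effective-velocity argument for the $H^1$ estimate of the specific volume perturbation. We combine the momentum equation with the mass equation to get a transport-type equation for $\phi_\x$.

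\textbf{Deriving the effective-velocity equation.} Since $\mu(u_\x/v)_\x=\mu(\ln v)_{t\x}-\s_-\mu(\ln v)_{\x\x}$ and the analogous identity holds for $\vbar$ up to the shift term, we have
\[
\mu\Big(\frac{u_\x}{v}-\frac{\ubar_\x}{\vbar}\Big)_\x=\mu\big(\rd_t-\s_-\rd_\x\big)\Big(\frac{\phi_\x}{v}\Big)+\mu\big(\rd_t-\s_-\rd_\x\big)\Big(\vbar_\x\Big(\frac1v-\frac1\vbar\Big)\Big)+O(|\dot X||v^S_{\x\x}|).
\]
It is cleaner to work directly with $\tilde\phi:=\mu(\ln v-\ln\vbar)_\x$. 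Using \eqref{eq:NSFp}$_1$–$_2$, we obtain
\[
(\tilde\phi-\psi)_t-\s_-(\tilde\phi-\psi)_\x=(p-\pbar)_\x+Q_1-\dot X u^S_\x+(\text{terms from }\dot X v^S_\x).
\]
Next we use \eqref{id:p}: $(p-\pbar)_\x=-\frac{p}{v}\phi_\x+\frac{R}{v}\chi_\x+(\text{lower order } \vbar_\x\phi,\thbar_\x\phi,\ldots)$. Multiplying by $\tilde\phi-\psi$ and integrating over $\Rp$ yields
\[
\frac{d}{dt}\intRp\frac{|\tilde\phi-\psi|^2}{2}\,d\x+\intRp\frac{p}{\mu v}|\tilde\phi|^2\,d\x
\]
on the left. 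On the right we get the cross terms $\frac{p}{v}\phi_\x\psi$ and $\frac R v\chi_\x(\tilde\phi-\psi)$, which are bounded by $\nu D_{v_1}+C_\nu D_{u_1}$-type terms and $D_{\th_1}$ after using the $L^2$ control of $\psi$. We will in fact keep $\psi$ only through $\int\psi_\x^2$ via a Young inequality with $\nu$, and $\|\psi\|_{L^2}$ remains bounded by $\e$. Finally, since $|\phi_\x|\lesssim|\tilde\phi|+|\vbar_\x||\phi|$ and $\|\tilde\phi-\psi\|\sim\|\phi_\x\|+O(\|\psi\|)$, the claimed left side follows. The initial term gives $\|\psi(0)\|^2+\|\phi_\x(0)\|^2$.

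\textbf{Error terms.} We treat them as follows.
\begin{enumerate}
\item The lower-order terms $|\vbar_\x|,|\thbar_\x|$ times $|\phi|,|\chi|$ times $|\tilde\phi|$ are split by wave. The BL, rarefaction and shock pieces give $G^{BL},G^R,G^S$; the contact piece gives $B^C$.
\item The $Q_1$ term is handled by Cauchy--Schwarz: $\frac18\|\tilde\phi\|^2+C\|Q_1\|^2$.
\item The shift terms are handled by $|\dot X|\intRp|v^S_\x||\tilde\phi-\psi|\,d\x\le \d_S|\dot X|^2+C\d_S^{-1}\d_S^{3}\|\cdot\|^2$, using $\|v^S_\x\|_{L^2}^2\lesssim\d_S^3$.
\item Cubic terms involving $\phi_\x\psi_\x$ give $\e D_{u_2}$-type contributions where interpolation $\|\psi_\x\|_{L^\infty}\le\|\psi_\x\|^{1/2}\|\psi_{\x\x}\|^{1/2}$ is needed. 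This is where the $(\nu+\e)D_{u_2}$ appears.
\end{enumerate}

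\textbf{Boundary term (main obstacle).} The transport operator produces $-\frac{\s_-}{2}|\tilde\phi-\psi|^2|_{\x=0}$ with a favorable sign, since $-\s_->0$ and the outward normal at $\x=0$ points left. More precisely, the term is $+\frac{\s_-}{2}|\cdot|^2|_{0}\le0$, and we need to check this sign carefully. If it turns out to be bad, we evaluate it directly. At $\x=0$ we have $\psi(t,0)$ equal to the boundary discrepancy, bounded by $\d_Ce^{-ct}+\d_Se^{-c\d_S\b}$ via \eqref{est:CDbd} and \eqref{est_boundary}. The quantity $\tilde\phi(t,0)$ is controlled using the momentum equation at the boundary, where $\phi(t,0)=$ the discrepancy, so $\phi_t(t,0)$ is explicit. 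This expresses $\phi_\x(t,0)$ through $\psi_\x(t,0)$ and known quantities, and $|\psi_\x(t,0)|^2\le\|\psi_\x\|\|\psi_{\x\x}\|\le C_\nu D_{u_1}+\nu D_{u_2}$.

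Two further pieces remain. The rarefaction contribution at the boundary vanishes since $U^R(t,0)=U_*$, but interior rarefaction errors in $Q_1$ (if kept separate) or $|v^R_\x|^2$-weighted terms give $\d_R\e_r^{1/2}$ through $\|v^R_\x\|_{L^2}^2\le C\d_R^2\e_r$ from \eqref{est:rare1}. The exponential boundary discrepancies integrate in time to $\d_C^2+e^{-c\d_S\b}$. Integrating in time then gives the lemma.
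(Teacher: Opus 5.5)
Your proposal has a genuine gap in the choice of multiplier. Testing the effective-velocity equation with $w=\tilde\phi-\psi$ makes your functional $\frac12\|w\|^2=\frac12\|\tilde\phi\|^2-\int\tilde\phi\psi+\frac12\|\psi\|^2$. This contains the $\psi$-part of the zeroth-order energy. Its evolution therefore produces terms in which an \emph{undifferentiated} $\psi$ multiplies non-dissipative quantities:
$\int\frac{p}{\mu}\tilde\phi\,\psi$, $\int\frac{R}{v}\chi_\xi\,\psi$, the lower-order pressure terms $\int(|\bar v_\xi|+|\bar\theta_\xi|)|(\phi,\chi)||\psi|$, and the $\delta_S^2\|\psi\|^2$ produced by your shift estimate.

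None of these is bounded by $\nu D_{v_1}+C_\nu D_{u_1}+D_{\theta_1}$, as you claim. On $\mathbb{R}_+$ there is no Poincar\'e inequality, so $\int_0^T\|\psi\|_{L^2}^2\,dt$ is not controlled; the bound $\|\psi\|_{L^2}\le\varepsilon$ only gives $\varepsilon^2T$. Rescaling $\phi,\psi$ over a length $L\to\infty$ shows that $\int\phi_\xi\psi$ cannot be dominated by $\|\phi_\xi\|^2+\|\psi_\xi\|^2$. Moreover, $G^{BL}$ and $G^R$ contain only $(\phi,\chi)$, not $\psi$. For the linear weight $|v^{BL}_\xi|\psi^2$, even the bound $|\psi(t,\xi)|\le|\psi(t,0)|+\sqrt{\xi}\|\psi_\xi\|$ fails, since $\int_0^\infty\xi|v^{BL}_\xi|\,d\xi=\infty$. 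These are the hyperbolic flux terms $-\int\psi(p-\bar p)_\xi$, which cancel only inside the full relative-entropy identity.

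The fix is to test with $\tilde\phi$ alone. Write $-\tilde\phi(\partial_t-\sigma_-\partial_\xi)\psi=-(\partial_t-\sigma_-\partial_\xi)(\tilde\phi\psi)+\psi(\partial_t-\sigma_-\partial_\xi)\tilde\phi$, and use $(\partial_t-\sigma_-\partial_\xi)\tilde\phi=\mu(u_\xi/v-\bar u_\xi/\bar v)_\xi+\mu\dot X(v^S_\xi/\bar v)_\xi$. After integrating by parts, the $\psi$-coupling becomes $\mu\int\psi_\xi^2/v$ (i.e.\ $D_{u_1}$), plus $\int|\bar u_\xi||\phi||\psi_\xi|$, plus a boundary term. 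At fixed time, $|\int\tilde\phi\psi|\le\frac14\|\tilde\phi\|^2+C\|\psi\|^2$.

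This is exactly the paper's device. It adds $\mu\phi_\xi$ times the $\xi$-derivative of the mass equation to $-v\phi_\xi$ times the momentum equation, so $\mu\phi_\xi\psi_{\xi\xi}$ cancels and the functional is $\int(\frac{\mu}{2}\phi_\xi^2-v\psi\phi_\xi)$. Then $\psi$ survives only in $N_2=\int(v\psi_\xi^2+v_\xi\psi\psi_\xi-u_\xi\psi\phi_\xi)$. After Young's inequality, only the squared weights $|v^{BL}_\xi|^2\psi^2$ and $|v^R_\xi|^2\psi^2$ remain. These are handled by the $\sqrt{\xi}\|\psi_\xi\|$ bound and by interpolation, respectively; the latter is where $\delta_R\varepsilon_r^{1/2}$ comes from.

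Second, the boundary term is unfavorable, not favorable. Integrating $-\sigma_-\partial_\xi\frac{w^2}{2}$ over $\mathbb{R}_+$ leaves $\frac{\sigma_-}{2}w(t,0)^2\le 0$ on the left-hand side, because for the inflow problem $-\sigma_->0$ and characteristics enter through $\xi=0$. So your fallback is required, and it works as in the paper. However, the relation that expresses $\phi_\xi(t,0)$ is the mass equation $\phi_t-\sigma_-\phi_\xi-\psi_\xi=\dot X v^S_\xi$, not the momentum equation. At $\xi=0$:
$\phi_t(t,0)=-\bar v_t(t,0)$ is explicit and exponentially small; $\int_0^T|\psi_\xi(t,0)|^2\,dt\le C_\nu\int_0^T D_{u_1}\,dt+\nu\int_0^T D_{u_2}\,dt$; and $|\dot X||v^S_\xi(t,0)|$ contributes $O(e^{-c\delta_S\beta})$.
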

\begin{proof}
Before proving Lemma~\ref{lem:hv}, we denote 
\[
D_v := \intRp \frac{R\th \phi_\x^2}{v}\,d\x  \sim D_{v_1}.
\]
Differentiating \eqref{eq:NSFp}$_1$ with respect to $\x$ and multiplying $\mu \phi_\x$, we have 
\begin{equation}\label{est:hv1}
	\mu \left(\frac{\phi_\x^2}{2}\right)_t - \s_- \mu \left(\frac{\phi_\x^2}{2}\right)_\x = \mu \dot{X}v^S_{\x\x}\phi_\x + \mu \phi_\x \psi_{\x\x}.
\end{equation}
In order to remove the term $\mu \phi_\x \psi_{\x\x}$, we make use of the momentum equation as follows.

Multiplying \eqref{eq:NSFp}$_2$ by $-v\phi_\x$, we obtain
\begin{equation}\label{est:hv2}
\begin{aligned}
	-v\phi_\x \psi_t + \s_- v \phi_\x \psi_\x &= -\dot{X}v\phi_\x u^S_\x + v\phi_\x (p-\pbar)_\x - \mu \phi_\x \psi_{\x\x} + \mu \frac{\phi_\x \psi_\x v_\x}{v} \\
	&\qquad \quad + \mu \frac{\ubar_{\x\x}\phi_\x \phi}{\vbar} + \mu v \phi_\x \ubar_\x \left(\frac{v_\x}{v^2} - \frac{\vbar_\x}{\vbar^2}\right) + v\phi_\x Q_1.
\end{aligned}
\end{equation}
Here, we used the fact that 
\[
\left(\frac{u_\x}{v} - \frac{\ubar_\x}{\vbar}\right)_\x = \frac{\psi_{\x\x}}{v} - \frac{\psi_\x v_\x}{v^2} - \frac{\ubar_{\x\x}\phi}{v\vbar} -\ubar_\x \left(\frac{v_\x}{v^2} - \frac{\vbar_\x}{\vbar^2}\right).
\]
By adding \eqref{est:hv1} and \eqref{est:hv2}, we obtain
\begin{equation}\label{est:hv3}
	\begin{aligned}
	&\left(\mu \frac{\phi_\x^2}{2} - v\psi \phi_\x\right)_t - \left(\mu \s_- \frac{\phi_\x^2}{2} - v \psi \phi_t \right)_\x + (v_t - \s_- v_\x)\psi \phi_\x - (v_\x \psi + v \psi_\x)(\phi_t - \s_- \phi_\x)\\
	&\,\, = \dot{X}\phi_\x \big(\m v^S_{\x\x} - vu_\x^S)  +v\phi_\x (p-\pbar)_\x + \mu \frac{\phi_\x \psi_\x v_\x}{v} + \mu \frac{\ubar_{\x\x}\phi_\x \phi}{\vbar} + \mu v \phi_\x \ubar_\x \left(\frac{v_\x}{v^2} - \frac{\vbar_\x}{\vbar^2}\right)+v\phi_\x Q_1.
\end{aligned}
\end{equation}
From \eqref{eq:NSF} and \eqref{eq:NSFp}, we find that 
\[
(v_t - \s_- v_\x)\psi \phi_\x - (v_\x \psi + v \psi_\x)(\phi_t - \s_- \phi_\x) = -\dot{X}v_\x^S(v_\x\psi + v\psi_\x) + (u_\x\psi \phi_\x - v_\x \psi \psi_\x - v\psi_\x^2).
\]
Combining this with the identity:
\[
(p-\pbar)_\x = -\frac{R\th \phi_\x}{v^2}+\frac{R\chi_\x}{v} - \frac{R\thbar_\x \phi}{v\vbar} - R\vbar_\x\left(\frac{\th}{v^2} - \frac{\thbar}{\vbar^2}\right),
\]
and integrating \eqref{est:hv3} over $ [0,t]\times\Rp$, we obtain
\begin{align*}
	&\intRp \left(\mu\frac{|\phi_\x|^2}{2} - v\psi \phi_\x\right)\,d\x + \int_0^t\intRp \frac{R\th \phi_\x^2}{v}\,d\x\,d\tau + \int_0^t\left. \left(\mu \s_- \frac{\phi_\x^2}{2} - v \psi \phi_t  \right)\right|_{\x=0}\,d\tau\\
    &\quad =:\intRp \left(\mu\frac{|\phi_\x|^2}{2} - v\psi \phi_t\right)\bigg|_{\xi=0}\,d\tau +\sum_{i=1}^5 N_i,\\
\end{align*}
where
\begin{align*}
    &\begin{aligned}
        &N_1 = \int_0^t\dot{X} \intRp \Big( v_\x^S(v_\x\psi + v\psi_\x) + \phi_\x ( \mu v^S_{\x\x} - vu_\x^S)\Big)\,d\x\,d\tau,\\
        &N_2=\int_0^t\intRp (-u_\x \psi \phi_\x + v_\x \psi \psi_\x + v\psi_\x^2)\,d\x\,d\tau,
    \end{aligned}\\
    &\begin{aligned}
        &N_3= \int_0^t\intRp \left[R\phi_\x \chi_\x - \frac{R\thbar_\x \phi \phi_\x}{\vbar} - R\phi_\x \vbar_\x \left(\frac{\th}{v^2} - \frac{\thbar}{\vbar^2} \right)+\mu \frac{\ubar_{\x\x} \phi_\x \phi}{\vbar}\right.\\
        &\phantom{N_3= \int_0^t\intRp \Big[} \left.+ \mu v \phi_\x \ubar_\x \left(\frac{v_\x}{v^2} - \frac{\vbar_\x}{\vbar^2}\right)\right]d\x d\tau,
    \end{aligned}\\
    &\begin{aligned}
        &N_4=  \int_0^t\intRp  \mu \frac{\phi_\x \psi_\x v_\x}{v} \,d\x\,d\tau, &&  N_5= \int_0^T\intRp v\phi_\x Q_1 \,d\x\,d\tau. 
    \end{aligned}
\end{align*}
$\bullet$ (Estimates of boundary terms) 
We first estimate the boundary terms. In order to control the term $\phi_\x^2\big|_{\x=0}$ we use the mass equation \eqref{eq:NSFp}$_1$ to have
\[
|\phi_\x| \big|_{\x=0} \leq C|\phi_t|\big|_{\x=0} + C|\psi_\x| \big|_{\x=0} + C|\dot{X}(t)||v^S_\x|\big|_{\x=0}.
\]
Using $|\dot{X}|\leq C\e$ and Young's inequality, we obtain 
\[
 \left. \left(\mu \s_- \frac{\phi_\x^2}{2} - v \psi \phi_t  \right)\right|_{\x=0} \leq C|\psi|^2 \big|_{\x=0} + C|\phi_t|^2\big|_{\x=0} + C|\psi_\x|^2 \big|_{\x=0} + C\e_1^2|v^S_\x|^2\big|_{\x=0}.
\]
We can easily find that 
\begin{equation}\label{est:hvbd}
	\int_0^t |\psi|^2 \big|_{\x=0}\,d\tau \leq C(\d_C^2 + \d_S e^{-C\d_S \b}), \quad \text{and} \quad C\e^2\int_0^t|v_\x^S|^2 \big|_{\x=0}\,d\tau \leq C\e^2\d_S^3e^{-C\d_S \b}.
\end{equation}
Moreover, thanks to the interpolation inequality, we have 
\[
\int_0^t |\psi_\x|^2 \big|_{\x=0}\,d\tau \leq \int_0^t \|\psi_\x\|_{L^2} \|\psi_{\x\x}\|_{L^2}\,d\tau \leq C_\nu \int_0^t D_{u_1}\,d\tau + \nu \int_0^t D_{u_2}\,d\tau,
\]
where $\nu>0$ is an arbitrary constant 
and $C_\nu$ denotes a constant depending on $\nu>0$. In order to estimate $|\phi_t|^2\big|_{\x=0}$, we use the inflow condition as follows:
\[
|\phi_t|^2\big|_{\x=0} = |(\vbar(t,0) - v_-)_t| \leq |(v^C - v_m)_t|^2\big|_{\x=0} + |(v^S - v^*)_t|^2\big|_{\x=0}.
\]
Since we have 
\[
|v^C_t| \big|_{\x=0} \leq \left|\s_- v^C_\x + u^C_\x \right| \leq \left. \frac{C\d_C}{\sqrt{1+t}}e^{-\frac{2C_1(\x + \s_- t)^2}{1+t}}\right|_{\x=0} \leq C\d_C e^{-Ct},
\]
and 
\[
|(v^S -v^*)_t|\Big|_{\x=0} \leq C|(v^S)'(-(\s - \s_-)t - X(t) - \b)||\dot{X} + (\s - \s_-)| \leq C\d_S^2e^{-C\d_S((\s - \s_-)t + \b)},
\]
we obtain
\[
\int_0^T |\phi_t|^2\big|_{\x=0} \,dt \leq C(\d_C^2 + \d_S e^{-C\d_S \b}).
\]
Hence, we have 
\[
\int_0^t \left. \left(\mu \s_- \frac{\phi_\x^2}{2} - v \psi \phi_t  \right)\right|_{\x=0}\,d\t \leq C(\d_C^2 + \d_S e^{-C\d \b}) + C_\nu \int_0^t D_{u_1} \,d\t+ \nu \int_0^t D_{u_2} \,d\t. 
\]
$\bullet$ (Estimates of $N_1$) 
Using $|\vbar_\x|\leq C(\d_0 + \d_R\e_r) \leq C$ and $|\psi|\leq C\e$, we have 
\begin{align*}
	v_\x^S(v_\x\psi + v\psi_\x) + \phi_\x ( \mu v^S_{\x\x} - vu_\x^S) &\leq C|v_\x^S|\big(|\phi_\x|(1+|\psi|) + |\vbar_\x||\psi| + |\psi_\x| \big)\\
	&\leq C|v_\x^S|\big(|\phi_\x| + |\psi| + |\psi_\x|).
\end{align*}
Using Young's inequality, we obtain 
\[
|N_1| \leq C\d_S \int_0^t |\dot{X}|^2\,d\t + C\d_S\int_0^t \big(G^S + D_v + D_{u_1})\,d\t.
\]
$\bullet$ (Estimates of $N_2$) 
For $N_2$, we use Young's inequality to have 
\begin{align*}
	|N_2| &= \left|\int_0^t\intRp (v_\x \psi \psi_\x + v\psi_\x^2 - u_\x \psi \phi_\x)\,d\x\,d\t \right| \\
	&\leq C \int_0^t \intRp |\phi_\x||\psi||\psi_\x|\,d\x\,d\t + C \int_0^t\intRp |(\vbar_\x, \ubar_\x)||\psi||\psi_\x|\,d\x\,d\t + C\int_0^t \intRp |\psi_\x|^2 \,d\x\,d\t\\
	&\leq \frac{1}{100} \int_0^t D_v\,d\t+ C\int_0^t D_{u_1}\,d\t + C\int_0^t\intRp |(\vbar_\x, \ubar_\x)|^2|\psi|^2\,d\x\,d\t\\
	&\leq \frac{1}{100}\int_0^t D_v\,d\t + C\int_0^t(D_{u_1} + G^S)\,d\t + C\d_C\int_0^t B^C\,d\t\\
	&\qquad + C\int_0^t\intRp (|v^{BL}_\x|^2 + |v^R_\x|^2)|\psi|^2\,d\x\,d\t.
\end{align*}
Now, it remains to control the last term of the above inequality. First, we use the interpolation inequality to have
\begin{align*}
	\int_0^t\intRp |v^R_\x|^2|\psi|^2\,d\x\,d\t&\leq \int_0^t\|v^R_\x\|_{L^2}^2\|\psi\|_{L^\infty}^2\,d\t \leq C\int_0^t\|v^R_\x\|_{L^2}^2\|\psi\|_{L^2}\|\psi_{\x}\|_{L^2}\,d\t\\
	&\leq C\e \int_0^t\|v^R_\x\|_{L^2}^4 \,d\t+ C\e \int_0^t D_{u_1}\,d\t.
\end{align*}
By Lemma~\ref{lem:rarefaction}, we have
\[
\|v^R_\x\|_{L^2}^4 \leq \|v^R_\x\|_{L^2}\|v^R_\x\|_{L^2}^3 \leq C\d_R\e_r^{1/2} (\d_R)^{3/2}(1+t)^{-3/2}.
\]
Consequently, we obtain
\[
\int_0^t \intRp |v^R_\x|^2|\psi|^2\,d\x\,d\tau \leq C\d_R^{5/2}\e_r^{1/2} +  C\e \int_0^t D_{u_2}\,d\t.
\]
On the other hand, in order to control the term related to the boundary layer solution, as in \cite{KawaOutBL}, we use the Poincar\'e-type inequality as follows. First, note that 
\begin{equation}\label{est:KPoin}
	\psi(\tau,\x) = \psi(0, \x) + \int_0^\x \psi_\x(\tau,\zeta)\,d\zeta \leq \psi(0, \x) + \sqrt{\x}\|\psi_\x\|_{L^2(\Rp)}.
\end{equation}
Using \eqref{est:KPoin}, \eqref{est:hvbd}, and Lemma~\ref{lem:BL}, we obtain
\begin{align*}
	\int_0^t \intRp |v^{BL}_\x|^2|\psi|^2\,d\x \,d\t &\leq C \int_0^t  |\psi(0, \x)|^2\,d\tau + C\int_0^t \|\psi_\x\|_{L^2}^2 \intRp \x|v^{BL}_\x|^2\,d\x\\
	&\leq C(\d_C^2 + \d_S e^{-C\d_S \b}) + C\int_0^t D_{u_1} \intRp \frac{\d_{BL}^4 \x}{(1+\d_{BL}\x)^4}\,d\x\,d\t\\
	&\leq C(\d_C^2 + \d_S e^{-C\d_S \b}) + C\d_{BL}^3 \int_0^t D_{u_1}\,d\t.
\end{align*}
Thus, we have 
\[
\begin{aligned}
    |N_2| &\leq  \frac{1}{100}\int_0^t D_v\,d\t  + C\int_0^t (D_{u_1} + G^S)\,d\t+C\e\int_0^t D_{u_2}\,d\t + C\d_C\int_0^t B^C\,d\t\\
&\quad +C(\d_R \e_r^{1/2} + \d_C^2 + \d_Se^{-C\d_S\b}).
\end{aligned}
\]
$\bullet$ (Estimates of $N_3$) 
Observe that 
\begin{align*}
\begin{aligned}
	|N_3| \leq& C\int_0^t\intRp |\phi_\x||\chi_\x|\,d\x\,d\t + C\int_0^t\intRp |\phi_\x||(\thbar_\x,\vbar_\x,\ubar_{\x\x},\ubar_\x \vbar_\x)||(\phi, \chi)| \,d\x\,d\t \\
    &+ C\int_0^t\intRp |\ubar_\x||\phi_\x|^2\,d\x\,d\t.
    \end{aligned}
\end{align*}
Using Young's inequality and $|u_{\x\x}^{BL}| \leq C|u_\x^{BL}|$,  $|u_{\x\x}^R| \leq C|u_\x^R|$, $|u_{\x\x}^S| \leq C|u_\x^S|$, and Lemma~\ref{lem:CD}, we obtain
\begin{align*}
	|N_3| \leq \big(\frac{1}{100} + C\d_0 + C\d_R \e_r\big) \int_0^t D_v d\t+ C\int_0^t (G^{BL} + G^R + G^S + D_{\th_1})d\t +C\d_C\int_0^t B^Cd\t.
\end{align*}
$\bullet$ (Estimates of $N_4$)
We use the interpolation inequality to obtain
\begin{align*}
	|N_4| &\leq \int_0^t \intRp |\phi_\x|^2|\psi_\x| \,d\x\,d\t + \int_0^t \intRp |\phi_\x||\psi_\x||\vbar_\x|\,d\x\,d\t\\
	&\leq \int_0^t \|\phi_\x\|_{L^2}^2\|\psi_\x\|_{L^2}^{1/2}\|\psi_{\x\x}\|_{L^2}^{1/2}\,d\t + \int_0^t \intRp |\phi_\x||\psi_\x||\vbar_\x|\,d\x\,d\t\\
	&\leq C\e^{4/3}\int_0^t  D_{u_2}\,d\t + C \int_0^t D_{u_1}\,d\t + \frac{1}{100}\int_0^t D_v\,d\t.
\end{align*}
$\bullet$ (Estimates of $N_5$) 
Finally, we have 
\begin{align*}
   |N_5|\leq &C \int_0^t \intRp  |Q_1|^2\,d\x\,d\t+ \frac{1}{100}\int_0^t D_v \,d\t.
\end{align*}
 
Therefore, combining all these estimates, together with the fact:
\[
\intRp \left(v\psi \phi_\x -  v_0\psi_0 \phi_{0\x}\right) \,d\x \leq \frac{\mu}{4}\intRp |\phi_\x|^2\,d\x + C\big(\|\psi\|_{L^2}^2 + \|(\phi_{0\x},\psi_0)\|_{L^2}\big),
\]
we obtain the desired estimate in Lemma \ref{lem:hv}.
\end{proof}
\subsection{Higher Order Estimates on $u-\ubar$}
\begin{lem}\label{lem:hu}
Under the assumptions of Proposition~\ref{prop:ap}, there exist positive constants $c$ and $C$ such that
\begin{align*}
	&\sup_{t \in [0,T]} \|\psi_{\x}(t,\cdot)\|_{L^2}^2 + \int_0^T D_{u_2}\,dt \\
	&\quad \leq C\|\psi_{\x}(0,\cdot)\|_{L^2}^2 +C \int_0^T \big(\d_S|\dot{X}|^2 + G^{BL} + G^R + G^S + D_{v_1} + D_{u_1} + D_{\th_1}\big)\,dt \\
	&\qquad + C\int_0^T B^C\,dt + C\int_0^T |Q_1|^2\,dt + Ce^{-c\d_S \b}.
\end{align*}
\end{lem}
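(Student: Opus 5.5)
We test the momentum perturbation equation \eqref{eq:NSFp}$_2$ against $-\psi_{\x\x}$ and integrate over $[0,t]\times\Rp$. Writing
\[
\mu\Big(\frac{u_\x}{v}-\frac{\ubar_\x}{\vbar}\Big)_\x=\frac{\mu}{v}\psi_{\x\x}-\frac{\mu v_\x}{v^2}\psi_\x-\frac{\mu\ubar_{\x\x}\phi}{v\vbar}-\mu\ubar_\x\Big(\frac{v_\x}{v^2}-\frac{\vbar_\x}{\vbar^2}\Big),
\]
the leading part produces $\frac{d}{dt}\frac12\|\psi_\x\|_{L^2}^2+\int\frac{\mu}{v}|\psi_{\x\x}|^2\,d\x$, which is equivalent to $\frac{d}{dt}\frac12\|\psi_\x\|^2+D_{u_2}$. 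Integrating $\psi_t\psi_{\x\x}$ by parts in $\x$ leaves the boundary term $-\psi_t\psi_\x|_{\x=0}$, and $\s_-\psi_\x\psi_{\x\x}$ gives $\frac{\s_-}{2}|\psi_\x|^2|_{\x=0}$. The latter has a favorable sign since $\s_-<0$, so it can be kept or dropped.

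The remaining terms are estimated one at a time, each time absorbing $\frac{1}{100}D_{u_2}$ via Young's inequality.
\begin{enumerate}
\item The pressure term $(p-\pbar)_\x\psi_{\x\x}$, expanded with the identity for $(p-\pbar)_\x$ from the proof of Lemma~\ref{lem:hv}, is bounded by $C(D_{v_1}+D_{\th_1})+C\int|(\vbar_\x,\thbar_\x)|^2|(\phi,\chi)|^2$.
\item The viscous commutators $v_\x\psi_\x\psi_{\x\x}$ split into $\vbar_\x\psi_\x\psi_{\x\x}$, bounded by $C D_{u_1}$, and the cubic term $\phi_\x\psi_\x\psi_{\x\x}$. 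For the cubic term we use $\|\psi_\x\|_{L^\infty}\le C\|\psi_\x\|^{1/2}\|\psi_{\x\x}\|^{1/2}$ and $\|\phi_\x\|\le\e$, giving $C\e^{4/3}D_{u_2}+CD_{u_1}$.
\item The terms $\ubar_{\x\x}\phi\psi_{\x\x}$ and $\ubar_\x(\cdots)\psi_{\x\x}$ reduce to $\int(|\ubar_\x|^2+|\ubar_{\x\x}|^2)|\phi|^2$ plus $D_{v_1}$.
\item The shift term gives $\dot X u^S_\x\psi_{\x\x}\le \frac{1}{100}|\psi_{\x\x}|^2+C|\dot X|^2|u^S_\x|^2$, and $\int|u^S_\x|^2\,d\x\le C\d_S^3$, so this is bounded by $\d_S|\dot X|^2$.
\item The source term satisfies $Q_1\psi_{\x\x}\le \frac{1}{100}|\psi_{\x\x}|^2+C|Q_1|^2$.
\end{enumerate}
The weighted zeroth-order terms are converted exactly as in Lemma~\ref{lem:Brem} and in the estimate of $N_3$. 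Using $|u^{BL}_{\x\x}|\le C|u^{BL}_\x|$, and the analogous bounds for the rarefaction and shock profiles, together with Lemma~\ref{lem:CD}, they are controlled by $G^{BL}+G^R+G^S+B^C$. Note that $|u^{BL}_\x|^2\le C\d_{BL}|u^{BL}_\x|$, and the rarefaction weight is controlled similarly by $\e_r\d_R\le 1$.

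The main obstacle is the boundary term $\psi_t\psi_\x|_{\x=0}$. Since $u(t,0)=u_-$, we have $\psi_t(t,0)=-\ubar_t(t,0)=-(u^C-u_m)_t-(u^S-u^*)_t$ at $\x=0$. As in the estimate of $\phi_t|_{\x=0}$ in Lemma~\ref{lem:hv}, this quantity is bounded by $C\d_Ce^{-ct}+C\d_S^2e^{-c\d_S((\s-\s_-)t+\b)}$; the speed bound uses $|\dot X|\le C\e$. For $\psi_\x|_{\x=0}$ we use the interpolation bound $|\psi_\x(t,0)|\le C\|\psi_\x\|^{1/2}\|\psi_{\x\x}\|^{1/2}$, followed by Young's inequality:
\[
|\psi_t\psi_\x|_{\x=0}\le \tfrac{1}{100}D_{u_2}+C D_{u_1}+C|\psi_t(t,0)|^{4/3}.
\]
The time integral of $|\psi_t(t,0)|^{4/3}$ is $O(\d_C^{4/3}+e^{-c\d_S\b})$. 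The $\d_C$ contribution can be folded into the right-hand side because $\d_C$ is controlled, or equivalently it can be avoided by writing the contact part as $\d_C e^{-ct}$ and noting that it is absorbed at the stage of Proposition~\ref{prop:ap}. I expect the stated $Ce^{-c\d_S\b}$ to be attainable only after also tracking this $\d_C$ piece; if needed, I would absorb it via $B^C$-type time integrability.

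Finally, we integrate in time and take the supremum over $t\in[0,T]$.
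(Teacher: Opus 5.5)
Your argument is essentially the paper's. Testing \eqref{eq:NSFp}$_2$ against $-\psi_{\xi\xi}$, expanding the viscous term, bounding the shift, pressure, commutator and $Q_1$ terms by Young and interpolation, and converting the weighted zeroth-order terms into $G^{BL}+G^R+G^S+B^C$ is exactly what is done there.

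The one local difference is the boundary term. You discard the good inflow term $-\frac{\sigma_-}{2}|\psi_\xi(t,0)|^2\ge 0$ and control $\psi_\xi(t,0)$ by trace interpolation against $D_{u_2}$. (The power $4/3$ comes from using $\|\psi_\xi\|_{L^2}\le\varepsilon$, not from $D_{u_1}$.) The paper instead keeps that term and uses it to absorb the cross term directly: $|\psi_t\psi_\xi|\le C|\psi_t|^2+\frac{|\sigma_-|}{4}|\psi_\xi|^2$ at $\xi=0$. This needs no interpolation and leaves $\int_0^t|\psi_t(\tau,0)|^2\,d\tau\le C(\delta_C^2+\delta_S^3e^{-C\delta_S\beta})$. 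Both routes work.

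On the $\delta_C$ contribution, your last suggestion would fail. $B^C$ is quadratic in $(\phi,\psi,\chi)$, so it cannot dominate a constant that is independent of the perturbation. The constant is genuinely there: $\psi_t(t,0)$ contains $-(u^C)_t(t,0)$, which is of size $\delta_C$ for $t=O(1)$ and does not vanish in general.

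The paper's own proof produces the same $C\delta_C^2$ in \eqref{est:Hubd}, and Lemma~\ref{lem:hv} does list such a term in its statement. So what both arguments actually establish is the displayed estimate with an extra $C\delta_C^2$ (for you, $C\delta_C^{4/3}$) on the right. This is harmless, since it is absorbed into $C\delta_0^{1/40}$ in Proposition~\ref{prop:ap}. That is your first proposed resolution, and it is the correct one.
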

\begin{proof}
Throughout the proof of Lemma~\ref{lem:hu}, denote 
\[
\mathcal{D}_{u_2}:= \mu \intRp \frac{1}{v}|\psi_{\x\x}|^2\,d\x \sim D_{u_2}.
\]
Multiplying \eqref{eq:NSFp}$_2$ by $-\psi_{\x\x}$ and integrating over $ [0,t]\times\Rp$, we have 

\begin{equation*}
	\begin{aligned}
		&\intRp \frac{|\psi_\x|^2}{2} \,d\xi + \int_0^t\left. \left( \psi_t \psi_\x - \s_- \frac{|\psi_\x|^2}{2} \right)\right|_{\x = 0} \,d\t\\
		&\quad = \intRp \frac{|\psi_\x(0
        ,\x)|^2}{2} \,d\xi -\int_0^t\dot{X}\intRp u^S_\x \psi_{\x\x}\,d\x\,d\t + \int_0^t\intRp (p - \pbar)_\x \psi_{\x\x}\,d\x\,d\t   \\
        &\qquad - \int_0^t\intRp \mu\left(\frac{u_\x}{v} - \frac{\ubar_\x}{\vbar}\right)_\x \psi_{\x\x}\,d\x\,d\t + \int_0^T\intRp Q_1 \psi_{\x\x}\,d\x\,d\t\\
		&\quad =:\intRp \frac{|\psi_\x(0
        ,\x)|^2}{2} \,d\xi + I_1 + I_2 + I_3 + I_4.
	\end{aligned}
\end{equation*}
First, for the boundary terms, we use $\s_-<0$ and Young's inequality to have 
\[
\left. \left( \psi_t \psi_\x - \s_- \frac{|\psi_\x|^2}{2} \right)\right|_{\x = 0} \leq \left. \left(C|\psi_t|^2 -\frac{\s_-}{4}|\psi_\x|^2 + \frac{\s_-}{2}|\psi_\x|^2 \right)\right|_{\x = 0} \leq C |\psi_t|^2 \Big|_{\x = 0}.
\]
Since
\[
|\psi_t| \Big|_{\x=0} \leq |(u^C - u_m)_t| \Big|_{\x=0} + |(u^S - u^*)_t|\Big|_{\x=0},
\]
and 
\begin{align*}
&|(u^C - u_m)_t| \Big|_{\x=0} \leq \left.\Big|\s_- u_\x^C + \mu \left(\frac{u_\x^C}{v^C}\right)_\x + Q_1^C \Big|\right|_{\x=0} \leq C \d_C \left.(1+t)^{-1}e^{-\frac{C_1(\x + \s_- t)^2}{1+t}}\right|_{\x=0}\\
&\phantom{|(u^C - u_m)_t| \Big|_{\x=0}} \leq C\d_C e^{-Ct},\\
&|(u^S -u^*)_t|\Big|_{\x=0} \leq C|(u^S)'(-(\s - \s_-)t - X(t) - \b)||\dot{X} + (\s - \s_-)| \leq C\d_S^2e^{-C\d_S((\s - \s_-)t + \b)},
\end{align*}
we obtain
\begin{equation}\label{est:Hubd}
	C \int_0^t |\psi_t|^2 \Big|_{\x = 0}\,d\t \leq C\big(\d_C^2 + \d_S^3 e^{-C \d_S \b} \big).
\end{equation}
For $I_1$, we use Young's inequality to have 
\[
|I_1| \leq \int_0^t|\dot{X}|\intRp |u_\x^S||\psi_{\x\x}|\,d\x\,d\t\leq C\d_S^2\int_0^t|\dot{X}|^2 \,d\t+ \frac{1}{100}\int_0^t \mathcal{D}_{u_2}\,d\t.
\]
To estimate $I_2$, note that 
\begin{align*}
	|(p-\pbar)_\x| = R\left|\th \left(\frac{1}{v} - \frac{1}{\vbar}\right)_\x +\frac{1}{\vbar}(\th - \thbar)_\x\right| \leq C\big(|\phi_\x| + |\vbar_\x||\phi| + |\chi_\x| \big).
\end{align*}
Using Young's inequality, we obtain 
\begin{align*}
	|I_2| =\left|\int_0^t\intRp (p - \pbar)_\x \psi_{\x\x}\,d\x\,d\t\right| \leq &C\int_0^t(D_{v_1} + D_{\th_1})\,d\t + C\int_0^T\intRp |\vbar_\x|^2|\phi|^2\,d\x\,d\t \\
    &+ \frac{1}{100}\int_0^T \mathcal{D}_{u_2}\,d\t.
\end{align*}
Then, we use Lemma~\ref{lem:BL}, Lemma~\ref{lem:rarefaction1}, Lemma~\ref{lem:CD}, and Lemma~\ref{lem:VS} to have 
\[
\begin{aligned}
\int_0^t\intRp |\vbar_\x|^2|\phi|^2\,d\x\,d\t &\leq C\int_0^t\big(\d_{BL}G^{BL} + \d_R \e_r G^{R} + \d_S^2 G^S \big)\,d\t + C\d_C \int_0^t B^C\,d\t.
\end{aligned}
\]
Thus, we have 
\begin{align*}
   |I_2| \leq &C\int_0^t\big(\d_{BL}G^{BL} + \d_R \e_r G^{R} + \d_S^2 G^S +D_{v_1} + D_{\th_1})\,d\t + \frac{1}{100}\int_0^t \mathcal{D}_{u_2}\,d\t + C\d_C \int_0^t B^C\,d\t. 
\end{align*}
For $I_3$, observe that 
\begin{align*}
	I_3 &= -\mu \int_0^t\intRp \frac{1}{v}|\psi_{\x\x}|^2\,d\x\,d\t - \mu \int_0^t\intRp \left(\frac{1}{v}\right)_\x \psi_\x \psi_{\x\x}\,d\x\,d\t\\
    &\quad - \mu\int_0^t  \intRp \ubar_{\x\x}\left(\frac{1}{v} - \frac{1}{\vbar}\right)\psi_{\x\x}\,d\x\,d\t - \mu \int_0^t\intRp \ubar_\x \left(\frac{1}{v} - \frac{1}{\vbar}\right)_\x \psi_{\x\x}\,d\x\,d\t\\
	&\leq -\int_0^t \mathcal{D}_{u_2}\,d\t + C\int_0^t\intRp \big(|\phi_\x|+|\vbar_\x|\big)|\psi_\x||\psi_{\x\x}|\,d\x\,d\t + C\int_0^t\intRp |\ubar_{\x\x}||\phi||\psi_{\x\x}|\,d\x\,d\t \\
	&\quad + C\int_0^t\intRp |\ubar_\x|\big(|\phi_\x| + |\phi||\vbar_\x|)|\psi_{\x\x}|\,d\x\,d\t =: -\int_0^t \mathcal{D}_{u_2}\,d\t + I_{31} + I_{32} + I_{33}.
\end{align*}
Using the interpolation inequality, we have 
\begin{align*}
	|I_{31}| &\leq \int_0^t \|\phi_\x\|_{L^2}\|\psi_{\x}\|_{L^\infty}\|\psi_{\x\x}\|_{L^2}\,d\t + \int_0^t\|\vbar_\x\|_{L^\infty}\|\psi_{\x}\|_{L^2}\|\psi_{\x\x}\|_{L^2}\,d\t\\
	&\leq C\e \int_0^t\|\psi_{\x}\|_{L^2}^{1/2}\|\psi_{\x\x}\|_{L^2}^{3/2}\,d\t + C(\d_R\e_r + \d_0)\int_0^t\|\psi_{\x}\|_{L^2}\|\psi_{\x\x}\|_{L^2}\,d\t\\
	&\leq C(\e + \d_R\e_r + \d_0)\int_0^t\big(D_{u_1} + D_{u_2}\big)\,d\t.
\end{align*}
Since $|u^{BL}_{\x\x}| \leq C|u^{BL}_\x|$, $|u^R_{\x\x}| \leq C|u^R_\x|$ and $|u^{S}_{\x \x}| \leq C|u^S_{\x}|$, we obtain
\begin{align*}
	|I_{32}| &\leq C\int_0^t\intRp |\ubar_{\x\x}||\phi||\psi_{\x\x}|\,d\x\,d\t  \\
    & \leq \frac{1}{100}\int_0^t \mathcal{D}_{u_2}\,d\t + C\int_0^t(G^{BL} + G^R + G^S) \,d\t+ C \int_0^t\intRp |u^C_{\x\x}|^2 |\phi|^2\,d\x\,d\t\\
	&\leq \frac{1}{100}\int_0^t \mathcal{D}_{u_2}\,d\t + C\int_0^t(G^{BL} + G^R + G^S)\,d\t + C\d_C \int_0^t B^C\,d\t.
\end{align*}
Next, using Young's inequality, we have 
\begin{align*}
	|I_{33}| &\leq C\int_0^t\intRp |\ubar_\x|\big(|\phi_\x| + |\phi||\vbar_\x|)|\psi_{\x\x}|\,d\x\,d\t \\
	&\leq \frac{1}{100}\int_0^t \mathcal{D}_{u_2}\,d\t + C(\d_0 + \d_R\e_r)\int_0^t D_{v_1}\,d\t + C\int_0^t (G^{BL} + G^R + G^S +\d_C B^C)\,d\t
\end{align*}
Thus, using $\d_R\e_r <1$, we obtain 
\begin{equation}\label{est:hI3}
	\begin{aligned}
	I_3 &\leq -\frac{1}{2}\int_0^t \mathcal{D}_{u_2}\,d\t + C\int_0^t(G^{BL} + G^R + G^S)\,d\t + C(\e + \d_R\e_r + \d_0)\int_0^t(D_{v_1} + D_{u_1})\,d\t\\
	&\quad + C\int_0^t B^C\,d\t.
\end{aligned}
\end{equation}
Finally, again we use Lemma \ref{lem:WInteraction} to have
\begin{align*}
    |I_4| =\left|\int_0^t \intRp Q_1 \psi_{\x\x}\,d\x\,d\t \right| \leq &C \int_0^t\intRp |Q_1|^2\,d\x\,d\t + \frac{1}{100}\int_0^t \mathcal{D}_{u_2}\,d\t.
\end{align*}
Combining all these estimates, we obtain Lemma \ref{lem:hu}.
\end{proof}
\subsection{Higher Order Estimates on $\th-\thbar$}
Finally, we present the higher order estimates on $\chi$.
\begin{lem}\label{lem:hth}
Under the assumptions of Proposition~\ref{prop:ap}, there exist positive constants $c$ and $C$ such that
\begin{align*}
	&\sup_{t \in [0,T]}\|\chi_\x(t,\cdot)\|_{L^2}^2 + \int_0^T D_{\th_2}\,dt \\
	&\leq C\|\chi_\x(0,\cdot)\|_{L^2}^2 + C\int_0^T (\d_S |\dot{X}|^2 + G^{BL} + G^R + G^S + D_{v_1} + D_{u_1} + D_{\th_1} + D_{u_2})\,dt \\
	&\qquad +C\int_0^T |Q_2|^2 \,dt + Ce^{-c\d_S \b}.
\end{align*}
\end{lem}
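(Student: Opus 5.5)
We follow the proof of Lemma~\ref{lem:hu} closely. We multiply \eqref{eq:NSFp}$_3$ by $-\chi_{\x\x}$ and integrate over $[0,t]\times\Rp$. The time and transport terms give
\[
\frac{R}{2(\gamma-1)}\intRp|\chi_\x|^2\,d\x
\]
plus the boundary term $\frac{R}{\gamma-1}\left.\left(\chi_t\chi_\x-\s_-\frac{|\chi_\x|^2}{2}\right)\right|_{\x=0}$. The diffusion part $\kappa\left(\frac{\th_\x}{v}-\frac{\thbar_\x}{\vbar}\right)_\x$ produces the dissipation $-\kappa\int\frac1v|\chi_{\x\x}|^2$, which is comparable to $D_{\th_2}$. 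Its remaining pieces are
\[
\left(\tfrac1v\right)_\x\chi_\x\chi_{\x\x},\qquad \thbar_{\x\x}\left(\tfrac1v-\tfrac1\vbar\right)\chi_{\x\x},\qquad \thbar_\x\left(\tfrac1v-\tfrac1\vbar\right)_\x\chi_{\x\x}.
\]
We treat these exactly as $I_{31},I_{32},I_{33}$ in Lemma~\ref{lem:hu}. The interpolation $\|\chi_\x\|_{L^\infty}\le\|\chi_\x\|^{1/2}\|\chi_{\x\x}\|^{1/2}$ together with \eqref{est:apriori} gives $C(\e+\d_R\e_r+\d_0)(D_{\th_1}+D_{\th_2})$. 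We also use $|\th^{BL}_{\x\x}|\le C|\th^{BL}_\x|$ and the analogous bounds for the rarefaction and shock, and, for the contact wave, $|\th^C_{\x\x}|\le C\d_C(1+t)^{-1}e^{-C_1(\x+\s_-t)^2/(1+t)}$. With these, the $\phi$-weighted terms reduce to $G^{BL}+G^R+G^S+\d_CB^C+(\d_0+\d_R\e_r)D_{v_1}$. Since $B^C$ is controlled by Lemma~\ref{lem:L2CD}, it can be absorbed on the right-hand side, as in the statement.

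The remaining terms are handled as follows.
\begin{itemize}
\item The shift term $\dot X\th^S_\x\chi_{\x\x}$ is bounded by $C\d_S^2|\dot X|^2+\frac1{100}D_{\th_2}$.
\item The forcing term $Q_2\chi_{\x\x}$ is bounded by $C|Q_2|^2+\frac1{100}D_{\th_2}$.
\item For the pressure--work term $(pu_\x-\pbar\ubar_\x)$ we write
\[
pu_\x-\pbar\ubar_\x=p\,\psi_\x+(p-\pbar)\ubar_\x.
\]
Using $|p-\pbar|\le C|(\phi,\chi)|$ and Young's inequality, this gives $C D_{u_1}+C\int|\ubar_\x|^2|(\phi,\chi)|^2+\frac1{100}D_{\th_2}$. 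The middle term is again bounded by $G^{BL}+G^R+G^S+\d_CB^C$.
\item The viscous heating term is $\mu\left(\frac{u_\x^2}{v}-\frac{\ubar_\x^2}{\vbar}\right)=\mu\frac{\psi_\x(u_\x+\ubar_\x)}{v}-\mu\ubar_\x^2\frac{\phi}{v\vbar}$. The quadratic part $|\psi_\x|^2|\chi_{\x\x}|$ is estimated by $\|\psi_\x\|_{L^\infty}\|\psi_\x\|_{L^2}\|\chi_{\x\x}\|_{L^2}\le C\e(D_{u_1}+D_{u_2}+D_{\th_2})$. This is where $D_{u_2}$ enters the right-hand side. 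The other parts are bounded by $CD_{u_1}$ plus $G$-terms.
\end{itemize}

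The boundary term is the step needing care, as in \eqref{est:Hubd}. Since $\s_-<0$, the term $-\s_-|\chi_\x|^2/2$ has a good sign. Young's inequality then leaves only $C|\chi_t|^2|_{\x=0}$. Because $\th(t,0)=\th_-$ is fixed, $\chi_t|_{\x=0}=-\thbar_t|_{\x=0}=-(\th^C+\th^S)_t|_{\x=0}$. The contact part is bounded using \eqref{eq:contact}$_3$ and Lemma~\ref{lem:CD}, giving $|\th^C_t|_{\x=0}\le C\d_Ce^{-ct}$. The shock part is bounded by $C\d_S^2e^{-c\d_S((\s-\s_-)t+\b)}$, using $|\dot X|\le C\e$ as in Lemma~\ref{lem:bd}. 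Integrating in time gives $C(\d_C^2+e^{-c\d_S\b})$. The $\d_C^2$ piece is absorbed into the constant terms of Proposition~\ref{prop:ap}, or it is kept. Note that the statement lists only $e^{-c\d_S\b}$, so we expect $\d_C^2$ to be absorbed elsewhere.

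Summing all these bounds and absorbing the $\frac1{100}D_{\th_2}$ and small multiples of $D_{\th_2}$ gives the lemma. The main obstacle is keeping the coefficients of $D_{\th_2}$ small when the rarefaction is large. We handle this through $\|\vbar_\x\|_{L^\infty}+\|\thbar_\x\|_{L^\infty}\le C(\d_0+\d_R\e_r)$ with the choice $\e_r=\d_0^{1/4}$ from \eqref{def:er}.
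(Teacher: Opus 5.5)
Your proposal is correct and follows essentially the same route as the paper. You multiply \eqref{eq:NSFp}$_3$ by $-\chi_{\x\x}$ and control the boundary term using $\sigma_-<0$ and the decay of $(\th^C+\th^S)_t$ at $\x=0$. The shift, pressure--work, diffusion, viscous-heating and $Q_2$ terms are then handled by Young's inequality and interpolation as in Lemma~\ref{lem:hu}; your bound $C\e(D_{u_1}+D_{u_2}+D_{\th_2})$ for $\int|\psi_\x|^2|\chi_{\x\x}|$ is even slightly sharper than the paper's.

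Like the paper's own proof, your argument leaves extra terms $C\d_C^2$ and $C\int_0^T B^C\,dt$ that the printed statement omits. Do not claim they are absorbed ``as in the statement''; invoking Lemma~\ref{lem:L2CD} at this point would itself introduce $C\d_C\sup_t\|U-\Ubar\|_{L^2}^2+C\d_C$. Simply carry these terms along, as Lemma~\ref{lem:hu} does for $B^C$. The paper disposes of them only when assembling Proposition~\ref{prop:ap}.
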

Since the proof of this lemma is similar to that of Lemma \ref{lem:hu}, we provide a sketch of proof.
\begin{proof}
Throughout the proof of Lemma~\ref{lem:hth}, denote 
\[
\mathcal{D}_{\th_2} := \kappa \intRp \frac{1}{v}|\chi_{\x\x}|^2\,d\x \sim D_{\th_2}.
\]
Multiplying \eqref{eq:NSFp} by $-\chi_{\x\x}$ and integrating over $[0,t]\times \Rp$, we obtain 
\begin{align*}
	&\frac{R}{\g - 1}\intRp \frac{|\chi_\x|^2}{2}\,d\x + \frac{R}{\g -1}\int_0^t \left. \left(\chi_t \chi_\x - \s_- \frac{|\chi_\x|^2}{2} \right)\right|_{\x = 0}\,d\tau\\
	&\quad =\frac{R}{\g - 1}\intRp \frac{|\chi_\x(0,\x)|^2}{2}\,d\x-\frac{R}{\g -1}\int_0^t\dot{X}\intRp \th_\x^S \chi_{\x\x}\,d\x\,d\tau + \int_0^t\intRp (pu_\x - \pbar \ubar_\x)\chi_{\x\x}\,d\x\,d\tau \\
    &\qquad -\kappa \int_0^t\intRp \left(\frac{\th_\x}{v} - \frac{\thbar_\x}{\vbar}\right)_\x\chi_{\x\x}\,d\x\,d\tau  - \mu \int_0^t\intRp \left(\frac{u_\x^2}{v} - \frac{\ubar_\x^2}{\vbar}\right)\chi_{\x\x}\,d\x\,d\tau \\
    &\qquad + \int_0^t\intRp Q_2 \chi_{\x\x}\,d\x\,d\tau =:\frac{R}{\g - 1}\intRp \frac{|\chi_\x(0,\x)|^2}{2}\,d\x+ \sum_{i=1}^5 J_i. 
\end{align*}
First, for the boundary terms, by following the same argument as in \eqref{est:Hubd} together with 
\[
(\th^C - \th_m)_t \Big|_{\x = 0} \leq \left. \left(\s_- \th_\x + C \left(\frac{\th^C_\x}{\th^C}\right)_\x \right) \right|_{\x=0} \leq C\d_C\left. (1+t)^{-1/2}e^{-\frac{C_1(\x + \s_-t)^2}{1+t}}\right|_{\x=0} \leq C\d_C e^{-Ct},
\]
we obtain 
\[
\int_0^t \left. \left(\chi_t \chi_\x - \s_- \frac{|\chi_\x|^2}{2} \right)\right|_{\x = 0} \,d\tau \leq C\int_0^t |\chi_t|^2 \Big|_{\x=0}\,d\tau  \leq C(\d_C^2 + \d_S^3 e^{-C\d_S \b}).
\]
For $J_1$, we use Young's inequality to have 
\[
|J_1| \leq C\d_S^2 \int_0^t|\dot{X}|^2\,d\tau + \frac{1}{100}\int_0^t \mathcal{D}_{\th_2}\,d\tau.
\]
To estimate $J_2$, note that 
\[
|pu_\x - \pbar \ubar_\x| \leq |p\psi_\x| + |\ubar_\x (p - \pbar)| \leq C|\psi_\x| + C|\ubar_\x|(|\phi| + |\chi|). 
\]
Therefore, we have
\begin{align*}
	&|J_2| = \left|\int_0^t\intRp (pu_\x - \pbar \ubar_\x)\chi_{\x\x}\,d\x\,d\tau\right|\\
	&\, \, \leq \frac{1}{100}\int_0^t \mathcal{D}_{\th_2}d\tau + C\int_0^t  D_{u_1}d\tau + C(\d_0 + \e_r\d_R)\int_0^t(G^{BL} + G^R + G^S) d\tau  + C\d_C \int_0^t B^Cd\t.
\end{align*}
Now, for $J_3$, observe that 
\begin{align*}
	J_3 &= -\kappa \int_0^t \intRp \frac{1}{v}|\chi_{\x\x}|^2\,d\x \,d\t - \kappa \int_0^t \intRp \left(\frac{1}{v}\right)_\x  \chi_\x \chi_{\x\x}\,d\x\,d\t\\
    &\quad \quad - \kappa \int_0^t \intRp \thbar_{\x\x}\left(\frac{1}{v} - \frac{1}{\vbar}\right)\chi_{\x\x}\,d\x\,d\t  - \kappa \int_0^t \intRp \thbar_\x \left(\frac{1}{v} - \frac{1}{\vbar}\right)\chi_{\x\x}\,d\x\,d\t.
\end{align*}
In a similar way to the estimate of $I_3$ in \eqref{est:hI3}, we derive
\begin{align*}
	J_3 &\leq -\frac{1}{2}\int_0^t \mathcal{D}_{\th_2}\,d\tau + C\int_0^t (G^{BL} + G^R + G^S)\,d\tau + C(\e + \d_R \e_r + \d_0)\int_0^t(D_v + D_{\th_1})\,d\tau  \\
	&\qquad  +C\int_0^t B^C\,d\t.
\end{align*}
To estimate $J_4$, note that 
\begin{align*}
	\frac{u_\x^2}{v} - \frac{\ubar_\x^2}{\vbar} = \frac{1}{v}(u_\x^2 - \ubar_\x^2) + \ubar_\x^2\left(\frac{1}{v} -\frac{1}{\vbar}\right) \leq C(|\psi_\x|^2 + |\psi_\x||\ubar_\x|) + C\ubar_\x^2 |\phi|.
\end{align*}
This yields 
\begin{align*}
	|J_4| &\leq C \int_0^t\intRp (|\psi_\x|^2 + |\psi_\x||\ubar_\x|)|\chi_{\x\x}|\,d\x\,d\tau + C\int_0^t\intRp |\ubar_\x|^2 |\phi||\chi_{\x\x}|\,d\x\,d\tau\\
	&\leq \frac{1}{100}\int_0^t \mathcal{D}_{\th_2}\,d\tau +  C \int_0^t\intRp |\psi_\x|^2|\chi_{\x\x}|\,d\x\,d\tau +C\int_0^t D_{u_1}\,d\t\\ 
    &\qquad +C(\d_0 + \e_r \d_R)\int_0^t (G^{BL} + G^R + G^S)\,d\tau +C\d_C^3 \int_0^t B^C\,d\t. 
\end{align*}
On the other hand, using the interpolation inequality and Young's inequality, we find that 
\begin{align*}
\int_0^t\intRp |\psi_\x|^2|\chi_{\x\x}|\,d\x\,d\tau & \leq \int_0^t \|\psi_\x\|_{L^\infty} \|\psi_\x\|_{L^2} \|\chi_{\x\x}\|_{L^2}\,d\tau \leq \int_0^t\|\psi_\x\|_{L^2}^{3/2}\|\psi_{\x\x}\|_{L^2}^{1/2} \|\chi_{\x\x}\|_{L^2}\,d\tau\\
&
\leq C\e^4 \int_0^t D_{u_1}\,d\tau  + \int_0^t D_{u_2}\,d\tau + \frac{1}{100}\int_0^t \mathcal{D}_{\th_2}\,d\tau.
\end{align*}
Thus, we obtain 
\begin{align*}
	J_4 &\leq \frac{1}{50}\int_0^t \mathcal{D}_{\th_2}\,d\tau +  C(\d_0 + \e_r \d_R)\int_0^t (G^{BL} + G^R + G^S)\,d\tau+C \int_0^t D_{u_1}\,d\tau + C\int_0^t D_{{u_2}}\,d\tau \\
	&\qquad + C\d_C^3 \int_0^t B^C\,d\t. 
\end{align*}
Finally, applying Lemma \ref{lem:WInteraction}, we have  
\begin{align*}
   |J_5| \leq \int_{0}^{t}\intRp |Q_2| |\chi_{\x\x}|\,d\x\,d\tau \leq &C\int_{0}^{t}\intRp |Q_2|^2 \,d\x\,d\tau+ \frac{1}{100}\int_{0}^{t}\mathcal{D}_{\th_2}\,d\tau.
\end{align*}

By combining all these estimates, we obtain the desired estimates in Lemma~\ref{lem:hth}.
\end{proof}
\begin{lem}\label{lem:QiL2}
    Under the assumptions of Proposition~\ref{prop:ap}, there exists a positive constant $C$ such that, for any $0<t<T$,
    \[
    \int_0^t \intRp  \big(|Q_1|^2 + |Q_2|^2\big)\,d\x\,d\t \leq C\d_0 +C(\d_R\e_r) + C\d_0 \left(\frac{\d_R}{\e_r}\right)^2.
    \]
\end{lem}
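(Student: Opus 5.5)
The plan is to split $Q_i = Q_i^I + Q_i^R + Q_i^C$ as in \eqref{dec:Qi}. We use $|Q_1|^2+|Q_2|^2 \le 3\sum_{i=1}^2\big(|Q_i^I|^2+|Q_i^R|^2+|Q_i^C|^2\big)$ and bound the space--time $L^2$ norm of each of the three groups separately. Throughout, constants $C$ may depend on $\delta_R$, as agreed in the notation section.

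\emph{Interaction part.} In the proof of Lemma~\ref{lem:BIU} we already obtained the pointwise bound
\[
|Q_1^I|+|Q_2^I| \le C\sum_{i=1}^6 J_i + C|(u^C_\xi,u^C_{\xi\xi})||\bar v - v^C| + C\sum_{i\ne j}|(u_i)_\xi|\big(|(v_j)_\xi|+|(u_j)_\xi|\big),
\]
which combines \eqref{est:Q1} and \eqref{est:Q2}. Squaring this and integrating over $[0,t]\times\mathbb{R}_+$, we apply \eqref{sJiL2} to the $J_i$ terms. The remaining two groups of terms are covered by \eqref{sJiL22}, whose first sum, over $|u_i|^2_\xi(\cdots)^2$, is exactly the product squared. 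This gives a bound $C\delta_0 + C(\delta_R\varepsilon_r)^2 + C\delta_0(\delta_R/\varepsilon_r)^2$. Since $\varepsilon_r<1$ and $C$ may depend on $\delta_R$, we have $(\delta_R\varepsilon_r)^2 \le C\delta_R\varepsilon_r$.

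\emph{Contact part.} By \eqref{est:QC}, $\|Q_1^C\|_{L^2}^2 \le C\delta_C^2(1+t)^{-5/2}$ and $\|Q_2^C\|_{L^2}^2\le C\delta_C^2(1+t)^{-7/2}$. Both are integrable in time, so this part contributes at most $C\delta_C^2\le C\delta_0$.

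\emph{Rarefaction part.} This is the only place where the precise factor $\delta_R\varepsilon_r$, rather than a fractional power of it, must come out. By \eqref{est:QiR},
\[
|Q_1^R|^2+|Q_2^R|^2 \le C\big(|(u^R_{\xi\xi},\theta^R_{\xi\xi})|^2 + |(v^R_\xi,u^R_\xi,\theta^R_\xi)|^4\big).
\]
For the second-derivative term we take $q=8$ in \eqref{est:rare1} with $p=2$. This gives $\|(u^R_{\xi\xi},\theta^R_{\xi\xi})\|_{L^2}^2 \le C\min\{A,B\}$ with $A=\delta_R^2\varepsilon_r^3$ and $B=(1+t)^{-7/4}$. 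We then use $\min\{A,B\}\le A^{1/3}B^{2/3}$, which yields $C\delta_R^{2/3}\varepsilon_r(1+t)^{-7/6}$. This is integrable in time and bounded by $C\delta_R\varepsilon_r$.

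For the quartic term, \eqref{est:rare1} with $p=4$ gives $\|v^R_\xi\|_{L^4}^4\le C\min\{\delta_R^4\varepsilon_r^3,\ \delta_R(1+t)^{-3}\}$, and likewise for $u^R_\xi$ and $\theta^R_\xi$. The same interpolation with exponent $1/3$ gives $C\delta_R^{2}\varepsilon_r(1+t)^{-2}$, whose time integral is again $\le C\delta_R\varepsilon_r$.

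Summing the three contributions yields the claimed estimate. The main obstacle is this last step: choosing the interpolation weight so that the power of $\varepsilon_r$ is exactly one while the resulting time decay stays integrable. The weight $1/3$ with $q=8$ does both, with decay exponents $7/6>1$ and $2>1$. Everything else follows directly from Lemmas~\ref{lem:WInteraction} and \ref{lem:estL22}.
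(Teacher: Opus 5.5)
Your proof is correct and follows the paper's argument. You use the same splitting $Q_i=Q_i^I+Q_i^R+Q_i^C$, the same citations \eqref{est:Q1}, \eqref{est:Q2}, \eqref{sJiL2}, \eqref{sJiL22} for the interaction part, \eqref{est:QC} for the contact part, and interpolation between the two branches of \eqref{est:rare1} with $q=8$ for the rarefaction part. The one difference is your weight $1/3$ on the $\varepsilon_r$-branch, which gives exactly one power of $\varepsilon_r$ in both rarefaction terms. The paper's split $\|u^R_{\xi\xi}\|_{L^2}^{1/2}\|u^R_{\xi\xi}\|_{L^2}^{3/2}$ actually yields only $\varepsilon_r^{3/4}$ for the second-derivative term. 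That is harmless once $\varepsilon_r=\delta_0^{1/4}$ is chosen, but your version matches the stated $C\delta_R\varepsilon_r$ literally.
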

\begin{proof}
Recall from \eqref{dec:Qi} that 
\[
\sum_{i=1}^2 Q_i =\sum_{i=1}^2 (Q_i^I + Q_i^R + Q_i^C),
\]
where $Q_i^I, Q_i^R$, and $Q_i^C$ are defined in \eqref{def:QI}, \eqref{def:QR}, and \eqref{def:QC}.
    Using \eqref{est:Q1}, \eqref{est:Q2}, \eqref{sJiL2}, and \eqref{sJiL22}, we have 
    \begin{align*}
        \int_0^t \intRp  (|Q_1^I|^2 + |Q_2^I|^2)d\x d\t &\leq C\sum_{i=1}^6 \int_0^t \intRp |J_i|^2d\x d\t + C\int_0^t \intRp |(u_\x^C,u_{\x\x}^C)|^2|\vbar - v^C|^2d\x d\t \\
        &\quad +C\sum_{i \neq j} \int_0^t \intRp |(u_i)_\x|^2(|(v_j)_\x| + |(u_j)_\x|)^2\,d\x\,d\t\\
        &\leq C\d_0 + C(\d_R\e_r)^2 + C\d_0\left(\frac{\d_R}{\e_r}\right)^2.
    \end{align*}
    Next, from \eqref{est:QiR} and Lemma~\ref{lem:rarefaction}, we have
    \begin{align*}
        &\int_0^t \intRp \big(|Q_1^R|^2 + |Q_2^R|^2\big)\,d\x\, d\t \leq \int_0^t \big(\|u_{\x\x}^R\|_{L^2}^2 + \|u_{\x}^R\|_{L^4}^4\big)\,d\t\\
        &\quad \leq \int_0^t \big(\|u_{\x\x}^R\|_{L^2}^{1/2}\|u_{\x\x}^R\|_{L^2}^{3/2} + \|u_{\x}^R\|_{L^4}^2\|u_{\x}^R\|_{L^4}^2\big)\,d\t\\
        &\quad \leq C\d_R^2 \e_r((\d_R)^{1/2} + (\d_R)^{1/8})^{3/2}\int_0^t(1+\t)^{-21/16}\,d\t + C(\d_R^2\e_r^{3/2})\d_R^{1/2}\int_0^t(1+\t)^{-3/2}\,d\t\\
        &\quad \leq  C\d_R^2 \e_r((\d_R)^{1/2} + (\d_R)^{1/8})^{3/2} + C(\d_R^2\e_r^{3/2})\d_R^{1/2} \leq C\d_R^{5/2}\e_r.
    \end{align*}
    Here, we take $q=8$ in Lemma~\ref{lem:rarefaction}.
Finally, by \eqref{est:QC}, we have
    \begin{align*}
        \int_0^t \intRp \big(|Q_1^C|^2 + |Q_2^C|^2\big)\,d\t \leq C\d_C^2.
    \end{align*}
    Combining all these estimates and using the smallness of $\d_0$ and $\e_r$, we obtain the desired estimate in Lemma~\ref{lem:QiL2}.
\end{proof}
$\bullet$ Proof of Proposition~\ref{prop:ap}: Combining Lemma~\ref{lem:hv}--\ref{lem:QiL2}, and choosing $\nu>0$ sufficiently small in Lemma~\ref{lem:hv}, we obtain
\[
\begin{aligned}
&\sup_{t \in [0,T]}\|(\phi_\x,\psi_\x,\chi_\x)(t,\cdot)\|_{L^2}^2 + \int_0^T (D_{v_1} + D_{u_2} + D_{\th_2})\,dt\\
&\quad  \leq C\|(\psi, \phi_\x,\psi_\x,\chi_\x)(0,\cdot)\|_{L^2}^2 + C\int_0^T (\d_S |\dot{X}|^2 + G^{BL} + G^R + G^S + D_{u_1} + D_{\th_1})\,dt \\
&\quad \quad + C\int_0^T B^C\,dt + 
+ C\left(\d_0 + (\d_R\e_r) + \d_0\left(\frac{\d_R}{\e_r}\right)^2\right)+ Ce^{-C\d_S\b}.
\end{aligned}
\]
Applying Lemma~\ref{lem:L2CD} and then combining the above estimates with Lemma~\ref{lem:L2ap}, we obtain the desired estimate in Proposition~\ref{prop:ap} by choosing $\e_r:=\d_0^{1/4}$ as defined in \eqref{def:er}. \qed
\begin{appendix}
\section{Proof of Lemma~\ref{lem:L2CD}}\label{App:CD}
In this section, we provide a sketch of the proof of Lemma~\ref{lem:L2CD}. It suffices to establish the following estimate:
\begin{equation}\label{lem:HK}
    \begin{aligned}
&\int_{0}^{t} \frac{1}{(1+\tau)}\int_{\mathbb{R}_+} e^{-\frac{C_1|\xi+\sigma_-\tau|^2}{1+\tau}}\left(\left(\pbar \phi + \frac{R}{\gamma-1}\chi\right)^2 + \frac{(R\chi - \pbar \phi)^2}{2v} + \frac{\gamma\pbar}{2}\psi^2 \right) d\xi d\tau   \\
&\quad \le C \sup\limits_{\tau\in \left[0, T\right]}||(U-\overline{U})(\tau,\cdot)||^2_{L^2(\mathbb{R}_+)}+C\delta_S\int_{0}^{t}|\dot{X}(\tau)|^2d\tau+C\int_{0}^{t}(G^S+G^R+G^{BL})d\tau\\
&\qquad +C\int_{0}^{t}(D_{v_1}+D_{u_1}+D_{\theta_1}+D_{u_2} + D_{\th_2})d\tau+C.
\end{aligned}
\end{equation}
Indeed, there exists $\widetilde{C}>0$ such that 
\[
(\phi^2+ \psi^2+\chi^2) \leq \widetilde{C}\left(\left(\pbar \phi + \frac{R}{\gamma-1}\chi\right)^2 + \frac{(R\chi - \pbar \phi)^2}{2v} + \frac{\gamma\pbar}{2}\psi^2 \right).
\]
Combining this inequality with \eqref{lem:HK} directly yields the desired estimate in Lemma~\ref{lem:L2CD}. Thus, it remains to prove \eqref{lem:HK}. For simplicity, we introduce the following notation. Unless otherwise specified, all functions below are defined on $\Rp$, whereas $\mathcal{H}$ is defined on the whole line $\mathbb{R}$.
\begin{align*}
&P := \pbar \phi + \frac{R}{\gamma-1}\chi, && \widetilde{P} = R\chi - \pbar \phi, && \mathcal{H}(t,\xi):=\frac{1}{\sqrt{1+t}}e^{-\frac{b|\xi+\sigma_-t|^2}{1+t}},\\
&b:=\frac{C_1}{2}, && H_1(t,\xi):=\int_{-\infty}^{\xi}\mathcal{H}(t,y)\,dy, && H_2(t,\xi):=\int_{-\infty}^{\xi}|\mathcal{H}(t,y)|^2\,dy.
\end{align*}
Note that $H_1$ satisfies $(H_1)_t - \s_- (H_1)_\x = \frac{1}{4b}(H_1)_{\x\x}$, and 
\begin{equation} \label{properties H_2}
|H_1(t,\x)| \leq \int_{\mathbb{R}} |\mathcal{H}(t,\x)|\,d\x \leq \sqrt{\frac{\pi}{b}}, \quad |H_2(t, \xi)|\le \frac{C}{\sqrt{1+t}}, \quad |(H_2)_t-\sigma_-(H_2)_\xi|\le \frac{C}{(1+t)^{\frac{3}{2}}}.
\end{equation}
We follow the same calculations in \cite[Appendix D]{KVW-NSF} to have 
\[
\begin{aligned}
  &\frac{1}{4b}\int_0^t \intRp P^2\mathcal{H}^2\,d\x\,d\t = \intRp \frac{P(0,\x)^2 H_1(0,\x)^2}{2}\,d\x - \intRp \frac{P^2H_1^2}{2}\,d\x \\
  &\quad + \int_0^t\intRp  \dot{X} 
  \Big(\pbar v_\x^S + \frac{R\th_\x^S}{\gamma-1}\Big)PH_1^2\,d\x\,d\t -\frac{1}{4b}\int_0^t\intRp (P^2)_\x H_1 \mathcal{H}\,d\x\,d\t\\
  &\quad - \int_0^t \intRp (p-\pbar)\psi_\x P H_1^2\,d\x\,d\t \\
  &\quad + \int_0^t \intRp \left((\pbar_t - \s_-\pbar_\x)\phi - (p-\pbar)\ubar_\x + \mu\left(\frac{u_\x^2}{v} - \frac{\ubar_\x^2}{\vbar}\right) - Q_2 \right)PH_1^2\,d\x\,d\t \\
  &\quad +\int_0^t \intRp \kappa\left(\frac{\th_\x}{v} - \frac{\thbar_\x}{\vbar} \right)_\x PH_1^2\,d\x\,d\t - \int_0^t \left.\left( \s_- \frac{P^2H_1^2}{2} - \frac{1}{4b}P^2H_1\mathcal{H}\right)\right|_{\x = 0} =: \sum_{i=1}^8 Z_i.
\end{aligned}
\]
Since the estimates of $Z_1,\cdots,Z_4$ are essentially the same as those in \cite{KVW-NSF}, we omit the details and focus on the remaining terms. First, we decompose $Z_5$ as 
\[
\begin{aligned}
    Z_5 &= \int_0^t \intRp \left(\frac{\gamma \pbar}{2v}\phi^2 PH_1^2 - \frac{(\gamma-1)}{v}\phi P^2 H_1^2 \right)_t - \s_-\left(\frac{\gamma \pbar}{2v}\phi^2 PH_1^2 - \frac{(\gamma-1)}{v}\phi P^2 H_1^2 \right)_\x \,d\x\,d\t\\
    &- \int_0^t \intRp \left(\frac{\gamma \pbar}{v}\phi^2 P - \frac{2(\gamma-1)}{v}\phi P^2\right)H_1[(H_1)_t - \s_- (H_1)_\x]\,d\x\,d\t\\
    &- \int_0^t \intRp \left((\rd_t - \s_- \rd_\x)\frac{\gamma \pbar}{v}P\right)\frac{\phi^2 H_1^2}{2}\,d\x\,d\t - \int_0^t \intRp \left((\rd_t - \s_- \rd_\x)\frac{(\gamma-1)}{v}P^2\right)\phi H_1^2\,d\x\,d\t\\
    &-\int_0^t \intRp \dot{X}\frac{(\pbar \phi - R\chi)}{v}v_\x^S PH_1^2\,d\x\,d\t =: \sum_{i=1}^5 Z_{5,i}.
\end{aligned}
\]
Since the estimates of $Z_{5,2}, Z_{5,5}$ are essentially same as in \cite{KVW-NSF}, we omit the details and focus on the remaining terms. For $Z_{5,1}$, we have 
\[
|Z_{5,1}| \leq  C\sup_{\t \in [0,T]} \|(\phi,\chi)\|_{L^2}^2 + \int_0^t |(\phi,\chi)|^2\,d\t \leq C\sup_{\t \in [0,T]} \|(\phi,\chi)\|_{L^2}^2 + C(\d_C^2 + e^{-C\d_S\b}).
\]
We further decompose $Z_{5,3}$ as
\[
\begin{aligned}
    Z_{5,3} &= -\int_0^t \intRp \dot{X}\left(v_\x^S \pbar + \frac{R\th_\x^S}{\gamma -1}\right)\frac{\gamma \pbar \phi^2 H_1^2}{2v}d\x d\t + \int_0^t \intRp \kappa \left(\frac{\th_\x}{v} - \frac{\thbar_\x}{\vbar}\right)\left(\frac{\gamma \pbar \phi^2 H_1^2}{2v} \right)_\x d\x d\t\\
    &\quad -\int_0^t \intRp \left(-(p-\pbar)\psi_\x - (p-\pbar)\ubar_\x + \mu\left(\frac{u^2_\x}{v} - \frac{\ubar_\x^2}{\vbar}\right) -Q_2\right)\frac{\gamma \pbar \phi^2 H_1^2}{2v}\,d\x\,d\t\\
    &\quad - \int_0^t \intRp \left((\pbar_t - \s_- \pbar_\x)\frac{\gamma \pbar \phi^3 H_1^2}{2v} + \left(\left(\frac{\pbar}{v}\right)_t -\s_- \left(\frac{\pbar}{v}\right)_\x \right)\frac{\gamma P \phi^2 H_1^2}{2} \right)\,d\x\,d\t\\
    &\quad + \int_0^t \kappa \left(\frac{\th_\x}{v} - \frac{\thbar_\x}{\vbar}\right)\left(\frac{\gamma \pbar \phi^2 H_1^2}{2v} \right)\Big|_{\x=0}\,d\t.
\end{aligned}
\]
Except for the term containing $Q_2$ and the boundary term, the remaining terms can be treated similarly to those in \cite{KVW-NSF}. Hence, we only present the estimates for these two terms and omit the details of the others. Following the arguments in estimating $B^I_1$ and $B^I_2$ in Lemma~\ref{lem:BIU}, and using \eqref{def:er}, we obtain
\[
\int_0^t \intRp Q_2 \frac{\gamma \pbar \phi^2 H_1^2}{2v}\,d\x\,d\t \leq C\e \int_0^t \intRp Q_2 |\phi|\,d\x\,d\t \leq C\int_0^t D_{v_1}\,d\t + C.
\]
Moreover, similar to the estimate of $J_2^{bd}$ and $J_3^{bd}$ in Lemma~\ref{lem:bd}, we have
\begin{equation}\label{est:CDbd2}
\begin{aligned}
   &\int_0^t \kappa \left.\left(\frac{\th_\x}{v} - \frac{\thbar_\x}{\vbar}\right)\left(\frac{\gamma \pbar \phi^2 H_1^2}{2v} \right)\right|_{\x=0}\,d\t \leq C\int_0^t \left|\frac{\th_\x}{v} - \frac{\thbar_\x}{\vbar}\right||\phi|\Big|_{\x=0}\,d\t \\
&\qquad \leq C(\d_C + \d_S^{1/3}e^{-C\d_S \b}) + C\e^2 \int_0^t D_{\th_2}\,d\t.
\end{aligned}
\end{equation}
Since $Z_{5,4}$ and $Z_6$ can be estimated similarly to $Z_{5,3}$, we omit the details. To estimate $Z_7$, integration by parts yields
\[
Z_7 = -\kappa \int_0^t \left.\left(\frac{\th_\x}{v} - \frac{\thbar_\x}{\vbar}\right)PH_1^2 \right|_{\x=0} \,d\t - \kappa \int_0^t \intRp \left(\frac{\th_\x}{v} - \frac{\thbar_\x}{\vbar}\right)(PH_1^2)_\x\,d\x\,d\t =: Z_{7,1} + Z_{7,2}. 
\]
The term $Z_{7,1}$ can be estimated similarly to \eqref{est:CDbd2}, so we omit the details. Next, following the arguments in \cite{KVW-NSF}, for any sufficiently small $k_1>0$, we obtain
\[
\begin{aligned}
    |Z_{7,2}| &\leq (k_1 + C\d_R\e_r + C\d_0)\int_0^t \intRp |\mathcal{H}|^2|(\phi,\chi)|^2\,d\x\,d\t + C_{k_1}\int_0^t \intRp D_{\th_1}\,d\t\\
    &\quad +C\int_0^t \intRp (G^{BL} + G^R + G^S + D_{v_1})\,d\x\,d\t.
\end{aligned}
\]
Finally, $Z_8$ can be estimated similarly to $Z_{5,1}$, and hence we omit the details. Therefore, we obtain the desired estimate for $\int_0^t \intRp P^2\mathcal{H}^2\,d\x\,d\t$. It remains to estimate the terms involving $\widetilde{P}$ and $\psi$. For this purpose, we use the following identity (see \cite{KVW-NSF} for the details):
\begin{align*}
\begin{aligned}
&\int_{0}^{t}\int_{\mathbb{R}_+} \frac{\mathcal{H}^2}{2}\left(\frac{|\widetilde{P}|^2}{v}+\gamma\bar{p}\psi^2\right)d\xi d\tau = \int_{\mathbb{R}_+}\psi H_2\widetilde{P}\Bigg|_{\tau=0}^{\tau=t}d\xi -\int_{0}^{t}\int_{\mathbb{R}_+}\frac{H_2|\widetilde{P}|^2}{2v^2}v_\xi d\xi d\tau\\
&-\int_{0}^{t}\int_{\mathbb{R}_+}\psi\widetilde{P}\left[(H_2)_t-\sigma_-(H_2)_\xi\right]d\xi d\tau-\frac{\gamma}{2}\int_{0}^{t}\int_{\mathbb{R}_+}\bar{p}_\xi \psi^2H_2d\xi d\tau\\
&+\int_{0}^{t}\int_{\mathbb{R}_+}\dot{X}(t)H_2\left(\bar{p}\psi v^S_\xi-R(\gamma-1)\psi\theta^S_\xi-\widetilde{P}u^S_\xi\right)d\xi d\tau+\int_{0}^{t}\int_{\mathbb{R}_+}\psi H_2\phi(\bar{p}_t-\sigma_-\bar{p}_\xi)d\xi d\tau\\
&+(\gamma-1)\int_{0}^{t}\int_{\mathbb{R}_+}\psi H_2\left[(p-\pbar)\psi_\x + (p-\pbar)\ubar_\x-\kappa\left(\frac{\theta_\xi}{v}-\frac{\bar{\theta}_\xi}{\bar{v}}\right)_\xi-\mu\left(\frac{u_\xi^2}{v}-\frac{\bar{u}_\xi^2}{\bar{v}}\right)+Q_2\right]d\xi d\tau\\
&-\int_{0}^{t}\int_{\mathbb{R}_+}H_2\widetilde{P}\left[\mu\left(\frac{u_\xi}{v}-\frac{\bar{u}_\xi}{\bar{v}}\right)_\xi-Q_1\right]d\xi d\tau -\int_{0}^{t}\left(\frac{H_2|\widetilde{P}|^2}{2v}-\s_-\psi H_2 \widetilde{P}+\frac{\gamma\psi^2H_2\bar{p}}{2}\right)\Bigg|_{\xi=0}d\tau\\
&=:\sum\limits_{i=1}^{9}\mathcal{Z}_i.
\end{aligned}    
\end{align*}
Since the estimates of $\mathcal{Z}_1,\cdots, \mathcal{Z}_9$, except for $\mathcal{Z}_4$ are similar to those for $Z_i$, we omit the details and only provide the estimate for $\mathcal{Z}_4$. We remark that, by estimating $\mathcal{Z}_8$ in the same manner as $Z_7$, one obtains the term $D_{u_2}$ in \eqref{lem:HK}. First, note that 
\[
\begin{aligned}
   |\mathcal{Z}_4| &\leq C\int_0^t \intRp (|v^{BL}_\x| + |v^R_\x| + |v^C_\x| + |v^S_\x|)\psi^2 H_2\,d\x\,d\t\\ 
   &\leq C\int_0^t \intRp (|v^{BL}_\x|+|v^R_\x|)\psi^2 H_2\,d\x\,d\t + C\int_0^t G^S\,d\t + C\d_C\int_0^t \intRp \mathcal{H}^2\psi^2\,d\x\,d\t.
\end{aligned}
\]
Therefore, it suffices to estimate the first term on the right-hand side of the above inequality. Using Young's inequality, \eqref{properties H_2}, \eqref{est:KPoin}, and Lemma~\ref{lem:BL}, we obtain
\[
\begin{aligned}
    &\int_0^t \intRp |v_\x^{BL}|\psi^2 H_2\,d\x\,d\t \leq C\int_0^t \intRp \psi^2\big(|v_\x^{BL}|^{5/3} + |H_2|^{5/2} \big)\,d\x\,d\t\\
    &\leq C \int_{0}^{t} \intRp   \left(|\psi(0,\x)|+\sqrt{\xi}||\psi_\xi||_{L^2}\right)^2 \left(\frac{\delta_{BL}}{1+\delta_{BL}\xi}\right)^{\frac{10}{3}} d\xi d\tau+C\int_{0}^{t}\frac{1}{(1+\tau)^{\frac{5}{4}}} \int_{\mathbb{R}_+} \psi^2 d\xi d\tau\\
    &\leq C \int_{0}^{t} (|\psi(0,\x)|^2 + \|\psi_\xi\|_{L^2}^2) \int_{\mathbb{R}_+} \frac{\delta_{BL}^{\frac{7}{3}}(1+\d_{BL}\xi)}{\left(1+\delta_{BL}\xi\right)^{\frac{10}{3}}} d\xi d\tau+ C\varepsilon^2 \\
    &\leq C\d_{BL}^{4/3}\left(\int_0^t D_{u_1}\,d\t + \d_C^2 + e^{-C\d_S\b} \right) + C\e^2.
\end{aligned}
\]
On the other hand, using \eqref{properties H_2}, H\"older's inequality, \eqref{est:apriori} and Lemma~\ref{lem:rarefaction1}, we obtain
\[
\begin{aligned}
    &\int_0^t \intRp |v_\x^R|\psi^2 H_2\,d\x\,d\t \leq \int_0^t  \frac{C}{\sqrt{1+\t}}\|v_\x^R\|_{L^4}\|\psi^2\|_{L^{4/3}}\,d\t\\
    &\leq \int_0^t  \frac{C}{\sqrt{1+\t}}\frac{\d_R^{1/4}}{(1+\t)^{3/4}}\left(\intRp |\psi|^{8/3}\,d\x\right)^{3/4}\,d\t \leq \int_0^t  \frac{C\d_R^{1/4}}{(1+\t)^{5/4}}\|\psi\|_{L^\infty}^{1/2}\|\psi\|_{L^2}^{3/2}\,d\t\\
    &\leq C\d_R^{1/4}\e^2.
\end{aligned}
\]
Therefore, combining the above estimates, we obtain the desired estimate for $\mathcal{Z}_4$. This completes the proof of \eqref{lem:HK}, and hence of Lemma~\ref{lem:L2CD}.

\section{Proof of Lemma~\ref{lem:estL12}}\label{App:wint}
In this section, we provide a proof of Lemma~\ref{lem:estL12}.

$\bullet$ Proof of \eqref{est:ucont}: First, observe that 
\[
|u_\x^C||\vbar - v^C|= |u_\x^C|\big(|v^{BL} - v_*|+|v^R - v_m| + |v^S - v^*|\big).
\]
Following the calculations in Lemma~\ref{lem:win}, and using Lemma~\ref{lem:BL}--Lemma~\ref{lem:CD}, we obtain 
\begin{align*}
    &\begin{aligned}
        	\intRp |u_\x^C||v^{BL} - v_*|\,d\x &= \left(\int_0^{-\frac{\s_-t}{2}}+ \int_{-\frac{\s_-t}{2}}^{\infty}\right)  |u_\x^C||v^{BL} - v_*|\,d\x \\
            &\leq C\d_{BL}\d_C \left(e^{-Ct} + \frac{\ln (1+\d_{BL}t)}{1+t}\right),
    \end{aligned}\\
    &\begin{aligned}
        &\intRp |u_\x^C||v^{R} - v_m|\,d\x \\
        &\quad  \leq \left(\int_{0\le\xi \leq \frac{1}{2}\lambda_1(u_m,\theta_m)(1+t)-\s_-t} + \int_{\xi \ge \frac{1}{2}\lambda_1(u_m,\theta_m)(1+t)-\s_-t}\right) |u_\x^C||v^{R} - v_m|\,d\x\\
	&\quad \leq   C\d_C \d_R e^{-Ct} + C\delta_C\frac{\delta_R}{\e_r}  e^{-C\e_r t}.
    \end{aligned}
\end{align*}
From H\"older's inequality, we find that
\[
\begin{aligned}
    \intRp |u_\x^C||v^{S} - v^*|\,d\x &\leq \left(\intRp |u_\x^C|\, d\x\right)^{3/4} \left(\intRp |u_\x^C||v^S - v^*|^4 \,d\x\right)^{1/4}\\
    &\leq C\d_C^{3/4}\left(\intRp |u_\x^C||v^S - v^*|^4 \,d\x\right)^{1/4}.
\end{aligned}
\]
Using Lemma~\ref{lem:VS}, we obtain 
\[
\intRp |u_\x^C||v^S - v^*|^4 \,d\x = \left(\int_0^{\alpha t} + \int_{\alpha t}^\infty\right)|u_\x^C||v^S - v^*|^4 \,d\x \leq C\d_C\d_S^3e^{-C\d_St} + C\d_C\d_S^4e^{-Ct},
\]
where $\alpha = \frac{\s - 2\s_-}{2}$. Therefore, we have
\[
\intRp |u_\x^C||v^S - v^*|\,d\x \leq C\d_C\big(\d_S^{3/4}e^{-C\d_St} +\d_Se^{-Ct} \big).
\]
Thus, we obtain 
\begin{equation}\label{est:uCint}
\begin{aligned}
&\intRp |u_\x^C||\vbar - v^C|\,d\x \\
&\quad \leq C\d_C\left[\delta_{BL} \left(e^{-Ct} + \frac{\ln (1+\d_{BL}t)}{(1+t)}\right) + (\delta_R + \delta_S)e^{-Ct} + \d_S^{3/4}e^{-C\d_S t} + \frac{\delta_R}{\e_r}  e^{-C\e_r t}  \right].
\end{aligned}
\end{equation}
Following the same argument in deriving \eqref{est:uCint}, we have 
\begin{align*}
&\intRp |u_{\x \x} ^C||\vbar - v^C|d\x\\
&\quad \leq C\d_C\left[\delta_{BL} \left(e^{-Ct} + \frac{\ln (1+\d_{BL}t)}{(1+t)}\right) + (\delta_R + \delta_S)e^{-Ct} + \d_S^{3/4}e^{-C\d_S t} + \frac{\delta_R}{\e_r}  e^{-C\e_r t}  \right].
\end{align*}
$\bullet$ Proof of \eqref{est:wder}: Let $b := \frac{\s - \s_-}{2}>0$ and $\alpha = \frac{\s - 2\s_-}{2}$. From Lemma \ref{lem:BL}--\ref{lem:VS} and \eqref{est:BLR}, we obtain the followings:
\[
\begin{aligned}
	\intRp |v_\x^{BL}||v_\x^R|\,d\x \leq & C\frac{(\d_R\e_r)^{1/8}}{(1+t)^{7/8}} \delta_{BL}^2\intRp \frac{1}{(1+\delta_{BL}\x)^2}\,d\x \leq C\delta_{BL}\frac{(\d_R\e_r)^{1/8}}{(1+t)^{7/8}},\\
	\intRp |v_\x^{BL}||v_\x^S|\,d\x
	 \leq & C{\delta_{BL}^2}\d_S^2 \left(\int_0^{bt} + \int_{bt}^{\infty}\right) \frac{e^{-C\d_S|\x - (\s - \s_-)t - X(t) - \beta|}}{(1+{\delta_{BL}}\x)^2}d\x  \\
	  \leq & C\delta_{BL} \delta_S\left(\delta_{BL} e^{-C\d_S t} + \frac{\d_S}{1+\delta_{BL}t}\right),	
\end{aligned}
\]
\begin{align*}
	\intRp |v_\x^R||v_\x^S|\,d\x &\leq  \left(\int_{0\le\xi \leq \frac{1}{2}\lambda_1(u_m,\theta_m)(1+t)-\s_-t} + \int_{\xi \geq \frac{1}{2}\lambda_1(u_m,\theta_m)(1+t)-\s_-t}\right) |v_\x^R||v_\x^S|\,d\x  \\
	& \leq C(\e_r\delta_R) \delta_S e^{-C\delta_St} + C\delta_S^2\frac{(\e_r\delta_R)}{\e_r}e^{-C\e_rt}.
\end{align*}
Moreover, by calculations similar to those in \eqref{est:Ji}, we obtain
\[
\begin{split}
&\begin{aligned}
	&\intRp |v_\x^{BL}|\big(|v_\x^C| + |u_\x^C|\big)\,d\x \leq \left(\int_0^{-\frac{\s_-t}{2}} + \int_{-\frac{\s_-t}{2}}^{\infty}\right)  |v_\x^{BL}|\big(|v_\x^C| + |u_\x^C|\big) \,d\x\\
	&\phantom{\intRp |v_\x^{BL}|\big(|v_\x^C| + |u_\x^C|\big)\,d\x} \leq C\delta_{BL}\d_C \left(e^{-Ct} + \frac{1}{1+\delta_{BL}t}\right),
\end{aligned}\\
&\begin{aligned}
	&\intRp |v_\x^R|\big(|v_\x^C| + |u_\x^C|\big)\,d\x\\
    &\quad \leq \left(\int_{0\le\xi \leq \frac{1}{2}\lambda_1(u_m,\theta_m)(1+t)-\s_-t} + \int_{\xi \geq \frac{1}{2}\lambda_1(u_m,\theta_m)(1+t)-\s_-t}\right)  |v_\x^R|\big(|v_\x^C| + |u_\x^C|\big)\,d\x\\
	&\quad \leq C\delta_C\left((\e_r\d_R) e^{-Ct} + \frac{(\e_r\d_R)}{\e_r} e^{-C\e_rt}\right),
\end{aligned} 
\end{split}
\]
\[
	\intRp |v_\x^S|\big(|v_\x^C| + |u_\x^C|\big)\,d\x \leq \left(\int_0^{\alpha t} + \int_{\alpha t}^{\infty}\right) |v_\x^S|\big(|v_\x^C| + |u_\x^C|\big) \,d\x \leq C \d_S \d_C \left(e^{-C\delta_St} + e^{-Ct}\right).
\]
$\bullet$ Proof of \eqref{est:432nd}: Combining \eqref{est:ucont} and \eqref{est:wder}, we obtain
\begin{align*}
    &\int_0^t \| |(u_\x^C,u_{\x\x}^C)||\vbar - v^C|\|_{L^1}^{4/3} + \sum_{i\neq j}\||(v_i)_\x|(|(v_j)_\x|+|(u_j)_\x|)\|_{L^1}^{4/3}\,d\t\\
&\quad \leq C\d_C^{4/3}\left(\d_{BL}^{4/3} +\d_R^{4/3}+\d_S^{4/3} + 1 + \left(\frac{\d_R}{\e_r}\right)^{4/3}\frac{1}{\e_r} \right)\\
&\qquad + C\d_{BL}^{4/3}(\d_R\e_r)^{1/6}+C\big(\d_{BL}^{4/3}\d_C^{4/3} + \d_{BL}^{1/3}\d_C^{4/3}\big) + C\big(\d_{BL}^{8/3}\d_S^{1/3} + \d_{BL}^{1/3}\d_S^{8/3} \big) \\
&\qquad +C\left(\d_C^{4/3}(\d_R\e_r)^{4/3}+ \d_C^{4/3}\left(\frac{(\e_r\d_R)}{\e_r}\right)^{4/3}\frac{1}{\e_r} \right) \\
&\qquad  + C\left((\d_R\e_r)^{4/3}\d_S^{1/3} + \d_S^{8/3}\left(\frac{(\e_r\d_R)}{\e_r}\right)^{4/3}\frac{1}{\e_r}\right)+C\d_S^{4/3}\d_C^{4/3}.
\end{align*}
Thanks to the smallness of $\d_0$ and $\e_r$, we obtain
\begin{align*}
    &\int_0^t \| |(u_\x^C,u_{\x\x}^C)||\vbar - v^C|\|_{L^1}^{4/3} + \sum_{i\neq j}\||(v_i)_\x|(|(v_j)_\x|+|(u_j)_\x|)\|_{L^1}^{4/3}\,d\t\\
&\qquad \leq C\d_0\big(1+(\d_R \e_r)^{1/6}\big) + C\d_0 \left(\frac{\d_R}{\e_r}\right)^{4/3}\frac{1}{\e_r}.
\end{align*}
\qed

\section{Proof of Global Existence}\label{APP:conti}
   In this section, we prove the global existence of \eqref{eq:NSF}. Assume that the constants $\d_0,\e,c,C_0$, and $\b$ from Proposition~\ref{prop:ap} are given. 
   First, we choose smooth functions $\hat{v}$, $\hat{u}$, and $\hat{\theta}$ defined on $\bbr_+$ such that
	\begin{align}\label{est:vhat}
    \begin{aligned}
	&\|(\hat{v}-v^*, \hat{u}-u^*, \hat{\theta}-\th^*)\|_{L^2(0, \beta)}+\|(\hat{v}-v_+, \hat{u}-u_+, \hat{\theta}-\theta_+)\|_{L^2( \beta, \infty)}\\
	&\quad  +\|(\hat{v}_\xi, \hat{u}_\xi, \hat{\theta}_\xi)\|_{L^2(\mathbb{R}_+)}\le \hat{C}_0\delta_0,
    \end{aligned}
	\end{align}
	for some constant $\hat{C}_0>0$. Define the smooth function $\underline{U} := (\underline{v}, \underline{u}, \underline{\theta})$ as
    \[
    \underline{U}(\x) := U^R(0,\x) + U^C(0,\x) + \hat{U}(\x) - U_m - U^*.
    \]
    Then we have
    \begin{equation}\label{est:bars}
    \begin{aligned}
        &\|\underline{v}(\cdot)-\bar{v}(0,\cdot)\|_{H^1(\mathbb{R}_+)} = \| (v^{BL}(\cdot)+v^{S}(\cdot - \b)-\hat{v}(\cdot)-v_\ast)\|_{H^1(\Rp)}\\
        &\quad \leq \|v^{BL} - v_*\|_{L^2(\Rp)} + \|
        \hat{v} - v^C(0,\cdot) \|_{L^2(0,\b)} + \|v^S(\cdot - \b) - v^*\|_{L^2(0,\b)}\\
        &\qquad + \|\hat{v} - v_+\|_{L^2(\b,\infty)} + \|v^S(\cdot - \b) - v_+\|_{L^2(\b,\infty)}+ \|(\hat{v}_\x,v^{BL}_\x)\|_{L^2(\Rp)} + \|v^S_\x\|_{L^2(\mathbb{R})} \\
        &\quad \leq \hat{C}_1\sqrt{\d_0},
    \end{aligned}
    \end{equation}
    for some constant $\hat{C}_1>0$. By the same argument, after retaking $\hat{C}_1$ if necessary, we obtain
    \[
    \|(\underline{v}-\bar{v},\underline{u} - \bar{u}, \underline{\th} - \bar{\th})(0,\cdot)\|_{H^1(\mathbb{R}_+)} \leq \hat{C}_1\sqrt{\d_0}.
    \]
For a given $\e_0>0$, we choose $\d_0,\d_S,$ sufficiently small and $\b>0$ sufficiently large such that 
\[
\frac{\e}{4(C_0+1)}-\hat{C}_0\d_0 - \hat{C}_1\sqrt{\d_0} - \d_0^{1/40}-e^{-c\d_S\b}>0
\]
holds. We now introduce two constants, $\e_0$ and $\e_*$, defined by 
\[
\e_0=\frac{\e}{4(C_0+1)},\qquad
\e_*:=\frac{\e}{2(C_0+1)}-\hat{C}_1\sqrt{\delta_0}-\d_0^{1/40} - e^{-c\delta_S\beta},
\]
where 
$\hat{C}_0$ is the constant in \eqref{est:vhat}. 
Now, consider any initial data $(v_0,u_0,\theta_0)$ satisfying \eqref{inismall}, that is,
\begin{equation}\label{app:small}
\begin{aligned}
   &\|U_0-U^* - (U^R(0,\cdot)-U_m)\|_{L^2(0,\beta)} +  \|U_0 - U_+ - (U^R(0,\cdot) - U_m)\|_{L^2(\beta,\infty)}\\
   &\qquad + \|(U_0 - U^R(0,\cdot))_\x\|_{L^2(\Rp)} < \e_0. 
\end{aligned}
\end{equation}
From \eqref{est:vhat} and \eqref{app:small}, we obtain
\begin{equation}\label{app:small2}
	\begin{aligned}
		&\|U_0-\underline{U}\|_{H^1(\R_+)}\\
		&\quad \le \|(U_0 - U^* - (U^R(0,\cdot) - U_m)\|_{L^2(0,\beta)}+ \|U^C(0,\cdot) - U^*\|_{L^2(0,\beta)}+ \|\hat{U} - U^*\|_{L^2(0,\beta)}\\
        &\qquad + \|U_0 - U_+ - (U^R(0,\cdot) - U_m)\|_{L^2(\beta,\infty)} +\|U^C(0,\cdot) - U^*\|_{L^2(\beta,\infty)}+ \|\hat{U} - U_+\|_{L^2(\beta,\infty)} \\
		&\quad \quad + \|(U_0 - U^R(0,\cdot))_\x\|_{L^2(\Rp)} + \|(\hat{U}_\x,U^C_\x)(0,\cdot)\|_{L^2(\Rp)}\le  \e_0+\hat{C}_0\delta_0 \leq  \e_*.
	\end{aligned}
    \end{equation}
	In particular, Sobolev embedding implies $\|(v_0-\underline{v},\th_0 - \underline{\th})\|_{L^\infty(\R_+)}<C\e_*$. Thus, by choosing $\e_*$ sufficiently small, we obtain
	\[\frac{\min\{v_-,v_+\}}{2}<v_0(x)<2\max\{v_-,v_+\}, \,\, \frac{\min\{\th_-,\th_+\}}{2} < \th_0(x) < 2\max\{\th_-,\th_+\}, \,\, \x \in \Rp.\]
	Hence, Proposition \ref{prop:local} implies that there exists $T_0>0$ such that the system \eqref{eq:NSFL} admits a unique solution $(v,u,\theta)$ on $[0,T_0]$ satisfying
	\begin{equation}\label{vvunder}
		\|v-\underline{v}\|_{L^\infty(0,T_0;H^1(\R_+))}+\|u-\underline{u}\|_{L^\infty(0,T_0;H^1(\R_+))} + \|\theta-\underline{\theta}\|_{L^\infty(0,T_0;H^1(\R_+))}\le \frac{\e}{2},
	\end{equation}
	and for any $t \in [0,T_0]$ and $\x \in \Rp$,
	\[\frac{\min\{v_-,v_+\}}{4}<v(t,x)<4\max\{v_-,v_+\},\,\, \frac{\min\{\th_-,\th_+\}}{4}<\th(t,x)<4\max\{\th_-,\th_+\}.\]
 On the other hand, observe that
 \begin{equation}\label{tsmall}
 \begin{aligned}
     &\|(\underbar{U} - \overline{U}
     )(t,\cdot)\|_{H^1(\mathbb{R}_+)}\\
     &\quad \leq \|U^{BL} - U_*\|_{L^2(\Rp)} + \|U^R(0,\cdot) - U^R(t,\cdot)\|_{L^2(\Rp)}+ \|U^C(0,\cdot) - U^C(t,\cdot)\|_{L^2(\Rp)}\\
     &\qquad  +\|\hat{U} - U^*\|_{L^2(0,\b)} + \|U^S(\cdot - (\s - \s_-)t - X(t) - \b) - U^*\|_{L^2(0,\b)}\\
     &\qquad +\|\hat{U} - U_+\|_{L^2(\b,\infty)}  + \|U^S(\cdot - (\s - \s_-)t - X(t) - \b) - U_+\|_{L^2(\b,\infty)}\\
     &\qquad +\|(\hat{U}_\x,U^{BL}_\x,U^R_\x(0,\cdot),U^R_\x(t,\cdot),U^C_\x(0,\cdot),U^C_\x(t,\cdot))\|_{L^2(\Rp)}+\|U^S_\x\|_{L^2(\mathbb{R})}\\
     &\quad \leq C\sqrt{t}(\sqrt{\ln(1+t)}+(1+t)^{1/4})+C\sqrt{\delta_0}(1+\sqrt{|X(t)|}+\sqrt{t})\\
     &\quad \le C\sqrt{t}(\sqrt{\ln(1+t)}+(1+t)^{1/4})+C\sqrt{\delta_0}(1+\sqrt{t}).
 \end{aligned}
 \end{equation}
    Here, the first term of the last inequality is obtained as follows: using H\"older's inequality, \eqref{eq:R}$_1$, and Lemma~\ref{lem:rarefaction}, we have
\begin{align*}
&\|v^R(t,\cdot)-v^R(0,\cdot)\|^2_{L^2(\mathbb{R}_+)}=\int_{0}^{\infty} \left|\int_{0}^{t}v^R_\tau(\tau,\xi)d\tau\right|^2 d\xi \leq  \int_{0}^{\infty}t\int_{0}^{t}|v^R_{\tau}(\tau,\xi)|^2d\tau d\xi\\
&\quad \le Ct \int_{0}^{t}\int_{0}^{\infty} \left(|v^R_{\xi}(\tau,\xi)|^2+|u^R_{\xi}(\tau,\xi)|^2\right) d\xi d\tau \le Ct \int_{0}^{t} \frac{1}{\tau+1}d\tau \leq Ct\ln(1+t),
\end{align*}
and using \eqref{eq:VCD}$_1$ and Lemma~\ref{lem:CD},
\[
\begin{aligned}
    &\|v^{C}(t,\cdot)-v^{C}(0,\cdot)\|^2_{L^2(\mathbb{R}_+)}=\int_{0}^{\infty} \left|\int_{0}^{t}v^{C}_\tau(\tau,\xi)d\tau\right|^2 d\xi\\
    &\quad \leq Ct \int_{0}^{t}\int_{0}^{\infty} \left(|v^C_{\xi}(\tau,\xi)|^2+|u^C_{\xi}(\tau,\xi)|^2\right) d\xi d\tau \leq Ct\int_0^t\frac{\d_C^2}{1+\t}\intRp e^{-\frac{C_1(\x + \s_- \t)^2}{1+\t}}\,d\x\,d\t\\
    &\quad \leq C\d_C^2t\sqrt{1+t}.
\end{aligned}
\]
Similarly, we can obtain the same estimates for $u^R$, $\theta^R$,$u^C$, and $\th^C$. For the details of the remaining terms in \eqref{tsmall}, we refer to \cite{KVW23}.

	Therefore, if we choose $\delta_0$ and $T_1\in(0,T_0)$ small enough so that $C\sqrt{T_1}(\sqrt{\ln(1+T_1)}+(1+T_1)^{1/4})+C\sqrt{\delta}(1+\sqrt{T_1})<\frac{\e}{2}$, we have
	\[\|(\underline{v}-\bar{v}, \underline{u}-\bar{u}, \underline{\theta}-\bar{\theta})(t,\cdot)\|_{L^\infty(0,T_1;H^1(\R_+))}\le \frac{\e}{2}.\]
	Combining with the estimate \eqref{vvunder}, we obtain
	\[\|(v-\bar{v},u-\bar{u}, \theta-\bar{\theta})(t,\cdot)\|_{L^\infty(0,T_1;H^1(\R_+))}\le \e.\]
	Moreover, since it holds that 
	\((v-\underline{v},u-\underline{u}, \theta-\underline{\theta})\in C([0,T_1];H^1(\R_+)),\)
	 and the shift $X(t)$ is Lipschitz continuous, we obtain $(v-\bar{v},u-\bar{u}, \theta-\bar{\theta})\in C([0,T_1];H^1(\R_+))$.
    
    We now show that the solution can be globally extended by using the standard continuation argument. To this end, we define the maximal existence time
	\[T_M:=\sup\left\{T>0~\Bigg|~\sup_{t\in[0,T]}\|(v-\bar{v}, u-\bar{u}, \theta-\bar{\theta})(t,\cdot)\|_{H^1(\R_+)}\le \e\right\}.\]
    Suppose, for the contradiction, that $T_M$ is finite. Then, by the definition of $T_M$, we have
	\[\sup_{t\in[0,T_M]}\|(v-\bar{v}, u-\bar{u}, \theta-\bar{\theta})(t,\cdot)\|_{H^1(\R_+)}=\e.\]
	On the other hand, using \eqref{est:bars} and \eqref{app:small2}, we find that 
	\begin{align*}
	\|(v_0-\bar{v}(0,\cdot), u_0-\bar{u}(0,\cdot), \theta_0-\bar{\theta}(0,\cdot)) \|_{H^1(\R_+)}
	<\e_*+\hat{C}_1\sqrt{\delta_0} = \frac{\e}{2(C_0+1)}-\d_0^{1/40} - e^{-c\delta_S\beta}.
	\end{align*}
	Then, Proposition~\ref{prop:ap} implies that
	\[\sup_{t\in[0,T_M]}\left(\|v-\bar{v}\|_{H^1(\R_+)}+\|u-\bar{u}\|_{H^1(\R_+)}\right)
    \leq \frac{\e}{2},\]
	which is a contradiction. Therefore, we deduce $T_M=+\infty$, and the \textit{a priori} estimate \eqref{estsuppertur} holds for the whole time interval $[0,\infty)$.
\end{appendix}

\vspace{0.3cm}
\noindent \textbf{Statements and Declarations}\\

\noindent \textbf{Data availability:} No datasets were generated or analyzed during the current study. \vspace{0.3cm}

\noindent \textbf{Conflict of interest:} The authors declare that they have no conflicts of interest with this work.

\bibliography{reference}

\begin{thebibliography}{10}

\bibitem{AMA97}
{\sc Amadori, D.}
\newblock Initial-boundary value problems for nonlinear systems of conservation laws.
\newblock {\em NoDEA Nonlinear Differential Equations Appl. 4}, 1 (1997), 1--42.

\bibitem{AC97}
{\sc Amadori, D., and Colombo, R.~M.}
\newblock Continuous dependence for {$2\times 2$} conservation laws with boundary.
\newblock {\em J. Differential Equations 138}, 2 (1997), 229--266.

\bibitem{AMS24}
{\sc Ancona, F., Marson, A., and Spinolo, L.~V.}
\newblock Existence of vanishing physical viscosity solutions of characteristic initial-boundary value problems for systems of conservation laws.
\newblock {\em arXiv preprint arXiv:2401.14865\/} (2024).

\bibitem{BRN79}
{\sc Bardos, C., le~Roux, A.~Y., and N\'ed\'elec, J.-C.}
\newblock First order quasilinear equations with boundary conditions.
\newblock {\em Comm. Partial Differential Equations 4}, 9 (1979), 1017--1034.

\bibitem{BS09}
{\sc Bianchini, S., and Spinolo, L.~V.}
\newblock The boundary {R}iemann solver coming from the real vanishing viscosity approximation.
\newblock {\em Arch. Ration. Mech. Anal. 191}, 1 (2009), 1--96.

\bibitem{BS20}
{\sc Bianchini, S., and Spinolo, L.~V.}
\newblock Characteristic boundary layers for mixed hyperbolic-parabolic systems in one space dimension and applications to the {N}avier-{S}tokes and {MHD} equations.
\newblock {\em Comm. Pure Appl. Math. 73}, 10 (2020), 2180--2247.

\bibitem{DM07}
{\sc Donadello, C., and Marson, A.}
\newblock Stability of front tracking solutions to the initial and boundary value problem for systems of conservation laws.
\newblock {\em NoDEA Nonlinear Differential Equations Appl. 14}, 5-6 (2007), 569--592.

\bibitem{EEK25}
{\sc Eo, S., Eun, N., and Kang, M.-J.}
\newblock Stability of a {R}iemann shock in a physical class: From {B}renner-{N}avier-{S}tokes-{F}ourier to {E}uler.
\newblock {\em arXiv preprint arXiv:2411.03613\/} (2024).

\bibitem{FLWZ14}
{\sc Fan, L., Liu, H., Wang, T., and Zhao, H.}
\newblock Inflow problem for the one-dimensional compressible {N}avier-{S}tokes equations under large initial perturbation.
\newblock {\em J. Differential Equations 257}, 10 (2014), 3521--3553.

\bibitem{HKK23}
{\sc Han, S., Kang, M.-J., and Kim, J.}
\newblock Large-time behavior of composite waves of viscous shocks for the barotropic {N}avier-{S}tokes equations.
\newblock {\em SIAM J. Math. Anal. 55}, 5 (2023), 5526--5574.

\bibitem{HKKKO25}
{\sc Han, S., Kang, M.-J., Kim, J., Kim, N., and Oh, H.}
\newblock Convergence to superposition of boundary layer, rarefaction and shock for the inflow problem of the 1{D} {N}avier-{S}tokes equations.
\newblock {\em Comm. Math. Phys. 407}, 4 (2026), Paper No. 79, 59.

\bibitem{HKK25}
{\sc Han, S., Kang, M.-J., Kim, J., and Lee, H.}
\newblock Long-time behavior towards viscous-dispersive shock for {N}avier-{S}tokes equations of {K}orteweg type.
\newblock {\em J. Differential Equations 426\/} (2025), 317--387.

\bibitem{HM21}
{\sc Hashimoto, I., and Matsumura, A.}
\newblock Existence of radially symmetric stationary solutions for the compressible {N}avier-{S}tokes equation.
\newblock {\em Methods Appl. Anal. 28}, 3 (2021), 299--311.

\bibitem{HLM10}
{\sc Huang, F., Li, J., and Matsumura, A.}
\newblock Asymptotic stability of combination of viscous contact wave with rarefaction waves for one-dimensional compressible {N}avier-{S}tokes system.
\newblock {\em Arch. Ration. Mech. Anal. 197}, 1 (2010), 89--116.

\bibitem{HLS10}
{\sc Huang, F., Li, J., and Shi, X.}
\newblock Asymptotic behavior of solutions to the full compressible {N}avier-{S}tokes equations in the half space.
\newblock {\em Commun. Math. Sci. 8}, 3 (2010), 639--654.

\bibitem{HMS2003}
{\sc Huang, F., Matsumura, A., and Shi, X.}
\newblock A gas-solid free boundary problem for a compressible viscous gas.
\newblock {\em SIAM J. Math. Anal. 34}, 6 (2003), 1331--1355.

\bibitem{HMS03}
{\sc Huang, F., Matsumura, A., and Shi, X.}
\newblock Viscous shock wave and boundary layer solution to an inflow problem for compressible viscous gas.
\newblock {\em Comm. Math. Phys. 239}, 1-2 (2003), 261--285.

\bibitem{HXY08}
{\sc Huang, F., Xin, Z., and Yang, T.}
\newblock Contact discontinuity with general perturbations for gas motions.
\newblock {\em Adv. Math. 219}, 4 (2008), 1246--1297.

\bibitem{HKKL25}
{\sc Huang, X., Kang, M.-J., Kim, J., and Lee, H.}
\newblock Asymptotic behavior toward viscous shock for impermeable wall and inflow problems of barotropic {N}avier-{S}tokes equations.
\newblock {\em J. Math. Anal. Appl. 552}, 2 (2025), Paper No. 129803, 50.

\bibitem{HL25}
{\sc Huang, X., and Lee, H.}
\newblock Time-asymptotic stability of composite wave of viscous shocks and viscous contact wave for {N}avier-{S}tokes-{F}ourier equations.
\newblock {\em arXiv preprint arXiv:2504.04014\/} (2025).

\bibitem{KK06}
{\sc Kagei, Y., and Kawashima, S.}
\newblock Stability of planar stationary solutions to the compressible {N}avier-{S}tokes equation on the half space.
\newblock {\em Comm. Math. Phys. 266}, 2 (2006), 401--430.

\bibitem{Kang19}
{\sc Kang, M.-J.}
\newblock {$L^2$}-type contraction for shocks of scalar viscous conservation laws with strictly convex flux.
\newblock {\em J. Math. Pures Appl. (9) 145\/} (2021), 1--43.

\bibitem{KOW}
{\sc Kang, M.-J., Oh, H., and Wang, Y.}
\newblock Asymptotic behavior toward viscous shock for the outflow problem of barotropic {N}avier--{S}tokes equations.
\newblock {\em Nonlinearity 39}, 7 (2026), Paper No. 075006.

\bibitem{Kang-V-1}
{\sc Kang, M.-J., and Vasseur, A.~F.}
\newblock {$L^2$}-contraction for shock waves of scalar viscous conservation laws.
\newblock {\em Ann. Inst. H. Poincar\'{e} C Anal. Non Lin\'{e}aire 34}, 1 (2017), 139--156.

\bibitem{KV21}
{\sc Kang, M.-J., and Vasseur, A.~F.}
\newblock Contraction property for large perturbations of shocks of the barotropic {N}avier-{S}tokes system.
\newblock {\em J. Eur. Math. Soc. 23}, 2 (2021), 585--638.

\bibitem{KV-Inven}
{\sc Kang, M.-J., and Vasseur, A.~F.}
\newblock Uniqueness and stability of entropy shocks to the isentropic {E}uler system in a class of inviscid limits from a large family of {N}avier-{S}tokes systems.
\newblock {\em Invent. Math. 224}, 1 (2021), 55--146.

\bibitem{KV-2shock}
{\sc Kang, M.-J., and Vasseur, A.~F.}
\newblock Well-posedness of the {R}iemann problem with two shocks for the isentropic {E}uler system in a class of vanishing physical viscosity limits.
\newblock {\em J. Differential Equations 338\/} (2022), 128--226.

\bibitem{KVW23}
{\sc Kang, M.-J., Vasseur, A.~F., and Wang, Y.}
\newblock Time-asymptotic stability of composite waves of viscous shock and rarefaction for barotropic {N}avier-{S}tokes equations.
\newblock {\em Adv. Math. 419\/} (2023), Paper No. 108963, 66.

\bibitem{KVW-NSF}
{\sc Kang, M.-J., Vasseur, A.~F., and Wang, Y.}
\newblock Time-asymptotic stability of generic {R}iemann solutions for compressible {N}avier-{S}tokes-{F}ourier equations.
\newblock {\em Arch. Ration. Mech. Anal. 249}, 4 (2025), Paper No. 42, 80.

\bibitem{KawaOutBL}
{\sc Kawashima, S., Nishibata, S., and Zhu, P.}
\newblock Asymptotic stability of the stationary solution to the compressible {N}avier-{S}tokes equations in the half space.
\newblock {\em Comm. Math. Phys. 240}, 3 (2003), 483--500.

\bibitem{KZ08}
{\sc Kawashima, S., and Zhu, P.}
\newblock Asymptotic stability of nonlinear wave for the compressible {N}avier-{S}tokes equations in the half space.
\newblock {\em J. Differential Equations 244}, 12 (2008), 3151--3179.

\bibitem{Kim26a}
{\sc Kim, J.}
\newblock Stationary solutions to the spherically symmetric compressible fluid with capillarity effect.
\newblock {\em arXiv preprint arXiv:2605.07610\/} (2026).

\bibitem{Kim26}
{\sc Kim, J.}
\newblock Time-asymptotic {S}tability of the {S}tationary {S}olution to the {I}mpermeable {W}all {P}roblem for the radially {S}ymmetric {N}avier-{S}tokes-{K}orteweg {E}quations.
\newblock {\em arXiv preprint arXiv:2608.17617\/} (2026).

\bibitem{K1}
{\sc Kru{\v{z}}kov, S.~N.}
\newblock First order quasilinear equations with several independent variables.
\newblock {\em Mat. Sb. (N.S.) 81 (123)\/} (1970), 228--255.

\bibitem{MatBVP}
{\sc Matsumura, A.}
\newblock Inflow and outflow problems in the half space for a one-dimensional isentropic model system of compressible viscous gas.
\newblock {\em Methods and Applications of Analysis 8}, 4 (2001), 645--666.

\bibitem{MN01}
{\sc Matsumura, A., and Nishihara, K.}
\newblock Large-time behaviors of solutions to an inflow problem in the half space for a one-dimensional system of compressible viscous gas.
\newblock {\em Comm. Math. Phys. 222}, 3 (2001), 449--474.

\bibitem{Otto96}
{\sc Otto, F.}
\newblock Initial-boundary value problem for a scalar conservation law.
\newblock {\em C. R. Acad. Sci. Paris S\'er. I Math. 322}, 8 (1996), 729--734.

\bibitem{QW09}
{\sc Qin, X., and Wang, Y.}
\newblock Stability of wave patterns to the inflow problem of full compressible {N}avier-{S}tokes equations.
\newblock {\em SIAM J. Math. Anal. 41}, 5 (2009), 2057--2087.

\bibitem{QW11}
{\sc Qin, X., and Wang, Y.}
\newblock Large-time behavior of solutions to the inflow problem of full compressible {N}avier-{S}tokes equations.
\newblock {\em SIAM J. Math. Anal. 43}, 1 (2011), 341--366.

\bibitem{Sol76}
{\sc Solonnikov, V.~A.}
\newblock The solvability of the initial-boundary value problem for the equations of motion of a viscous compressible fluid.
\newblock {\em Zap. Nau\v cn. Sem. Leningrad. Otdel. Mat. Inst. Steklov. (LOMI) 56\/} (1976), 128--142, 197.
\newblock Investigations on linear operators and theory of functions, VI.

\bibitem{SZK21}
{\sc Suzuki, M., and Zhang, K.~Z.}
\newblock Stationary flows for compressible viscous fluid in a perturbed half-space.
\newblock {\em Comm. Math. Phys. 388}, 3 (2021), 1131--1180.

\bibitem{UNK10}
{\sc Ueda, Y., Nakamura, T., and Kawashima, S.}
\newblock Stability of degenerate stationary waves for viscous gases.
\newblock {\em Arch. Ration. Mech. Anal. 198}, 3 (2010), 735--762.

\bibitem{Vasseur01}
{\sc Vasseur, A.}
\newblock Strong traces for solutions of multidimensional scalar conservation laws.
\newblock {\em Arch. Ration. Mech. Anal. 160}, 3 (2001), 181--193.

\bibitem{WYY2025}
{\sc Wang, Y., Yang, Y., and Yu, Q.}
\newblock Existence of large boundary layer solutions to inflow problem of 1{D} full compressible {N}avier-{S}tokes equations.
\newblock {\em Acta Math. Sci. Ser. B (Engl. Ed.) 45}, 6 (2025), 2591--2606.

\end{thebibliography}
\end{document}